\def\wh#1{\widehat{#1}}
\def\wt#1{\widetilde{#1}}
\theoremstyle{plain}
    \newtheorem{theorem}{Theorem}[section]
    \newtheorem{proposition}[theorem]{Proposition}
    \newtheorem{lemma}[theorem]{Lemma}
    \newtheorem{corollary}[theorem]{Corollary}
\theoremstyle{definition}
    \newtheorem{definition}[theorem]{Definition}
    \newtheorem{remark}[theorem]{Remark}
    \newtheorem{conjecture}[theorem]{Conjecture}
\def\Alphabet{A,B,C,D,E,F,G,H,I,J,K,L,M,N,O,P,Q,R,S,T,U,V,W,X,Y,Z}
\def\alphabet{a,b,c,d,e,f,g,h,i,j,k,l,m,n,o,p,q,r,s,t,u,v,w,x,y,z}
\def\endpiece{xxx}
\def\makeAlphabet[#1]{\expandafter\makeA#1,xxx,}
\def\makealphabet[#1]{\expandafter\makea#1,xxx,}
\def\makeA#1,{\def\temp{#1}\ifx\temp\endpiece\else%
\mkbb{#1}\mkfrak{#1}\mkbf{#1}\mkcal{#1}\expandafter\makeA\fi}%
\def\makea#1,{\def\temp{#1}\ifx\temp\endpiece\else\mkfrak{#1}\mkbf{#1}\expandafter\makea\fi}%
\def\mkbb#1{\expandafter\def\csname bb#1\endcsname{\mathbb{#1}}}
\def\mkfrak#1{\expandafter\def\csname fr#1\endcsname{\mathfrak{#1}}}
\def\mkbf#1{\expandafter\def\csname b#1\endcsname{\mathbf{#1}}}
\def\mkcal#1{\expandafter\def\csname c#1\endcsname{\mathcal{#1}}}
\def\makeop[#1]{\xmakeop#1,xxx,}
\def\mkop#1{\expandafter\def\csname #1\endcsname{{\mathrm{#1}}}} %
\def\xmakeop#1,{\def\temp{#1}\ifx\temp\endpiece\else\mkop{#1}\expandafter\xmakeop\fi}%
\def\isom{\cong}
\def\verk{\circ}
\def\prolim{\varprojlim}
\begin{document}

\title[Equivariant main conjecture]{On the equivariant and the non-equivariant main conjecture for imaginary quadratic fields}   
\author{Jennifer Johnson-Leung and Guido Kings}
\begin{abstract}
	In this paper we first prove the main conjecture for imaginary quadratic fields for all
	prime numbers $p$, improving slightly earlier results by Rubin. From this we deduce the equivariant
	main conjecture in the case that a certain $\mu$-invariant vanishes. For prime numbers 
	$p\nmid 6$ which split in $K$, we can prove the equivariant main conjecture using
	a theorem by Gillard.
\end{abstract}
\date{\today} 
\maketitle
\setcounter{tocdepth}{1}
\tableofcontents

\section*{Introduction}
The Iwasawa main conjecture has been an important 
tool to study the arithmetic of special values of $L$-functions of Hecke characters
of imaginary quadratic fields (\cite{Rubin}, \cite{Kings}, \cite{Ts}, \cite{Bley}, \cite{JLthesis}).
To obtain the finest possible invariants it is important to know the
main conjecture for all prime numbers $p$ and also to have an equivariant version
at disposal.

In this paper we address these questions and treat the main conjecture for imaginary quadratic fields $K$
in the equivariant and the non-equivariant setting. 
Our results are twofold:

As a first theorem (see \ref{lambda-mc}), we prove the main conjecture first proven by Rubin
\cite{Rubin}, \cite{RubinMoreMC} for all prime numbers $p$. This improves the
results by Rubin, who had to impose the condition that $p$ does not divide the order of the abelian
field defined by $\chi$. 

The second result of our paper treats the equivariant main conjecture. We reduce this
conjecture to the vanishing of a certain $\mu$-invariant (see \ref{mu=0} for the precise condition).
A result of Gillard \cite{Gillard} implies that the equivariant main conjecture
is a theorem for prime numbers $p\nmid 6$, which split in $K$.
 
It was Rubin's idea to prove the main conjecture with the techniques of Euler systems
invented by Kolyvagin (building also on ideas of Thaine). Later, he (and also Kato and Perrin-Riou independently) developed
the machinery in an abstract and conceptual way, which made it a very flexible and
general tool.

Our approach to the main conjecture follows the scheme of proof developed by
the second author with A. Huber in \cite{HK1}. Instead of decomposing the
classical Iwasawa modules under character-wise projectors (some of which turn out to not be integral and lead to restrictions on $p$), we use Galois cohomology with coefficients
in the Galois representations defined by the character $\chi$. Using this
we reduce the main conjecture to the Tamagawa number conjecture for number fields
at $s=0$.  We then exploit the isogeny invariance of the TNC to reduce to the analytic class number formula.
This approach was inspired by the Tamagawa number conjecture and in particular by the work of Kato.

To treat the equivariant main conjecture, Burns and Greither had the happy idea that
the vanishing of certain $\mu$-invariants had the consequence that the decisive Iwasawa modules
vanish when localized at so called singular prime ideals (see \ref{proofforsingular}).
We essentially adopt this strategy but with a conceptual change first explained
by Witte \cite{Witte}: we deduce the equivariant main conjecture from the characterwise
one using the fact that the vanishing of the $\mu$-invariant implies the vanishing
of the localized $H^2$, which is essentially the inverse limit of the class groups. 

For the experts we like to point out one seemingly new technical feature in the proof.
Kato had the idea that one should use the functor $\Det$ of Knudsen and Mumford \cite{knumum}
instead of the more traditional characteristic ideal. We not only follow his suggestion, but
we use also the functor $\Div$ in a systematic way.
This allows us to deal in an elegant way with the
reduction of the main conjecture to the Tamagawa number conjecture.

The paper is organized as follows: after some notational preliminaries in the first section,
we review the Tamagawa number conjecture for number fields at $s=0$. The next section
recalls the Euler system of elliptic units following the exposition by Kato in \cite{Kato}.
The fourth section introduces the basic Iwasawa modules and studies some of their properties.
The technical part here is simpler than in the corresponding case of 
the main conjecture for $\bbQ$, as we work here with a $\bbZ_p^2$-extension of
$K$. This implies that some cohomological Iwasawa modules are automatically pseudo-null,
which is not true in the case for $\bbQ$. The fifth section formulates
the equivariant (here called $\Omega$-main-conjecture) and the non-equivariant 
Iwasawa main conjecture (here called $\Lambda$-main-conjecture). The last two sections
contain the proofs of these main conjectures.

The first named author would like to thank Universit\"at Regensburg for its hospitality during the initial phase of this project.  The second named author would like to thank M. Witte for very useful discussions
concerning \cite{Witte}.
It is a great pleasure for both authors to thank the referee for a very careful reading of the manuscript and
for pointing out several inaccuracies in an earlier version of the text.

\section{Preliminaries}

\subsection{General notations}\label{notations}

In this paper $K$ always denotes an imaginary quadratic field with a fixed embedding 
$K\to \bbC$ and we fix an algebraic closure $\bar{K}\subset \bbC$. By $\cO_K$ we denote the ring
of integers. For each ideal $\frf\subset \cO_K$ we consider the ray class field $K(\frf)$ of
modulus $\frf$ and we denote by
$$
	G(\frf):=\Gal(K(\frf)/K)
$$
its Galois group over $K$. 
Consider for an ideal $\frf\subset \cO_K$  characters
$$
	\eta:G(\frf)\to \bbC^*.
$$
We will use in this paper the \emph{inverse} of the usual 
Artin reciprocity map. That is, if $\frq$ is a prime ideal, which does not divide the conductor of  $K(\frf)$,
then we associate to it $\Frob_\frq^{-1}$ the geometric Frobenius. Thus we get
$$
	\eta(\frq)=\eta(\Frob_\frq^{-1}).
$$
To have compatibility with the sources \cite{Kato} and \cite{deShalit}, we let 
$\sigma_\fra$ be the usual Artin symbol, so that 
$$
	\eta(\fra)=\eta^{-1}(\sigma_\fra).
$$
The conductor of $\eta$ will be denoted by $\frf_\eta$ and we let 
$$
	\wh{G}(\frf):=\{\eta:G(\frf)\to \bbC^*\}
$$
be the dual group of $G(\frf)$.  We denote by $E$ a number field, which contains the values of $\eta$.
We denote by $\cO:=\cO_E$ the ring of integers in $E$ and we introduce the following conventions:
$$
	E_\infty:=E\otimes_\bbQ\bbR\mbox{ and } E_p:=E\otimes_\bbQ\bbQ_p.
$$
In a similar way we let $\cO_p:=\cO\otimes_\bbZ\bbZ_p$. Note that this is a product of discrete valuation
rings.

For each character $\eta:G(\frf)\to E^*$ and each embedding $\sigma:E\to \bbC$ we define the $E\otimes_\bbQ\bbC\simeq\Hom(E,\bbC)$-valued $L$-function of $\eta$ for $\Re(s)> 1$
to be 
$$
	L(\eta,s):=(\ldots,L(\sigma\verk\eta,s),\ldots),
$$
where
$$
L(\sigma\verk\eta,s):=\prod_{0\neq\frp\subset\cO_K}\frac{1}{1-\frac{\sigma\eta(\frp)}{\N(\frp)^s}},
$$
and the product is taken over all non-trivial prime ideals of $\cO_K$.
For each ideal $\frn\subset \cO_K$ we define
$$
	L_\frn(\sigma\verk\eta,s):=\prod_{\frp\nmid \frn}\frac{1}{1-\frac{\sigma\eta(\frp)}{\N(\frp)^s}}.
$$
These $L$-functions have a meromorphic continuation to $\bbC$ and satisfy a functional equation.
If $\eta\neq 1$ is non-trivial, the functions $L(\sigma\verk\eta,s)$ 
have a zero of order $1$ at $s=0$. We write 
$$
	L^*(\eta,0)
$$
for the leading term in the Laurent series at $0$ of $L(\eta,s)$ as an $E\otimes\bbC$-valued function.

\subsection{The motive of a number field}\label{motivic}

For each Galois extension $K\subset F\subset \bar{K}$ with Galois group $G:=\Gal(F/K)$, we denote by $h^0(F)$ its motive over $K$ and
$$
	M(F):=h^0(F)_E
$$ 
the motive with coefficients in $E$. Here we assume that $E$ contains all the values of the characters
in  $\wh{G}$.
For each group $G$ and a commutative ring $R$, we let 
$$
	R[G]
$$
be the group ring of $G$ with coefficients in $R$. Now suppose that $G$ is abelian.
For a character $\eta:G\to E^*$
we let 
$$
	p_{\eta^{-1}}:=\frac{1}{\#G}\sum_{\sigma\in G}\eta(\sigma)\sigma\in E[G]
$$	
be the projector onto the $\eta^{-1}$-eigenspace.
The projectors $p_{\eta^{-1}}$
decompose $M(F)$ into a direct sum
$$
	M(F)=\bigoplus_{\eta\in \wh{G}}M(\eta),
$$
where 
$$
	M(\eta):=p_{\eta^{-1}}M(F).
$$
Note that if $\eta$ factors through a subgroup $G'$, then also $p_{\eta^{-1}}$ factors
through $E[G']$. If $G'=\Gal(F'/K)$, then
\begin{equation}\label{motivdec}
M(\eta):=p_{\eta^{-1}}M(F)=p_{\eta^{-1}}M(F').
\end{equation}
This means that $M(\eta)$ is independent of the choice of the
group on which the character $\eta$ is considered.
The $L$-function of the motive $M(F)$ is the Dedekind zeta function of $F$,
$$
L(M(F),s)=\zeta_{F}(s)
$$
considered as $E\otimes_\bbQ \bbC$-valued function. Similarly, 
$$
	L(M(\eta),s)=L(\eta,s)
$$
for each character $\eta:G\to E^*$.
We consider several realizations of the motives $M(F)$ and the dual motive $M(F)^\vee(1)$ with a Tate twist. 
In this case, since the dimension of the variety is 0, $M(F)^\vee=M(F)$.
Note that the dual motive of $M(\eta)$ is
$$
	M(\eta)^\vee\simeq M(\eta^{-1}).
$$
The \emph{Betti realization} is the $E$-vector space
$$
M(F)_{B}:=H^0_B(\Spec F(\bbC),E)\simeq\bigoplus_{\tau:F\to_K \bbC}E\simeq E[G].
$$ 
Here the sum is over all embeddings of $F$ into $\bbC$, which agree with the
fixed embedding of $K$. Also we have used the fixed embedding of $F\subset \bar{K}$ into $\bbC$
in the last isomorphism.

The \emph{deRham realization} 
$$
M(F)_{\dR}:=H^0_{\dR}(F/K)\otimes_\bbQ E\simeq F\otimes_\bbQ E
$$
is a filtered $K\otimes_\bbQ E$-module, and in this case, $\Fil^0(M(F)_{\dR})=M(F)_{\dR}$.  
The \emph{\'etale realization} for any prime number $p$, 
$$
M(F)_p:=H^0_{\et}(\Spec F\times_K\bar{K},E_p)\simeq \bigoplus_{\tau:F\to_K \bar{K}}E_p\simeq E_p[G]
$$
is an $E_p$-representation of $\Gal(\bar{K}/K)$. 

The \emph{motivic cohomology} groups are defined in terms of $K$-theory and we have
 $H^0_f(M(F))=E$ and $H^1_f(M(F))=0$ while $H^0_f(M(F)^\vee(1))=0$ and 
$$
	H^1_f(M(F)^\vee(1))= K_1(\cO_{F})\otimes_\bbZ E\simeq\cO^*_{F}\otimes_\bbZ E. 
$$
The realizations of the motives $M(\eta)$ are defined by applying the projector
$p_{\eta^{-1}}$ to the realizations of $M(F)$. In particular, we have
$$
	H^1_f(M(\eta)^\vee(1))=p_{\eta}(\cO^*_{F}\otimes_\bbZ E).
$$
\begin{definition}\label{latticedefn}
Using the identification $E[G]\simeq M(F)_B$ we define the 
\emph{canonical lattice} to be $\cO[G]\subset M(F)_B$. Similarly, we consider
$\cO_p[G]\subset M(F)_p$. This induces a \emph{canonical lattice}
$$
	\cO(\eta):=p_{\eta^{-1}}(\cO[G])\subset M(\eta)_B
$$
with \emph{canonical generator} $t_B(\eta):=p_{\eta^{-1}}(1)$ and 
Galois stable lattices
$$
	\cO_p(\eta):=p_{\eta^{-1}}(\cO_p[G])\subset M(\eta)_p
$$
with \emph{canonical generator} $t_p(\eta):=p_{\eta^{-1}}(1)$.
We also define
$$
	\cO(\eta)^\vee:=\Hom_{\cO}(\cO(\eta),\cO)
$$
and
$$
	\cO_p(\eta)^\vee:=\Hom_{\cO_p}(\cO_p(\eta),\cO_p)
$$
and denote by $t_B(\eta)^\vee$ and $t_p(\eta)^\vee$ the
dual bases.
\end{definition}
Note that all these notions depend on $\eta$ and in the following
way on $G$: Suppose that $\eta$ factors into $\eta=\wt\eta\verk \pi$, where
$\pi:G\to G'$ is a group homomorphism. Then $\pi$ induces a map $\pi:E[G]\to E[G']$ and one has
$$
\pi(p_{\eta^{-1}}(\cO[G]))=p_{\wt\eta^{-1}}(\cO[G']).
$$
In this sense the lattice $\cO(\eta)$ is independent of the group $G$, where $\eta$ is defined.

Note also that the action of $\Gal(\bar{K}/K)$ on $M(F)_p$ factors through $G$ but this
action is contragredient to the canonical action of $G$ on $M(F)_p$. This is 
the reason why $M(\eta)_p=p_{\eta^{-1}}M(F)_p$ has Galois action through $\eta$. 

\subsection{The functors $\Det$ and $\Div$}\label{determinants}
For any ring $R$ a \emph{perfect complex} is a complex of (left) $R$-modules,
which is quasi-isomorphic to a bounded complex of finitely generated projective
$R$-modules.

We will use the graded determinant functor $\Det$ and the divisor functor
$\Div$ of Knudsen and Mumford \cite{knumum}.  Let $R$ now be a commutative ring, and  
$$
P^{\cdot}: \cdots\rightarrow P^{i-1}\rightarrow P^i\rightarrow P^{i+1}\rightarrow\cdot\cdot\cdot
$$ a perfect complex of projective $R$-modules. 
One defines $\Det_RP^i:=\bigwedge_{R}^{\mbox{rk}_{R}{P^i}}P^i$ as a graded invertible $R$-module of
(locally constant) degree $\mbox{rk}P^i$.
The determinant of the complex $P^\cdot$ is then the graded invertible $R$-module 
$$
\Det_RP^{\cdot}:=\bigotimes_{i\in\bbZ}\Det_R^{(-1)^i}P^i.
$$
Notice that the determinant depends only on the quasi-isomorphism class of $P^\cdot$.  Moreover, 
if the cohomology groups $H^i(P^\cdot)$ are all perfect, one has
$$
\Det_RP^\cdot=\bigotimes_{i\in\bbZ}\Det_R^{(-1)^i}H^i(P^\cdot).
$$
This functor is closely related to the characteristic ideal.  
If $P$ is a torsion $R$-module, $R$ a regular noetherian integral domain and $Q(R)$ the total quotient ring of $R$, then $\mbox{char}(P)=\Det_R^{-1}P$. Here we identify 
$$
	\Det_R^{-1}P\subset (\Det_R^{-1}P)\otimes_RQ(R)=\Det_{Q(R)}^{-1}0=Q(R).
$$
Assume now that $R$ is noetherian and let
$$
\lambda:\cF^{\cdot}\to \cG^{\cdot}
$$ 
be a map of perfect complexes on  $X:=\Spec R$ in
the derived category. Let $U(\lambda)$ be the open set of $x\in X$ such that
$\lambda$ is an isomorphism in a neighbourhood of $x$. The map $\lambda$ is called good if
$U(\lambda)$ contains all points of depth $0$. Knudsen and Mumford define
for good $\lambda$ a Cartier divisor $\Div(\lambda)$ on $X$, which has the property that the canonical map
on $U(\lambda)$
$$
	\Det(\lambda):\Det(\cF^{\cdot})\mid_{U(\lambda)}\simeq \Det(\cG^{\cdot})\mid_{U(\lambda)}
$$
extends to an isomorphism on the whole of $X$
\begin{equation}\label{divequation}
	\Det(\lambda):\Det(\cF^{\cdot})(\Div(\lambda))\simeq \Det(\cG^{\cdot}).
\end{equation}
In particular, one has an isomorphism $\cO_X(\Div(\lambda))\simeq \Det(\cG^{\cdot})\otimes \Det^{-1}(\cF^{\cdot})$.
One defines
$$
	\Div(\cF^{\cdot}):=\Div(0\to \cF^{\cdot}),
$$
if $0\to \cF^{\cdot}$ is good.
The functor $\Div$ has among other the following properties (see \cite{knumum} Theorem 3):
If 
$$
	0\to \cF^{\cdot}\xrightarrow{\lambda}\cG^{\cdot}\xrightarrow{\mu}\cH^{\cdot}\to 0
$$
is a short exact sequence of perfect complexes such that $\lambda$ is good, then
$0\to \cH^{\cdot}$ is good and $\Div(\lambda)=\Div(\cH^{\cdot})$.

If $\lambda:\cF^{\cdot}\to \cG^{\cdot}$ and $\mu:\cH^{\cdot}\to \cI^{\cdot}$ are good,
then $\Div(\lambda\oplus \mu)=\Div(\lambda)+\Div(\mu)$. In the case, where $\cG^{\cdot}=\cH^{\cdot}$
one has also
$$
	\Div(\mu\verk\lambda)=\Div(\mu)+\Div(\lambda).
$$

\begin{proposition}[\cite{knumum} Theorem 3]\label{divfunctoriality}
If $f:Y\to X$ is a morphism of noetherian schemes, $\lambda:\cF^{\cdot}\to \cG^{\cdot}$ a good
map on $X$ and for all $y\in Y$ of depth $0$ one has $f(y)\in U(\lambda)$, then
$$
	Lf^*(\lambda):Lf^*\cF^{\cdot}\to Lf^*\cG^{\cdot}
$$ 
is good on $Y$ and one has 
$$
	\Div(Lf^*(\lambda))=f^*\Div(\lambda).
$$
\end{proposition}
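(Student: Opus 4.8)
The statement is \cite{knumum}, Theorem~3; here is the shape of the argument I would give. The plan is to reduce everything to two facts: that the functor $\Det$ is compatible with derived pullback, and that $\Div(\mu)$ is uniquely pinned down by the isomorphism \eqref{divequation} it produces.

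First I would dispose of goodness. Over $U(\lambda)$ the map $\lambda$ is a quasi-isomorphism, and $Lf^*$ both preserves quasi-isomorphisms and sends perfect complexes to perfect complexes; hence $Lf^*(\lambda)$ is an isomorphism over $f^{-1}(U(\lambda))$, so $f^{-1}(U(\lambda))\subseteq U(Lf^*(\lambda))$. By hypothesis every $y\in Y$ of depth $0$ satisfies $f(y)\in U(\lambda)$, so all such $y$ lie in $f^{-1}(U(\lambda))\subseteq U(Lf^*(\lambda))$; thus $Lf^*(\lambda)$ is good and $\Div(Lf^*(\lambda))$ is defined. The same hypothesis shows that $\operatorname{Supp}\bigl(f^*\Div(\lambda)\bigr)\subseteq f^{-1}\bigl(X\setminus U(\lambda)\bigr)$ contains no point of $Y$ of depth $0$, so the pullback divisor $f^*\Div(\lambda)$ makes sense and $f^*\cO_X(\Div(\lambda))=\cO_Y(f^*\Div(\lambda))$.

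Next I would invoke the canonical base-change isomorphism $\xi_{\cC}:Lf^*\Det_X(\cC^{\cdot})\xrightarrow{\ \sim\ }\Det_Y(Lf^*\cC^{\cdot})$ of graded invertible sheaves, natural in the perfect complex $\cC^{\cdot}$ (this is part of the package of properties of $\Det$ in \cite{knumum}). Naturality applied to $\lambda$, restricted to $f^{-1}(U(\lambda))$ where $\lambda$ is an isomorphism, gives $\Det(Lf^*\lambda)\circ\xi_{\cF}=\xi_{\cG}\circ Lf^*\Det(\lambda)$. Now apply $Lf^*$ to the defining isomorphism \eqref{divequation} and compose with $\xi_{\cF}$ and $\xi_{\cG}$: this produces an isomorphism over all of $Y$
\[
	\Det_Y(Lf^*\cF^{\cdot})\bigl(f^*\Div(\lambda)\bigr)\xrightarrow{\ \sim\ }\Det_Y(Lf^*\cG^{\cdot}),
\]
which, by the previous identity, restricts over $f^{-1}(U(\lambda))$ to the canonical isomorphism $\Det(Lf^*\lambda)$.

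Finally I would appeal to uniqueness: $\Div(Lf^*\lambda)$ is, by construction, the divisor of the canonical rational section of $\Det_Y(Lf^*\cG^{\cdot})\otimes\Det_Y^{-1}(Lf^*\cF^{\cdot})$ cut out by $\Det(Lf^*\lambda)$ over $U(Lf^*\lambda)$, and among Cartier divisors on $Y$ whose support avoids the depth-$0$ points this is unique, since a rational function on $Y$ equal to $1$ on an open set containing all depth-$0$ points is $1$. The isomorphism built above exhibits $f^*\Div(\lambda)$ with exactly this characterizing property, so $\Div(Lf^*\lambda)=f^*\Div(\lambda)$. I expect the only delicate point to be the bookkeeping: one must check that the canonical rational section cut out by $\Det(\lambda)$ pulls back to a bona fide rational section on $Y$ — which is precisely where the depth-$0$ hypothesis enters — and that $\xi$ transports it compatibly with the $\bbZ$-gradings of the determinants. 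Since all of this is carried out in \cite{knumum}, in the text we would simply cite Theorem~3 there.
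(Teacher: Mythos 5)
The paper does not prove this proposition; it is cited directly from Knudsen--Mumford \cite{knumum}, Theorem~3, exactly as you note at the end of your sketch. Your reconstruction of the argument is correct and matches the structure of the proof in \cite{knumum}: goodness of $Lf^*(\lambda)$ follows because $Lf^*$ preserves quasi-isomorphisms and perfection, so $f^{-1}(U(\lambda))\subseteq U(Lf^*\lambda)$ and the depth-$0$ hypothesis does the rest; the base-change isomorphism for $\Det$ is part of the formal package in \cite{knumum} Theorem~1; the depth-$0$ condition also ensures $\Div(\lambda)$ pulls back as a Cartier divisor (its local equations stay non-zero-divisors because the support of $f^*\Div(\lambda)$ misses the associated points of $Y$, which are precisely the depth-$0$ points); and uniqueness of the divisor with the required extension property pins down $\Div(Lf^*\lambda)=f^*\Div(\lambda)$. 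Since the paper simply cites the result, there is nothing to reconcile; your sketch is a sound account of what lies behind the citation.
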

For more details on these functors, see \cite{knumum}.

\section{The Tamagawa number conjecture for the motive $M(F)$}

We now review the Tamagawa number conjecture for number fields in the case $s=0$, which is essentially a reformulation of the 
class number formula. As in the classical case we will reduce the main conjecture to the
case of the Tamagawa number conjecture.
The extension of the Tamagawa number conjecture 
of Bloch and Kato to coefficients is due to Kato, Fontaine-Perrin-Riou and Burns-Flach.

\subsection{\'Etale cohomology}\label{cohomology}

In this section $M$ is one of the motives $M(F)$ or $M(\eta)$.
As usual, using our fixed algebraic closure $\bar K$, we identify
continuous Galois cohomology and continuous \'etale cohomology.

In the formulation of the Tamagawa number conjecture, as well as in the sequel, we have need of several complexes of Galois cohomology, which we define following Fontaine \cite{Fontaine}.
Fix a rational prime p, and 
for every finite place $v$ of $K$, define the local unramified cohomology of $M_p$ to be
the complex
$$
	R\Gamma_f(K_v, M_p)=\begin{cases}M_p^{I_v}\stackrel{1-\Frob_v^{-1}}{\rightarrow}M_p^{I_v}&v\nmid p\\
	D_{\cris}(M_p)\stackrel{1-\phi}{\rightarrow}D_{\cris}(M_p) & v\mid p\end{cases}
$$
where $I_v$ is the inertia group at $v$. Recall that 
$D_{\cris}(M_p):=(B_{\cris}\otimes M_p)^{G_{K_v}}$ carries an action of the Frobenius of $B_{\cris}$, which is denoted by $\phi$. Moreover, the tangent space 
$(B_{\dR}/\Fil^0\otimes M_p)^{G_{K_v}}=0$ for our motive.   
This unramified cohomology is necessary to keep track of the Euler factors that 
arise when removing primes.   We further define 
$$
R\Gamma_{/f}(K_v, M_p):=\mbox{Cone}\left(R\Gamma_f(K_v,M_p)\rightarrow 
R\Gamma(K_v, M_p)\right).
$$
\begin{definition}\label{jconvention} Let $S$ be a finite set of primes of $K$ such that $M_p$ is
unramified outside of $S$ and
let $j:\Spec (\cO_K[1/pS])\hookrightarrow \Spec (\cO_K[1/p])$, 
then the \'etale sheaf $j_*M_p$ (resp. $j_*\cO_p(\eta)$ as defined in \ref{latticedefn}) 
on $\cO_K[1/p]$ 
will be denoted by $M_p$ (resp. $\cO_p(\eta)$), i.e.,
we omit $j_*$ from the notation.
\end{definition}
Using this convention, the compact support cohomology is defined 
for any Galois stable lattice $T_p\subset M_p$ as 
\begin{multline*}
R\Gamma_c(\cO_K[1/p], T_p):=\\
\mbox{Cone}\left(R\Gamma(\cO_K[1/p],T_p)\rightarrow\bigoplus_{v\mid p}R\Gamma(K_v, T_p)\oplus T_p\right)[-1].
\end{multline*}
\begin{lemma}\label{perfect}
For $K$ imaginary quadratic, $R\Gamma(\cO_K[1/p],T_p)$, $R\Gamma(K_v, T_p)$
and $R\Gamma_c(\cO_K[1/p], T_p)$ are
perfect $\cO_p$-complexes and $R\Gamma(\cO_K[1/p],T_p)$ and $R\Gamma(K_v, T_p)$ have cohomology
in degrees $0,1,2$.
\end{lemma}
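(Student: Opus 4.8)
The plan is to establish the three assertions — perfectness of the two "arithmetic" complexes, perfectness of the compactly supported complex, and the cohomological amplitude statement — by appealing to the standard finiteness theorems for étale cohomology of affine arithmetic schemes with $\bbZ_p$-coefficients. First I would treat $R\Gamma(\cO_K[1/p],T_p)$ and $R\Gamma(K_v,T_p)$. Since $\cO_K[1/p]$ is (the spectrum of) a localization of the ring of integers of a number field and $T_p$ is a finitely generated $\cO_p$-module with continuous Galois action (a Galois-stable lattice in $M_p$), the complex $R\Gamma_{\et}(\Spec\cO_K[1/p],T_p)$ — where, per Definition \ref{jconvention}, $T_p$ really denotes $j_*T_p$ for the relevant open immersion $j$ — is a perfect complex of $\cO_p$-modules. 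The cleanest route is to note that $\cO_p=\cO\otimes_\bbZ\bbZ_p$ is a finite product of complete discrete valuation rings (as remarked in \S\ref{notations}), so it suffices to check the statement after base change to each factor, i.e.\ for a finitely generated module over a complete DVR with finite residue field; there the result is classical (e.g.\ the continuous cochain complex can be computed by a bounded complex of finite free modules because cohomological dimension is finite and the cohomology groups are finitely generated). The finiteness of $H^i(\cO_K[1/p],T_p)$ and $H^i(K_v,T_p)$, and their vanishing outside degrees $0,1,2$, is exactly the content of the finiteness and cohomological-dimension theorems: $\Spec\cO_K[1/p]$ has étale (strict) cohomological dimension $2$ for $p$-power torsion sheaves (here one uses that $K$ is totally imaginary, so there are no real places contributing in higher degree — this is where the hypothesis "$K$ imaginary quadratic" enters), and likewise $\cd_p(K_v)\le 2$ for every finite place $v$. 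Passing from $\bbZ/p^n$-coefficients to the $\bbZ_p$-lattice $T_p$ by the usual $\varprojlim$ argument preserves both perfectness and the amplitude $[0,2]$.

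Next I would handle $R\Gamma_c(\cO_K[1/p],T_p)$. By its very definition it sits in a distinguished triangle
\[
R\Gamma_c(\cO_K[1/p],T_p)\to R\Gamma(\cO_K[1/p],T_p)\to \bigoplus_{v\mid p}R\Gamma(K_v,T_p)\oplus T_p\to .
\]
The mapping cone of a morphism between perfect complexes is perfect, and since each of the three terms on the right (the global complex, the finitely many local complexes at $v\mid p$, and the finite free module $T_p$ placed in degree $0$) is a perfect $\cO_p$-complex by the previous paragraph, $R\Gamma_c(\cO_K[1/p],T_p)$ is perfect as well. (One does not need an amplitude statement for $R\Gamma_c$ in the lemma, which is consistent with the fact that compactly supported cohomology here is concentrated in degrees $1,2,3$, not $0,1,2$.)

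I do not expect any serious obstacle; the only points requiring care are bookkeeping ones. One must be careful that "$T_p$" in the statement of the lemma means $j_*T_p$ in the sense of \ref{jconvention}, so that the global complex is genuinely over $\Spec\cO_K[1/p]$ rather than over the smaller open set where $T_p$ is lisse — this is harmless because $j_*$ of a lisse sheaf on a localization of a Dedekind scheme still has perfect cohomology with amplitude in $[0,2]$, the excised primes contributing only in degrees $0$ and $1$ via the localization sequence. The other mild subtlety is that $\cO_p$ is merely a product of DVRs rather than a single DVR, so statements like "finitely generated cohomology $\Rightarrow$ perfect complex" should be applied componentwise; this is routine since perfectness, and membership in a given cohomological degree range, can both be checked after the faithfully flat base changes $\cO_p\to\cO_{\mathfrak p}$ to the factors. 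Assembling these observations yields all three claims.
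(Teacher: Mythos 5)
Your proposal is essentially correct and follows the same backbone as the paper's argument: regularity of $\cO_p$ plus finite cohomological dimension plus finiteness of cohomology gives perfectness, and the cohomological-dimension input is Milne I.4.10, which requires $K$ to have no real places. The paper's proof is slightly different in organization: it first establishes perfectness for $R\Gamma(\cO_K[1/pS],T_p)$ (where the sheaf is lisse) directly from Milne, and then passes to $\cO_K[1/p]$ (where, per Definition \ref{jconvention}, the sheaf is $j_*T_p$) via two explicit distinguished triangles --- one relating $R\Gamma_c(\cO_K[1/pS],T_p)$, $R\Gamma_c(\cO_K[1/p],T_p)$, and $\bigoplus_{v\in S}R\Gamma(\kappa(v),T_p^{I_v})$, and one using purity $i^!=i^*(-1)[-2]$ to relate $R\Gamma(\cO_K[1/p],T_p)$ and $R\Gamma(\cO_K[1/pS],T_p)$ through residue-field cohomology. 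Your approach for $R\Gamma_c$ via the defining cone triangle, together with perfectness of the other three terms, is an equally valid route.

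One small but real inaccuracy in your aside about the $j_*$ convention: you say the excised primes contribute only in degrees $0$ and $1$ via the localization sequence. In the cohomology-with-supports localization triangle that the paper actually uses, purity introduces a $[-2]$ shift, so the residue-field term $R\Gamma(\kappa(v),T_p^{I_v}(-1))[-2]$ contributes in degrees $2$ and $3$, not $0$ and $1$. Thus the triangle alone does not immediately rule out an $H^3$; what saves the amplitude claim is exactly the direct cohomological-dimension bound $\cd_p(\Spec\cO_K[1/p])\le 2$ for $p$-power-torsion sheaves on a totally imaginary number ring (which applies to $j_*T_p$ and hence forces the boundary map $H^2(\cO_K[1/pS],T_p)\to\bigoplus_v H^1(\kappa(v),T_p^{I_v}(-1))$ to be surjective). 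You do in effect invoke that bound, so the conclusion stands, but the stated reason for why the excised primes are harmless is not quite the right one.
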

\begin{proof} As $\cO_p$ is regular, this just amounts to the statement that 
the complexes have finite cohomological dimension. 
For $\cO_K[1/pS]$ this follows using
that the cohomological $p$-dimension is two because $K$ has no real place (see \cite{Milne} I.4.10). 
To show the statement for $R\Gamma_c(\cO_K[1/p], T_p)$, consider the distinguished triangle
$$
	R\Gamma_c(\cO_K[1/pS], T_p)\to R\Gamma_c(\cO_K[1/p], T_p)\to \bigoplus_{v\in S}R\Gamma(\kappa(v), T_p^{I_v})
$$
where the outer complexes are perfect by loc. cit. For $R\Gamma(\cO_K[1/p],T_p)$ we have the
distinguished triangle (using purity $i^!=i^*(-1)[-2]$)
$$
	\bigoplus_{v\in S}R\Gamma(\kappa(v),T_p(-1)^{I_p}[-2])\to R\Gamma(\cO_K[1/p],T_p)\to
	R\Gamma(\cO_K[1/pS],T_p).
$$
This proves the claim.
\end{proof}

The global unramified cohomology is defined similarly as a mapping cone 
\begin{multline*}
R\Gamma_f(\cO_K[1/p], M_p):= \\
\mbox{Cone}\left(R\Gamma(\cO_K[1/p], M_p)\rightarrow\bigoplus_{v\mid p} R\Gamma_{/f}(K_v, M_p)\right)[-1].
\end{multline*}
We have isomorphisms $H^0_f(M)\otimes_\bbQ\bbQ_p\simeq H^0_f(\cO_K[1/p], M_p)$ and thanks
to results of Soul\'e an isomorphism
$$
	H^1_f(M(1))\otimes_\bbQ\bbQ_p\simeq H^1_f(\cO_K[1/p], M_p(1))
$$ 
given by the regulator map
$$
	r_p:H^1_f(M(1))\to  H^1_f(\cO_K[1/p], M_p(1)).
$$
Further, by Artin-Verdier duality, we have that 
$$
	H^i_f(\cO_K[1/p], M_p)\simeq H^{3-i}_f(\cO_K[1/p], M_p^\vee(1))^\vee,
$$
where $^\vee$ denotes the $E_p$-dual.
Thus, we can compute $R\Gamma_f(\cO_K[1/p], M_p)$ in all degrees and get for our motives the triangle
\begin{equation}\label{Hfcomputation}
	H^0_f(\cO_K[1/p], M_p)\to R\Gamma_f(\cO_K[1/p], M_p)\to H^1_f(\cO_K[1/p], M_p^{\vee}(1))^{\vee}[-2].
\end{equation}

From the above, we deduce a fourth exact triangle
(note that $M_B\otimes_\bbQ\bbQ_p\isom M_p$):
\begin{equation}\label{galoistriangle}
R\Gamma_c(\cO_K[1/p],M_p)\rightarrow R\Gamma_f(\cO_K[1/p], M_p)\rightarrow\bigoplus_{v\mid p}R\Gamma_f(K_v,M_p)\oplus M_p.
\end{equation}
For later use, we note the behaviour of $R\Gamma_c(\cO_K[1/p],M_p)$ under addition of a finite set
of places $S$.
\begin{lemma}\label{enlargingS}
	Let  $S$ be a finite set of places of $\cO_K$ not dividing $p$, then one has a localization sequence for any $\cO_p$-lattice $T_p\subset M_p$
	$$
		R\Gamma_c(\cO_K[1/pS],T_p)\to R\Gamma_c(\cO_K[1/p],T_p)\to \bigoplus_{v\in S}R\Gamma_f(K_v,T_p).
	$$
\end{lemma}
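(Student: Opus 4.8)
The plan is to deduce the triangle from the excision (localization) sequence in étale cohomology with compact support, applied to the open immersion $j\colon\Spec\cO_K[1/pS]\hookrightarrow\Spec\cO_K[1/p]$ whose closed complement is the finite set of closed points $S$. Throughout one uses the convention of Definition~\ref{jconvention}: on $\cO_K[1/p]$ the symbol $T_p$ denotes the (underived) pushforward $j_*T_p$ of the underlying lisse sheaf, while on $\cO_K[1/pS]$ it denotes the analogous pushforward there; and $R\Gamma_c(\cO_K[1/pS],T_p)$ is defined by the same cone recipe as $R\Gamma_c(\cO_K[1/p],T_p)$, now with local term $\bigoplus_{v\mid pS}R\Gamma(K_v,T_p)\oplus T_p$ in place of $\bigoplus_{v\mid p}R\Gamma(K_v,T_p)\oplus T_p$.

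The key step is the short exact sequence of étale sheaves on $\cO_K[1/p]$
$$
0\longrightarrow j_!T_p\longrightarrow j_*T_p\longrightarrow\bigoplus_{v\in S}i_{v*}\bigl(T_p^{I_v}\bigr)\longrightarrow 0,
$$
with $i_v\colon\Spec\kappa(v)\hookrightarrow\Spec\cO_K[1/p]$ and $T_p^{I_v}$ the stalk of $j_*T_p$ at a geometric point over $v$, an unramified $G_{K_v}$-module. Applying $R\Gamma_c(\cO_K[1/p],-)$ to this sequence yields a distinguished triangle, and I then identify its three terms. The middle term is $R\Gamma_c(\cO_K[1/p],T_p)$ by definition. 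The third term is $\bigoplus_{v\in S}R\Gamma_c(\cO_K[1/p],i_{v*}T_p^{I_v})=\bigoplus_{v\in S}R\Gamma(\kappa(v),T_p^{I_v})$, since a skyscraper at a closed point sees no place above $p$ and no archimedean place; and because $v\nmid p$, this equals $\bigoplus_{v\in S}R\Gamma_f(K_v,T_p)$ by the definition of the local unramified cohomology in Section~\ref{cohomology}. The first term $R\Gamma_c(\cO_K[1/p],j_!T_p)$ I identify with $R\Gamma_c(\cO_K[1/pS],T_p)$ — the standard fact that compact-support cohomology is unaffected by extension-by-zero across the removed points. Concretely, this I check from the two cone definitions using the $j_!$-localization triangle $R\Gamma(\cO_K[1/p],j_!T_p)\to R\Gamma(\cO_K[1/pS],T_p)\to\bigoplus_{v\in S}R\Gamma(K_v,T_p)$, whose third term is precisely the surplus local term in the definition of $R\Gamma_c(\cO_K[1/pS],T_p)$ and so cancels.

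Assembling the three identifications gives the asserted triangle; equivalently it is the first distinguished triangle already invoked in the proof of Lemma~\ref{perfect}, rewritten via $R\Gamma_f(K_v,T_p)=R\Gamma(\kappa(v),T_p^{I_v})$ for $v\nmid p$. The argument is formal excision and I expect no real obstacle. The only points requiring care are the bookkeeping in the identification $R\Gamma_c(\cO_K[1/p],j_!T_p)\simeq R\Gamma_c(\cO_K[1/pS],T_p)$ — matching the two cone-formula definitions and checking that the contributions at $v\in S$ offset correctly — and the harmless observation that $S$ need not contain the ramification locus of $M_p$, so that on each of $\cO_K[1/p]$ and $\cO_K[1/pS]$ one must read $T_p$ as the appropriate $j_*$-pushforward of the lisse sheaf it restricts to.
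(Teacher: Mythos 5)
Your proposal is correct and takes essentially the same route as the paper: the paper simply cites the localization sequence for compact-support cohomology (Milne II.2.3(d)) together with the identification $R\Gamma(\kappa(v),T_p^{I_v})\isom R\Gamma_f(K_v,T_p)$ for $v\nmid p$, and your argument is a self-contained reconstruction of exactly these two ingredients — the $j_!$-excision triangle for the open immersion $\Spec\cO_K[1/pS]\hookrightarrow\Spec\cO_K[1/p]$, plus the cone-bookkeeping showing that the surplus local terms at $v\in S$ in Fontaine's cone definition of $R\Gamma_c$ are absorbed by the third term of the $j_!$-triangle. The only added care in your version (correctly handled) is that one must check the Fontaine-style cone definitions on the two rings are compatible under this excision, which the paper treats as implicit in the citation.
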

\begin{proof}
	This follows from the localization sequence for cohomology with compact support (see \cite{Milne} II.2.3(d)) and the isomorphism
	$$
		R\Gamma(\kappa(v),M_p^{I_v})\isom R\Gamma_f(K_v,M_p)
	$$
	where $\kappa(v)$ is the residue class field and $I_v$ the inertia group at $v$.
\end{proof}

\subsection{Review of the Tamagawa number conjecture for $M$}\label{tnc}
In this section we formulate the Tamagawa number conjecture for the motives $M(F)$ and $M(\eta)$. 
Let $M$ be one of the motives $M(F)$ or $M(\eta)$.

Beilinson's regulator $r_\infty$ sits in a short exact sequence
\begin{align}\label{dirichlet}
0\rightarrow H^0_f(M)\otimes_\bbQ\bbR \to M_B\otimes_\bbQ\bbR\xrightarrow{r_\infty^\vee} H^1_f(M^\vee(1))^\vee\otimes_\bbQ\bbR\rightarrow 0.
\end{align}
Recall that $H^1_f(M(F)^\vee(1))\isom\cO_{F}^*\otimes_\bbZ E$, and that
$$
	M(F)_B\otimes_\bbQ\bbR=\bigoplus_{\tau:F\to_K\bbC}E_\infty\simeq E_\infty[G].
$$
Then $r_\infty$ is given by
\begin{align}\label{explicitreg}
	\cO_F^*\otimes_\bbZ E&\xrightarrow{r_\infty}\bigoplus_{\tau:F\to_K\bbC}E_\infty\\
	\nonumber u&\mapsto \sum_{\tau\in G}(\log |\tau(u)|)\tau,
\end{align}
where $|\tau(u)|:=({\tau(u)\overline{\tau(u)}})^{1/2}$ is the usual complex norm. Note that we
use here the Beilinson regulator, which differs from the usual normalization of the Dirichlet regulator
by a factor of $2$. This is important to get the correct $2$-part of the main conjecture.
We define the \emph{fundamental line} to be the $E$-vector space
\begin{equation}\label{fundamentalline}
\Xi(M):=\Det_E(H^0_f(M))\otimes_E
\Det_E^{-1}(H^1_f(M^\vee(1)))\otimes_E\Det_E^{-1}(M_B)
\end{equation}
By the exact sequence (\ref{dirichlet}), we have an isomorphism
$$
\vartheta_\infty:E_\infty\simeq\Xi(M)\otimes_\bbQ\bbR
$$

The leading term of the L-function at $s=0$, $L^*(M,0)$ considered as $E\otimes_\bbQ\bbC$-valued function is in $E_\infty^*$, so we can consider its image under the isomorphism above.  
\begin{conjecture}[Rational Conjecture]\label{Ratconj}
$$
\vartheta_\infty(L^*(M,0)^{-1})\in  \Xi(M)\otimes_\bbQ{1}.
$$
\end{conjecture}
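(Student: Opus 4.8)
The Rational Conjecture at $s=0$ for the motive of a number field is, in the abelian situation at hand, essentially a known statement: it is the rationality part of Stark's conjecture at $s=0$, equivalently -- for imaginary quadratic $K$ -- a consequence of Kronecker's second limit formula. The plan is to identify \ref{Ratconj} with this known input by tracing it through the determinant formalism of \ref{determinants}. First I would decompose the problem under $M(F)=\bigoplus_{\eta\in\wh{G}}M(\eta)$. Since $H^0_f(M)$, $H^1_f(M^\vee(1))$ and the Betti realization $M_B$ are additive in $M$ along this decomposition, the fundamental line $(\ref{fundamentalline})$ satisfies $\Xi(M(F))=\bigotimes_\eta\Xi(M(\eta))$, the isomorphism $\vartheta_\infty$ and the regulator map $(\ref{explicitreg})$ decompose accordingly, and $L^*(M(F),0)=\prod_\eta L^*(M(\eta),0)$ in $E_\infty^*$ -- this last point being the factorization $\zeta_F=\prod_\eta L(\eta,\cdot)$ of the Dedekind zeta function together with the order-of-vanishing statement of \ref{notations} ($L(\eta,\cdot)$ has a zero of order $0$ at $s=0$ when $\eta=1$ and of order $1$ otherwise). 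Hence it suffices to prove the statement for each $M(\eta)$ separately.

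The trivial character is immediate. For $\eta=1$ one has $H^0_f(M(1))=E$, $H^1_f(M(1)^\vee(1))=p_1(\cO_F^*\otimes_\bbZ E)=0$ (as $\cO_K^*$ is finite), and $M(1)_B=E\cdot t_B(1)$, so $(\ref{dirichlet})$ degenerates to the isomorphism $H^0_f(M(1))\otimes_\bbQ\bbR\isom M(1)_B\otimes_\bbQ\bbR$. Tracing it through $\Det$, $\vartheta_\infty$ carries $1\in E_\infty$ to a nonzero \emph{rational} multiple of the canonical basis $1\otimes1\otimes t_B(1)^{-1}$ of $\Xi(M(1))$, since the image of the motivic generator $1\in H^0_f(M(1))$ in $M(1)_B$ is $\sum_{\sigma\in G}\sigma=\#G\cdot t_B(1)\in\bbQ[G]$. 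As $L^*(M(1),0)=\zeta_K(0)=-h_K/w_K\in\bbQ^*$ by Dirichlet, this gives $\vartheta_\infty(L^*(M(1),0)^{-1})\in\Xi(M(1))\otimes_\bbQ1$. For $\eta\neq1$, the order-of-vanishing statement of \ref{notations} and Dirichlet's unit theorem show that $H^1_f(M(\eta)^\vee(1))=p_\eta(\cO_F^*\otimes_\bbZ E)$ is free of rank $1$ over $E$, while $H^0_f(M(\eta))=0$ and $M(\eta)_B=E\cdot t_B(\eta)$; so $\Xi(M(\eta))$ is free over $E$ on $1\otimes u^{-1}\otimes t_B(\eta)^{-1}$ for any $E$-generator $u$ of $p_\eta(\cO_F^*\otimes_\bbZ E)$, and $(\ref{dirichlet})$ reduces to the single isomorphism $r_\infty^\vee\colon M(\eta)_B\otimes_\bbQ\bbR\isom H^1_f(M(\eta)^\vee(1))^\vee\otimes_\bbQ\bbR$.

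Unwinding $\Det$ of this isomorphism against the canonical bases, one finds that $\vartheta_\infty(L^*(M(\eta),0)^{-1})\in\Xi(M(\eta))\otimes_\bbQ1$ holds if and only if $L^*(\eta,0)/R_\eta(u)\in E^*$, where the $\eta$-regulator $R_\eta(u)\in E_\infty^*$ is defined by $r_\infty(u)=R_\eta(u)\cdot t_B(\eta)$ with $r_\infty$ as in $(\ref{explicitreg})$. That this ratio is $E$-rational is exactly the rationality part of Stark's conjecture at $s=0$ for the one-dimensional Artin character $\eta$, which is a theorem of Tate: an automorphism $\sigma\in\Aut(\bbC)$ sends $L^*(\eta,0)$ to $L^*(\eta^\sigma,0)$ and $R_\eta(u)$ to $R_{\eta^\sigma}(u^\sigma)$, and the equivariance $(L^*(\eta,0)/R_\eta(u))^\sigma=L^*(\eta^\sigma,0)/R_{\eta^\sigma}(u^\sigma)$ -- which follows from the analytic class number formulae for the fields $K(\frf)$, or, more concretely for imaginary quadratic $K$, from Kronecker's second limit formula expressing $L^*(\eta,0)$ through logarithms of elliptic units -- forces the ratio into the field generated by the values of $\eta$, which lies in $E$. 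The factor $2$ between the Beilinson and Dirichlet normalizations (see the remark after $(\ref{explicitreg})$) only alters $R_\eta(u)$ by a power of $2\in\bbQ^*$ and is harmless. Combining the two cases and re-assembling along $M(F)=\bigoplus_\eta M(\eta)$ proves \ref{Ratconj}.

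The main obstacle is precisely this last input: isolating a single character $\eta$. The analytic class number formula by itself delivers only the product $\prod_\eta L^*(\eta,0)$ over all characters of a fixed ray class group, and characters of equal conductor cannot be separated by passing to subfields, so the genuine ingredient is the $\Aut(\bbC)$-equivariance of the regulator-normalized leading term. The remaining work -- computing $\vartheta_\infty$ against the canonical generators of \ref{latticedefn} by chasing $\Det$ through $(\ref{dirichlet})$, with all normalization constants (in particular the factor $2$ and the comparison between the motivic and Betti lattices) accounted for -- is routine but must be done carefully so that the statement holds on the nose and not merely up to an unspecified rational factor.
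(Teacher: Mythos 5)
Your argument is essentially correct but takes a genuinely different path from the paper's, and it uses a deeper classical input than the paper does. The paper's proof of Theorem \ref{classnumber} is by citation to \cite{HK1} Proposition 2.3.1, which establishes the conjecture directly for the full motive $M(F)$ using nothing beyond the analytic class number formula; individual characters are never separated. Indeed, the remark following Theorem \ref{classnumber} explicitly flags that the conjecture for an individual $M(\eta)$ is equivalent to Stark's conjecture, and the rest of the paper (see Corollary \ref{classnumbercharacterwise}) is arranged so that only the quotient blocks $M(L)/M(F)$ --- never a single $M(\eta)$ --- ever enter. You instead decompose $M(F)=\bigoplus_\eta M(\eta)$ at the outset and prove the rational conjecture for each $\eta\ne 1$ by invoking the rationality part of Stark's conjecture at $s=0$. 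That step is valid: for abelian characters of an imaginary quadratic field the rank-one case of Stark's conjecture is a theorem, proved by Stark via the Kronecker second limit formula (not ``a theorem of Tate''; Tate formulated the general framework). But be explicit about what is doing the work: as you yourself observe in your closing paragraph, the class number formula alone only gives the product over all $\eta$ of a fixed modulus and cannot separate characters of equal conductor, so the Kronecker limit formula is the essential extra input, and your phrase that the equivariance ``follows from the analytic class number formulae for the fields $K(\frf)$'' overstates what the class number formula delivers. The net comparison: your route proves strictly more (the per-character statement) at the cost of citing a harder classical theorem; the paper's route is more economical, needs only the class number formula, and proves exactly what it later uses.
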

The triangle in (\ref{galoistriangle}) induces an isomorphism 
\begin{equation}\label{varthetapdefn}
\vartheta_p:\Xi(M)\otimes_\bbQ\bbQ_p\stackrel{\simeq}{\rightarrow}
\Det_{E_p}R\Gamma_c(\cO_K[1/p],M_p),
\end{equation}
where one identifies $\Det_{E_p}R\Gamma_f(K_v,M_p)=E_p$.
Let $T_p$ be any $\Gal(\bar{K}/K)$-stable $\cO_p$-lattice inside of $M_p$.  In the application
to $M(F)$ we will use
$$
	T_p=\bigoplus_{\eta\in\wh G}\cO_p(\eta).
$$ 
\begin{conjecture}[Tamagawa Number Conjecture]\label{TNConj}
For all rational primes $p$, there is an equality of $\cO_p$-modules
$$
\cO_p\cdot\vartheta_p\vartheta_\infty(L^*(M,0)^{-1})=\Det_{\cO_p}R\Gamma_c(\cO_K[1/p],T_p)
$$
inside of $\Det_{E_p}R\Gamma_c(\cO_K[1/p],M_p)$, which is independent of the choice of $T_p$.
\end{conjecture}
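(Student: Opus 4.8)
The plan follows the scheme of \cite{HK1}: establish the Rational Conjecture \ref{Ratconj} from the analytic class number formula, and then upgrade it to the integral statement \ref{TNConj} by a prime-by-prime comparison of lattices, exploiting the isogeny invariance of the whole formalism — in particular its compatibility with direct sums and with change of the lattice. Since the realizations of $M(F)$, the triangles \ref{galoistriangle} and \ref{Hfcomputation}, the isomorphisms $\vartheta_\infty$ and $\vartheta_p$, and the $L$-functions all respect the decomposition $M(F)=\bigoplus_{\eta\in\wh{G}}M(\eta)$, and $\Det$ is multiplicative in distinguished triangles and direct sums (\cite{knumum}), it is enough to treat each $M=M(\eta)$ separately.

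First I would dispatch the Rational Conjecture. The exact sequence \ref{dirichlet} together with the explicit formula \ref{explicitreg} identifies $\vartheta_\infty$ with the $\eta$-component of Dirichlet's regulator pairing between $\cO_F^*\otimes_\bbZ E$ and $M(F)_B\isom E[G]$, so the analytic class number formula for $F$ — equivalently the classical leading-term formula at $s=0$ for the finite-order Hecke $L$-function $L(\eta,s)$ — says exactly that $\vartheta_\infty(L^*(M(\eta),0)^{-1})$ lies in the $E$-line $\Xi(M(\eta))\otimes_\bbQ 1$. For $\eta\neq 1$ this is the rank-one statement coming from Dirichlet's unit theorem (the factor of $2$ absorbed by the Beilinson normalization used in \ref{explicitreg}); for $\eta=1$ it is the class number formula for $K$ itself.

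For the integral statement I would work at each of the discrete valuation ring factors of $\cO_p$, so that \ref{TNConj} becomes an equality of $p$-adic lengths inside the invertible $E_p$-module $\Det_{E_p}R\Gamma_c(\cO_K[1/p],M_p)$. Independence of the lattice comes first: for commensurable Galois-stable lattices $T_p\subseteq T_p'$ the cone of $R\Gamma_c(\cO_K[1/p],T_p)\to R\Gamma_c(\cO_K[1/p],T_p')$ is, up to shift, $R\Gamma_c(\cO_K[1/p],T_p'/T_p)$ for a finite module, and by the global Euler-characteristic formula (Artin--Verdier duality, $K$ imaginary quadratic, cf.\ \ref{perfect}) this complex has trivial determinant; hence $\Det_{\cO_p}R\Gamma_c(\cO_K[1/p],T_p)$ does not depend on $T_p$, and the term for the lattice $\bigoplus_\eta\cO_p(\eta)$ of \ref{TNConj} equals the product over $\eta$ of the terms for $\cO_p(\eta)$. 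Using the triangles \ref{galoistriangle} and \ref{Hfcomputation}, Lemma \ref{perfect}, and the behaviour of the local terms $R\Gamma_f(K_v,M_p)$ under \ref{enlargingS}, I would then compute $\Det_{\cO_p}R\Gamma_c(\cO_K[1/p],\cO_p(\eta))$ in terms of the integral structures of $H^0_f$ and of $H^1_f(M(\eta)^\vee(1))$ (the $\eta$-part of the global units, via Soul\'e's regulator $r_p$), the $\eta$-part of the class group of $F$ (entering through the identification $H^2_f(\cO_K[1/p],M_p)\isom H^1_f(\cO_K[1/p],M_p^\vee(1))^\vee$), and the Euler factors at the places above $p$.

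It then remains to match this with $\cO_p\cdot\vartheta_p\vartheta_\infty(L^*(M(\eta),0)^{-1})$. The decisive point is that the analytic class number formula is an \emph{exact} identity among honest numbers — $L^*(\eta,0)$ is the $\eta$-regulator determinant times a rational number whose valuation at each prime of $\cO_p$ is the length of the $\eta$-part of $\mathrm{Cl}(F)$ minus that of the $\eta$-part of $\mu(F)$, corrected by the local Euler factors at $p$ — so, unlike a $p$-adic $L$-value, it can be evaluated $p$-adically with no input from Iwasawa theory; comparing it with the computation of $\Det_{\cO_p}R\Gamma_c$ above gives the claimed equality at each valuation, and the functorialities of $\Div$ in \ref{divfunctoriality} package the comparison cleanly. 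I expect the real obstacle to be making all the comparison isomorphisms — Soul\'e's $r_p$, the crystalline trivializations of $R\Gamma_f(K_v,M_p)$ for $v\mid p$, and the identification $\Det_{E_p}R\Gamma_f(K_v,M_p)=E_p$ — integral \emph{uniformly in $p$}, including the primes $p\mid\#G$ that Rubin had to exclude, where the idempotents $p_{\eta^{-1}}$ and the lattices $\cO_p(\eta)$ are not naively integral and one must show that the denominators so introduced cancel against matching denominators on the $L$-value side; the $\Det$--$\Div$ formalism together with the lattice independence above is precisely what forces this cancellation and lets the statement hold for all $p$.
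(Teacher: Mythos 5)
The paper does not give a self-contained proof of this statement: the proof of theorem~\ref{classnumber} (which asserts \ref{TNConj} for $M(F)$) consists entirely of citations --- to \cite{HK1}~Proposition~2.3.1 for the Rational Conjecture and for the integral statement at $p\neq 2$, to \cite{It}~3.1 for $p=2$, and to \cite{BF1}~Lemma~5 for the independence of the lattice $T_p$. Your sketch is an outline of that HK1 argument, and you say so explicitly, so the broad strategy (Rational Conjecture from the analytic class number formula via the Beilinson regulator; integral upgrade by comparing $\cO_p$-structures inside $\Det_{E_p}R\Gamma_c$ using the triangles \ref{galoistriangle} and \ref{Hfcomputation}, together with lattice independence) is exactly the one the paper relies on, and it is sound.

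A few points to adjust. Your lattice-independence step --- trivial determinant of $R\Gamma_c(\cO_K[1/p],T_p'/T_p)$ for a finite quotient --- is precisely the content of \cite{BF1}~Lemma~5; for $K$ imaginary quadratic it reduces to the Tate global Euler characteristic formula with no archimedean corrections, but you should either cite that lemma or actually carry out the computation rather than wave at ``the global Euler-characteristic formula.'' Your opening reduction ``it is enough to treat each $M=M(\eta)$ separately'' is misleading for the integral statement: when $p\mid\#G$ the TNC for $M(F)$ does \emph{not} decompose $\eta$-by-$\eta$, and the paper is careful to assert only the tensor product over the $\eta$'s (compare corollary~\ref{classnumbercharacterwise}); your closing paragraph shows you understand that the $\eta$-wise ratios need not be trivial individually, but the initial framing should be corrected to say you \emph{compute} factor-by-factor and only \emph{assert} the product. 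The appeal to proposition~\ref{divfunctoriality} and the $\Div$ formalism is misplaced here --- the paper uses $\Det$/$\Div$ in the Iwasawa-theoretic sections 6--7, not in the finite-level proof of the TNC, which is an unadorned comparison of invertible $\cO_p$-submodules. Finally, you do not say anything about $p=2$, which the paper treats via a separate reference to \cite{It}; for $K$ imaginary quadratic the obstruction is mild because the $\widehat{H}^0(\bbR,\cdot)$ terms vanish (cf.\ proposition~\ref{latticecomparison}), but the point must be addressed since \cite{HK1}~Proposition~2.3.1 excludes $p=2$.
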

For the independence of $T_p$, see \cite{BF1} Lemma 5. This conjecture is compatible with enlarging 
$p$ to any finite set of primes $S$ by lemma \ref{enlargingS} and hence coincides with the
usual formulation, where one uses $R\Gamma_c(\cO_K[1/pS],T_p)$.
Both conjectures hold for number fields:
\begin{theorem}\label{classnumber} Let $F$ be a number field, then the conjectures
\ref{Ratconj} and \ref{TNConj} hold for $M(F)$ and all primes $p$.
\end{theorem}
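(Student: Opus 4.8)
The plan is to prove Conjecture~\ref{Ratconj} and Conjecture~\ref{TNConj} for $M(F)$ by reducing everything to the analytic class number formula for $F$, using the isogeny invariance of the Tamagawa number conjecture to pass from the base field $\bbQ$ (or $K$) to the field $F$. First I would treat the \emph{rational conjecture}. The fundamental line $\Xi(M(F))$ involves $\Det_E H^0_f(M(F)) = \Det_E E$, $\Det_E^{-1}H^1_f(M(F)^\vee(1)) = \Det_E^{-1}(\cO_F^*\otimes_\bbZ E)$, and $\Det_E^{-1}(M(F)_B) = \Det_E^{-1}E[G]$. Under $\vartheta_\infty$, built from the Dirichlet exact sequence (\ref{dirichlet}) together with the explicit Beilinson regulator (\ref{explicitreg}), the statement $\vartheta_\infty(L^*(M(F),0)^{-1})\in\Xi(M(F))\otimes_\bbQ 1$ is exactly the assertion that the leading coefficient of $\zeta_F(s)$ at $s=0$ equals, up to a unit in $E^*$, the covolume of the unit lattice computed with the Beilinson normalization. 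This is the class number formula: $\zeta_F^*(0) = -\frac{h_F R_F}{w_F}$ (in the classical normalization), and the factor-of-$2$ discrepancy noted after (\ref{explicitreg}) is precisely accounted for by the Beilinson versus Dirichlet regulator comparison, so rationality holds on the nose.

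Next I would establish the \emph{integral} statement, Conjecture~\ref{TNConj}, for all primes $p$. The cleanest route is the one advertised in the introduction: invoke the known validity of the Bloch--Kato--Fontaine--Perrin-Riou--Burns-Flach conjecture for the Tate motive $\bbQ(0)$ over $\bbQ$ (equivalently $h^0(\Spec\bbQ)$), which is a theorem, and then use the compatibility of the equivariant Tamagawa number conjecture with change of the coefficient ring and with Galois descent along the finite extension $F/\bbQ$. Concretely: $M(F)$ as a motive over $\bbQ$ (restricting scalars from $K$ if one prefers to work over $K$, using that induction is compatible with the $L$-function and with the cohomological formalism) is $h^0(\Spec F)$, and $R\Gamma_c(\cO_K[1/p], T_p)$ with $T_p=\bigoplus_{\eta}\cO_p(\eta)$ is, by Shapiro's lemma, the compactly supported cohomology of the constant sheaf on $\Spec\cO_F[1/p]$. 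The $p$-part of the equivariant TNC for $(h^0(\Spec F), \bbZ[G])$ is known by the work of Burns--Greither and Flach (building on Mazur--Wiles / the main conjecture over $\bbQ$), hence in particular the non-equivariant statement for each $M(\eta)$, and therefore for $M(F)=\bigoplus M(\eta)$, follows; assembling the $\eta$-components via the decomposition $R\Gamma_c(\cO_K[1/p],T_p) = \bigoplus_\eta R\Gamma_c(\cO_K[1/p],\cO_p(\eta))$ and the multiplicativity of $\Det$ gives the claim. The independence of $T_p$ is \cite{BF1} Lemma~5, already cited.

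An alternative, more self-contained argument I would keep in reserve: prove Conjecture~\ref{TNConj} directly by the standard ``from the class number formula to the integral statement'' computation. One uses the exact triangle (\ref{galoistriangle}) to identify $\Det_{\cO_p}R\Gamma_c(\cO_K[1/p],T_p)$ with $\Det_{\cO_p}R\Gamma_f(\cO_K[1/p],T_p)$ up to the local factors $\Det_{\cO_p}R\Gamma_f(K_v,T_p)$ for $v\mid p$ and the Betti lattice contribution; then (\ref{Hfcomputation}) together with Artin--Verdier duality computes $R\Gamma_f$ in terms of $H^0_f = \cO_p$ and $H^1_f(\cO_K[1/p],M_p^\vee(1))^\vee$, i.e.\ in terms of the global units and the $p$-part of the class group of $F$. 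Matching this against $\vartheta_p\vartheta_\infty(L^*(M(F),0)^{-1})$ reduces, prime by prime, to the $p$-adic valuation of $h_F R_F/w_F$ and the semisimplicity of the relevant local Euler factors; the sheaf-theoretic conventions of Definition~\ref{jconvention} and Lemma~\ref{enlargingS} guarantee the local terms at $v\nmid p$ contribute only units. The main obstacle in either approach is bookkeeping at the prime $p=2$ and at primes dividing $[F:\bbQ]$ or the torsion $w_F$: one must check that no spurious powers of $p$ enter, and here the Beilinson (rather than Dirichlet) normalization of $r_\infty$, flagged in the text precisely for this reason, is what makes the $2$-part come out correctly; the rest is the formal multiplicativity of $\Det$ and $\Div$ recalled in \S\ref{determinants}.
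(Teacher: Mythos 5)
Your argument for the rational conjecture~\ref{Ratconj} is fine and is essentially the content of the cited reference: one unwinds $\vartheta_\infty$ via the Dirichlet sequence~(\ref{dirichlet}) and the explicit Beilinson regulator~(\ref{explicitreg}) and sees the statement is the class number formula $\zeta_F^*(0)=-h_F R_F/w_F$ (with the factor-of-$2$ absorbed by the Beilinson vs.\ Dirichlet normalization).

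For the integral statement, however, your ``Route A'' contains a genuine gap. You invoke the equivariant TNC for $(h^0(\Spec F),\bbZ[G])$ as ``known by the work of Burns--Greither and Flach (building on Mazur--Wiles).'' That theorem is proved for \emph{abelian extensions $F/\bbQ$}. In this paper $F$ is an abelian extension of the imaginary quadratic field $K$; such $F$ are in general \emph{not} abelian over $\bbQ$, so the Burns--Greither--Flach result simply does not apply to the fields under consideration. Moreover, even where it would apply, the equivariant TNC over $\bbZ[G]$ is a vastly stronger statement (resting on the full main conjecture over $\bbQ$) than the non-equivariant Conjecture~\ref{TNConj}, which is an equality of $\cO_p$-modules and is equivalent to the class number formula for $F$ and nothing more; importing the equivariant statement here is both logically overkill and, in the present setting, circular in spirit since the whole paper is aimed at establishing a main conjecture of this type. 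Finally, ``Galois descent along $F/\bbQ$'' starting from the TNC for $h^0(\Spec\bbQ)$ does not produce the TNC for $h^0(\Spec F)$: the class number formula for $\bbQ$ carries no information about $h_F$, $R_F$, $w_F$.

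Your ``reserve'' Route B is the correct argument and matches what the paper actually does by reference: Huber--Kings, Prop.~2.3.1, carries out exactly the computation you sketch --- identify $\Det_{\cO_p}R\Gamma_c(\cO_K[1/p],T_p)$ via the triangle~(\ref{galoistriangle}), use~(\ref{Hfcomputation}) and Artin--Verdier duality to express everything through $\cO_F^*\otimes\bbZ_p$ and the $p$-part of $\Pic(\cO_F)$, then match $p$-adic valuations against $h_F R_F/w_F$ --- for $p\neq 2$; Itakura handles $p=2$, where the Beilinson normalization of $r_\infty$ and the vanishing of $\wh{H}^0(\bbR,\,\cdot\,)$ at the complex place are what make the $2$-part come out right. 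Your remarks on Definition~\ref{jconvention} and Lemma~\ref{enlargingS} ensuring the places $v\nmid p$ contribute only units are also the right bookkeeping. So: discard Route A and promote Route B; what it lacks is a detailed verification at $p=2$, for which the paper defers to Itakura.
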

\begin{proof}  This is actually a consequence of the analytic class number formula.  For the proof 
of \ref{Ratconj} we refer to \cite{HK1} Proposition 2.3.1.  There are some differences in notation, in particular $V$ is used for the motive called $M$ in this text, and the fundamental line is denoted by $\Delta_f(V)$. The conjecture \ref{TNConj} is proved in \cite{HK1} Proposition 2.3.1 if $p\neq2$.  A proof of the case $p=2$ is given in \cite{It} 3.1.
\end{proof}
\begin{remark}
Note that for the motives $M(\eta)$ the conjecture \ref{Ratconj} is equivalent to
Stark's conjecture.
\end{remark}
 \subsection{A reformulation of the Tamagawa number conjecture}

In our proof of the equivariant main conjecture, we will not use the Tamagawa number conjecture
for the motives $M(F)$ but for certain quotients. 

Consider an abelian Galois extension $L/K$, with $K\subset F\subset L\subset \bar{K}$
and write $G_L:=\Gal(L/K)$ and $G_F:=\Gal(F/K)$. We consider $\wh{G}_F$ as a subset of
$\wh{G}_L$.
Then we have a decomposition
$$
	M(L)/M(F)\simeq 
	\bigoplus_{\eta\in \wh{G}_L \smallsetminus \wh{G}_F}M(\eta).
$$
Here we assume that $E$ contains all values of $\eta \in \wh{G}_L \smallsetminus \wh{G}_F$.
As the Tamagawa number conjecture holds for $M(L)$ and $M(F)$ 
it also holds for the quotient motive $M(L)/M(F)$ and we get from theorem \ref{classnumber}:

\begin{corollary}\label{classnumbercharacterwise}
	For all rational primes $p$, there is an equality of $\cO_p$-modules
inside of $\bigotimes_{\eta\in \wh{G}_L \smallsetminus \wh{G}_F}\Det_{E_p}R\Gamma_c(\cO_K[1/p],M(\eta)_p)$:
$$
\bigotimes_{\eta\in \wh{G}_L \smallsetminus \wh{G}_F}\cO_p\cdot\vartheta_p\vartheta_\infty(L^*(\eta,0)^{-1})=
\bigotimes_{\eta\in \wh{G}_L \smallsetminus \wh{G}_F}\Det_{\cO_p}R\Gamma_c(\cO_K[1/p],\cO_p(\eta)).
$$
\end{corollary}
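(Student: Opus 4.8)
The plan is to deduce the statement from Theorem \ref{classnumber} by passing to the quotient motive $N:=M(L)/M(F)$ and then decomposing all the data attached to $N$ into its $\eta$-components.

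The first point is that Conjectures \ref{Ratconj} and \ref{TNConj} are compatible with short exact sequences of motives. Concretely, for a short exact sequence $0\to M'\to M''\to M'''\to 0$ of motives with $E$-coefficients one has $\Xi(M'')\simeq\Xi(M')\otimes_E\Xi(M''')$, the trivializations $\vartheta_\infty$ and $\vartheta_p$ are compatible with this tensor product (they are built from the long exact cohomology sequences, which split compatibly with the decomposition), the leading terms multiply, $L^*(M'',0)=L^*(M',0)\cdot L^*(M''',0)$, and for compatible Galois stable lattices $R\Gamma_c(\cO_K[1/p],T''_p)\simeq R\Gamma_c(\cO_K[1/p],T'_p)\oplus R\Gamma_c(\cO_K[1/p],T'''_p)$, so that $\Det_{\cO_p}$ is multiplicative. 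This is part of the Burns-Flach formalism. I would apply it to $0\to M(F)\to M(L)\to N\to 0$, which is even (non-canonically) split because $E[\Gal(L/K)]$ is semisimple and, by (\ref{motivdec}), $M(L)=\bigoplus_{\eta\in\wh{G}_L}M(\eta)$ with $M(F)=\bigoplus_{\eta\in\wh{G}_F}M(\eta)$. Since \ref{Ratconj} and \ref{TNConj} hold for $M(F)$ and $M(L)$ by Theorem \ref{classnumber}, they then hold for $N$.

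Next I would unwind $N=\bigoplus_{\eta\in\wh{G}_L\smallsetminus\wh{G}_F}M(\eta)$. Every such $\eta$ is nontrivial, since the trivial character always factors through $G_F$; hence each $L(\eta,s)$ has a simple zero at $s=0$ and $L^*(N,0)=\prod_{\eta}L^*(\eta,0)$. Applying the direct sum compatibility once more to this finite sum of motives, and using that $R\Gamma_c(\cO_K[1/p],-)$ commutes with finite direct sums of coefficient modules while $\Det$ of a direct sum of perfect complexes is the tensor product of the individual $\Det$'s, the assertion of Conjecture \ref{TNConj} for $N$ with its canonical lattice becomes precisely the claimed equality
$$
\bigotimes_{\eta}\cO_p\cdot\vartheta_p\vartheta_\infty(L^*(\eta,0)^{-1})=\bigotimes_{\eta}\Det_{\cO_p}R\Gamma_c(\cO_K[1/p],\cO_p(\eta))
$$
inside $\bigotimes_\eta\Det_{E_p}R\Gamma_c(\cO_K[1/p],M(\eta)_p)$. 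Here I invoke the independence of the choice of lattice in \ref{TNConj} (\cite{BF1} Lemma 5) to replace the quotient lattice $\cO_p[G_L]/(\cO_p[G_L]\cap M(F)_p)$ by the convenient direct sum $\bigoplus_\eta\cO_p(\eta)$, with $\cO_p(\eta)=p_{\eta^{-1}}\cO_p[G]$ the canonical lattice of \ref{latticedefn}.

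The main obstacle lies entirely in the first step: spelling out the compatibility of the entire Tamagawa number formalism (fundamental line, archimedean trivialization $\vartheta_\infty$ from (\ref{dirichlet}), $p$-adic trivialization $\vartheta_p$ from (\ref{galoistriangle}) and (\ref{varthetapdefn}), the duality (\ref{Hfcomputation}), and the integral structure $\Det_{\cO_p}R\Gamma_c$) with short exact sequences of motives, and in particular checking that the identifications $\Det_{E_p}R\Gamma_f(K_v,M_p)=E_p$ respect the decomposition. Once this bookkeeping is carried out the corollary is formal.
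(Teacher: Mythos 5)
Your proposal is correct and takes essentially the same route as the paper: the paper likewise deduces the Tamagawa number conjecture for the quotient motive $M(L)/M(F)$ from Theorem \ref{classnumber} applied to $M(L)$ and $M(F)$, then decomposes that quotient as $\bigoplus_{\eta\in\wh{G}_L\smallsetminus\wh{G}_F}M(\eta)$ and passes to determinants. You spell out more carefully than the paper does the compatibility of the formalism with short exact sequences and the lattice-independence step via \cite{BF1} Lemma 5, but the underlying argument is the same.
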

We now give a reformulation of this corollary 
without using cohomology with compact support. This is necessary as the classical formulation
of the Iwasawa main conjecture also does not mention cohomology with compact support.
We first need to identify the $\cO_p$-modules given by $\Det_{\cO_p} R\Gamma_c(\cO_K[1/p],\cO_p(\eta))$.

Let $\eta\in \wh{G}_L \smallsetminus \wh{G}_F$ and $\cO_p(\eta)$ be our standard $\cO_p$-lattice inside of $M_p(\eta)$ defined in (\ref{latticedefn}). Recall that $\cO_p(\eta)^\vee $ is the $\cO_p$-dual 
of $\cO_p(\eta)$. 

\begin{proposition}[\cite{HK1} 1.2.10, \cite{It} 1.15]\label{latticecomparison} 
Consider the Artin-Verdier duality isomorphism 
$$
	\Det_{E_p} R\Gamma_c(\cO_K[1/p],M(\eta)_p)\otimes\Det_{E_p} M(\eta)_p\isom
	\Det_{E_p} R\Gamma(\cO_K[1/p],M(\eta)^{\vee}_p(1))
$$ 
and the lattice $\cO_p(\eta^{-1})^\vee\subset M(\eta)_p$.
Then, for all $p$ the $\cO_p$-structures given by
$$
	\Det_{\cO_p} R\Gamma_c(\cO_K[1/p],\cO_p(\eta^{-1})^\vee)\otimes\Det_{\cO_p} \cO_p(\eta^{-1})^\vee
$$
 on the left hand side and  by
$$
	\Det_{\cO_p} R\Gamma(\cO_K[1/p],\cO_p(\eta^{-1})(1))
$$
on the right hand side
agree under this duality isomorphism.
\end{proposition}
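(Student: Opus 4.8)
The plan is to establish the identification of $\cO_p$-structures by a duality/base-change argument that reduces everything to the Artin–Verdier duality statement already recorded in \ref{cohomology}, being careful to track lattices rather than just rational structures. First I would unwind the definitions: by \ref{jconvention} all the complexes are taken with the $j_*$-extended sheaves on $\Spec\cO_K[1/p]$, and by \ref{perfect} the complexes $R\Gamma_c(\cO_K[1/p],\cO_p(\eta^{-1})^\vee)$, $R\Gamma(\cO_K[1/p],\cO_p(\eta^{-1})(1))$ and the $v\mid p$-local complexes are perfect complexes of $\cO_p$-modules. Since $\cO_p$ is a finite product of discrete valuation rings, $\Det_{\cO_p}$ of each is an invertible $\cO_p$-module sitting inside its $E_p$-counterpart, so the claim is really the equality of two invertible $\cO_p$-submodules of the same one-dimensional $E_p$-vector space, and it suffices to check it after localizing at each height-one prime of $\cO_p$, i.e. we may assume $\cO_p$ is a DVR.

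Next I would set up the duality on the level of integral complexes. The key input is the perfect Artin–Verdier (Poitou–Tate) duality pairing
$$
	R\Gamma_c(\cO_K[1/p],\cO_p(\eta^{-1})^\vee)\otimes^{\mathbb L} R\Gamma(\cO_K[1/p],\cO_p(\eta^{-1})(1))\to \cO_p[-3],
$$
which holds integrally because $\cO_p(\eta^{-1})^\vee(1)$ is $\cO_p$-linear dual to $\cO_p(\eta^{-1})(1)$ twisted appropriately and $K$ has no real places (so there is no contribution from archimedean places and no $2$-torsion obstruction to the integral duality). Applying $\Det_{\cO_p}$ to this pairing, together with the general multiplicativity of $\Det$ on the defining triangle
$$
	R\Gamma_c(\cO_K[1/p],\cO_p(\eta^{-1})^\vee)\to R\Gamma(\cO_K[1/p],\cO_p(\eta^{-1})^\vee)\to \bigoplus_{v\mid p}R\Gamma(K_v,\cO_p(\eta^{-1})^\vee)\oplus \cO_p(\eta^{-1})^\vee
$$
from \ref{cohomology}, yields a canonical isomorphism of invertible $\cO_p$-modules
$$
	\Det_{\cO_p}R\Gamma_c(\cO_K[1/p],\cO_p(\eta^{-1})^\vee)\otimes_{\cO_p}\Det_{\cO_p}\cO_p(\eta^{-1})^\vee\isom \Det_{\cO_p}R\Gamma(\cO_K[1/p],\cO_p(\eta^{-1})(1)).
$$
Finally one checks that, after inverting $p$, this integral isomorphism is exactly the rational Artin–Verdier isomorphism in the statement of the proposition (both are induced by the same cup-product pairing and trace map), so the integral $\cO_p$-lattice on the left is carried precisely onto the one on the right. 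This is exactly the assertion. The references \cite{HK1} 1.2.10 and \cite{It} 1.15 carry this out, so I would follow their bookkeeping; in particular the compatibility of the two ``canonical generator'' conventions $t_p(\eta)$ and $t_p(\eta)^\vee$ from \ref{latticedefn} must be matched with the duality pairing.

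The main obstacle I anticipate is the integrality of the duality at $p=2$ and the precise matching of the dual lattice $\cO_p(\eta^{-1})^\vee$ with $\cO_p(\eta)$: one must verify that applying $p_{\eta^{-1}}$ to $\cO_p[G]$ and then taking the $\cO_p$-linear dual really reproduces the standard lattice $\cO_p(\eta)$ (up to the canonical identification $M(\eta)^\vee\isom M(\eta^{-1})$), and that no spurious denominators are introduced when $p$ divides $\#G$. Because $\cO_p$ is a product of DVRs this is a finite computation with idempotents, but it is the step where the ``improvement to all $p$'' of this paper really enters, so I would treat it carefully rather than citing it; everything else is formal manipulation of $\Det$ on distinguished triangles together with the duality already available from \ref{cohomology}.
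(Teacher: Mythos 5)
Your strategy---prove an integral Artin--Verdier duality directly and check it refines the rational isomorphism---is sound, and it is genuinely different from the paper's approach, which establishes the proposition by pure citation: \cite{HK1} Proposition 1.2.10 for $p\neq2$ and \cite{It} Proposition 1.15 for $p=2$, together with a small verification of hypotheses. You also correctly identify the strategic key to uniformity in $p$: since $K$ is imaginary quadratic there is no real place, hence no Tate cohomology $\wh{H}^*(\bbR,-)$ to introduce $2$-torsion obstructions. The paper reaches the same conclusion from the other side: because \cite{It} is formulated over $\bbQ$ (where the real place is present), it verifies $\wh{H}^0(\bbR,\cO_2(\eta^{-1})^\vee)=0$, which is exactly the ``no real contribution'' assertion after descent. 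Your direct-over-$K$ route makes the role of the imaginary quadratic hypothesis transparent; the citation route outsources the delicate bookkeeping that has already been done carefully elsewhere.

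Two imprecisions should be repaired before the argument stands on its own. First, the pairing you write,
$R\Gamma_c(\cO_K[1/p],\cO_p(\eta^{-1})^\vee)\otimes^{\bbL}R\Gamma(\cO_K[1/p],\cO_p(\eta^{-1})(1))\to\cO_p[-3]$,
is not perfect with the paper's $R\Gamma_c$, because that complex includes the Betti summand $T_p$ in its defining cone (definition in \ref{cohomology}), whereas the Artin--Verdier compactly supported complex uses modified (Tate) cohomology at the archimedean places, which vanishes at a complex place. The two therefore differ exactly by the summand $T_p$, and that difference is precisely where the extra factor $\Det_{\cO_p}\cO_p(\eta^{-1})^\vee$ in the proposition comes from. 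Your appeal to ``multiplicativity of $\Det$ on the defining triangle'' is the right move, but you should state it as: the perfect pairing lives on the Artin--Verdier $R\Gamma_c$, and the triangle comparing the paper's $R\Gamma_c$ with the Artin--Verdier one (with cofiber $T_p$) produces the $\Det_{\cO_p}T_p$ twist upon taking determinants. Second, the worry about reconciling $\cO_p(\eta^{-1})^\vee$ with $\cO_p(\eta)$ and about denominators when $p\mid\#G$ is a red herring for this particular proposition: the duality pairs $T_p=\cO_p(\eta^{-1})^\vee$ with its honest $\cO_p$-linear dual (twisted), namely $T_p^*(1)=\cO_p(\eta^{-1})(1)$, and no identification with $\cO_p(\eta)$ is needed. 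The subtlety you should instead flag is the $j_*$-convention of \ref{jconvention}: the sheaf $\cO_p(\eta^{-1})$ on $\Spec\cO_K[1/p]$ is a direct image extension and not lisse when $\eta$ is ramified away from $p$, and verifying the integral Artin--Verdier duality for such $j_*$-extensions (rather than lisse sheaves) is exactly the piece of bookkeeping that \cite{HK1} 1.2.10 and \cite{It} 1.15 supply and that your sketch treats as automatic.
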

\begin{proof}
The statement for $p\neq 2$ is \cite{HK1} 1.2.10 applied to $T_p=\cO_p(\eta^{-1})^\vee$. 
The statement for $p=2$ follows from \cite{It} 1.15 using that 
$R\Gamma(\cO_K[1/p],\cO_p(\eta^{-1})^\vee(1))$ is concentrated in degrees $\le 2$ and
that $\wh{H}^0(\bbR,\cO_2(\eta^{-1})^\vee)=0$, which gives $\Det_{\cO_2}\wh{H}^0(\bbR,\cO_2(\eta^{-1})^\vee)=\cO_2$.
\end{proof}

\begin{definition}\label{zetaelementdefn} Let $\eta$ be non-trivial, so that
$H^0_f(M(\eta))=0$, and consider the lattice $\cO(\eta^{-1})^\vee\subset M(\eta)_B$ with
generator  $t_B(\eta^{-1})^\vee=t_B(\eta)$ from \ref{latticedefn}. 
Then there is a unique $ z(\eta)\in H^1_f(M(\eta)^{\vee}(1))\otimes_{\bbQ}\bbR$, the \emph{zeta element} of $M(\eta)$,
such that
$$
	\vartheta_\infty(L^*(\eta,0)^{-1})=z(\eta)^{-1}\otimes (t_B(\eta^{-1})^\vee)^{-1}
$$
in $(\Det_{E_\infty}^{-1}H^1_f(M(\eta)^{\vee}(1))\otimes_{\bbQ}\bbR)\otimes_\bbR (\Det_{E_\infty}^{-1}M(\eta)_B\otimes_{\bbQ}\bbR)$. Note that
$z(\eta)$ depends on the choice of $t_B(\eta^{-1})^\vee$.
We also let 
$$
	z_{p}(\eta):=\prod_{\frp\mid p}(1-\eta(\frp))z(\eta)
$$
(product over places $\frp\mid p$ of $K$)
be the zeta element with the Euler factors above $p$ at $s=0$ removed (here we use the
convention that $\eta(\frp)=0$, if $\eta$ is ramified at $\frp$). 
\end{definition}
Consider the regulator map for $\eta\in \wh{G}_L \smallsetminus \wh{G}_F$
$$
	r_p: H^1_f(M(\eta)^\vee(1))\otimes_E E_p\to H^1_f(\cO_K[1/p],M(\eta)_p^{\vee}(1)).
$$
From theorem \ref{classnumber}
we see that the sum of the zeta elements 
$$
	\bigoplus_{\eta\in\wh{G}_L \smallsetminus \wh{G}_F}z(\eta)\in H^1_f(\bigoplus_{\eta\in\wh{G}_L \smallsetminus \wh{G}_F} M(\eta)^{\vee}(1))\otimes_{\bbQ}\bbR
$$
is in fact contained in $H^1_f(\bigoplus_{\eta\in\wh{G}_L \smallsetminus \wh{G}_F} M(\eta)^{\vee}(1))$ 
(note that we do not know this for the individual $z(\eta)$).
In particular, we can consider the element
$$
	r_p(\bigoplus_{\eta\in\wh{G}_L \smallsetminus \wh{G}_F }z(\eta))\in H^1_f(\cO_K[1/p],\bigoplus_{\eta\in\wh{G}_L \smallsetminus \wh{G}_F } M(\eta)_p^{\vee}(1)).
$$
We want to understand, the $\cO_p$-submodule in $H^1(\cO_K[1/p], M(\eta)_p^{\vee}(1))$ (without
the $_f$)
generated by these elements.
Taking determinants in the above identity,
the sums become tensor products and we get the following reformulation of 
corollary \ref{classnumbercharacterwise}:
\begin{corollary}\label{classnumbercor}
The $\cO_p$-module
$$
	\bigotimes_\eta r_p(z_{p}(\eta)\cO_p)\in \bigotimes_\eta \Det_{E_p}R\Gamma(\cO_K[1/p],M(\eta)_p^{\vee}(1))[1],
$$
where the tensor product is taken over all $\eta\in \wh{G}_L \smallsetminus \wh{G}_F$, coincides with the
$\cO_p$-module
$$
	\bigotimes_\eta \Det_{\cO_p}R\Gamma(\cO_K[1/p],\cO_p(\eta^{-1})(1))[1].
$$
\end{corollary}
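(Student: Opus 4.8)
The plan is to pass from Corollary~\ref{classnumbercharacterwise}, which is phrased in terms of cohomology with compact support, to the stated reformulation in terms of ordinary Galois cohomology of the Tate-twisted lattices, by a chain of canonical identifications applied termwise in the (inverse) character $\eta^{-1}$ running over $\wh{G}_L \smallsetminus \wh{G}_F$. Since all the isomorphisms involved are compatible with forming $\Det$ and with $\bigotimes$, it suffices to track a single $\eta$ and to check that the $\cO_p$-lattices match up; the statement for the tensor product then follows by taking the tensor product over all $\eta$.

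First I would unwind the left-hand side. By Definition~\ref{zetaelementdefn}, $\vartheta_\infty(L^*(\eta,0)^{-1}) = z(\eta)^{-1}\otimes (t_B(\eta^{-1})^\vee)^{-1}$, so applying $\vartheta_p$ and using the description \eqref{Hfcomputation}–\eqref{galoistriangle} of $R\Gamma_c$ together with $H^0_f(M(\eta))=0$ (as $\eta\neq 1$), the element $\vartheta_p\vartheta_\infty(L^*(\eta,0)^{-1})$ is identified, after removing the local determinants $\Det_{E_p}R\Gamma_f(K_v,M(\eta)_p)=E_p$ (which is exactly where the Euler factors $\prod_{\frp\mid p}(1-\eta(\frp))$ enter, turning $z(\eta)$ into $z_p(\eta)$), with $r_p(z_p(\eta))^{-1}\otimes (t_B(\eta^{-1})^\vee)^{-1}$ inside $\Det^{-1}_{E_p}R\Gamma(\cO_K[1/p],M(\eta)_p^\vee(1))\otimes \Det^{-1}_{E_p}M(\eta)_p$. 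Dualizing — i.e.\ tensoring both sides of the equality in Corollary~\ref{classnumbercharacterwise} with $\bigotimes_\eta \Det_{\cO_p}\cO_p(\eta^{-1})^\vee$ and using that $t_B(\eta^{-1})^\vee$ generates $\cO_p(\eta^{-1})^\vee$ — converts $\Det^{-1}$ into $\Det$ and shifts the complex by $[1]$; this is the passage from $\Det_{\cO_p}R\Gamma_c$ to the combination $\Det_{\cO_p}R\Gamma_c(\cO_K[1/p],\cO_p(\eta^{-1})^\vee)\otimes\Det_{\cO_p}\cO_p(\eta^{-1})^\vee$ on the left of Proposition~\ref{latticecomparison}.

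Next I would invoke the Artin--Verdier duality isomorphism together with Proposition~\ref{latticecomparison}, which is precisely the statement that under this duality the $\cO_p$-structure $\Det_{\cO_p}R\Gamma_c(\cO_K[1/p],\cO_p(\eta^{-1})^\vee)\otimes\Det_{\cO_p}\cO_p(\eta^{-1})^\vee$ corresponds to $\Det_{\cO_p}R\Gamma(\cO_K[1/p],\cO_p(\eta^{-1})(1))$. Applying this termwise identifies the right-hand side of Corollary~\ref{classnumbercharacterwise} with $\bigotimes_\eta \Det_{\cO_p}R\Gamma(\cO_K[1/p],\cO_p(\eta^{-1})(1))[1]$, and identifies the left-hand side (the $\cO_p$-line generated by the leading term) with $\bigotimes_\eta r_p(z_p(\eta)\cO_p)$ sitting inside $\bigotimes_\eta\Det_{E_p}R\Gamma(\cO_K[1/p],M(\eta)_p^\vee(1))[1]$. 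Equating the two gives the corollary.

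The main obstacle is bookkeeping rather than a deep new input: one must be careful that the three ingredients — the concrete form of $z_p(\eta)$ from Definition~\ref{zetaelementdefn}, the identification $\Det_{E_p}R\Gamma_f(K_v,M(\eta)_p)=E_p$ used in the definition of $\vartheta_p$, and the lattice bookkeeping in Proposition~\ref{latticecomparison} — are normalized consistently, in particular that the Euler factors $\prod_{\frp\mid p}(1-\eta(\frp))$ are accounted for exactly once (they come from trivializing the local terms $R\Gamma_f(K_v,M(\eta)_p)$ for $\frp\mid p$, and this is built into the passage from $z(\eta)$ to $z_p(\eta)$), and that the degree shift $[1]$ and the switch between $\eta$ and $\eta^{-1}$ (forced by $M(\eta)^\vee=M(\eta^{-1})$ and by the duality between $\cO_p(\eta)$ and $\cO_p(\eta^{-1})^\vee$) are handled symmetrically on both sides. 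Once these compatibilities are in place, the identity is a formal consequence of Corollary~\ref{classnumbercharacterwise} and Proposition~\ref{latticecomparison}.
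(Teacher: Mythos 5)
Your proposal is correct and follows essentially the same route as the paper: start from Corollary~\ref{classnumbercharacterwise}, tensor with $\bigotimes_\eta\Det_{\cO_p}\cO_p(\eta^{-1})^\vee$ so that Proposition~\ref{latticecomparison} (Artin--Verdier duality of lattices) applies, and absorb the local determinants $\Det_{\cO_p}R\Gamma_f(K_\frp,\cO_p(\eta^{-1})^\vee)=\prod_{\frp\mid p}(1-\eta(\frp))$ into the passage from $z(\eta)$ to $z_p(\eta)$. The only thing worth tightening is that this last identity should be stated explicitly as an $\cO_p$-lattice equality inside $E_p$ (it is the key input from \cite{HK1}~1.2.5), rather than left as the remark that ``this is exactly where the Euler factors enter.''
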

\begin{proof}
	By  proposition \ref{latticecomparison} the statement
	in corollary \ref{classnumbercharacterwise} is equivalent to the statement that 
	under the isomorphism $\vartheta_p$ the $\cO_p$-module 
	$$
		\bigotimes_{\eta\in\wh{G}_L \smallsetminus \wh{G}_F }
		\Det_{\cO_p}^{-1}(\cO_pr_p(z(\eta))\otimes \cO_p(\eta^{-1})^\vee)\otimes
		\bigotimes_{\frp\mid p}\Det_{\cO_p}^{-1}R\Gamma_f(K_\frp,\cO_p(\eta^{-1})^\vee)
	$$
	coincides with
	$$
		\bigotimes_{\eta\in\wh{G}_L \smallsetminus \wh{G}_F }\Det_{\cO_p}^{-1} R\Gamma(\cO_K[1/p],\cO_p(\eta^{-1})(1))\otimes \Det_{\cO_p}^{-1} \cO_p(\eta^{-1})^\vee
	$$
	The claim follows from the fact that 
	$$
		\Det_{\cO_p}\bigoplus_{\frp\mid p}R\Gamma_f(K_\frp,\cO_p(\eta^{-1})^\vee)=\prod_{\frp\mid p}(1-\eta(\frp))
	$$
	(see \cite{HK1} 1.2.5).
\end{proof}

\section{Review of the Euler System of elliptic units}\label{euler-system}
In the proof of the Iwasawa main conjecture, the machinery of Euler systems is an essential tool.  In this section, we construct an Euler system  by twisting the elliptic units by a finite order character. 
The general theory of Euler systems, invented by Kolyvagin, was further developed
by Kato, Perrin-Riou and Rubin (alphabetical order). We follow
Rubin as his approach is closest to our setting.

 \subsection{Euler systems}\label{es-definition}
Rubin gives a general definition for an Euler system in \cite{Rubin2}.  We recall this definition using much of his notation. Fix a prime $p$ and let $T_p$ be a $p$-adic representation of the absolute Galois group of $K$ with coefficients in  $\cO_p$, and let $\cN$ denote an ideal of $\cO_K$
divisible by $p$ and the primes at which $T_p$ is ramified. 
Denote by  $\cK:=\bigcup_{(\frq, \cN_0)=1}K(\frq)$ the union of the ray class fields of conductor
prime to the prime to $p$-part $\cN_0$ of $\cN$. We denote by $K_\infty$ the maximal abelian $\bbZ_p$-extension of $K$ unramified outside of $p$. 
Note that no finite prime of $\cO_K$ splits completely in $K_\infty$ and $\Gal(K_\infty/K)\simeq\bbZ_p^2$.

\begin{definition}[\cite{Rubin2} Definition 2.1.1 and 2.1.3]\label{rubin-defn}
A collection of Galois cohomology classes $c_\frm\in H^1(K(\frm)\cap\cK,T_p)$ for all ideals $\frm$ of $\cO_K$
is called an {\em Euler system} for $(\cK,T_p,\cN)$ if for every prime ideal $\frq$
$$
\Cor_{K(\frm\frq)\cap\cK/K(\frm)\cap\cK}(c_{\frm\frq})=\begin{cases}
P(\Frob_\frq^{-1}|T_p^\vee(1);\Frob_\frq^{-1})c_\frm &\frq\nmid\frm\cN\\
c_\frm&\frq\mid\frm\cN.
                                         \end{cases}
$$
Here the Euler factors are given by the characteristic polynomial
$$
P(\Frob_\frq^{-1}|T_p^\vee(1);x)=\det(1-\Frob_\frq^{-1}x|\Hom_{\cO_p}(T_p,\cO_p(1)))\in\cO_p[x].
$$
\end{definition}

\subsection{Elliptic Units}\label{elliptic-units}
We recall the definition of the Euler system of elliptic units, following the treatment of 
Kato \cite{Kato} section 15.

First we recall Kato's definition of a CM-pair $(E, \alpha)$ of modulus $\frm$. Fix a non-zero
ideal $\frm$ of $\cO_K$,  such that $\cO_K^*\to (\cO_K/\frm)^*$ is injective.
Then a CM-pair $(E,\alpha)$ consists of an elliptic curve $E/K'$, where $K'/K$ is
a field extension together with an isomorphism $\cO_K\simeq\End(E)$, such that
the composition $\cO_K\simeq\End(E)\to \End_{K'}(\Lie(E))=K'$ is the canonical inclusion,
and $\alpha\in E(K')$ is a torsion point, such that the annihilator of $\alpha$ in $\cO_K$
coincides with $\frm$. Any isomorphism between two CM-pairs of modulus $\frm$ over $K'$ is
unique because $\cO_K^*\to (\cO_K/\frm)^*$ is injective.

The main theorem of complex multiplication implies that there exists a CM-pair 
(unique up to unique isomorphism) of modulus $\frm$ over the ray class field $K(\frm)$
which is isomorphic to $(\bbC/\frm,1\mbox{ mod }\frm)$ over $\bbC$.

Kato constructs in \cite{Kato} 15.4 for each $\fra\subset \cO_K$ which is prime to $6$ a
function $_\fra\theta_E\in \cO(E \smallsetminus E[\fra])^*$,
which is characterized uniquely by the following two properties (denote by $E[\fra]$ the subgroup
of elements annihilated by $\fra$):
\begin{itemize}
\item The divisor of $_\fra\theta_E$ is $\N(\fra)(0)-\sum_{P\in E[\fra]}(P)$
\item For each integer $b$, which is prime to $\fra$, 
one has $[b]_*{_\fra}\theta_E={_\fra}\theta_E$, where $[b]_*$ is the norm with respect to
the isogeny $[b]$.
\end{itemize}
We can now define elliptic units following Kato:
\begin{definition}\label{zetadefinition}
	Fix a prime $p$ and choose an integer $r\ge 1$, such
	that $\cO_K^*\hookrightarrow (\cO_K/ p^r)^*$ is injective. Let $\fra$ be prime to
	$6p$. For any non zero ideal $\frm$
	of $\cO_K$ prime to $\fra$ we define
	$$
		\zeta_\frm:={_\fra\zeta_\frm}:=\N_{K(p^r\frm)/K(\frm)} {_\fra \theta_E(\alpha)^{-1}}\in {K(\frm)}^*
	$$
	and if $K\subset L\subset K(\frm)$ has conductor $\frm$
	$$
		\zeta_L:=\N_{K(\frm)/L}\zeta_\frm.
	$$
	Here $(E,\alpha)$ is ``the" CM-pair of modulus $p^r\frm$ defined over $K(p^r\frm)$. Note that this is independent of
	the chosen $r\ge 1$. We omit the auxiliary ideal $\fra$ from the notation, whenever no confusion is possible.
\end{definition}
These elements have the following properties.
\begin{proposition}\label{unitproperties} Let $p^r$ and $\fra$ be as in definition
\ref{zetadefinition}, then:
\begin{enumerate}
\item (Integrality) $\zeta_{\frm}\in \cO_{K(\frm)}^*$ if $p^r\frm$ is divisible by two different primes and
$\zeta_{\frp^n}\in \cO_{K(\frp^n)}[1/\frp]^*$ if $p^r\frm$ is a power of $\frp$.
\item (Euler system property) For a prime ideal $\frq\subset \cO_K$  such that $\frm\frq$ is 
prime to $\fra$ one has
$$
\N_{K(\frq\frm)/K(\frm)}(\zeta_{\frq\frm})=
\begin{cases} 
\zeta_{\frm}^{1-\Frob_\frq^{-1}} & \frq\nmid p\frm\\
\zeta_{\frm}  & \frq\mid p\frm \\
\end{cases}
$$
\item (Independence from $\fra$) If $\fra,\frb\subset \cO_K$ are prime to $6p$ and
$\frm$ is prime to $\fra\frb$ let $\sigma_\fra=(\fra,K(\frm)/K)$
and $\sigma_\frb=(\frb,K(\frm)/K)$ be the Artin symbols in $G(\frm)$, then
$$
	_\frb \zeta_{\frm}^{\N(\fra)-\sigma_\fra}=
	{_\fra}\zeta_{\frm}^{\N(\frb)-\sigma_\frb}.
$$
\item (Relation to $L$-values) For any non-trivial character $\eta:G(\frm)\to \bbC^*$ (not necessarily proper) we have 
$$
	\sum_{\tau \in G(\frm)}\eta(\tau)\log|\tau({_\fra}\zeta_{\frm})|=
	(\N(\fra)-\eta(\fra))
	\lim_{s\to 0}s^{-1}L_{p\frm}(\eta,s),
$$
where $|z|=(z\bar z)^{1/2}$ (note that $\eta(\fra)=\eta^{-1}(\sigma_\fra)$ by our normalization of the reciprocity map).
\end{enumerate}
\end{proposition}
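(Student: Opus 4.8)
The four assertions are essentially standard properties of elliptic units, and the plan is to reduce each of them to the corresponding property of the Robert--Kato functions ${}_\fra\theta_E$ together with the norm-compatibility of the CM-pairs $(E,\alpha)$ under varying modulus. First I would recall the fundamental distribution/norm relation: if $\frm\mid\frm'$ and $(E',\alpha')$ is the CM-pair of modulus $p^r\frm'$, then the isogeny corresponding to the inclusion of lattices sends $\alpha'$ to $\alpha$ up to the appropriate sum over the kernel, and ${}_\fra\theta_{E'}$ pushes forward to ${}_\fra\theta_E$ (this is built into the second defining property of ${}_\fra\theta_E$, the compatibility under $[b]_*$, extended to all isogenies prime to $\fra$). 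This single computation, carried out once, yields the norm-compatibility from which (1), (2) and (4) all flow; (3) is separate and comes from the dependence of ${}_\fra\theta_E$ on $\fra$.

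For (1), the divisor of ${}_\fra\theta_E$ is supported on $E[\fra]$ and the origin, so after evaluating at the torsion point $\alpha$ (whose annihilator is $p^r\frm$, hence disjoint from $\fra$ since $\fra$ is prime to $6p$ and $\frm$ is prime to $\fra$) the value ${}_\fra\theta_E(\alpha)$ is a unit away from the primes dividing $p^r\frm$; taking norms down to $K(\frm)$ and using Siegel/Ramachandra-type integrality (exactly as in Kato \cite{Kato} §15), one gets a global unit when $p^r\frm$ has at least two prime factors, and a $\frp$-unit in the remaining prime-power case — the $1/\frp$ appearing precisely because the divisor then degenerates. For (2), I would apply the norm map $\N_{K(\frq\frm)/K(\frm)}$ and translate it, via the main theorem of complex multiplication, into a pushforward along the isogeny $E_{\frq\frm}\to E_{\frm}$; the distribution relation for ${}_\fra\theta$ produces the factor indexed by the kernel, which is exactly $1-\Frob_\frq^{-1}$ in the unramified case $\frq\nmid p\frm$ and is trivial when $\frq\mid p\frm$ (because then the conductor does not actually drop, or the relevant Frobenius coincidence makes the product collapse). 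This is the usual Euler-system shape and matches Definition \ref{rubin-defn} with the Euler factor $P(\Frob_\frq^{-1}\mid T_p^\vee(1);\Frob_\frq^{-1})$ specialized to the trivial representation twisted by nothing.

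For (3), the point is that $\N(\fra)-\sigma_\fra$ annihilates the ambiguity in choosing $\fra$: using the two defining properties of ${}_\fra\theta_E$ and the standard identity ${}_\fra\theta_E^{\,\N(\frb)}/({}_\frb\theta_E^{\,\N(\fra)}) = $ (something $\sigma_\fra,\sigma_\frb$-symmetric), raising to the appropriate power and evaluating at $\alpha$ gives ${}_\frb\zeta_\frm^{\N(\fra)-\sigma_\fra}={}_\fra\zeta_\frm^{\N(\frb)-\sigma_\frb}$; this is \cite{Kato} 15.4.(2) transported through the norm $\N_{K(p^r\frm)/K(\frm)}$. For (4), I would use the classical Kronecker limit formula in the form giving $\log|{}_\fra\theta_E(\alpha)|$ as a sum of $2$-variable (partial) Eisenstein--Kronecker values; summing against a character $\eta$ of $G(\frm)$ and unfolding, the Galois average becomes the $L$-function $L_{p\frm}(\eta,s)$ (the subscript $p\frm$ because we have normed from level $p^r\frm$, removing exactly the Euler factors at primes dividing $p\frm$), with the factor $\N(\fra)-\eta(\fra)$ coming from the $\fra$-smoothing in the divisor of ${}_\fra\theta_E$, and the $s^{-1}$ reflecting the simple zero at $s=0$; the normalization $|z|=(z\bar z)^{1/2}$ is chosen so that there is no spurious factor of $2$, consistently with the remark after \eqref{explicitreg}. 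The main obstacle is bookkeeping rather than conceptual: getting the Euler factors at primes above $p$ and the ramified primes exactly right in (2) and (4) — in particular checking that the cases $\frq\mid p\frm$ genuinely give the ``trivial'' relation and that the removed Euler factors in the $L$-value formula are precisely those indexed by $p\frm$ — requires care with conductors and with the convention $\eta(\frp)=0$ for $\frp$ ramified in $\eta$; all of this is carried out in Kato \cite{Kato} §15 and de Shalit \cite{deShalit}, which I would cite for the detailed verifications.
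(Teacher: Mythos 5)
Your proposal follows essentially the same route as the paper: reduce each assertion to the corresponding statement about the Robert--Kato function ${}_\fra\theta_E$ and invoke de Shalit and Kato for the detailed verifications (de Shalit II.2.4 for (1), II.2.5.i for (2), Kato 15.4.4 for (3), Kato (15.5.1) for (4)). The one observation the paper makes explicit that your sketch omits is that ${}_\fra\theta_E$ is a twelfth root of de Shalit's Chapter II function and that $w_{p^r\frm}=w_{p^r\frm\frq}=1$ (a consequence of the choice of $p^r$ in Definition \ref{zetadefinition}); this is what lets de Shalit's statements transport verbatim without extra root-of-unity corrections, so it is worth recording rather than leaving implicit in the bookkeeping.
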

\begin{proof}
Observe first that the function ${_\fra}\theta_E$ is uniquely determined by the
norm compatibility and its divisor. Then it is clear that ${_\fra}\theta_E$ is a twelfth
root of the function in Chapter II of \cite{deShalit}. Property
1) follows immediately from \cite{deShalit} II. 2.4. and property 2) follows in the same way as
II. 2.5. i) in \cite{deShalit}, if one observes that $w_{p^r\frm}=w_{p^r\frm\frq}=1$ in our case.
Property 3) is \cite{Kato} 15.4.4 and property 4) is \cite{Kato} (15.5.1).
\end{proof}

\begin{corollary}\label{eulerdefn} Choose $\fra$ prime to $6p$ and let $\cK:=\bigcup_{(\frq, \fra)=1}K(\frq)$. 
Then the $_\fra\zeta_\frm\in \cO_{K(\frm)}[1/p]^*\otimes_\bbZ\bbZ_p\subset H^1(\cO_{K(\frm)}[1/p],\bbZ_p(1))$ 
for all $\frm$ prime to $\fra$ form
an Euler system for $(\cK,\bbZ_p(1),p\fra)$ in the sense of definition \ref{rubin-defn}.
\end{corollary}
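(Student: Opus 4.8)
The plan is to verify the three defining properties of an Euler system from Definition \ref{rubin-defn} for the collection $\{{_\fra}\zeta_\frm\}$, reading off everything from Proposition \ref{unitproperties}. First I would record that the target module is correct: by part (1) of Proposition \ref{unitproperties}, ${_\fra}\zeta_\frm$ lies in $\cO_{K(\frm)}[1/p]^*$ (either it is a genuine unit, or, in the prime-power case, a $\frp$-unit, and in all cases it is a $p$-unit since $p \mid \cN = p\fra$), so after tensoring with $\bbZ_p$ it defines a class in $H^1(\cO_{K(\frm)}[1/p],\bbZ_p(1))$ via the Kummer sequence; and since $\cK = \bigcup_{(\frq,\fra)=1} K(\frq)$, for $\frm$ prime to $\fra$ we have $K(\frm) \subset \cK$, so $K(\frm)\cap\cK = K(\frm)$ and the class lives in the group demanded by the definition.

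Next I would match the norm-compatibility relation. Definition \ref{rubin-defn} asks, for $T_p = \bbZ_p(1)$, that $\Cor_{K(\frm\frq)/K(\frm)}(c_{\frm\frq})$ equal $P(\Frob_\frq^{-1}\mid T_p^\vee(1);\Frob_\frq^{-1})\,c_\frm$ when $\frq \nmid \frm\cN$ and equal $c_\frm$ when $\frq \mid \frm\cN$. Here corestriction in Galois cohomology is exactly the field-norm $\N_{K(\frm\frq)/K(\frm)}$ on the Kummer side, so the content of part (2) of Proposition \ref{unitproperties} — namely $\N_{K(\frm\frq)/K(\frm)}({_\fra}\zeta_{\frm\frq}) = {_\fra}\zeta_\frm^{\,1-\Frob_\frq^{-1}}$ for $\frq\nmid p\frm$ and $= {_\fra}\zeta_\frm$ for $\frq\mid p\frm$ — is precisely what is needed, once one checks the two Euler factors agree. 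For $T_p^\vee(1) = \Hom_{\bbZ_p}(\bbZ_p(1),\bbZ_p(1)) = \bbZ_p$ with trivial Galois action, the characteristic polynomial is $P(\Frob_\frq^{-1}\mid \bbZ_p; x) = 1 - x$, so $P(\Frob_\frq^{-1}\mid T_p^\vee(1);\Frob_\frq^{-1}) = 1 - \Frob_\frq^{-1}$, matching the exponent in Proposition \ref{unitproperties}(2). The only bookkeeping point is the precise shape of $\cN = p\fra$: the "trivial" case $\frq\mid\frm\cN$ of Definition \ref{rubin-defn} must be covered by the "$\frq\mid p\frm$" case of Proposition \ref{unitproperties}(2), which it is since $\cN_0 = \fra$ and the primes dividing $\fra$ are excluded from $\cK$ anyway (so those indices are vacuous or handled by restricting attention to $\frm$ prime to $\fra$); the primes dividing $p$ are common to both, and the primes dividing $\frm$ match directly.

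Finally I would address the $\frq \mid \fra$ situation, which is where a little care is the main (mild) obstacle: strictly, the collection is only indexed by $\frm$ prime to $\fra$, so the relation is only being asserted for $\frq \nmid \fra$, and this is consistent with Rubin's framework because $\cK$ is built only from ray class fields of conductor prime to $\cN_0 = \fra$. I would remark that this is the reason the definition of $\cK$ in Corollary \ref{eulerdefn} uses $(\frq,\fra)=1$ rather than $(\frq, p\fra)=1$: the primes above $p$ ramify in $K_\infty$ but are still allowed as conductors, whereas the primes dividing $\fra$ are genuinely excluded. With these identifications in place, Definition \ref{rubin-defn} is satisfied verbatim, and ${_\fra}\zeta_\frm$ is an Euler system for $(\cK,\bbZ_p(1),p\fra)$. (Properties (3) and (4) of Proposition \ref{unitproperties} are not needed for this corollary; they enter later when relating the Euler system to $L$-values.)
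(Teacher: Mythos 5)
Your proof is correct and supplies exactly the verification that the paper omits (the corollary is stated there as an immediate consequence of Proposition \ref{unitproperties} with no written proof). The key steps — identifying corestriction with the norm via Kummer theory, computing $T_p^\vee(1)=\Hom_{\bbZ_p}(\bbZ_p(1),\bbZ_p(1))=\bbZ_p$ with trivial Galois action so the Euler factor is $1-\Frob_\frq^{-1}$, and matching the case distinction $\frq\nmid\frm\cN$ versus $\frq\mid\frm\cN$ with $\cN=p\fra$ against the $\frq\nmid p\frm$ versus $\frq\mid p\frm$ cases of Proposition \ref{unitproperties}(2), noting the primes dividing $\fra$ are handled by the restriction to $\cK$ — are exactly what one needs and are carried out correctly.
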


\subsection{The twisted Euler system}\label{twisted-euler}
Consider a character
$$
	\eta:G(\frf_\eta)\to E^*
$$
of conductor $\frf_\eta$.
Let $\cK$ be the field extension defined in \ref{eulerdefn} 
and  assume that $\fra$ is chosen prime to $\frf_\eta$.
We wish to study a twist of the Euler system of elliptic units by $\eta$.

Consider the composition of the following two maps (\ref{ttwist}) and (\ref{trace})
\begin{align}\label{ttwist}
H^1(\cO_{K(\frf_\eta\frm)}[1/p],\cO_p(1))\xrightarrow{\otimes t_p(\eta)}H^1(\cO_{K(\frf_\eta\frm)}[1/p],\cO_p(\eta)(1)),
\end{align}
where we have identified
$$
	H^1(\cO_{K(\frf_\eta\frm)}[1/p],\cO_p(1))\otimes_{\cO_p} \cO_p(\eta)\simeq H^1(\cO_{K(\frf_\eta\frm)}[1/p],\cO_p(\eta)(1)),
$$
and of the trace map (for $\cO_{K(\frf_\eta\frm)}[1/p]\to \cO_{K(\frm)}[1/p]$)
\begin{align}\label{trace}
H^1(\cO_{K(\frf_\eta\frm)}[1/p],\cO_p(\eta)(1))&\xrightarrow{\tr_{K(\frf_\eta\frm)/K(\frm)}}
H^1(\cO_{K(\frm)}[1/p],\cO_p(\eta)(1)).
\end{align}

\begin{definition}\label{twisted-euler-defn}
For all $\frm$ prime to $\fra$ define
$$
	\zeta_\frm(\eta):={_\fra}\zeta_\frm(\eta):=\tr_{K(\frf_\eta\frm)/K(\frm)} (\zeta_{\frf_\eta\frm}\otimes t_p(\eta))
\in H^1(\cO_{K(\frm)}[1/p],\cO_p(\eta)(1)).
$$
For any field $K\subset F\subset K(\frm)$ of conductor $\frm$, we define
$$
	\zeta_F(\eta):= \tr_{K(\frm)/F}\zeta_\frm(\eta).
$$
\end{definition}
Note that $\zeta_F(\eta)$ depends on $t_p(\eta)$.

The following proposition is shown in Rubin \cite{Rubin2}
\begin{proposition}[\cite{Rubin2} 2.4.2] Let $\cK$ be as above and $\fra$ prime to $6p$. The collection
$$
_\fra\zeta_\frm(\eta)\in H^1(\cO_{K(\frm)}[1/p],\cO_p(\eta)(1))
$$
for all ideals $\frm$ prime to $\fra$ is an Euler system for $(\cK,\cO_p(\eta)(1),p\frf_\eta\fra)$.
\end{proposition}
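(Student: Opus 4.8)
The plan is to deduce the Euler system property for the twisted collection $\{{}_\fra\zeta_\frm(\eta)\}$ directly from the Euler system property for the untwisted elliptic units established in Corollary \ref{eulerdefn}, by keeping careful track of how the twist by $t_p(\eta)$ and the two trace maps in \eqref{ttwist} and \eqref{trace} interact with corestriction. Concretely, fix $\frm$ prime to $\fra$ and a prime $\frq$ prime to $\fra$; I must compute $\Cor_{K(\frm\frq)\cap\cK/K(\frm)\cap\cK}\big({}_\fra\zeta_{\frm\frq}(\eta)\big)$ and show it equals $P(\Frob_\frq^{-1}\mid \cO_p(\eta)^\vee(1);\Frob_\frq^{-1})\cdot{}_\fra\zeta_\frm(\eta)$ when $\frq\nmid \frm p\frf_\eta\fra$, and ${}_\fra\zeta_\frm(\eta)$ otherwise. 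Since by construction ${}_\fra\zeta_\frm(\eta)=\tr_{K(\frf_\eta\frm)/K(\frm)}(\zeta_{\frf_\eta\frm}\otimes t_p(\eta))$, and $\zeta_{\frf_\eta\frm}$ is itself obtained from the elliptic units of conductor $\frf_\eta\frm$, the corestriction along $K(\frm\frq)/K(\frm)$ factors through the corestriction along $K(\frf_\eta\frm\frq)/K(\frf_\eta\frm)$ by transitivity of trace maps and the fact that, for $\frq$ prime to $\frf_\eta$, the conductor of $K(\frf_\eta\frm\frq)$ is $\frf_\eta\frm\frq$.

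The first step is to reduce to the untwisted statement: because the twist $\otimes\, t_p(\eta)$ is an isomorphism of Galois modules $H^1(\cO_{K(\frf_\eta\frm)}[1/p],\cO_p(1))\otimes_{\cO_p}\cO_p(\eta)\isom H^1(\cO_{K(\frf_\eta\frm)}[1/p],\cO_p(\eta)(1))$ that is functorial in the base ring, it commutes with corestriction maps between fields over $K$ (the Galois action on $\cO_p(\eta)$ being through $\Gal(\bar K/K)$, hence compatible with all these functorialities). Thus the corestriction of $\zeta_{\frf_\eta\frm\frq}\otimes t_p(\eta)$ down to $K(\frf_\eta\frm)$ is $\big(\N_{K(\frf_\eta\frm\frq)/K(\frf_\eta\frm)}\zeta_{\frf_\eta\frm\frq}\big)\otimes t_p(\eta)$, and Proposition \ref{unitproperties}(2) evaluates the norm as $\zeta_{\frf_\eta\frm}^{1-\Frob_\frq^{-1}}$ when $\frq\nmid p\frf_\eta\frm$ (noting $\frq\nmid\frm$ and $\frq\nmid\frf_\eta$ gives $\frq\nmid\frf_\eta\frm$) and as $\zeta_{\frf_\eta\frm}$ otherwise. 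The second step is to convert the factor $1-\Frob_\frq^{-1}$ acting on the $\cO_p(1)$-valued class into the correct Euler factor for the $\cO_p(\eta)(1)$-valued class: after tensoring with $t_p(\eta)$, the operator $\Frob_\frq^{-1}$ acts on $\cO_p(\eta)(1)$ as $\eta(\frq)^{-1}\chi_{\mathrm{cyc}}(\Frob_\frq^{-1})$ times the Frobenius on the underlying class, so $(1-\Frob_\frq^{-1})\otimes t_p(\eta)$ becomes multiplication by $P(\Frob_\frq^{-1}\mid\cO_p(\eta)^\vee(1);\Frob_\frq^{-1})$, since $P(\Frob_\frq^{-1}\mid T_p^\vee(1);x)=\det(1-\Frob_\frq^{-1}x\mid\Hom_{\cO_p}(T_p,\cO_p(1)))$ evaluated at $x=\Frob_\frq^{-1}$ is exactly $1-\eta(\frq)\Frob_\frq^{-1}$ for $T_p=\cO_p(\eta)$ — one checks that for a character twist the characteristic polynomial of $\Frob_\frq^{-1}$ on the rank-one dual-twisted module produces precisely this linear Euler factor, matching Rubin's normalization in Definition \ref{rubin-defn}. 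The third step is bookkeeping: apply $\tr_{K(\frf_\eta\frm)/K(\frm)}$ and $\tr_{K(\frm\frq)/(K(\frm\frq)\cap\cK)}$ and use transitivity of traces plus the identification $K(\frm)\cap\cK$, $K(\frm\frq)\cap\cK$ to land in the cohomology groups named in the proposition; the case $\frq\mid p\frf_\eta\frm\fra$ (in particular $\frq\mid\frf_\eta$, where the norm relation degenerates but so does the conductor comparison) is handled by the same diagram chase with the trivial Euler factor.

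The main obstacle I expect is the second step — correctly matching the twist of the untwisted Euler factor $1-\Frob_\frq^{-1}$ with Rubin's characteristic-polynomial Euler factor $P(\Frob_\frq^{-1}\mid\cO_p(\eta)^\vee(1);\Frob_\frq^{-1})$, including getting the inverse/contragredient conventions right (recall the Galois action on $M(\eta)_p$ is through $\eta$ but contragredient to the canonical $G$-action, as emphasized after Definition \ref{latticedefn}). One must be scrupulous about whether $\Frob_\frq$ or $\Frob_\frq^{-1}$ appears and about the normalization $\eta(\frq)=\eta(\Frob_\frq^{-1})$ fixed in \ref{notations}; a sign or inversion error here changes the Euler factor. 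A secondary technical point is justifying that the twist map commutes with corestriction — this is formal (it is a cap/cup product with a Galois-invariant element of an untwisted coefficient module, and corestriction is a module map over the coefficient ring) but should be stated cleanly. Since the proposition is quoted from Rubin \cite{Rubin2} 2.4.2, the write-up can be brief: indicate the reduction to Proposition \ref{unitproperties}(2), the twist-compatibility of corestriction, and the Euler-factor identification, then cite Rubin for the remaining verification.
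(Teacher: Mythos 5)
The paper itself gives no proof of this proposition: it is quoted verbatim from Rubin's book (\cite{Rubin2}, 2.4.2), so there is no argument in the paper to compare against. That said, your overall strategy is the natural and correct one, and it is surely close to what Rubin does: reduce to the untwisted Euler system of Corollary~\ref{eulerdefn}, use the functoriality of the twist $u\mapsto u\otimes t_p(\eta)$ with respect to corestriction along extensions $L/F$ with $L,F\supset K(\frf_\eta)$ (where $\cO_p(\eta)(1)\cong\cO_p(1)$ as $G_L$- and $G_F$-modules), apply Proposition~\ref{unitproperties}(2) to evaluate the inner norm, and then match the resulting group-ring operator with Rubin's characteristic-polynomial Euler factor.

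The one step that does not hold up as written is the Euler-factor identification --- precisely the step you yourself flagged as delicate. Two things are off. First, in the proposition at hand $T_p=\cO_p(\eta)(1)$, not $\cO_p(\eta)$, so Rubin's factor is $P(\Frob_\frq^{-1}\mid T_p^\vee(1);\Frob_\frq^{-1})$ with $T_p^\vee(1)=\Hom_{\cO_p}\bigl(\cO_p(\eta)(1),\cO_p(1)\bigr)\cong\cO_p(\eta^{-1})$, whereas you evaluate $P$ on $\cO_p(\eta)^\vee(1)\cong\cO_p(\eta^{-1})(1)$, which carries an extra Tate twist and gives a spurious factor of $\N\frq^{-1}$. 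Second, the claimed evaluation $1-\eta(\frq)\Frob_\frq^{-1}$ does not follow from either choice together with the convention fixed in \S\ref{notations}: since $\eta(\frq)=\eta(\Frob_\frq^{-1})$, the geometric Frobenius $\Frob_\frq^{-1}$ acts on $\cO_p(\eta^{-1})$ by $\eta^{-1}(\Frob_\frq^{-1})=\eta(\frq)^{-1}$, which would give $1-\eta(\frq)^{-1}\Frob_\frq^{-1}$, not $1-\eta(\frq)\Frob_\frq^{-1}$. You need to redo this bookkeeping carefully and, crucially, make it consistent with the Euler factors $1-\chi(\frl)\Frob_\frl^{-1}$ that appear later (Lemma~\ref{perfectness}, Corollary~\ref{localizationcor}, Lemma~\ref{decompositionlemma}); the interaction between the Artin-map convention of \S\ref{notations}, the contragredient Galois action on $\cO_p(\eta)$ discussed after Definition~\ref{latticedefn}, and the $\Lambda$-module normalization (``multiplication with the inverse'') has to be tracked in one place, once, and then reused verbatim.
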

\subsection{A compatibility}
Suppose that $K\subset F\subset L\subset \cK$ with $G:=\Gal(L/F)$ and let $H$ be the quotient so that
$$
0\to G\to \Gal(L/K)\to H\to 0.
$$
For later use we need a compatibility between
$\zeta_F(\eta)$ as  in definition \ref{twisted-euler-defn} and $p_{\eta^{-1}}(\zeta_{L})$.
Consider the following diagram:
$$
\begin{CD}
\Spec L@>\wt j>>\Spec \cO_{L}[1/p]\\
@V\pi VV@V\wt \pi VV\\
\Spec F@>j>>\Spec \cO_{F}[1/p].
\end{CD}
$$
Then we have $\wt \pi_*\wt j_*=j_* \pi_*$ and
$$
	H^1(\cO_{L}[1/p],\wt j_*\cO_p(1))\simeq H^1(\cO_{F}[1/p], j_*\pi_*\cO_p(1)).
$$
We can identify $\pi_*\cO_p(1)\simeq \cO_p[G](1)$, which we consider as $\cO_p(1)$-valued maps on $G$. 
Consider the map induced by $p_{\eta^{-1}}:\cO_p[G]\to \cO_p(\eta)$ 
$$
	p_{\eta^{-1}}:H^1(\cO_{F}[1/p],j_*\cO_p[G](1))\to H^1(\cO_{F}[1/p],j_*\cO_p(\eta)(1)).
$$
We return now to our convention, to omit $j_*$ from the notation.
\begin{lemma}\label{zetaidentification}
 The image of $\zeta_{L}\in H^1(\cO_{L}[1/p],\cO_p(1))$ under the above map
$p_{\eta^{-1}}$
coincides with $\zeta_F(\eta)$.
\end{lemma}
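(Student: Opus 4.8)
The plan is to unwind both sides to the level of the elliptic units $\zeta_{\frm}$ living in $K(\frm)^*\otimes\bbZ_p$ and then match the two prescriptions for ``twisting by $\eta$'' — one via the tensor-with-$t_p(\eta)$ construction of Definition \ref{twisted-euler-defn}, the other via the idempotent $p_{\eta^{-1}}$ acting on $\cO_p[G](1)$-coefficients as in the diagram above. Write $\frm$ for the conductor of $F$ (so $F\subset K(\frm)$) and recall $\zeta_F(\eta)=\tr_{K(\frm)/F}\tr_{K(\frf_\eta\frm)/K(\frm)}(\zeta_{\frf_\eta\frm}\otimes t_p(\eta))=\tr_{K(\frf_\eta\frm)/F}(\zeta_{\frf_\eta\frm}\otimes t_p(\eta))$, while by Definition \ref{zetadefinition} $\zeta_L=\N_{K(\frm')/L}\zeta_{\frm'}$ where $\frm'$ is the conductor of $L$. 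The essential point is that $\frf_\eta\mid\frm'$ (since $\eta$ factors through $\Gal(L/K)$ and $L$ has conductor $\frm'$), so after pushing everything up to $K(\frf_\eta\frm')$ via the norm-compatibility of the $\zeta$'s we are comparing two operations on the single class $\zeta_{K(\frf_\eta\frm')}\in H^1(\cO_{K(\frf_\eta\frm')}[1/p],\cO_p(1))$.

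The key computation is the following identity of operators. Let $\Gamma:=\Gal(K(\frf_\eta\frm')/F)$ and note the trace map $\tr$ followed by $\otimes t_p(\eta)$ corresponds, after the identification $\pi_*\cO_p(1)\simeq\cO_p[G](1)$, to applying the element $p_{\eta^{-1}}=\tfrac{1}{\#G}\sum_{\sigma\in G}\eta(\sigma)\sigma$ — this is precisely because, for a class coming by inflation from $K(\frf_\eta\frm')$, summing the $G$-translates against $\eta$ and then identifying with the $\eta$-component is the defining recipe for $p_{\eta^{-1}}$, using our convention $\eta(\fra)=\eta^{-1}(\sigma_\fra)$ to fix the direction of the Galois action (recall the remark after Definition \ref{latticedefn} that the Galois action on $M(F)_p$ is contragredient, which is exactly why $t_p(\eta)=p_{\eta^{-1}}(1)$ carries $\eta$ and not $\eta^{-1}$). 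Concretely I would: (i) use the norm-compatibility property \ref{unitproperties}(2) of the elliptic units to rewrite $\zeta_L$ as the image of $\zeta_{K(\frf_\eta\frm')}$ under $\N_{K(\frf_\eta\frm')/L}$ (the auxiliary factor $\frf_\eta$ introduces no Euler factor because $\frf_\eta\mid\frm'$); (ii) observe that $p_{\eta^{-1}}$ applied to a class inflated from $F$-level factors through $F$, hence commutes with further norm/trace down to $F$, so $p_{\eta^{-1}}(\zeta_L)=p_{\eta^{-1}}(\zeta_{K(\frf_\eta\frm')})$ computed at $F$-level; (iii) identify $p_{\eta^{-1}}$ on $\cO_p[G](1)$-coefficients with ``$\tr\circ(\otimes t_p(\eta))$'' by the elementary group-ring identity $p_{\eta^{-1}}(\sum_\sigma a_\sigma\sigma)=\big(\sum_\sigma\eta(\sigma)a_\sigma\big)\,p_{\eta^{-1}}(1)$, reading $a_\sigma=\sigma^{-1}\zeta$; (iv) conclude both sides equal $\tr_{K(\frf_\eta\frm')/F}(\zeta_{K(\frf_\eta\frm')}\otimes t_p(\eta))=\zeta_F(\eta)$.

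The main obstacle I anticipate is purely bookkeeping: keeping the conductors straight and making sure the passage from $\zeta_L$ (defined via $\N_{K(\frm')/L}$ from the conductor-$\frm'$ field) up to the auxiliary level $K(\frf_\eta\frm')$ introduces no stray Euler factors — this is where property \ref{unitproperties}(2) must be invoked in the case $\frq\mid p\frm$, i.e. on primes dividing $\frf_\eta$, for which the norm is the identity; and, in parallel, checking that the identification $\wt\pi_*\wt j_*=j_*\pi_*$ together with $\pi_*\cO_p(1)\simeq\cO_p[G](1)$ is compatible with the Shapiro-type isomorphism used to define the twisted class, so that ``$p_{\eta^{-1}}$ on coefficients'' and ``$\tr$ then twist'' really are the same arrow in cohomology and not merely on underlying modules. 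Once these compatibilities are pinned down, the identity is formal. I would also remark that both sides manifestly depend on the same choice of $t_p(\eta)$, so there is no ambiguity to track.
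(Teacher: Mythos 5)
Your step (iii) is exactly the paper's proof, and it is the whole content of the argument: the paper works directly with $G=\Gal(L/F)$, observes that the corestriction $\tr_{L/F}$ on $\cO_p(\eta)(1)$-cohomology is induced by the $\cO_p$-module map $\cO_p[G]\otimes_{\cO_p}\cO_p(\eta)\to\cO_p(\eta)$, $f\otimes t_p(\eta)\mapsto\sum_{g\in G}g(f(g^{-1})t_p(\eta))$, and then checks by a two-line computation that pre-composing with ``$\otimes t_p(\eta)$'' coincides with $p_{\eta^{-1}}:\cO_p[G]\to\cO_p(\eta)$; together with the Shapiro identification $H^1(\cO_L[1/p],\cO_p(1))\isom H^1(\cO_F[1/p],\cO_p[G](1))$ this finishes the proof. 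Notably, the paper does \emph{not} invoke the norm compatibility \ref{unitproperties}(2) of the elliptic units at all: the lemma is a statement about coefficient modules, not about the particular class $\zeta_L$. Your preparatory steps (i)--(ii), which push both sides up to the common level $K(\frf_\eta\frm')$ via the distribution relation, are therefore superfluous for this argument; they do, however, address a real subtlety that the paper glosses over, namely that the identification of $\tr_{L/F}(\zeta_L\otimes t_p(\eta))$ with $\zeta_F(\eta)$ as literally defined requires the conductors of $L$, of $F$ and $\frf_\eta$ to be aligned (as they are in the places the lemma is invoked). Finally, correct the group-ring identity in step (iii): it should read $p_{\eta^{-1}}(\sum_\sigma a_\sigma\sigma)=\bigl(\sum_\sigma\eta^{-1}(\sigma)a_\sigma\bigr)p_{\eta^{-1}}(1)$, with $\eta^{-1}(\sigma)$ rather than $\eta(\sigma)$ --- this is the eigenvalue the paper computes and is what matches $\sum_{g}g(f(g^{-1})t_p(\eta))=\bigl(\sum_h\eta^{-1}(h)f(h)\bigr)t_p(\eta)$ after reindexing $h=g^{-1}$.
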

\begin{proof} We write $G=\Gal(L/F)$. The corestriction 
$$
\tr_{L/F}:H^1(\cO_{L}[1/p],\cO_p(\eta)(1))\to H^1(\cO_{F}[1/p],\cO_p(\eta)(1))
$$ 
is induced by the map
$\cO_p[G]\otimes_{\cO_p}\cO_p(\eta)\to \cO_p(\eta)$ given by
$$
	f\otimes t_p(\eta)\mapsto \sum_{g\in G} g(f(g^{-1})t_p(\eta)).
$$
To show the lemma it suffices to show the commutativity of the diagram
	$$
		\begin{xy}
			\xymatrix{\cO_p[G] \ar[r]^{p_{\eta^{-1}}}\ar[d]_{\otimes t_p(\eta)}&\cO_p(\eta)  \\
			\cO_p[G]\otimes_{\cO_p}\cO_p(\eta)\ar[ru]}\\
		\end{xy}
	$$
	where the diagonal map is given by the above formula.
Writing $f=\sum_{g\in G}f(g)\delta_g$ where $\delta_g$ is the delta function at $g\in G$, we get 
	$$
		p_{\eta^{-1}}f=\sum_{g\in G}f(g)p_{\eta^{-1}}\delta_g=(\sum_{g\in G}\eta^{-1}(g)f(g))t_p(\eta).
	$$
On the other hand
$$
	\sum_{g\in G} g(f(g^{-1}) t_p(\eta))=(\sum_{g\in G} \eta(g)f(g^{-1}))t_p(\eta).
$$
\end{proof}
\subsection{Relation to zeta elements}\label{zetaelements}
In this section
we make the relation between the Euler system and the zeta elements precise.
This is crucial for the reduction of the main conjecture to the Tamagawa number conjecture.

Let $K_\infty= \bigcup_{n\ge 0} K_n$ be the maximal $\bbZ_p^2$-extension of $K$ which
is unramified outside of $p$ and where $K\subset K_n\subset K_\infty$ is the unique
subextension with Galois group $(\bbZ/p^n\bbZ)^2$.

Fix an integral ideal $\frf_\chi\subset \cO_K$ (which will later be the conductor of a character).
\begin{definition}\label{leveldefn}
Let $\eta:G(\frf_\chi)\to E^*$ be a character of conductor $\frf_\eta$.
The biggest $n\ge 0$, such that $K_n\subset K(\frf_\eta)$ is called the \emph{level } of $\eta$.
\end{definition}
Observe that if the level of $\eta$ is big enough, then $\eta$ is ramified at
all primes above $p$, so that $\eta(\frp)=0$ and  $z_p(\eta)=z(\eta)$.

Our aim in this section is to show that for characters $\eta$ of big enough level 
$\zeta_K(\eta^{-1})$ as in definition \ref{twisted-euler-defn} essentially coincides with the zeta element $z_p(\eta)$.
Note that 
$$
H^1_f( M(\eta^{-1})(1))=p_{\eta}(\cO_{K(\frf_\eta)}^*\otimes_\bbZ E)=
p_{\eta}(\cO_{K(\frf_\chi)}^*\otimes_\bbZ E)
$$
because the motive $M(\eta^{-1})$ does not depend on the group where $\eta$ is considered
(cf. equation \eqref{motivdec}). By definition and lemma \ref{zetaidentification} (for $L=K(\frf_\eta)$
and $F=K$)
$$
	\zeta_K(\eta^{-1})=\tr_{K(\frf_\eta)/K}(\zeta_{\frf_\eta}\otimes t_p(\eta^{-1}))=p_\eta(\zeta_{\frf_\eta})\in p_\eta(\cO_{K(\frf_\eta)}[1/p]^*\otimes_\bbZ E).
$$
\begin{theorem}\label{zetaexplicit}  Let
$\eta:G(\frf_\chi)\to E^* $ be a character of conductor $\frf_\eta$ and level $n$ such that $\eta$ is ramified at the primes above $p$.
\begin{itemize}
\item [a)] Consider $\zeta_K(\eta^{-1})$ as an element in $p_\eta(\cO_{K(\frf_\eta)}[1/p]^*\otimes_\bbZ E)$.
Then $\zeta_K(\eta^{-1})\in p_{\eta}(\cO_{K(\frf_\eta)}^*\otimes_\bbZ E)=H^1_f( M(\eta^{-1})(1))$
and
$$
\zeta_K(\eta^{-1})=(\N\fra-\eta(\fra))z_{p}(\eta).
$$
\item [b)] For each ideal $\fra\neq \cO_K$ prime to $6p\frf_\eta$, one has $\N\fra\neq \eta(\fra)$ 
as automorphisms of $M(\eta)_p$, so that $\N\fra-\eta(\fra)$ is invertible.
\item [c)] Consider the regulator
$$
	r_p:H^1_f(M(\eta^{-1})(1))\to H^1(\cO_K[1/p],M(\eta^{-1})_p(1))
$$
and let $\fra\neq \cO_K$ be prime to $6p\frf_\eta$. Then 
$$
	r_p(z_{p}(\eta)\cO_p)=(\N\fra-\eta(\fra))^{-1}\zeta_K(\eta^{-1})\cO_p\subset H^1(\cO_K[1/p],M(\eta^{-1})_p(1)).
$$
\end{itemize}
In particular, the element 
$\zeta_K(\eta^{-1})$ is not torsion in $ H^1(\cO_K[1/p],\cO_p(\eta^{-1})(1))$.
\end{theorem}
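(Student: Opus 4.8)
The plan is to prove the four assertions in the order (b), the integrality half of (a), the identity in (a), and then to deduce (c) and the closing statement formally. \emph{Part (b)} is immediate: $\N\fra$ is a rational integer $\geq 2$ since $\fra\neq\cO_K$, while $\eta(\fra)=\eta^{-1}(\sigma_\fra)$ is a root of unity in $E^{*}$ (the group $G(\frf_\chi)$ is finite and $\fra$ is prime to $\frf_\eta$); hence every complex embedding sends $\N\fra-\eta(\fra)$ to a number of absolute value $\geq\N\fra-1>0$, so $\N\fra-\eta(\fra)\in E^{*}\subseteq E_p^{*}$ and acts invertibly on the rank-one $E_p$-module $M(\eta)_p$. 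For the \emph{integrality half of (a)} I would start from the identification $\zeta_K(\eta^{-1})=p_\eta(\zeta_{\frf_\eta})\in p_\eta(\cO_{K(\frf_\eta)}[1/p]^{*}\otimes_\bbZ E)$ recalled just before the theorem (using Proposition \ref{unitproperties}(1) to put $\zeta_{\frf_\eta}$ into $\cO_{K(\frf_\eta)}[1/p]^{*}\otimes\bbZ_p$), and apply the idempotent $p_\eta$ to the $G(\frf_\eta)$-equivariant exact valuation sequence $0\to\cO_{K(\frf_\eta)}^{*}\otimes E\to\cO_{K(\frf_\eta)}[1/p]^{*}\otimes E\to\bigoplus_{w\mid p}E\to 0$ (right exactness by Dirichlet). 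Since $\eta$ is ramified at every prime above $p$, its restriction to the decomposition group $D_w$ of a place $w\mid p$ of $K(\frf_\eta)$ is nontrivial already on inertia, so by Frobenius reciprocity $\eta$ does not occur in the induced module $\bigoplus_{w\mid p}E$; thus $p_\eta$ annihilates this term and $p_\eta(\cO_{K(\frf_\eta)}[1/p]^{*}\otimes E)=p_\eta(\cO_{K(\frf_\eta)}^{*}\otimes E)=H^1_f(M(\eta^{-1})(1))$, which then contains $\zeta_K(\eta^{-1})$.

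\emph{The identity in (a)} is the main step. Since $\eta$ is ramified above $p$ we have $\eta(\frp)=0$, so $z_p(\eta)=z(\eta)$, and both $\zeta_K(\eta^{-1})$ and $(\N\fra-\eta(\fra))z(\eta)$ lie in the one-dimensional $E$-space $H^1_f(M(\eta^{-1})(1))=H^1_f(M(\eta)^{\vee}(1))$ — for the second element because $z(\eta)$ is $E$-rational, i.e. the rational conjecture \ref{Ratconj} (equivalently Stark's conjecture) holds for the summand $M(\eta)$ of $M(K(\frf_\eta))$ by Theorem \ref{classnumber}. By (\ref{dirichlet}) the Beilinson regulator induces, for $\eta\neq 1$, an isomorphism $r_\infty\colon H^1_f(M(\eta^{-1})(1))\otimes_E E_\infty\xrightarrow{\ \sim\ }M(\eta)_B\otimes_E E_\infty$ that is injective on $E$-rational classes, so I would prove the identity by comparing $r_\infty$ of the two sides. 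For the left side I would use the explicit formula (\ref{explicitreg}) together with $\zeta_K(\eta^{-1})=p_\eta(\zeta_{\frf_\eta})$ and a change of variables in $G(\frf_\eta)$ to write $r_\infty(\zeta_K(\eta^{-1}))$ as the multiple of the canonical generator $t_B(\eta)=t_B(\eta^{-1})^{\vee}$ of $M(\eta)_B$ by a twisted sum $\sum_{\tau\in G(\frf_\eta)}\eta^{-1}(\tau)\log|\tau(\zeta_{\frf_\eta})|$, and then invoke Proposition \ref{unitproperties}(4) (noting that, as $\eta$ is ramified above $p$ and at the primes dividing $\frf_\eta$, $L_{p\frf_\eta}(\cdot,s)$ is the full $L$-function, which has a simple zero at $s=0$, so $\lim_{s\to0}s^{-1}L_{p\frf_\eta}(\cdot,s)$ is the leading $L$-value) to evaluate the sum. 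For the right side I would unwind Definition \ref{zetaelementdefn} through the description of $\vartheta_\infty$ coming from (\ref{dirichlet}) and the identification $t_B(\eta^{-1})^{\vee}=t_B(\eta)$ to see that $r_\infty(z(\eta))$ is the multiple of $t_B(\eta)$ by $L^{*}(\eta,0)$. Matching the two expressions should give $r_\infty(\zeta_K(\eta^{-1}))=(\N\fra-\eta(\fra))r_\infty(z(\eta))$, hence $\zeta_K(\eta^{-1})=(\N\fra-\eta(\fra))z_p(\eta)$ by injectivity.

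\emph{Parts (c) and the closing statement} are then formal. Applying $r_p$ to the identity of (a) — by Soul\'e's theorem $r_p\colon H^1_f(M(\eta^{-1})(1))\otimes_E E_p\xrightarrow{\sim}H^1_f(\cO_K[1/p],M(\eta^{-1})_p(1))$ is an isomorphism, and under Kummer theory the \'etale class $\zeta_K(\eta^{-1})$ is exactly $r_p$ of the motivic element $p_\eta(\zeta_{\frf_\eta})$ — gives $\zeta_K(\eta^{-1})=(\N\fra-\eta(\fra))r_p(z_p(\eta))$; dividing by the unit $\N\fra-\eta(\fra)\in E_p^{*}$ yields $r_p(z_p(\eta))=(\N\fra-\eta(\fra))^{-1}\zeta_K(\eta^{-1})$, whence $r_p(z_p(\eta)\cO_p)=(\N\fra-\eta(\fra))^{-1}\zeta_K(\eta^{-1})\cO_p$ inside $H^1(\cO_K[1/p],M(\eta^{-1})_p(1))$ — the factor cannot be dropped because $\N\fra-\eta(\fra)$ need not be an $\cO_p$-unit. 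Finally $L^{*}(\eta,0)\neq 0$ since the zero of $L(\eta,s)$ at $s=0$ is simple, so $z_p(\eta)=z(\eta)\neq 0$ in the $E$-space $H^1_f(M(\eta^{-1})(1))$; by (a) and (b) then $\zeta_K(\eta^{-1})\neq 0$ there, and as $r_p$ is injective its image is nonzero in $H^1_f(\cO_K[1/p],M(\eta^{-1})_p(1))\subseteq H^1(\cO_K[1/p],\cO_p(\eta^{-1})(1))\otimes_{\bbZ_p}\bbQ_p$, so $\zeta_K(\eta^{-1})$ is not torsion.

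\emph{Expected main obstacle.} Parts (b) and the integrality of (a) are short, and (c) and the final sentence are purely formal once (a) and (b) are available; the substance is the regulator identity in (a). Its essential inputs are Proposition \ref{unitproperties}(4) (the link between the twisted elliptic unit and the $L$-value) and the rationality theorem \ref{classnumber}, and the genuinely delicate point is to make all the normalizations fit so that the constant comes out exactly $\N\fra-\eta(\fra)$: this requires care with the factor $2$ between the Beilinson and the Dirichlet regulator, with the self-duality of $E[G]$ underlying the identification $t_B(\eta^{-1})^{\vee}=t_B(\eta)$, and with the bookkeeping relating $\eta$ and $\eta^{-1}$ under complex conjugation on $E\otimes_\bbQ\bbC$.
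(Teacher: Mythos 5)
Your plan follows the paper's proof almost step for step: (b) by the root--of--unity vs.\ integer argument; the integrality half of (a) by applying $p_\eta$ to the $S$-unit valuation sequence and using ramification of $\eta$ above $p$ to kill the induced term; the identity in (a) by comparing $r_\infty$ of both sides and invoking Proposition~\ref{unitproperties}(4); and (c) plus the non-torsion claim formally from (a) and (b). So the route is the same as the paper's, not a genuinely different one.

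There is, however, one concrete slip in the place you yourself flag as delicate, and it is not cosmetic. With the paper's convention $p_{\eta^{-1}}=\frac{1}{\#G}\sum_\sigma\eta(\sigma)\sigma$ (so $p_\eta=\frac{1}{\#G}\sum_\sigma\eta^{-1}(\sigma)\sigma$ and $t_B(\eta^{-1})=p_\eta(1)$), one has, for any coefficients $a_\tau$,
\[
p_\eta\Bigl(\sum_{\tau\in G}a_\tau\,\tau\Bigr)=\Bigl(\sum_{\tau\in G}\eta(\tau)\,a_\tau\Bigr)\,t_B(\eta^{-1}),
\]
so the correct twisted sum is $\sum_{\tau}\eta(\tau)\log|\tau(\zeta_{\frf_\eta})|$ with the result landing on the generator $t_B(\eta^{-1})$ of $M(\eta^{-1})_B$ --- not $\sum_{\tau}\eta^{-1}(\tau)\log|\tau(\zeta_{\frf_\eta})|$ and not $t_B(\eta)$. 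This matters: with $\eta(\tau)$ in the sum, Proposition~\ref{unitproperties}(4) applied to $\eta$ gives precisely the coefficient $(\N\fra-\eta(\fra))\,L^{*}(\eta,0)$, while your version would force you to apply \ref{unitproperties}(4) to $\eta^{-1}$ and would yield $(\N\fra-\eta^{-1}(\fra))\,L^{*}(\eta^{-1},0)$, which is not $(\N\fra-\eta(\fra))\,L^{*}(\eta,0)$ in general, so the claimed identity would not follow. Once the sum carries $\eta(\tau)$ and both sides are expressed as multiples of $t_B(\eta^{-1})$ the comparison closes exactly as you describe, with $r_\infty(z(\eta))=L^{*}(\eta,0)\,t_B(\eta^{-1})$ on the other side. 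The rest of the proposal (in particular the deduction of (c) by applying $r_p$ and keeping the factor $(\N\fra-\eta(\fra))^{-1}$ because it is only an $E_p$-unit, not an $\cO_p$-unit) is correct and matches the paper.
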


\begin{proof} a) Recall that 
$z(\eta)\in H^1_f(M(\eta)^\vee(1))\otimes_\bbQ\bbR\isom H^1_f(M(\eta^{-1})(1))\otimes_\bbQ\bbR$.
By definition the element $z(\eta)\otimes t_B(\eta^{-1})^\vee$ is the one which maps
to $L^*(\eta,0)$ under $\vartheta_\infty^{-1}$. Consider 
$\zeta_K(\eta^{-1})\in p_{\eta}(\cO_{K(\frf_\eta)}[1/p]^*\otimes_\bbZ E)$.
We first show that 
$$
	p_{\eta}(\zeta_{\frf_\eta})\in p_{\eta}(\cO_{K(\frf_\eta)}^*\otimes_\bbZ E)=H^1_f(M(\eta^{-1})(1)).
$$
if  $\eta$ is ramified at all places above $p$. If $\frf_\eta$ is divisible by
at least two different primes, then by \ref{unitproperties}, $\zeta_{\frf_\eta}$ is already in $\cO_{K(\frf_\eta)}^*\otimes_\bbZ E$.
By our assumption $p^k\mid \frf_\eta$
for some $k\ge 1$. This implies that $\zeta_{\frf_\eta}$ can be only a non-unit if
$\frf=\frp^l$ for $\frp$ the only prime above $p$.
Consider the exact sequence
$$
	1\to \cO_{K(\frf_\eta)}^*\otimes_{\bbZ}E\to \cO_{K(\frf_\eta)}[1/\frp]^*\otimes_{\bbZ}E\to 
	\prod_{v\mid\frp}E
$$
(product over all places $v\mid \frp$ of $K(\frf_\eta)$). As a $G$-module the 
product $\prod_{v\mid\frp}E\simeq E[G(\frf_\eta)/D_v]$, where $D_v$ is the decomposition group 
at $v$. The fact that $\eta$ is ramified at $v$ implies
$$
	p_{\eta}\prod_{v\mid\frp}E=0
$$
and the above claim.
To show the assertion in a) it suffices to compute the regulator $r_\infty$ because
this is an injective map.
 With the explicit form of the
regulator $r_\infty$ in (\ref{explicitreg}) we get using \ref{unitproperties} (4):
\begin{align*}
	r_\infty(\zeta_K(\eta^{-1}))&=p_{\eta}\sum_{\tau\in G(\frf_\eta)}(\log\mid \tau(\zeta_{\frf_\eta})\mid)\tau\\
				&=\sum_{\tau\in G(\frf_\eta)}\eta(\tau)(\log\mid \tau(\zeta_{\frf})\mid)t_B(\eta^{-1})\\
				&=(\N\fra-\eta(\fra))\lim_{s\to 0}s^{-1}L_{p\frf_\eta}(\eta,s)t_B(\eta^{-1}).
\end{align*}
As $p$ divides $\frf_\eta$ we get
$L_{p\frf_\eta}(\eta,s)=L_{\frf_\eta}(\eta,s)=L(\eta,s)$ and $z(\eta)=z_p(\eta)$. Thus, $r_\infty(\zeta_K(\eta^{-1}))=(\N\fra-\eta(\fra))L^*(\eta,0)t_B(\eta^{-1})$. On the other hand 
$$
	r_\infty(z(\eta))=L^*(\eta,0)t_B(\eta^{-1}),
$$
so that $\zeta_K(\eta^{-1})=(\N\fra-\eta(\fra))z(\eta)=(\N\fra-\eta(\fra))z_p(\eta)$ as $r_\infty$
is injective. This implies a).

b) This is clear as $\eta(\fra)$ is a root of unity and $\N\fra$ is not.

c) From a) and the choice of $\fra$ we get
$$
	r_p(\zeta_K(\eta^{-1}))\cO_p=(\N\fra-\eta(\fra))r_p(z_p(\eta))\cO_p.
$$
\end{proof}
Let $K_n(\frf_\chi):=K_nK(\frf_\chi) $ be the
compositum of $K_n$ and $K(\frf_\chi)$ and write $G_n(\frf_\chi):=\Gal(K_n(\frf_\chi)/K)$.

Combining the above theorem \ref{zetaexplicit} with corollary \ref{classnumbercor} for $L=K_n(\frf_\chi)$ and $F=K_{n-1}(\frf_\chi)$ 
one gets:

\begin{corollary}\label{generationbyEuler} Let  $n$ be so big
that all $\eta\in\wh{G}_n(\frf_\chi) \smallsetminus \wh{G}_{n-1}(\frf_\chi) $ are ramified at all primes above $p$. Let $\fra$ be as in theorem \ref{zetaexplicit}, then
the $\cO_p$-module
$$
	\bigotimes_\eta (\N\fra-\eta(\fra))^{-1}\zeta_K(\eta^{-1})\cO_p\subset \bigotimes_\eta \Det_{E_p}^{-1}R\Gamma(\cO_K[1/p],M(\eta)^\vee_p(1)),
$$
where the tensor product is taken over all $\eta\in \wh{G}_n(\frf_\chi) \smallsetminus \wh{G}_{n-1}(\frf_\chi)$,
coincides with the
$\cO_p$-module
$$
	\bigotimes_\eta \Det_{\cO_p}^{-1}R\Gamma(\cO_K[1/p],\cO_p(\eta^{-1})(1)).
$$
\end{corollary}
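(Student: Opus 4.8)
The plan is to obtain this corollary as the formal combination of Theorem~\ref{zetaexplicit}~(c), applied one character at a time, with the reformulated class number formula of Corollary~\ref{classnumbercor}, specialized to the two fields $L:=K_n(\frf_\chi)$ and $F:=K_{n-1}(\frf_\chi)$. These form an admissible input for Corollary~\ref{classnumbercor}: the extension $K_n(\frf_\chi)/K$ is abelian, $K_{n-1}(\frf_\chi)\subseteq K_n(\frf_\chi)$, and inflation identifies $\wh G_{n-1}(\frf_\chi)$ with a subgroup of $\wh G_n(\frf_\chi)$ whose complement is exactly the index set $\wh G_n(\frf_\chi)\smallsetminus\wh G_{n-1}(\frf_\chi)$ appearing in the statement.

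Concretely I would proceed as follows. First, fix an ideal $\fra\neq\cO_K$ prime to $6p\frf_\chi$. Since $K_\infty/K$ is unramified outside $p$, the prime-to-$p$ part of the conductor of $K_n(\frf_\chi)/K$ divides $\frf_\chi$, hence so does the prime-to-$p$ part of $\frf_\eta$ for every $\eta\in\wh G_n(\frf_\chi)$; combined with $\fra$ being prime to $p$ this shows that $\fra$ is prime to $6p\frf_\eta$ for all relevant $\eta$ at once, so $\fra$ is admissible in Theorem~\ref{zetaexplicit} uniformly. Next, note that every $\eta\in\wh G_n(\frf_\chi)\smallsetminus\wh G_{n-1}(\frf_\chi)$ is non-trivial (the trivial character lies in $\wh G_{n-1}(\frf_\chi)$) and, by the hypothesis on $n$, ramified at all primes above $p$; in particular $\eta(\frp)=0$ for $\frp\mid p$, so $z_p(\eta)=z(\eta)$, and Theorem~\ref{zetaexplicit} applies to $\eta$. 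Its parts (a) and (b) give that $z_p(\eta)\in H^1_f(M(\eta^{-1})(1))$ and that $\N\fra-\eta(\fra)$ is a nonzero element of $E$ acting invertibly on $M(\eta)_p$, so each factor $(\N\fra-\eta(\fra))^{-1}\zeta_K(\eta^{-1})\cO_p$ is a genuine $\cO_p$-lattice in $H^1(\cO_K[1/p],M(\eta)^\vee_p(1))$; and part (c) yields the character-wise identity
$$
r_p(z_p(\eta)\cO_p)=(\N\fra-\eta(\fra))^{-1}\zeta_K(\eta^{-1})\cO_p
$$
there (using $M(\eta)^\vee\cong M(\eta^{-1})$). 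Tensoring this over all $\eta$ in the index set turns the left-hand $\cO_p$-module $\bigotimes_\eta r_p(z_p(\eta)\cO_p)$ of Corollary~\ref{classnumbercor} into the module $\bigotimes_\eta(\N\fra-\eta(\fra))^{-1}\zeta_K(\eta^{-1})\cO_p$ occurring here. Finally, since $\Det(C[1])=\Det(C)^{-1}$ for any perfect complex $C$, one has $\Det_{E_p}R\Gamma(\cO_K[1/p],M(\eta)^\vee_p(1))[1]=\Det_{E_p}^{-1}R\Gamma(\cO_K[1/p],M(\eta)^\vee_p(1))$ and likewise $\Det_{\cO_p}R\Gamma(\cO_K[1/p],\cO_p(\eta^{-1})(1))[1]=\Det_{\cO_p}^{-1}R\Gamma(\cO_K[1/p],\cO_p(\eta^{-1})(1))$; substituting these two rewritings into Corollary~\ref{classnumbercor} gives exactly the asserted equality of $\cO_p$-modules.

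I do not expect a genuine obstacle, since the entire content sits in Theorem~\ref{zetaexplicit} and Corollary~\ref{classnumbercor} and this corollary is just their combination; the only point I would be careful to spell out is why the character-by-character substitution is legitimate. This is precisely where the hypothesis on $n$ enters: for $\eta$ ramified at all primes above $p$ the individual zeta element $z_p(\eta)=(\N\fra-\eta(\fra))^{-1}\zeta_K(\eta^{-1})$ is already $E$-rational by Theorem~\ref{zetaexplicit}~(a), in contrast to a general layer where only the product over the whole layer is known to descend from $\bbR$ to $E$, so no subtlety about passing between $\bbR$-, $E_p$- and $\cO_p$-coefficients intervenes when the tensor product is formed.
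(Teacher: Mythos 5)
Your proposal is correct and follows precisely the paper's own (one-line) argument: combine Theorem~\ref{zetaexplicit}(c), applied character by character to replace $r_p(z_p(\eta)\cO_p)$ with $(\N\fra-\eta(\fra))^{-1}\zeta_K(\eta^{-1})\cO_p$, with Corollary~\ref{classnumbercor} specialized to $L=K_n(\frf_\chi)$ and $F=K_{n-1}(\frf_\chi)$, and then identify $\Det(C[1])=\Det^{-1}(C)$. The additional checks you record (a single $\fra$ prime to $6p\frf_\chi$ works uniformly, the relevant $\eta$ are non-trivial and ramified above $p$ so $z_p(\eta)=z(\eta)$ is $E$-rational) are exactly the implicit hypotheses being verified and are all correct.
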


\section{Iwasawa modules}

In this section we introduce the basic Iwasawa modules we want to study
and state some of their properties used later.

\subsection{The Iwasawa algebras $\Lambda$ and $\Omega$}

Consider inside $K(p^\infty):= \bigcup_{n\ge 1}K(p^n)$ the 
maximal $\bbZ_p^2$-extension $K_\infty$ of $K$, so that 
$$
\Gamma:=\Gal(K_\infty/K)\simeq\bbZ_p^2.
$$
We denote by $K\subset K_n\subset K_\infty$ the unique subextension with
Galois group $G_n:=\bbZ_p^2/p^n\bbZ_p^2$. 
For an ideal $0\neq \frf\subset \cO_K$ we define
\begin{equation}\label{Gfdefn}
\cG_\frf:=\Gal(K(\frf p^\infty)/K).
\end{equation}
We denote by $\Delta\subset \cG_\frf$ the torsion subgroup and fix once for all a splitting
$$
	\cG_\frf\simeq \Delta\times \Gamma.
$$

For each profinite group $\cG=\prolim \cG/\cH$ we define its \emph{Iwasawa algebra} to be
the inverse limit
$$
	\Lambda(\cG):=\prolim_{\cH\subset \cG}\bbZ_p[\cG/\cH].
$$
Two Iwasawa algebras are especially important in the sequel:
\begin{definition}\label{Iwasawa-alg-def}The \emph{Iwasawa algebra} for $\Gamma$ is denoted
by
$$
\Lambda:=\Lambda(\Gamma),
$$
which is (non-canonically) isomorphic to $\bbZ_p[[T,S]]$. The Iwasawa algebra for $\cG_\frf$ is
denoted by
$$
\Omega:=\Lambda(\cG_\frf),
$$
which is (non-canonically) isomorphic to 
$$
\Omega\isom \bbZ_p[\Delta][[T,S]].
$$
We also let $\Lambda_\cO:=\Lambda\wh\otimes_{\bbZ_p}\cO_p$ and $\Omega_\cO:=\Omega\wh\otimes_{\bbZ_p}\cO_p$
be the Iwasawa algebras with coefficients in $\cO_p$.
\end{definition}
Both Iwasawa algebras $\Lambda$ and $\Omega$ carry a natural action of $\Gal(\bar{K}/K)$,
which acts through its quotient $\Gamma$ (resp. $\cG_\frf$) by the canonical inclusions
$\Gamma\subset \Lambda^*$ (resp. $\cG_\frf\subset \Omega^*$). 
The $\Gal(\bar{K}/K)$-module $\Lambda$ is unramified outside of $p$ and $\Omega$ is unramified outside of
$\frf p$. 
Note that $\Lambda$ and $\Omega$ are products of local rings so that we can apply the
Nakayama lemma to each component of $\Lambda$ and $\Omega$.

\subsection{The basic Iwasawa modules}
Fix an integral ideal $\frf$ and let $\Omega=\Lambda(\cG_\frf)$. Let $\eta$ be a character of conductor $\frf_\eta\mid \frf$ and $\cO_p(\eta)$ the associated $\cO_p$-module
with $\Gal(\bar{K}/K)$-action by $\eta$ as defined in \ref{latticedefn}.
The action on  $\cO_p(\eta)$ is unramified outside of $p\frf_\eta$ and factors through
$\cG_{\frf}$. In particular, $\cO_p(\eta)$ is an $\Omega_\cO$-module and by restriction
also a $\Lambda_\cO$-module.

Recall that we consider $\cO_p(\eta)$ as \'etale sheaf on $\cO_K[1/p]$ via the map $j:\Spec (\cO_K[1/p\frf_\eta])\hookrightarrow \Spec (\cO_K[1/p])$ and that  we omit $j_*$ from the notation.
\begin{definition}
For the Iwasawa algebra $\Lambda$  let
$$
\Lambda(\eta):=\cO_p(\eta)\otimes_{\bbZ_p}\Lambda
$$
considered as \`etale sheaf (of $\Lambda_\cO$-modules) on $\Spec (\cO_K[1/p])$.
We also use the notation $\Lambda(\eta)(1):=\Lambda(\eta)\otimes_{\bbZ_p}\bbZ_p(1)$ and
$\Omega(1):=\Omega\otimes_{\bbZ_p}\bbZ_p(1)$.
\end{definition}
We have
\begin{equation}
H^i(\cO_K[1/p],\Lambda(\eta))=\prolim_{K\subset K_n\subset K_\infty}H^i(\cO_{K_n}[1/p],\cO_p(\eta))
\end{equation}
and
\begin{equation}
H^i(\cO_K[1/p\frf],\Omega(1))=\prolim_{K\subset F\subset K(p^\infty\frf)}H^i(\cO_{F}[1/p\frf],\bbZ_p(1)).
\end{equation}
In particular,
\begin{equation}\label{h0vanishing}
H^0(\cO_K[1/p\frf],\Omega(1))=0=H^0(\cO_K[1/p],\Lambda(\eta)(1)).
\end{equation}
Here the (left) $\Lambda_{\cO}$-module structure on $H^i(\cO_K[1/p],\Lambda(\eta))$ 
is induced by multiplication with the inverse on $\Lambda$, so that 
$\gamma\in \Gamma$ acts on $\Lambda(\eta)$ via $\eta^{-1}(\gamma)$
(see \cite{Ho-Ki} Appendix B for details).
We consider also the cohomology with
compact support
\begin{equation}
H^i_c(\cO_K[1/p],\Lambda(\eta))
\end{equation}
and the local cohomology groups 
\begin{equation}
H^i(K_v,\Lambda(\eta))
\end{equation}
and similarly for $\Omega(1)$.
These $\Lambda_\cO$-modules (resp. $\Omega$-modules)
are the basic Iwasawa modules, which are involved in the formulation of the main
conjecture. 

We collect some information about these Iwasawa modules. The following lemma
will be often used without further comment.

\begin{lemma} \label{perfectness}
Let $\Lambda_\cO$ be the basic Iwasawa algebras with coefficients in $\cO_p$.
Then the complexes of $\Lambda_\cO$-modules 
$$
R\Gamma(\cO_K[1/p],\Lambda( \eta)(1)),\  R\Gamma_c(\cO_K[1/p],\Lambda( \eta)(1))
\mbox{ and } R\Gamma(K_v,\Lambda( \eta)(1))
$$
are perfect. The same statement holds for the complexes of $\Omega$-modules
$$
R\Gamma(\cO_K[1/p\frf],\Omega(1)),\ 
R\Gamma_c(\cO_K[1/p\frf],\Omega(1))\mbox{ and }R\Gamma(K_v,\Omega(1)).
$$ 
For all primes
$\frl\nmid p$ of $K$ one has 
$$
	\Det_{\Lambda_\cO}R\Gamma_{\kappa(\frl)}(\cO_{K_\frl},\Lambda( \eta)(1))=\begin{cases}
								(1-\eta(\frl)\Frob^{-1}_\frl)^{-1}\Lambda_\cO&\mbox{ if }\eta \mbox{ is unramified at }\frl\\
								\Lambda_\cO&\mbox{ else}
	                                                                         \end{cases}
$$
where $\Frob^{-1}_\frl\in \Gal(K_\infty/K)$ is the inverse of Frobenius at $\frl$,
$\kappa(\frl)$ is the residue field at $\frl$ and $R\Gamma_{\kappa(\frl)}(\cO_{K_\frl},\Lambda( \eta)(1))$
is the complex which computes the cohomology with support in $\kappa(\frl)$.
\end{lemma}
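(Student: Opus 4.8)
The plan is to treat the three families of complexes in turn, reducing everything to the perfectness statements already established for the finite-level complexes in Lemma \ref{perfect}, and then to compute the local determinant by an explicit calculation with the two-term Frobenius complex.

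First I would observe that each Iwasawa complex is the derived inverse limit of the finite-level complexes, e.g. $R\Gamma(\cO_K[1/p],\Lambda(\eta)(1)) \simeq R\prolim_n R\Gamma(\cO_{K_n}[1/p],\cO_p(\eta)(1))$, and similarly for the compact-support and local versions, and for the $\Omega$-complexes using the tower $K(p^\infty\frf)/K$. By Lemma \ref{perfect} each finite-level term is a perfect complex of $\cO_p[G_n]$-modules (resp. $\cO_p[\Gal(K_n(\frf)/K)]$-modules), with cohomology concentrated in degrees $0,1,2$. The standard descent/limit criterion for perfectness over Iwasawa algebras (the complex has bounded, finitely generated cohomology after $\otimes^{\mathbb L}_{\Lambda_\cO}\Lambda_\cO/\frm$, or equivalently it is represented by a bounded complex of finitely generated $\Lambda_\cO$-modules since $\Lambda_\cO$ is noetherian) then upgrades this to perfectness over $\Lambda_\cO$; here one uses that $\Lambda_\cO$ and $\Omega$ are semilocal noetherian rings, as noted after Definition \ref{Iwasawa-alg-def}, so the Nakayama/Mittag-Leffler arguments apply componentwise. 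The compact-support version follows from the defining cone together with the same statement for the local complexes $R\Gamma(K_v,\Lambda(\eta)(1))$, which are perfect for the same reason (finite cohomological dimension of $K_v$ with $\bbZ_p$-coefficients, and $\cO_p[G_n]$ is regular, so no issue even at $p=2$). The $\Omega$-statements are formally identical, working over $\Omega$ instead of $\Lambda_\cO$.

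For the local determinant computation at $\frl\nmid p$, I would use that $R\Gamma_{\kappa(\frl)}(\cO_{K_\frl},\Lambda(\eta)(1))$ fits, via purity $i^!=i^*(-1)[-2]$, into the two-term complex computing cohomology with support, which (after the Tate twist is absorbed) is quasi-isomorphic to $\Lambda(\eta)^{I_\frl}_{\eta(\frl)} \xrightarrow{1-\Frob_\frl^{-1}} \Lambda(\eta)^{I_\frl}_{\eta(\frl)}$ placed in appropriate degrees. If $\eta$ is ramified at $\frl$ the inertia invariants vanish, so the complex is acyclic and the determinant is $\Lambda_\cO$. If $\eta$ is unramified at $\frl$, then $\Lambda(\eta)^{I_\frl}=\Lambda(\eta)$ is free of rank one over $\Lambda_\cO$ and $\Frob_\frl^{-1}$ acts by $\eta(\frl)$ times the image of $\Frob_\frl^{-1}\in\Gamma$ in $\Lambda_\cO^*$; since that element is a topological unit congruent to $1$, the map $1-\eta(\frl)\Frob_\frl^{-1}$ is injective with cokernel $\Lambda_\cO/(1-\eta(\frl)\Frob_\frl^{-1})$, a torsion module with characteristic element $(1-\eta(\frl)\Frob_\frl^{-1})$. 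Taking graded determinants of the two-term complex (one factor up, one down, shifted) yields $\Det_{\Lambda_\cO}R\Gamma_{\kappa(\frl)} = (1-\eta(\frl)\Frob_\frl^{-1})^{-1}\Lambda_\cO$, matching the asserted formula; here I am following the identification of $\Det^{-1}$ of a torsion module with its characteristic ideal recalled in \S\ref{determinants}.

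The main obstacle I expect is not any single computation but getting the perfectness descent at the prime $p=2$ and in the semilocal (not local) setting clean: one must be careful that "perfect over $\Lambda_\cO$" really does follow from "perfect at each finite level with uniformly bounded amplitude", which requires either the explicit $R\prolim$ argument with Mittag-Leffler for the finitely generated terms, or a reduction mod the maximal ideals of each local factor of $\Lambda_\cO$ together with the fact that a complex over a noetherian local ring with bounded finitely generated cohomology and bounded Tor-amplitude is perfect. Everything else — the cone manipulations for compact support, the purity triangle, and the rank-one Frobenius calculation — is routine once this foundational point is in place.
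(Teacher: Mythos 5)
Your approach is genuinely different from the paper's: the paper handles all the perfectness claims in one stroke by citing Fukaya--Kato 1.6.5~(2) for the complexes over $\cO_K[1/p\frf]$ (where the sheaves $\Lambda(\eta)$ and $\Omega$ are unramified), and then uses the localization sequence
$$
	\bigoplus_{\frl\mid\frf,\,\frl\nmid p}R\Gamma_{\kappa(\frl)}(\cO_{K_\frl},\Lambda(\eta)(1))\to
	R\Gamma(\cO_K[1/p],\Lambda(\eta)(1))\to R\Gamma(\cO_K[1/p\frf],\Lambda(\eta)(1))
$$
to reduce perfectness over $\cO_K[1/p]$ to perfectness over $\cO_K[1/p\frf]$ plus the explicit two-term local complexes. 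You instead try to reprove the Fukaya--Kato input from scratch via derived inverse limits of the finite-level complexes. That route can be made to work, but as written there are two genuine gaps.

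First, you invoke Lemma~\ref{perfect} as giving perfectness of $R\Gamma(\cO_{K_n}[1/p],\cO_p(\eta)(1))$ as a complex of $\cO_p[G_n]$-modules; but Lemma~\ref{perfect} only asserts perfectness over $\cO_p$, which is a much weaker statement. Since $\cO_p[G_n]$ is not regular when $p\mid\#G_n$, finite cohomological dimension (the proof of Lemma~\ref{perfect}) does \emph{not} translate into perfectness over the group ring, and the step ``perfect at each finite level'' is therefore not yet established in the form you need it. Second, the parenthetical criterion ``equivalently it is represented by a bounded complex of finitely generated $\Lambda_\cO$-modules since $\Lambda_\cO$ is noetherian'' is false as a criterion for perfectness: any bounded complex with finitely generated cohomology over a noetherian ring is represented by a bounded complex of finitely generated modules, but perfectness additionally requires finite Tor-amplitude, and $\Omega\isom\bbZ_p[\Delta][[S,T]]$ is not regular in general, so this is not automatic. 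You do eventually state the correct criterion (bounded finitely generated cohomology together with bounded Tor-amplitude), and that is exactly the point where the cohomological dimension~$2$ bound for $K$ imaginary quadratic must enter; the exposition just does not carry that through cleanly. Your computation of the local determinant via purity and the two-term Frobenius complex is correct and matches the paper's, apart from the stray subscript notation $\Lambda(\eta)^{I_\frl}_{\eta(\frl)}$, which should just read $\Lambda(\eta)^{I_\frl}\isom\Lambda_\cO$ with $\Frob_\frl^{-1}$ acting by $\eta(\frl)$ times the image of $\Frob_\frl^{-1}$ in $\Lambda_\cO^*$, so that the complex is $\bigl(\Lambda_\cO\xrightarrow{1-\eta(\frl)\Frob_\frl^{-1}}\Lambda_\cO\bigr)[-2]$.
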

\begin{proof}
The first statement and the second statement for  $\cO_K[1/p\frf]$ follow from \cite{Fu-Ka} 1.6.5 (2).
Using the localization sequence 
$$
	\bigoplus_{\frl\mid\frf,\frl\nmid p}R\Gamma_{\kappa(\frl)}(\cO_{K_\frl},\Lambda( \eta)(1))\to
	R\Gamma(\cO_K[1/p],\Lambda( \eta)(1))\to R\Gamma(\cO_K[1/p\frf],\Lambda( \eta)(1))
$$
it suffices to consider $R\Gamma_{\kappa(\frl)}(\cO_{K_\frl},\Lambda( \eta)(1))$,
where $\kappa(\frl)$ is the residue field of $\frl$.
By purity this is isomorphic to
$R\Gamma(\kappa(\frl),\Lambda( \eta)^{I_\frl})[-2]$, where $I_\frl$ is the inertia group. 
If $\eta$ is ramified at $\frl$, $I_\frl$ acts non trivially and
one gets $\Lambda( \eta)^{I_\frl}=0$. Otherwise the complex is represented by
$(\Lambda_\cO\xrightarrow{1-\eta(\frl)\Frob^{-1}_\frl}\Lambda_\cO)[-2]$, which is obviously perfect and
has the right determinant.
\end{proof}
The following lemma is true in much greater generality (see \cite{Fu-Ka} 1.6.5 (3)) but 
is stated here only in the case we need.
\begin{lemma}\label{coefficientequation}
One has 
$$
	\cO_p\otimes_\Lambda^\bbL R\Gamma(\cO_K[1/p],\Lambda(\eta)(1))\isom 
	R\Gamma(\cO_K[1/p],\cO_p(\eta)(1)).
$$
In particular, one has a spectral sequence
$$
	\Tor_{r}^{\Lambda}\left( \cO_p, H^s(\cO_K[1/p],\Lambda(\eta)(1) ) \right)\\
\Rightarrow H^{s-r}(\cO_K[1/p],\cO_p(\eta)(1)).
$$
\end{lemma}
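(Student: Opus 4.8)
The strategy is to deduce the lemma from the corresponding finite-level statement by passing to the inverse limit, the perfectness recorded in Lemma \ref{perfectness} being the key input; this is precisely the special case of \cite{Fu-Ka} 1.6.5 (3) alluded to above, and the plan below essentially spells that reference out.

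\emph{Step 1 (sheaf level).} Write $G_n=\Gamma/p^n\Gamma$ and let $\pi_n\colon\Spec\cO_{K_n}[1/p]\to\Spec\cO_K[1/p]$ be the associated cover, so that $\Lambda(\eta)(1)=\prolim_n\pi_{n,*}\cO_p(\eta)(1)$ with $\pi_{n,*}\cO_p(\eta)(1)\isom\cO_p(\eta)(1)\otimes_{\bbZ_p}\bbZ_p[G_n]$, the module structure being that of the second factor with the inversion convention of \cite{Ho-Ki} Appendix B. This sheaf is $\bbZ_p[G_n]$-free, so the untwisting isomorphism gives
$$\bbZ_p\otimes_{\bbZ_p[G_n]}^\bbL\bigl(\cO_p(\eta)(1)\otimes_{\bbZ_p}\bbZ_p[G_n]\bigr)\isom\cO_p(\eta)(1)$$
concentrated in degree $0$; likewise $\cO_p\otimes_\Lambda^\bbL\Lambda(\eta)(1)\isom\cO_p(\eta)(1)$ in the derived category of \'etale sheaves on $\cO_K[1/p]$, since $\Lambda(\eta)(1)$ is $\Lambda$-flat and the tautological Galois action on $\Lambda$ becomes trivial modulo the augmentation ideal.

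\emph{Step 2 (cohomology).} By Shapiro's lemma $R\Gamma(\cO_K[1/p],\pi_{n,*}\cO_p(\eta)(1))=R\Gamma(\cO_{K_n}[1/p],\cO_p(\eta)(1))$, hence $R\Gamma(\cO_K[1/p],\Lambda(\eta)(1))\isom R\prolim_n R\Gamma(\cO_{K_n}[1/p],\cO_p(\eta)(1))$. By Lemma \ref{perfectness} this complex is perfect over $\Lambda_\cO$, so $\cO_p\otimes_\Lambda^\bbL(-)$ may be computed by a bounded complex of finitely generated projectives and therefore commutes with $R\prolim$ along our tower. Combining this with Step 1 and the projection formula at each finite level,
$$\cO_p\otimes_\Lambda^\bbL R\Gamma(\cO_K[1/p],\Lambda(\eta)(1))\isom R\prolim_n\bigl(\bbZ_p\otimes_{\bbZ_p[G_n]}^\bbL R\Gamma(\cO_{K_n}[1/p],\cO_p(\eta)(1))\bigr)\isom R\Gamma(\cO_K[1/p],\cO_p(\eta)(1)),$$
where the system on the right is eventually constant, so the derived limit is an honest one. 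This is exactly the content of \cite{Fu-Ka} 1.6.5 (3), which I would cite for the bookkeeping.

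\emph{Step 3 (spectral sequence).} For the perfect complex $C:=R\Gamma(\cO_K[1/p],\Lambda(\eta)(1))$ over $\Lambda$ pick a bounded-above complex of projectives $P^\bullet\simeq C$ and a projective resolution $Q_\bullet\to\cO_p$; the two spectral sequences of the double complex $Q_\bullet\otimes_\Lambda P^\bullet$ yield the hyper-Tor spectral sequence $\Tor_r^\Lambda(\cO_p,H^s(C))\Rightarrow H^{s-r}(\cO_p\otimes_\Lambda^\bbL C)$, and Step 2 identifies the abutment with $H^{s-r}(\cO_K[1/p],\cO_p(\eta)(1))$, as asserted. The main obstacle is not conceptual but the careful handling of the inverse limit in Step 2 --- knowing that $R\Gamma(\cO_K[1/p],\Lambda(\eta)(1))$ genuinely represents the derived inverse limit of the finite-level complexes and that $\cO_p\otimes_\Lambda^\bbL(-)$ commutes with it --- which rests on the perfectness of Lemma \ref{perfectness} (via \cite{Fu-Ka} 1.6.5 (2)) and is precisely what \cite{Fu-Ka} 1.6.5 (3) packages, so in practice the proof reduces to citing loc.\ cit.\ and reading off the spectral sequence.
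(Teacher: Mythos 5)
Your proposal reconstructs the Fukaya--Kato argument, which is indeed what the paper cites. But the paper's actual proof is structured differently and more carefully: it invokes \cite{Fu-Ka}~1.6.5~(3) only for $\cO_K[1/pS]$ (where, per the convention of Definition~\ref{jconvention}, $\cO_p(\eta)$ and $\Lambda(\eta)$ are genuine locally constant sheaves), and then extends to $\cO_K[1/p]$ by the localization distinguished triangle
$$
	\bigoplus_{v\in S}R\Gamma(\kappa(v),\Lambda(\eta)^{I_v})[-2]\to R\Gamma(\cO_K[1/p],\Lambda(\eta)(1))\to
	R\Gamma(\cO_K[1/pS],\Lambda(\eta)(1)),
$$
noting that on the local terms the base change is elementary because $I_v$ for $v\nmid p$ acts trivially on $\Lambda$. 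You instead work directly on $\cO_K[1/p]$, which is the case where $\Lambda(\eta)(1)$ really means $j_*$ of a lisse sheaf; this is exactly the setting that \cite{Fu-Ka}~1.6.5~(3) does \emph{not} cover directly, and it is the reason the paper introduces the localization triangle at all. Your direct route can be made to work --- the stalk-level untwisting $\cO_p\otimes_\Lambda^\bbL\Lambda(\eta)(1)\isom\cO_p(\eta)(1)$ does hold even at the points of $\Spec\cO_K[1/p]\smallsetminus\Spec\cO_K[1/p\frf_\eta]$, because inertia at $v\nmid p$ acts trivially on the $\Lambda$ factor --- but your Step~1 simply asserts this without noting the $j_*$ subtlety, and in Step~2 the claim that ``$\cO_p\otimes_\Lambda^\bbL(-)$ may be computed by a bounded complex of finitely generated projectives and therefore commutes with $R\prolim$'' is not a standard deduction: perfectness of the limit complex gives you a finite projective resolution of the \emph{limit}, not compatibility with the $R\prolim$ of the finite-level terms. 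That step is precisely the nontrivial content of Fukaya--Kato's proposition (typically checked by reducing modulo powers of $p$ and comparing to finite-coefficient cohomology), so ``citing loc.~cit.'' is doing real work that your sketch presents as bookkeeping. In short: the underlying ideas (Shapiro, untwisting, perfectness) are right, but the paper's proof is both shorter and more careful on the one point your argument glosses over --- extending the base-change statement from the lisse locus $\cO_K[1/pS]$ to $\cO_K[1/p]$ --- and that is where the distinguished triangle earns its keep.
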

\begin{proof}(compare lemma \ref{perfect}).
This is shown for $\cO_K[1/pS]$  in \cite{Fu-Ka} 1.6.5 (3). 
With the distinguished triangle
$$
	\bigoplus_{v\in S}R\Gamma(\kappa(v),\Lambda(\eta)^{I_p})[-2]\to R\Gamma(\cO_K[1/p],\Lambda(\eta)(1))\to
	R\Gamma(\cO_K[1/pS],\Lambda(\eta)(1))
$$
the result follows in general. 
\end{proof}
Consider the triangle for cohomology with compact support
$$
	R\Gamma_c(\cO_K[1/p],\Lambda(\eta)(1))\to
	R\Gamma(\cO_K[1/p],\Lambda(\eta)(1))\to \bigoplus_{v\mid p}R\Gamma(K_v,\Lambda(\eta)(1))\oplus \Lambda(\eta)(1).
$$
For the computations of some Iwasawa modules, we need a local duality result: 
\begin{proposition}\label{Artin-Verdier} Let $T_p$ be a finitely generated projective $\Lambda_\cO$-module.
 Let $T_p^*:=\Hom_\cont(T_p,E_p/\cO_p)$.
If $T_p$ has a continuous $\Gal(\bar K_v/K_v)$-action one has an isomorphism
$$
	R\Gamma(K_v,T_p)\simeq R\Hom_{\cO_p}( R\Gamma(K_v,T_p^*(1)), E_p/\cO_p)[-2].
$$
\end{proposition}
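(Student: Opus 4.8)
\emph{Strategy.} The statement is local Tate duality with Iwasawa-theoretic coefficients, and the plan is to deduce it from the classical finite-level statement by a limit argument; alternatively one can quote the duality for perfect complexes of $\Lambda_\cO$-modules from \cite{Fu-Ka} 1.6. Both sides of the asserted isomorphism are additive in $T_p$, and every finitely generated projective $\Lambda_\cO$-module is componentwise free over $\Lambda_\cO$, so it suffices to treat $T_p=\Lambda_\cO^d$, and by additivity again $T_p=\Lambda_\cO$. Also the archimedean place is complex, where the statement is trivial, so we may assume $v$ is a finite place.

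\emph{Passing to finite level.} First I would choose a cofinal decreasing chain of open ideals $\frm_n\subset\Lambda_\cO$ with $\Lambda_\cO/\frm_n$ finite, so that $T_p=\prolim_n T_p/\frm_nT_p$ with each $T_p/\frm_nT_p$ a finite $\Gal(\bar K_v/K_v)$-module. Continuous cohomology of the profinite coefficient module is the inverse limit of the cohomology at finite level, and since the groups $H^i(K_v,T_p/\frm_nT_p)$ are finite the transition systems satisfy Mittag--Leffler, so
$$
R\Gamma(K_v,T_p)\isom R\prolim_n R\Gamma(K_v,T_p/\frm_nT_p),
$$
a complex of finitely generated $\Lambda_\cO$-modules (lemma \ref{perfectness}). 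Dually, $T_p^*(1)=\indlim_n (T_p/\frm_nT_p)^*(1)$ is discrete, and Galois cohomology of discrete modules commutes with filtered colimits, so $R\Gamma(K_v,T_p^*(1))\isom\indlim_n R\Gamma(K_v,(T_p/\frm_nT_p)^*(1))$; applying $R\Hom_{\cO_p}(-,E_p/\cO_p)$, which carries this colimit to the corresponding limit, reduces the proposition to the statement at each finite level, together with compatibility of the finite-level isomorphisms with the transition maps.

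\emph{Finite level.} At finite level this is classical local Tate duality (see \cite{Milne} I.2.3): for a finite $\cO_p[\Gal(\bar K_v/K_v)]$-module $N$ with $v$ finite, the cup product and the invariant isomorphism $H^2(K_v,\mu_{p^\infty})\isom\bbQ_p/\bbZ_p$ give
$$
R\Gamma(K_v,N)\isom R\Hom_{\bbZ_p}\!\bigl(R\Gamma(K_v,N^\vee(1)),\bbQ_p/\bbZ_p\bigr)[-2],
$$
where $N^\vee=\Hom_{\bbZ_p}(N,\bbQ_p/\bbZ_p)$. One identifies $\Hom_{\bbZ_p}(-,\bbQ_p/\bbZ_p)$ with $\Hom_{\cO_p}(-,E_p/\cO_p)$ on finite $\cO_p$-modules (the two dualizing modules differ by a twist by the inverse different of $E/\bbQ$, which affects neither the existence of the isomorphism nor its $\Lambda_\cO$-linearity), and notes that $(T_p/\frm_nT_p)^\vee(1)$ is the reduction of $T_p^*(1)$. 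Taking $N=T_p/\frm_nT_p$ and letting $n$ vary, the structure maps are corestrictions on one side and duals of inclusions on the other, and classical duality intertwines them; passing to the limit yields the asserted isomorphism in the derived category of $\Lambda_\cO$-modules, the shift $[-2]$ coming from the cohomological dimension of $K_v$.

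\emph{Main obstacle.} The genuinely delicate point is not the duality itself but the bookkeeping that makes the limit rigorous: one needs that $R\Gamma(K_v,T_p^*(1))$ has cofinitely generated cohomology so that its $\cO_p$-Pontryagin dual is again a perfect complex of $\Lambda_\cO$-modules, that no $\prolim^1$ terms intervene, and that the finite-level dualities can be chosen compatibly with the transition maps so as to glue to a morphism of complexes rather than merely an abstract isomorphism at each stage. All of this is contained in the duality formalism of \cite{Fu-Ka} 1.6, which one may cite directly; I would spell out the limit argument only if a self-contained treatment is desired.
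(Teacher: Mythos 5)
Your overall approach coincides with the paper's: the authors dispose of this proposition in a single line, observing that it is a reformulation of the classical local duality theorem (Milne I.2.3), and your limit argument is precisely the detail they leave implicit. The finite-level reduction, the identification of $\Hom_{\bbZ_p}(-,\bbQ_p/\bbZ_p)$ with $\Hom_{\cO_p}(-,E_p/\cO_p)$ on finite modules, the Mittag--Leffler observation, and the closing remark that the bookkeeping is codified in \cite{Fu-Ka} 1.6 are all reasonable ways to make that one-sentence citation rigorous.

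One step in your write-up does not work as stated and should be removed. You claim that since every finitely generated projective $\Lambda_\cO$-module is (componentwise) free, ``by additivity again'' one may reduce to $T_p=\Lambda_\cO$. Additivity here is additivity in the category of $\Gal(\bar K_v/K_v)$-equivariant $\Lambda_\cO$-modules, but a free $\Lambda_\cO$-module $\Lambda_\cO^d$ equipped with a continuous Galois action need not decompose into a direct sum of $d$ rank-one Galois stable summands; the Galois action can and generally does mix the coordinates. So the reduction to $d=1$ is unjustified. Fortunately it is also unnecessary: nothing in the finite-level duality or the limit argument ever uses $\rk T_p=1$, since Milne's theorem applies to arbitrary finite Galois modules $N=T_p/\frm_nT_p$. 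Simply delete the reduction to $T_p=\Lambda_\cO$ and run the rest of the argument for a general projective $T_p$; the projectivity and perfectness (lemma \ref{perfectness}) are still what you need to know that the resulting complex is perfect over $\Lambda_\cO$.
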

\begin{proof}
This isomorphism is just a reformulation of 
the classical duality theorem (\cite{Milne} I 2.3.). 
\end{proof}
\begin{lemma}\label{lambdastructure} The modules
$$
H^2(K_v,\Lambda(\eta)(1))\mbox{ and }H^2(K_v,\Omega_\cO(1))
$$
for $v\mid p$
are finitely generated $\cO_p$-modules.
In particular, they are $\Lambda_\cO$-pseudo-null
 (resp. $\Omega_\cO$-pseudo-null).
\end{lemma}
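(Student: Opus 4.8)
The plan is to use local Tate duality, as recorded in Proposition~\ref{Artin-Verdier}, to express $H^2(K_v,\Lambda(\eta)(1))$ as the Pontryagin dual of an $H^0$, which is automatically small. Concretely, for $v\mid p$ apply the proposition with $T_p=\Lambda_\cO(\eta)(1)$ (a finitely generated projective $\Lambda_\cO$-module carrying a continuous $\Gal(\bar K_v/K_v)$-action, the action factoring through $\Gamma$ via $\eta^{-1}$ together with the cyclotomic character on the Tate twist). Since $T_p^*(1)=\Hom_{\cont}(\Lambda_\cO(\eta)(1),E_p/\cO_p)(1)\cong (\Lambda_\cO^\vee)(\eta^{-1})$ as a Galois module, the duality isomorphism gives
$$
	H^2(K_v,\Lambda(\eta)(1))\cong \Hom_{\cO_p}\bigl(H^0(K_v,(\Lambda_\cO^\vee)(\eta^{-1})),E_p/\cO_p\bigr).
$$
The same argument applies verbatim to $\Omega_\cO(1)$, replacing $\Lambda_\cO(\eta)$ by $\Omega_\cO$ and the character $\eta^{-1}$ by the tautological $\cG_\frf$-action.

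The heart of the matter is then to show that this $H^0$ is a cofinitely generated $\cO_p$-module, equivalently that its Pontryagin dual is a finitely generated $\cO_p$-module. First I would observe that $H^0(K_v,(\Lambda_\cO^\vee)(\eta^{-1}))$ consists of the $\Gal(\bar K_v/K_v)$-invariants, and the Galois action here is through the quotient $\Gamma$ (up to the unramified twist coming from $\eta$ and the cyclotomic character). The decisive input is the statement recorded in \ref{notations}--\ref{es-definition}: \emph{no finite prime of $\cO_K$ splits completely in $K_\infty$}, so the image of the decomposition group $\Gal(\bar K_v/K_v)$ in $\Gamma\cong\bbZ_p^2$ is an infinite (in fact open, since $\Gamma$ has $\bbZ_p$-rank $2$ and the local extension $K_{\infty,v}/K_v$ is a $\bbZ_p^2$- or at least $\bbZ_p$-extension) subgroup. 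Taking invariants of $\Lambda_\cO^\vee$ under a closed subgroup $\Gamma'\subset\Gamma$ that acts through multiplication by group elements amounts to the kernel of the augmentation-type operators coming from topological generators of $\Gamma'$; since $\Gamma'$ is infinite, at least one such operator acts on $\Lambda_\cO$ as a non-zero-divisor (here one uses that $\Lambda_\cO$ is a product of regular local domains of dimension $\ge 2$, so $\gamma-u$ for $\gamma\in\Gamma'$ nontrivial and $u$ a unit is a non-zero-divisor), and hence $(\Lambda_\cO^\vee)^{\Gamma'}$ is cofinitely generated over $\cO_p$. One then bounds $H^0(K_v,-)$ by $(\Lambda_\cO^\vee)^{\Gamma'}$, where $\Gamma'$ is the image of the decomposition group.

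The final sentence is immediate once finite generation over $\cO_p$ is known: a finitely generated $\cO_p$-module is $\Lambda_\cO$-torsion with support of codimension $\ge 2$ in $\Spec\Lambda_\cO$ (the two ``cyclotomic'' variables $T,S$ act by operators annihilating no nonzero submodule up to finite index, or more simply: $\dim_{\cO_p}$-finiteness forces the module to be killed by an ideal of height $\ge 2$), hence it is $\Lambda_\cO$-pseudo-null; similarly over $\Omega_\cO$.

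\medskip

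\noindent\textbf{Main obstacle.} The one step that needs genuine care, rather than formal manipulation, is the passage from ``no finite prime splits completely in $K_\infty$'' to the quantitative statement that $H^0(K_v,(\Lambda_\cO^\vee)(\eta^{-1}))$ is $\cO_p$-cofinitely generated. One must pin down precisely which subgroup of $\Gamma$ the local Galois group surjects onto (for $v\mid p$ both $\bbZ_p$-extensions inside $K_\infty$ are ramified at $v$ when $p$ is inert or ramified, and at least one of the two primes above $p$ is infinitely ramified when $p$ splits), and then verify that the corresponding operators on $\Lambda_\cO$ — which are of the form $\gamma - \chi_{\mathrm{cyc}}(\gamma)\eta^{-1}(\gamma)$ for suitable $\gamma$ — are non-zero-divisors. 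This is where the $\bbZ_p^2$ (as opposed to $\bbZ_p$) nature of the extension is exploited: even after quotienting by the twist, enough of $\Gamma$ acts to cut the module down to finite $\cO_p$-rank. Everything else (the duality isomorphism, Pontryagin duality exchanging cofinitely generated and finitely generated $\cO_p$-modules, and ``finite over $\cO_p$ $\Rightarrow$ pseudo-null'') is formal.
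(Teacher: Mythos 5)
Your overall strategy (local Tate duality to convert $H^2$ into the Pontryagin dual of an $H^0$, then argue that this $H^0$ is $\cO_p$-cofinitely generated because the local Galois group acts through a large enough quotient of $\Gamma$) is in the same spirit as the paper's proof. The paper applies local duality at each finite level to get $H^2(K_v\otimes_K K(p^n\frf),\cO_p(1))\simeq\cO_p[\Gal(K(p^n\frf)/K)/D_{v,n}]$ and passes to the inverse limit to obtain the explicit identification $H^2(K_v,\Omega_\cO(1))\simeq\cO_p[[\cG_\frf/D_v]]$; your formulation via Proposition~\ref{Artin-Verdier} is the limit-level version of the same duality. The paper also reduces the $\Lambda(\eta)$ case to the $\Omega$ case first via the surjection $\Omega_\cO\twoheadrightarrow\Lambda(\eta)$, a minor simplification you skip.

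The genuine gap is the inference ``since $\Gamma'$ is infinite, at least one operator $\gamma-u$ is a non-zero-divisor \dots\ and hence $(\Lambda_\cO^\vee)^{\Gamma'}$ is cofinitely generated over $\cO_p$.'' This is false as stated: each factor of $\Lambda_\cO$ is a $3$-dimensional regular local ring, and killing a single non-zero-divisor leaves a $2$-dimensional ring --- for instance, if $\Gamma'$ were a rank-one closed subgroup of $\Gamma\cong\bbZ_p^2$, then $\Lambda_\cO/(\gamma-u)\Lambda_\cO$ would have the size of a one-variable power series ring, which is nowhere near finitely generated over $\cO_p$. What you actually need is that the image of the decomposition group is \emph{open} in $\Gamma$ (equivalently, that $\cG_\frf/D_v$ is finite), so that the invariants are cut out by an ideal of height $2$. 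This is exactly what the paper proves: when $p$ is inert or ramified in $K$ the ramification group of $v$ already has finite index in $\cG_\frf$, and when $p=\frp\frp'$ splits one invokes de Shalit II 1.9 to see that $\frp$ is totally ramified in $K_\infty^{\frp}$ and finitely decomposed in $K_\infty^{\frp'}$ (and symmetrically for $\frp'$), whence $\cG_\frf/D_v$ is finite. You flag this difficulty in your ``Main obstacle'' paragraph, but the parenthetical in the main argument asserting openness ``since $\dots K_{\infty,v}/K_v$ is a $\bbZ_p^2$- or at least $\bbZ_p$-extension'' leaves the rank-one possibility open, and without the ramification-theoretic input from de Shalit (or an equivalent case analysis) the proof does not close.
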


\begin{proof}
The surjective map $\Omega_\cO\to \Lambda(\eta)$ induces a surjection
$$
	H^2(K_v,\Omega_\cO(1))\to H^2(K_v,\Lambda(\eta)(1))
$$
and it suffices to proof that  $H^2(K_v,\Omega_\cO(1))$ is a finitely generated $\cO_p$-module.
We have 
$$
	H^2(K_v,\Omega_\cO(1))\isom \prolim_n H^2(K_v\otimes_KK(p^n\frf),\cO_p(1))\simeq
	\cO_p[[\cG_\frf/D_v]]
$$
where $D_v$ is the decomposition group in $\cG_\frf$, because by local duality
$$
	H^2(K_v\otimes_K K(p^n\frf),\cO_p(1))\simeq H^0(K_v\otimes_KK(p^n\frf),E_p/\cO_p)^*\simeq \cO_p[\Gal(K(p^n\frf)/K)/D_{v,n}]
$$
where $D_{v,n}$ is the decomposition group of $v$ in $K(p^n\frf)$.
If $p$ is inert or ramified in $K$, the ramification group of $v$ has already finite index in $\cG_\frf$ and
the claim follows. If $p=\frp\frp'$ splits in $K$ and we decompose $K_\infty=K_\infty^{\frp}K_\infty^{\frp'}$,
with $K_\infty^{\frp}$ (resp. $K_\infty^{\frp'}$) the maximal unramified outside of $\frp$ 
(resp. outside of $\frp'$) subextension, then $\frp$ is totally ramified in $K_\infty^{\frp}$ and
finitely decomposed in $K_\infty^{\frp'}$ (see \cite{deShalit} II 1.9). Similarly for $\frp'$ and
it follows that in both cases $\cG_\frf/D_v$ is finite.
\end{proof}

We finally study the operation of twisting with a continuous character $\varrho:\Gamma\to \cO_p^*$. 
\begin{lemma}\label{twistinglemma}
	Let $\varrho:\Gamma\to \cO_p^*$ be a continuous character and consider $\Lambda(\varrho)$. Then
	there is an isomorphism of $\Gal(\bar{K}/K)$-modules depending on the generator
	$t_p(\varrho)$ of $\cO_p(\varrho)$
	$$
		\Lambda_\cO\simeq \Lambda(\varrho)
	$$
	given by $\gamma\mapsto \gamma\otimes \varrho(\gamma)t_p(\varrho)$. In particular,
	one has isomorphisms
	$$
		H^i(\cO_K[1/p],\Lambda(\eta))\simeq H^i(\cO_K[1/p],\Lambda(\eta\varrho))
	$$
	for all $i\ge 0$.
\end{lemma}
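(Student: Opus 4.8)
The plan is to construct the claimed $\Gal(\bar K/K)$-equivariant isomorphism $\Lambda_\cO \simeq \Lambda(\varrho)$ by hand and then transport it through \'etale cohomology. First I would fix the generator $t_p(\varrho)$ of the free rank-one $\cO_p$-module $\cO_p(\varrho)$ and recall that $\Lambda(\varrho) = \cO_p(\varrho)\otimes_{\bbZ_p}\Lambda$ carries the diagonal $\Gamma$-action (equivalently $\Gal(\bar K/K)$-action through $\Gamma$): on the $\Lambda$-factor $\gamma$ acts by multiplication by $\gamma\in\Gamma\subset\Lambda^*$, and on $\cO_p(\varrho)$ it acts by $\varrho(\gamma)$. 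I would then define the $\cO_p$-module map $\Lambda_\cO\to\Lambda(\varrho)$ on the topological generators by $\gamma\mapsto \gamma\otimes\varrho(\gamma)t_p(\varrho)$ and extend $\cO_p$-linearly and continuously. Since $\{\gamma : \gamma\in\Gamma\}$ is a topological $\cO_p$-basis of $\Lambda_\cO$ and $\{\gamma\otimes t_p(\varrho)\}$ is one of $\Lambda(\varrho)$, and $\varrho(\gamma)\in\cO_p^*$, this map is a continuous $\cO_p$-module isomorphism with inverse $\gamma\otimes t_p(\varrho)\mapsto \varrho(\gamma)^{-1}\gamma$.

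The key point is Galois equivariance. For $\sigma\in\Gal(\bar K/K)$ with image $\gamma_\sigma\in\Gamma$, the action on $\Lambda_\cO$ sends $\gamma$ to $\gamma_\sigma\gamma$, while the action on $\Lambda(\varrho)$ sends $\gamma\otimes\varrho(\gamma)t_p(\varrho)$ to $\gamma_\sigma\gamma\otimes\varrho(\gamma_\sigma)\varrho(\gamma)t_p(\varrho) = \gamma_\sigma\gamma\otimes\varrho(\gamma_\sigma\gamma)t_p(\varrho)$, using multiplicativity of $\varrho$. Hence the map intertwines the two actions, i.e.\ it is an isomorphism of \'etale sheaves on $\Spec(\cO_K[1/p])$ (both being unramified outside $p$). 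Tensoring with $\cO_p(\eta)$ over $\cO_p$ gives $\Lambda(\eta)\simeq\Lambda(\eta)\otimes_{\bbZ_p}\Lambda(\varrho)$, and identifying the latter with $\Lambda(\eta\varrho)$ — which holds because $\cO_p(\eta)\otimes_{\cO_p}\cO_p(\varrho)\simeq\cO_p(\eta\varrho)$ as Galois modules and the $\Lambda$-factors match up — yields an isomorphism of \'etale sheaves $\Lambda(\eta)\simeq\Lambda(\eta\varrho)$. Applying $R\Gamma(\cO_K[1/p],-)$ and taking cohomology gives the asserted isomorphisms $H^i(\cO_K[1/p],\Lambda(\eta))\simeq H^i(\cO_K[1/p],\Lambda(\eta\varrho))$ for all $i\ge 0$.

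The only genuinely delicate point is bookkeeping with the conventions: the module structure on $H^i(\cO_K[1/p],\Lambda(\eta))$ was defined (following \cite{Ho-Ki} Appendix B) via multiplication by the inverse on $\Lambda$, so $\gamma\in\Gamma$ acts on $\Lambda(\eta)$ via $\eta^{-1}(\gamma)$; I would double-check that the isomorphism above is compatible with this twist of the module structure, i.e.\ that it is not merely an isomorphism of abelian groups but of $\Lambda_\cO$-modules up to the intended reindexing. This is the main obstacle, but it is purely formal: since $\varrho$ takes values in $\cO_p^*$, multiplication by $\varrho(\gamma)$ is an automorphism of the coefficient ring and the contragredient bookkeeping goes through unchanged, so no essential difficulty arises. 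A standard reference such as \cite{Ho-Ki} or \cite{Fu-Ka} can be cited for the general twisting formalism if one prefers to avoid writing out the verification.
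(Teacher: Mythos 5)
Your argument is correct and is essentially the detailed verification of what the paper dismisses in one line as ``obviously an isomorphism of $\Gal(\bar K/K)$-modules.'' The equivariance check (using multiplicativity of $\varrho$) and the passage to $\Lambda(\eta)\simeq\Lambda(\eta\varrho)$ by tensoring with $\cO_p(\eta)$, then applying $R\Gamma(\cO_K[1/p],-)$, are exactly the intended reasoning. One small slip: the display ``$\Lambda(\eta)\simeq\Lambda(\eta)\otimes_{\bbZ_p}\Lambda(\varrho)$'' should read $\Lambda(\eta)\simeq\cO_p(\eta)\otimes_{\cO_p}\Lambda(\varrho)$ (tensoring $\Lambda_\cO\simeq\Lambda(\varrho)$ with $\cO_p(\eta)$ over $\cO_p$); writing $\Lambda(\eta)\otimes_{\bbZ_p}\Lambda(\varrho)$ introduces a spurious second $\Lambda$-factor. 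Your next sentence, which invokes $\cO_p(\eta)\otimes_{\cO_p}\cO_p(\varrho)\simeq\cO_p(\eta\varrho)$, makes the intended identification clear, so this is only a typo and not a gap. Your closing remark about the $\Lambda_\cO$-module structure (semi-linearity under the automorphism $\gamma\mapsto\varrho(\gamma)\gamma$ of $\Lambda_\cO$) is the right observation and is indeed all that needs saying; the lemma only asserts the existence of isomorphisms of cohomology groups, not $\Lambda_\cO$-linear ones.
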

\begin{proof}
As $\Lambda_\cO\simeq \Lambda(\varrho)$ is obviously an isomorphism of $\Gal(\bar{K}/K)$-modules,
the statement follows.
\end{proof}

\section{Statement of the two main conjectures}

Recall the definition of the Iwasawa algebras $\Lambda$ and $\Omega$ from \ref{Iwasawa-alg-def}.
We will formulate in this section two main-conjectures. One for the ring $\Lambda$, which
corresponds to the statement of the \emph{main conjecture} decomposed into characters, and another for the ring $\Omega$, which is elsewhere called the \emph{equivariant main conjecture}. 

The $\Omega$-main-conjecture is apparently stronger because it is an equivariant statement, which
does not involve any characters. Nevertheless, we will
deduce the $\Omega$-main-conjecture from the $\Lambda$-main-conjecture for all $\cO_p(\eta)$
by a simple observation, which is inspired by the work of Burns-Greither \cite{Bu-Gr} for the cyclotomic case and was first explained by Witte in \cite{Witte}. 
\subsection{Preliminary notations}
Fix an integral ideal $\frf\subset \cO_K$ and define $\Omega$ and $\Lambda$ as in \ref{Iwasawa-alg-def}.
For each integral ideal $\fra$, which is prime to $6p\frf$, we denote by
$$
	\sigma_\fra\in \Omega_\cO\mbox{ resp. }\sigma_\fra\in\Lambda_\cO
$$
the Artin symbol $(\fra,K(p^\infty\frf)/K)$ resp. $(\fra,K_\infty/K)$ if no
confusion is possible. The elements $\N\fra-\sigma_\fra\in \Omega_\cO$ resp. in $\Lambda_\cO$
are no zero-divisors as their images under the canonical maps
$$
	\Omega_\cO\to \cO_p(\eta)\mbox{ resp. }\Lambda_\cO\to \cO_p(\eta)
$$
(mapping $\gamma\in \cG_\frf$ to $\eta(\gamma)$)
are given by $\N\fra-\eta^{-1}(\fra)$, which is invertible in $\cO_p(\eta)\otimes_{\cO_p}E$
for all non-trivial characters $\eta$  of finite order. Denote by
$$
	\cJ_\Omega\mbox{ resp. }\cJ_\Lambda
$$
the annihilator of $\mu_{p^\infty}(K(p^\infty\frf))$ resp. $\mu_{p^\infty}(K_\infty)$
in $\Omega$ resp. $\Lambda$. It is easy to see that $\cJ_\Omega$ resp. $\cJ_\Lambda$ is generated by $\N\fra-\sigma_\fra$
for all $\fra$ prime to $6p\frf$. 
Note that $\cJ_\Omega$ (resp. $\cJ_\Lambda$) can also be described as the kernel
of the map $\Omega\to \bbZ_p$ (resp. $\Lambda\to \bbZ_p$) induced by the cyclotomic
character (resp. the restriction of the cyclotomic character to $\Gamma$).
This implies that $\Omega/\cJ_\Omega$ and $\Lambda/\cJ_\Lambda$
are isomorphic to $\bbZ_p$ and are hence pseudo-null modules. In particular, for all
prime ideals $\frq\subset \Omega$ resp. $\frq\subset \Lambda$ of height $1$,
$$
	(\cJ_\Omega)_\frq\simeq(\Omega)_\frq\mbox{ resp. }(\cJ_\Lambda)_\frq\simeq(\Lambda)_\frq.
$$
\begin{lemma}
The ideals $\cJ_\Omega\subset \Omega$ resp. $\cJ_\Lambda\subset \Lambda$ are perfect
$\Omega$- resp. $\Lambda$-modules.
\end{lemma}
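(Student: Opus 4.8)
The plan is to exhibit an explicit free resolution of $\cJ_\Omega$ (resp. $\cJ_\Lambda$) of length one over $\Omega$ (resp. $\Lambda$), which immediately shows perfectness. The starting point is the description already recorded in the excerpt: $\cJ_\Omega$ is the kernel of the augmentation-type map $\varepsilon:\Omega\to\bbZ_p$ induced by the cyclotomic character, and similarly for $\cJ_\Lambda$. Thus there is a short exact sequence
$$
0\to\cJ_\Omega\to\Omega\xrightarrow{\varepsilon}\bbZ_p\to 0.
$$
Since $\Omega$ is a perfect $\Omega$-module (it is free of rank one) and $\bbZ_p\simeq\Omega/\cJ_\Omega$ is pseudo-null hence in particular a finitely generated $\Omega$-module, it suffices to prove that $\bbZ_p$ is a perfect $\Omega$-module; then $\cJ_\Omega$ is perfect as the ``mapping fibre'' of a map of perfect complexes (perfect complexes are closed under taking cones and shifts, hence under the triangle $\cJ_\Omega\to\Omega\to\bbZ_p$).

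To see that $\bbZ_p$ is perfect over $\Omega$, I would use the (non-canonical) isomorphism $\Omega\simeq\bbZ_p[\Delta][[T,S]]$ from \ref{Iwasawa-alg-def}, under which $\varepsilon$ sends the two topological generators of $\Gamma$ to $1$ and kills the relevant piece of $\bbZ_p[\Delta]$; concretely $\bbZ_p\simeq\Omega/(\cJ_\Omega)$ where $\cJ_\Omega$ is generated by a regular sequence of length two in the power series variables together with the augmentation on the finite part $\bbZ_p[\Delta]$. The key structural input is that $\Delta$ has order prime to $p$ (it is the torsion subgroup of $\cG_\frf$, and the wild part has been split off into $\Gamma$), so $\bbZ_p[\Delta]$ is a finite product of unramified extensions of $\bbZ_p$, in particular a regular ring, and $\bbZ_p$ is a projective $\bbZ_p[\Delta]$-module (a direct factor). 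Hence $\Omega$ is regular — a finite product of regular local rings $\cO'[[T,S]]$ — and $\bbZ_p$, being a finitely generated module over the regular ring $\Omega$, automatically has finite projective dimension, i.e.\ is perfect. The same argument applies verbatim to $\Lambda\simeq\bbZ_p[[T,S]]$, which is even a regular local ring.

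Alternatively, and more explicitly (this is the version I would actually write down, to avoid invoking finiteness of global dimension), I would produce a Koszul-type resolution directly: choose topological generators $\gamma_1,\gamma_2$ of $\Gamma$, so that $\gamma_1-1,\gamma_2-1$ generate $\cJ_\Lambda$ and form a regular sequence in $\Lambda$; the Koszul complex on $(\gamma_1-1,\gamma_2-1)$ is then a finite free resolution of $\Lambda/\cJ_\Lambda=\bbZ_p$, exhibiting $\bbZ_p$ — and hence $\cJ_\Lambda$ — as perfect. For $\Omega$ one tensors this Koszul complex (over $\bbZ_p$) with a finite free resolution of $\bbZ_p$ as a $\bbZ_p[\Delta]$-module; since $|\Delta|$ is prime to $p$, the trivial module $\bbZ_p$ is already $\bbZ_p[\Delta]$-projective, so in fact $\bbZ_p[\Delta]\to\bbZ_p$ splits and one only needs the two-term Koszul complex in the $T,S$ variables, again of finite length. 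I expect the only point requiring a little care — and hence the main (minor) obstacle — is bookkeeping the splitting $\cG_\frf\simeq\Delta\times\Gamma$ correctly so that $\varepsilon$ really does restrict to the standard augmentation on each factor; once that is set up, perfectness is formal.
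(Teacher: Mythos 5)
Your argument for $\Lambda$ is fine: $\Lambda\simeq\bbZ_p[[T,S]]$ is regular local, so the quotient $\bbZ_p=\Lambda/\cJ_\Lambda$ has a finite free resolution (indeed a Koszul one on $\gamma_1-1,\gamma_2-1$), and $\cJ_\Lambda$ is perfect.

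For $\Omega$, however, there is a genuine error. You assert that $\Delta$ has order prime to $p$ ``because it is the torsion subgroup of $\cG_\frf$ and the wild part has been split off into $\Gamma$.'' That does not follow: $\Gamma\simeq\bbZ_p^2$ is the torsion-free quotient, but the torsion subgroup $\Delta$ of $\cG_\frf$ can very well contain non-trivial $p$-torsion (the paper itself only \emph{assumes} $(|\Delta|,p)=1$ as an additional hypothesis in a later subsection on the comparison with the classical formulation, precisely because it is not automatic). If $p\mid|\Delta|$, then $\bbZ_p[\Delta]$ is not regular, $\Omega\simeq\bbZ_p[\Delta][[T,S]]$ is not regular, and the trivial module $\bbZ_p$ does not in general have finite projective dimension over $\Omega$; e.g.\ $\bbZ_p$ has infinite projective dimension over $\bbZ_p[\bbZ/p]$. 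So the step ``$\Omega$ is regular, hence $\bbZ_p$ is perfect'' is not available, and neither is the step ``$\bbZ_p$ is $\bbZ_p[\Delta]$-projective.''

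The paper instead writes $\Delta=\Delta'\times\Delta''$ with $\Delta'$ the $p$-part and $\Delta''$ of order prime to $p$, and observes that the cyclotomic character $\Omega\to\bbZ_p$ factors through the regular ring $A:=\bbZ_p[\Delta''][[T,S]]$, which is both a quotient and a direct summand of $\Omega$, hence projective as an $\Omega$-module. Then one takes a finite free $A$-resolution of $\bbZ_p$ and views it as a resolution by projective $\Omega$-modules. Your proposal is missing exactly this factorization through the prime-to-$p$ quotient; without it, the claim that $\bbZ_p$ is $\Omega$-perfect is unjustified in general.
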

\begin{proof}
	As $\Lambda$ is regular, the quotient $\Lambda/\cJ_\Lambda$ is perfect and the
statement is clear in this case. For $\Omega\isom \bbZ_p[\Delta][[T,S]]$, write 
$\Delta=\Delta'\times \Delta''$, where the order of $\Delta'$ is divisible by $p$ and
the order of $\Delta''$ is prime to $p$. Then the cyclotmic character $\Omega\to \bbZ_p$
factors through the regular ring $A:=\bbZ_p[\Delta''][[T,S]]$. Note that $A$ is a quotient 
of $\Omega$ but also a direct summand. In particular, $A$ is projective as $\Omega$-module.
Let $A\to \bbZ_p$ be given by the cyclotomic character. As $A$ is regular, this has a finite
resolution by free $A$-modules of finite rank, so in particular by projective $\Omega$-modules.
This implies that $\cJ_\Omega$ as the kernel of $\Omega\to \bbZ_p$ is also perfect.
\end{proof}

\subsection{The $\Lambda$-main-conjecture}

Consider a character $\chi:G(\frf_\chi)\to E^*$ of conductor $\frf_\chi$ and
fix $\fra$ prime to $6p\frf_\chi$.
In definition \ref{twisted-euler-defn} we have defined elements 
$$
_\fra\zeta_{K_n}(\chi)\in H^1(\cO_{K_n}[1/p],\cO_p(\chi)(1)),
$$
which are part of an Euler system in the sense of definition \ref{rubin-defn}. Note that we retain the subscript $\fra$ for this element for the remainder of the paper.  In particular, 
these elements are norm compatible in the
$K_\infty$-direction and we can define
$$
	{_\fra\zeta(\chi)}:=\prolim_n{_\fra\zeta_{K_n}(\chi)}\in H^1(\cO_{K}[1/p],\Lambda(\chi)(1)).
$$
We define
\begin{equation}\label{zetachidefn}
	\zeta(\chi):=(\N\fra-\sigma_\fra)^{-1}{_\fra\zeta(\chi)}\in H^1(\cO_{K}[1/p],\Lambda(\chi)(1))\otimes_{\Lambda_\cO}Q(\Lambda_\cO),
\end{equation}
where $Q(\Lambda_\cO)$ is the total quotient ring of $\Lambda_\cO$. Using proposition \ref{unitproperties} (3) this
element is independent of $\fra$ and by definition it satisfies
$$
	(\N\fra-\sigma_\fra)\zeta(\chi)\in H^1(\cO_{K}[1/p],\Lambda(\chi)(1)).
$$
We consider the submodule 
$\cJ_\Lambda(\zeta(\chi))\subset H^1(\cO_{K}[1/p],\Lambda(\chi)(1))$. Recall that $\zeta(\chi)$ depends
on our choice of a generator $t_p(\chi)$ of the lattice $\cO_p(\chi)$.
By definition we have 
$$
	\Lambda_\cO(_\fra\zeta(\chi))\subset \cJ_\Lambda(\zeta(\chi))\subset H^1(\cO_{K}[1/p],\Lambda(\chi)(1))
$$
and $\cJ_\Lambda(\zeta(\chi))$ is generated by the  $_\fra\zeta(\chi)$ for all $\fra$ prime to $6p\frf_\chi$.
This inclusion induces a morphism of perfect complexes
$$
	\kappa_\chi:\cJ_\Lambda(\zeta(\chi))\to R\Gamma(\cO_{K}[1/p],\Lambda(\chi)(1))[1].
$$
\begin{theorem}[Main-Conjecture]\label{lambda-mc} For each character $\chi:G(\frf_\chi)\to E^*$ of conductor $\frf_\chi$ 
\begin{itemize}
\item[1)] $H^0 (\cO_{K}[1/p],\Lambda(\chi)(1))=0$.
\item[2)] $H^1 (\cO_{K}[1/p],\Lambda(\chi)(1))$ has $\Lambda_\cO$ rank $1$ and 
$$
	H^1 (\cO_{K}[1/p],\Lambda(\chi)(1))/\cJ_\Lambda(\zeta(\chi))
$$
is torsion.
\item[3)] $H^2(\cO_{K}[1/p],\Lambda(\chi)(1))$ is a $\Lambda_\cO$-torsion module.
\end{itemize}
The map $\kappa_\chi$ is good in the sense of Knudsen and Mumford (see \ref{determinants}) and
has divisor $\Div(\kappa_\chi)=0$. In particular, $\kappa_\chi$ induces 
an isomorphism of $\Lambda_\cO$-modules
$$
\kappa_\chi:
\Det_{\Lambda_\cO} \left( H^1(\cO_{K}[1/p],\Lambda(\chi)(1))/\cJ_\Lambda(\zeta(\chi))\right)\simeq \Det_{\Lambda_\cO} H^2(\cO_{K}[1/p],\Lambda(\chi)(1)).
$$
\end{theorem}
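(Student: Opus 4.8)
The strategy is to combine the Euler system machinery of Rubin (applied to the twisted elliptic units from Section \ref{euler-system}) to bound the Iwasawa modules from above, with the Tamagawa number conjecture for number fields (via Corollary \ref{generationbyEuler}) to pin down the determinant exactly. First I would dispose of the structural claims 1)--3). Item 1) is already recorded in \eqref{h0vanishing}, since no finite prime of $K$ splits completely in $K_\infty$. For the rank statement in 2), I would use Lemma \ref{coefficientequation} together with the fact that $H^1(\cO_K[1/p],\cO_p(\chi)(1))$ contains the nontorsion element $\zeta_K(\chi^{-1})$ (Theorem \ref{zetaexplicit}); alternatively one computes the Euler characteristic of $R\Gamma(\cO_K[1/p],\Lambda(\chi)(1))$ using Lemma \ref{perfectness} and global Euler characteristic formulas, which over the $\bbZ_p^2$-extension gives generic rank $1$ in degree $1$ and rank $0$ elsewhere. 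The torsionness of $H^2$ in 3) then follows from the same Euler characteristic count once one knows, from the Euler system bound, that $H^2$ has no free part; here the input ``no finite prime splits completely in $K_\infty$'' is what makes the relevant local $H^0$'s vanish and keeps the bookkeeping clean.

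The heart of the proof is the statement about $\kappa_\chi$. Since $\cJ_\Lambda(\zeta(\chi))$ has $\Lambda_\cO$-rank $1$ (being a nonzero submodule of the rank-one module $H^1$, using Theorem \ref{zetaexplicit} to see $\zeta(\chi)\neq 0$), the complex $\cJ_\Lambda(\zeta(\chi))$, placed in degree $0$, is perfect and the map $\kappa_\chi$ is a map of perfect complexes which is a quasi-isomorphism after $\otimes_{\Lambda_\cO}Q(\Lambda_\cO)$, hence good, and it has a well-defined divisor $\Div(\kappa_\chi)$. What must be shown is $\Div(\kappa_\chi)=0$, i.e. that the induced inclusion of determinant lattices is an equality at every height-one prime $\frq$ of $\Lambda_\cO$. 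I would argue this in two inequalities. The inclusion $\Det_{\Lambda_\cO}\!\big(H^1/\cJ_\Lambda(\zeta(\chi))\big)\subseteq \Det_{\Lambda_\cO}H^2$, i.e. $\Div(\kappa_\chi)\le 0$, is exactly the divisibility coming from the Euler system: Rubin's theorem (\cite{Rubin2}, applied to the Euler system $_\fra\zeta_\frm(\chi)$ from Section \ref{twisted-euler}) bounds the characteristic ideal of the relevant quotient, which in the $\Det$-formalism is precisely this containment. For the reverse, I would specialize: descend from $\Lambda_\cO$ to $\cO_p$ along the augmentation-type maps attached to the finite-order characters $\eta$ of large level, using Lemma \ref{coefficientequation} and Proposition \ref{divfunctoriality} for the base-change behaviour of $\Div$. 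Corollary \ref{generationbyEuler} says that at each such $\eta$ the specialized determinant identity holds \emph{on the nose} (that is the Tamagawa number conjecture / analytic class number formula input, Theorem \ref{classnumber}). Since $\Div(\kappa_\chi)$ is an effective-or-zero divisor already known to be $\le 0$, and it specializes to $0$ under infinitely many such maps whose kernels are height-one primes that are Zariski-dense in $\Spec\Lambda_\cO$, it must vanish identically.

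The main obstacle, as usual in this circle of ideas, is the reverse divisibility, and concretely the control of the specialization step. Two points need care: first, that the finite-layer zeta elements $z_p(\eta)$ really assemble, after twisting and taking inverse limits, into $\zeta(\chi)$ — this is where Lemma \ref{zetaidentification} and the norm-compatibility in Proposition \ref{unitproperties}(2) are used, and one must match the $\cJ_\Lambda$ versus $(\N\fra-\sigma_\fra)$ normalizations carefully so that the auxiliary ideal $\fra$ disappears (Proposition \ref{unitproperties}(3)); second, that the specialization maps $\Lambda_\cO\to\cO_p$ attached to characters $\eta$ of level $n\to\infty$ are good for $\kappa_\chi$ in the sense of Proposition \ref{divfunctoriality}, which requires knowing that all points of depth $0$ upstairs map into the locus where $\kappa_\chi$ is an isomorphism — this is where one invokes the pseudo-nullity statements of Lemma \ref{lambdastructure} to guarantee the problematic local $H^2$'s do not interfere. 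Once these compatibilities are in place, the equality $\Div(\kappa_\chi)=0$ and hence the claimed determinant isomorphism follow formally from the properties of $\Div$ recalled in Section \ref{determinants}.
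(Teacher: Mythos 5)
Your overall strategy (Euler system divisibility in one direction, analytic class number formula via the TNC to close the gap, all phrased in the $\Det$/$\Div$ language) is the right one and matches the paper. But there are three concrete places where the execution, as you describe it, would not go through.

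First, you skip the reduction to characters of \emph{big level}. This is not a cosmetic step: Theorem \ref{zetaexplicit} (which identifies $\zeta_K(\eta^{-1})$ with a multiple of the zeta element $z_p(\eta)$) and Theorem \ref{divisibility} (the Euler-system input) both require $\eta$ to be ramified at every place above $p$. An arbitrary $\chi$ need not satisfy this, so before anything else one must twist $\chi$ by a character $\varrho$ of $\Gamma$ (Lemma \ref{charactertwisting}, Corollary \ref{reductioncor}) to replace it by $\eta=\chi\varrho$ of large level. Without this step neither Rubin's divisibility nor the connection to the TNC is available.

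Second, your closing step misreads Corollary \ref{generationbyEuler} and uses the wrong kind of specialization. Corollary \ref{generationbyEuler} is a statement about the \emph{tensor product over all} $\eta\in\wh G_n(\frf_\chi)\smallsetminus\wh G_{n-1}(\frf_\chi)$; the paper explicitly flags that the individual integrality of the $z(\eta)$ (equivalently Stark's conjecture for a single $M(\eta)$) is not known. So one cannot assert, for a fixed $\chi$, that ``at each such $\eta$ the specialized determinant identity holds on the nose.'' Relatedly, the kernels of the specialization maps $\Lambda_\cO\to\cO_p$ attached to finite-order characters of $\Gamma$ are height-two primes (in $\cO_p[[T,S]]$ they are of the form $(T-a,S-b)$), not height-one, so there is no ``Zariski-dense set of height-one primes'' at which one could test a Cartier divisor on $\Spec\Lambda_\cO$. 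What the paper actually does is subtler: after the big-level reduction it considers the sum $\sum_\eta\Div(\kappa_\eta)$ of the effective divisors over \emph{all} $\eta$ of the fixed level $n$ simultaneously, applies the single augmentation $\iota:\Lambda_\cO\to\cO_p$ (Lemma \ref{reductionlemma}), invokes the TNC in the tensored form of Corollary \ref{generationbyEuler}, and then uses a Nakayama-type argument ($\iota(\lambda)\in\cO_p^*$ forces $\lambda\in\Lambda_\cO^*$) to get $\sum_\eta\Div(\kappa_\eta)=0$; since each summand is effective, each individual $\Div(\kappa_\eta)$ vanishes, in particular the one you care about.

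Third, a smaller sign issue: the Euler-system bound (Theorem \ref{divisibility} and Corollary \ref{divcorollary}) gives the inclusion $\Det_{\Lambda_\cO}\bigl(H^2\bigr)\subset\Det_{\Lambda_\cO}\bigl(H^1/\cJ_\Lambda(\zeta(\chi))\bigr)$, which translates to $\Div(\kappa_\chi)$ being \emph{effective}; you have stated the opposite inclusion and labelled it $\Div(\kappa_\chi)\le 0$, which if combined with effectivity would trivially give $\Div=0$ and is not what Rubin's bound provides.
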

This theorem will be proved in section \ref{proof-of-lambda-mc}. Note that the statement
is for all primes $p$ with no exceptions.
\begin{corollary}\label{localizationcor} Suppose that $\frf_\chi\mid \frf$ and
consider the image of $\zeta(\chi)$ in $H^1(\cO_{K}[1/p\frf],\Lambda(\chi)(1))$ and let 
$$
	\zeta(\chi,\frf):=\prod_{\frl\mid\frf, \frl\nmid p}(1-\chi(\frl)\Frob_\frl^{-1})\zeta(\chi).
$$
Then the inclusion
$$
	\wt\kappa_\chi:\cJ_\Lambda(\zeta(\chi,\frf))\to R\Gamma(\cO_{K}[1/p\frf],\Lambda(\chi)(1))[1].
$$
is good in the sense of \ref{determinants} and induces an isomorphism of $\Lambda_\cO$-modules 
$$
\wt\kappa_\chi:
\Det_{\Lambda_\cO} \left( H^1(\cO_{K}[1/p\frf],\Lambda(\chi)(1))/\cJ_\Lambda(\zeta(\chi,\frf))\right)\simeq \Det_{\Lambda_\cO} H^2(\cO_{K}[1/p\frf],\Lambda(\chi)(1)).
$$
\end{corollary}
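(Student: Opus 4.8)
The plan is to deduce the corollary from Theorem \ref{lambda-mc} by comparing \'etale cohomology over $\cO_K[1/p]$ with that over $\cO_K[1/p\frf]$: the two differ precisely by the Euler factors $\prod_{\frl\mid\frf,\ \frl\nmid p}(1-\chi(\frl)\Frob_\frl^{-1})$ which are built into $\zeta(\chi,\frf)$. Concretely, I would start from the localization distinguished triangle of perfect $\Lambda_\cO$-complexes (exactly as in the proof of Lemma \ref{perfectness})
\[
\bigoplus_{\frl\mid\frf,\ \frl\nmid p}R\Gamma_{\kappa(\frl)}(\cO_{K_\frl},\Lambda(\chi)(1))\longrightarrow R\Gamma(\cO_K[1/p],\Lambda(\chi)(1))\xrightarrow{\ \beta\ }R\Gamma(\cO_K[1/p\frf],\Lambda(\chi)(1))\xrightarrow{+1}.
\]
By Lemma \ref{perfectness} each term $R\Gamma_{\kappa(\frl)}(\cO_{K_\frl},\Lambda(\chi)(1))$ is a $\Lambda_\cO$-torsion complex, with cohomology concentrated in degree $3$ and $\Det_{\Lambda_\cO}R\Gamma_{\kappa(\frl)}(\cO_{K_\frl},\Lambda(\chi)(1))=(1-\chi(\frl)\Frob_\frl^{-1})^{-1}\Lambda_\cO$ --- uniformly in $\frl$, once one uses the convention $\chi(\frl)=0$ at primes where $\chi$ ramifies (there the factor is $1$ and the complex vanishes). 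Hence $\beta$ becomes a quasi-isomorphism after $\otimes_{\Lambda_\cO}Q(\Lambda_\cO)$; the long exact cohomology sequence then gives $H^0(\cO_K[1/p\frf],\Lambda(\chi)(1))=0$ (this is already \eqref{h0vanishing}), that $\beta$ induces an isomorphism on $H^1$ --- so $H^1(\cO_K[1/p\frf],\Lambda(\chi)(1))$ has $\Lambda_\cO$-rank $1$ --- and that $H^2(\cO_K[1/p\frf],\Lambda(\chi)(1))$ is $\Lambda_\cO$-torsion.

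By definition $\zeta(\chi,\frf)=\big(\prod_{\frl\mid\frf,\ \frl\nmid p}(1-\chi(\frl)\Frob_\frl^{-1})\big)\,\beta(\zeta(\chi))$ in $H^1(\cO_K[1/p\frf],\Lambda(\chi)(1))\otimes_{\Lambda_\cO}Q(\Lambda_\cO)$, and each $\N\fra-\sigma_\fra$ and each $1-\chi(\frl)\Frob_\frl^{-1}$ is a non-zero-divisor in $\Lambda_\cO$ (since $\Frob_\frl\in\Gamma\cong\bbZ_p^2$ has infinite order and $\chi(\frl)$ is a root of unity or $0$). Therefore $\cJ_\Lambda(\zeta(\chi,\frf))=\big(\prod_\frl(1-\chi(\frl)\Frob_\frl^{-1})\big)\cdot\beta\big(\cJ_\Lambda(\zeta(\chi))\big)$ as submodules of $H^1(\cO_K[1/p\frf],\Lambda(\chi)(1))$. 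As $\zeta(\chi)$ is a $Q(\Lambda_\cO)$-basis of $H^1(\cO_K[1/p],\Lambda(\chi)(1))\otimes Q(\Lambda_\cO)$ by part 2 of Theorem \ref{lambda-mc}, $\zeta(\chi,\frf)$ is a $Q(\Lambda_\cO)$-basis of $H^1(\cO_K[1/p\frf],\Lambda(\chi)(1))\otimes Q(\Lambda_\cO)$; hence $\cJ_\Lambda(\zeta(\chi,\frf))$ has rank $1$, the quotient $H^1(\cO_K[1/p\frf],\Lambda(\chi)(1))/\cJ_\Lambda(\zeta(\chi,\frf))$ is $\Lambda_\cO$-torsion, and $\wt\kappa_\chi\otimes Q(\Lambda_\cO)$ is an isomorphism. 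In particular $\wt\kappa_\chi$ is a quasi-isomorphism at every minimal (hence depth-$0$) prime of $\Lambda_\cO$, so it is good in the sense of \ref{determinants}.

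It remains to prove $\Div(\wt\kappa_\chi)=0$. Since $\Lambda_\cO$ is a product of regular domains, a Cartier divisor is determined by its multiplicities at the height-$1$ primes, so it suffices to check $v_\frq(\Div(\wt\kappa_\chi))=0$ for each height-$1$ prime $\frq\subset\Lambda_\cO$. There $(\cJ_\Lambda)_\frq=(\Lambda_\cO)_\frq$ (noted before Theorem \ref{lambda-mc}), so $\cJ_\Lambda(\zeta(\chi))_\frq$ and $\cJ_\Lambda(\zeta(\chi,\frf))_\frq$ are free of rank $1$ over the discrete valuation ring $(\Lambda_\cO)_\frq$, generated by $\zeta(\chi)$ and by $\prod_\frl(1-\chi(\frl)\Frob_\frl^{-1})\zeta(\chi)$ respectively. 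Localizing the determinant of the localization triangle at $\frq$: passing from $R\Gamma(\cO_K[1/p],\Lambda(\chi)(1))$ to $R\Gamma(\cO_K[1/p\frf],\Lambda(\chi)(1))$ changes the determinant by $-v_\frq\big(\prod_\frl(1-\chi(\frl)\Frob_\frl^{-1})\big)$ (Lemma \ref{perfectness}), passing from $\cJ_\Lambda(\zeta(\chi))$ to $\cJ_\Lambda(\zeta(\chi,\frf))$ changes it by $+v_\frq\big(\prod_\frl(1-\chi(\frl)\Frob_\frl^{-1})\big)$, and $v_\frq(\Div(\kappa_\chi))=0$ by Theorem \ref{lambda-mc}; these cancel, giving $v_\frq(\Div(\wt\kappa_\chi))=0$. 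Consequently $\wt\kappa_\chi$ induces an isomorphism $\Det_{\Lambda_\cO}\cJ_\Lambda(\zeta(\chi,\frf))\cong\Det_{\Lambda_\cO}R\Gamma(\cO_K[1/p\frf],\Lambda(\chi)(1))[1]$, and combining this with the short exact sequence $0\to\cJ_\Lambda(\zeta(\chi,\frf))\to H^1(\cO_K[1/p\frf],\Lambda(\chi)(1))\to H^1(\cO_K[1/p\frf],\Lambda(\chi)(1))/\cJ_\Lambda(\zeta(\chi,\frf))\to 0$ and the vanishing of $H^0$ produces the asserted isomorphism $\Det_{\Lambda_\cO}\big(H^1(\cO_K[1/p\frf],\Lambda(\chi)(1))/\cJ_\Lambda(\zeta(\chi,\frf))\big)\cong\Det_{\Lambda_\cO}H^2(\cO_K[1/p\frf],\Lambda(\chi)(1))$.

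The heart of the matter is bookkeeping rather than a new idea: one must keep track of which primes $\frl\mid\frf$ contribute (only those with $\frl\nmid p$, the rest being already inverted in $\cO_K[1/p]$), reconcile the convention $\chi(\frl)=0$ at ramified $\frl$ with the vanishing of the corresponding local term, and verify that the Euler factors are non-zero-divisors so that $\cJ_\Lambda(\zeta(\chi,\frf))$ remains perfect of rank $1$. Once this is arranged, the corollary follows formally from Theorem \ref{lambda-mc} by the functoriality and additivity of $\Det$ and $\Div$ recalled in \ref{determinants}; alternatively one can compute $\Div(\wt\kappa_\chi)$ globally by writing $\wt\kappa_\chi$ as a composite of $\kappa_\chi$ with the map $\beta[1]$ and multiplication by $\prod_\frl(1-\chi(\frl)\Frob_\frl^{-1})$, whose divisors add up to $0$.
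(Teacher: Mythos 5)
Your proof is correct and follows exactly the same route as the paper's: starting from the localization distinguished triangle relating $R\Gamma(\cO_K[1/p],-)$ to $R\Gamma(\cO_K[1/p\frf],-)$, computing $\Det_{\Lambda_\cO}R\Gamma_{\kappa(\frl)}(\cO_{K_\frl},\Lambda(\chi)(1))=(1-\chi(\frl)\Frob_\frl^{-1})^{-1}\Lambda_\cO$ via Lemma \ref{perfectness}, and then observing that these Euler factors are precisely what is absorbed into $\zeta(\chi,\frf)$, so that the divisors cancel and $\Div(\wt\kappa_\chi)=0$ follows from $\Div(\kappa_\chi)=0$. You merely spell out in more detail the bookkeeping (non-zero-divisor checks, the $\chi(\frl)=0$ convention for ramified $\frl$, localization at height-one primes) that the paper leaves implicit.
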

\begin{proof}
This follows from the theorem, the localization sequence
$$
	\bigoplus_{\frl\mid\frf,\frl\nmid p}R\Gamma_{\kappa(\frl)}(\cO_{K_\frl},\Lambda( \chi)(1))\to
	R\Gamma(\cO_K[1/p],\Lambda( \chi)(1))\to R\Gamma(\cO_K[1/p\frf],\Lambda( \chi)(1))
$$	
and the fact that by lemma \ref{perfectness}
$$
\Det_{\Lambda_\cO}R\Gamma_{\kappa(\frl)}(\cO_{K_\frl},\Lambda( \chi)(1))=\Lambda_\cO(1-\chi(\frl)\Frob_\frl^{-1})^{-1}.
$$
\end{proof}
\begin{remark}
Observe that
our formulation here follows \cite{HK1} and
is different from the classical approach by Rubin. Rubin decomposes the Iwasawa modules into $\chi$-eigenspaces, we use instead cohomology with coefficients in $\cO_p(\chi)$. This approach
avoids many problems with the $\chi$-eigenspaces and is very close to the spirit
of the Tamagawa number conjecture.
\end{remark}
%

\subsection{The $\Omega$-main-conjecture}

We are ultimately interested in an equivariant version of the  $\Lambda$-main-conjecture.
Recall that $\Omega=\Lambda(\cG_\frf)$. We admit the following hypothesis. 
\begin{conjecture}\label{mu=0} 
Let $\frq$ be a height one prime ideal of $\Omega$ containing $p$, then
$$
	H^2(\cO_K[1/p\frf],\Omega(1))_\frq=0.
$$
\end{conjecture}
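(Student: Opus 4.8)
The plan is to read Conjecture~\ref{mu=0} as the vanishing of an Iwasawa $\mu$-invariant and to verify it in the range where such a vanishing is available --- $p\nmid 6$ split in $K$ --- via the theorem of Gillard; for general $p$ it must remain a genuine hypothesis. \textbf{Step 1 (reduction to $\Lambda$ and to a $\mu$-invariant).} Since $\cG_\frf\simeq\Delta\times\Gamma$, the ring $\Omega_\cO$ is finite and free over $\Lambda_\cO$, so $H^2(\cO_K[1/p\frf],\Omega(1))$ is a finitely generated $\Lambda_\cO$-module, and it is $\Lambda_\cO$-torsion by Theorem~\ref{lambda-mc}(3) (or Corollary~\ref{localizationcor}). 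A height one prime $\frq$ of $\Omega$ containing $p$ restricts to a height one prime of $\Lambda_\cO\isom\cO_p[[T,S]]$ containing $p$, and on each regular local factor of the latter the only such prime is $(\pi)$ for $\pi$ a uniformizer of $\cO$. Decomposing along the characters of $\Delta$ and using the base change isomorphism of Lemma~\ref{coefficientequation} together with Nakayama's lemma (available since $\Lambda$ and $\Omega$ are products of local rings), one is reduced to proving that for every character $\chi\colon\cG_\frf\to E^\times$ of finite order,
$$
\mu_{\Lambda_\cO}\bigl(H^2(\cO_K[1/p\frf],\Lambda(\chi)(1))\bigr)=0,
$$
where $\mu_{\Lambda_\cO}(-)$ is the multiplicity of $(\pi)$ in the characteristic ideal, equivalently the length of the localization at $(\pi)$.

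\textbf{Step 2 (the $\Lambda$-main-conjecture evaluates this $\mu$).} By Corollary~\ref{localizationcor}, $\wt\kappa_\chi$ is an isomorphism $\Det_{\Lambda_\cO}\bigl(H^1(\cO_K[1/p\frf],\Lambda(\chi)(1))/\cJ_\Lambda(\zeta(\chi,\frf))\bigr)\isom\Det_{\Lambda_\cO}H^2(\cO_K[1/p\frf],\Lambda(\chi)(1))$. Since $H^0=0$ by \eqref{h0vanishing} and $R\Gamma(\cO_K[1/p\frf],\Lambda(\chi)(1))$ is perfect with cohomology concentrated in degrees $1$ and $2$, the module $H^1(\cO_K[1/p\frf],\Lambda(\chi)(1))$ is a submodule of a finite free $\Lambda_\cO$-module, hence torsion-free of rank $1$ (Theorem~\ref{lambda-mc}(2)), hence free of rank $1$ after localizing at the discrete valuation ring $\Lambda_{\cO,(\pi)}$; fix a generator. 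Under it $\zeta(\chi,\frf)=\prod_{\frl\mid\frf,\ \frl\nmid p}(1-\chi(\frl)\Frob_\frl^{-1})\,\zeta(\chi)$ becomes an element of $\Lambda_{\cO,(\pi)}$, and each Euler factor $1-\chi(\frl)\Frob_\frl^{-1}$, as well as each $\N\fra-\sigma_\fra$, is a unit in $\Lambda_{\cO,(\pi)}$ --- because the image in $\Gamma$ of $\Frob_\frl$ (resp. of $\sigma_\fra$) is non-trivial, no finite prime splitting completely in $K_\infty$. Combining this with $(\cJ_\Lambda)_\frq\isom\Lambda_\frq$ in height one and localizing the isomorphism above at $(\pi)$ yields
$$
\mathrm{length}_{\Lambda_{\cO,(\pi)}}\bigl(H^2(\cO_K[1/p\frf],\Lambda(\chi)(1))_{(\pi)}\bigr)=v_{(\pi)}\bigl(\zeta(\chi)\bigr),
$$
so $\mu_{\Lambda_\cO}(H^2(\cO_K[1/p\frf],\Lambda(\chi)(1)))=v_{(\pi)}(\zeta(\chi))$, which is finite because $\zeta(\chi)\ne 0$ (Theorem~\ref{zetaexplicit}).

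\textbf{Step 3 (invoking Gillard).} By the relation of the elliptic-unit Euler system to $L$-values (Proposition~\ref{unitproperties}(4) and Theorem~\ref{zetaexplicit}), $\zeta(\chi)$ is, via the regulator, the branch at $\chi$ of Katz's two-variable $p$-adic $L$-function of $K$, and hence $v_{(\pi)}(\zeta(\chi))$ is the Iwasawa $\mu$-invariant of that $p$-adic $L$-function. For $p\nmid 6$ split in $K$ this $\mu$-invariant is $0$ by Gillard~\cite{Gillard}. With Steps 1 and 2 this gives $H^2(\cO_K[1/p\frf],\Omega(1))_\frq=0$ for every height one prime $\frq$ of $\Omega$ containing $p$, establishing the conjecture in this range.

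\textbf{Main obstacle.} The identity $v_{(\pi)}(\zeta(\chi))=0$ is exactly a $\mu=0$ assertion, and outside the split case (and excluding $p\mid 6$) no analogue of Ferrero--Washington is known, so the hypothesis cannot be dispensed with in general. Even in the split case the delicate points are to check that the choice of generator of $H^1_{(\pi)}$ introduces no spurious power of $\pi$ (equivalently that $H^1$ carries no $\mu$-part, which is exactly the torsion-freeness used in Step 2) and to match precisely which branch of Katz's $L$-function interpolates $\zeta(\chi)$ --- including the Euler factors at the primes above $p$ and the behaviour at the finitely many characters $\eta$ of small level where the comparison of Theorem~\ref{zetaexplicit} degenerates ($z_p(\eta)\ne z(\eta)$) --- so that Gillard's theorem applies without loss.
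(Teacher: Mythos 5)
Your proposal takes a genuinely different route from the paper, and while the high-level idea (reduce to a $\mu$-invariant and invoke Gillard) is the right one, two of your steps have real gaps.

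\textbf{What the paper actually does.} The paper never decomposes along characters of $\Delta$ and never goes through the $p$-adic $L$-function. It uses Gillard's \emph{module-theoretic} theorem (\cite{Gillard} 3.4) that $\Gal(M_\infty/F_\infty)$ has no $\bbZ_p$-torsion, where $F_\infty=LK_\infty^\frp$. This gives that $\cA(F_\infty)$ is finitely generated over $\cO_p$, hence via an exact sequence and Nakayama that $H^2(\cO_K[1/p\frf],\Omega_\cO(1))$ is finitely generated as a module over the one-variable Iwasawa algebra $\Lambda_\cO(\cH)$, where $\cH\isom\bbZ_p$ is the $\frp'$-direction. Then a self-contained commutative-algebra lemma (\ref{vanishingforheightoneprimes}) shows that any $\Omega_\cO$-module finitely generated over $\Lambda_\cO(\cH)$ vanishes when localized at a singular height one prime. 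No main conjecture and no explicit reciprocity law are invoked.

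\textbf{Gap in your Step 1.} Your reduction to $\mu_{\Lambda_\cO}\bigl(H^2(\cO_K[1/p\frf],\Lambda(\chi)(1))\bigr)=0$ for all $\chi$ presupposes that the $\Omega_\cO$-module information can be recovered from its character components. But $\Delta$, the torsion subgroup of $\cG_\frf$, need not have order prime to $p$; in that case $\Omega_\cO\isom\cO_p[\Delta][[T,S]]$ is not a product of regular local rings, and $\Omega_\cO\to\wt\Omega_\cO\isom\prod_\chi\Lambda(\chi)$ is a strict normalization (Lemma~\ref{normalizationlemma}), which is finite and injective but \emph{not} faithfully flat. Vanishing of the $\mu$-invariants of the factors $H^2(\cO_K[1/p\frf],\Lambda(\chi)(1))=\bigl(H^2(\cO_K[1/p\frf],\Omega_\cO(1))\otimes_{\Omega_\cO}\wt\Omega_\cO\bigr)_\chi$ therefore does not on its own yield $H^2(\cO_K[1/p\frf],\Omega_\cO(1))_\frq=0$; the kernel and cokernel of the base-change map $H^2\to H^2\otimes_{\Omega_\cO}\wt\Omega_\cO$ can themselves carry $\mu$. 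Your appeal to Lemma~\ref{coefficientequation} and ``Nakayama's lemma'' does not explain how to traverse the conductor of the normalization. The paper sidesteps exactly this issue: its Lemma~\ref{vanishingforheightoneprimes} works directly with $\Omega_\cO$ and detects vanishing via finite generation over the smaller ring $\Lambda_\cO(\cH)$, with no reference to characters of $\Delta$ at all.

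\textbf{Gap in your Step 3.} Identifying $\zeta(\chi)$, via the regulator, with a branch of Katz's two-variable $p$-adic $L$-function so that $v_{(\pi)}(\zeta(\chi))$ becomes the analytic $\mu$-invariant requires the explicit reciprocity law (Coleman map) for elliptic units. That is a substantial external input not established in, nor needed by, this paper. Moreover the paper's actual citation of Gillard is to the statement that a certain Galois group has no $\bbZ_p$-torsion, not to the vanishing of the $\mu$-invariant of the $p$-adic $L$-function; even though Gillard proves both, matching your plan to his theorem would need a separate translation. Your Step 2 --- using Corollary~\ref{localizationcor} to convert $\mu(H^2)=0$ into $v_{(\pi)}(\zeta(\chi,\frf))=0$ (note $\zeta(\chi,\frf)$ rather than $\zeta(\chi)$; the Euler factors $1-\chi(\frl)\Frob_\frl^{-1}$ for $\frl\mid\frf$ need to be checked to be $(\pi)$-units) --- is sound as a reformulation, but as you yourself observe it is a detour: it trades a module-theoretic $\mu=0$ for an analytic one via the main conjecture, whereas the paper deduces the module-theoretic statement directly from Gillard.

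In short: your architecture is coherent in spirit, but Step 1 breaks when $p$ divides $\lvert\Delta\rvert$, and Step 3 imports the reciprocity law, which the paper's argument avoids by design. The paper's Lemma~\ref{vanishingforheightoneprimes} is precisely the replacement for your Steps 1--3.
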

This conjecture is essentially equivalent to the vanishing of the $\mu$-invariant
for the maximal abelian $\bbZ_p$-extension of $K_\infty$.
Using results of Gillard, we show in \ref{muverification} that this conjecture holds for 
primes $p\nmid 6$, which
are split in $K$: 
\begin{theorem}[see corollary \ref{muverification}]
In the case that $p\nmid 6$ splits in $K/\bbQ$, Conjecture \ref{mu=0} is true.
\end{theorem}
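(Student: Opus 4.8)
The plan is to translate Conjecture~\ref{mu=0} into the vanishing of an Iwasawa $\mu$-invariant, to identify the relevant module with a classical class-group Iwasawa module, and then to quote Gillard. First I would note that $\bbQ_p[\Delta]$ is semisimple, so $\Omega\otimes_{\bbZ_p}\bbQ_p\isom\prod_{[\chi]}E_\chi[[T,S]]$ with the product over Galois orbits of characters $\chi$ of $\Delta$, and under this decomposition $R\Gamma(\cO_K[1/p\frf],\Omega(1))\otimes_{\bbZ_p}\bbQ_p\isom\bigoplus_{[\chi]}R\Gamma(\cO_K[1/p\frf],\Lambda(\chi)(1))\otimes_{\bbZ_p}\bbQ_p$. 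Since each $H^2(\cO_K[1/p\frf],\Lambda(\chi)(1))$ is $\Lambda_\cO$-torsion by the third assertion of Theorem~\ref{lambda-mc} together with Corollary~\ref{localizationcor} for the level $p\frf$, the module $N:=H^2(\cO_K[1/p\frf],\Omega(1))$ is a finitely generated torsion $\Omega$-module. For such an $N$ and a height one prime $\frq\subset\Omega$ containing $p$ one has $N_\frq=0$ if and only if $\frq$ does not contain the characteristic (Fitting) ideal of $N$; as every such $\frq$ is the preimage of a minimal prime of some factor $A[[T,S]]$ of $\Omega/p\Omega$ with $A$ a finite local $\bbF_p$-algebra, Conjecture~\ref{mu=0} is equivalent to the assertion that $\mu(N)=0$.

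Next I would identify $N$ up to $\mu$-trivial pieces. The Kummer sequence gives, for each finite $K\subset F\subset K(p^\infty\frf)$, an exact sequence relating $H^2(\cO_F[1/p\frf],\bbZ_p(1))$ to the $p$-part of the $S$-ideal class group $A_S(F)$ (with $S$ the places above $p\frf$) and to the Tate module of the Brauer group $\ker\!\big(\bigoplus_{v\in S}\bbQ/\bbZ\to\bbQ/\bbZ\big)$. Taking the inverse limit over the tower, the Brauer contribution is a finitely generated $\bbZ_p$-module (the number of places above $p\frf$ stays bounded, since no finite place splits completely in $K_\infty$), hence $\mu$-trivial, and the class-group contribution is the classical Iwasawa module $X:=\prolim_F A_S(F)$; removing the places above $p\frf$ from $S$ only alters $X$ by finitely generated $\bbZ_p$-modules, so $\mu(N)=\mu(X)$ is the classical Iwasawa $\mu$-invariant of the $p$-class group along $K(p^\infty\frf)/K$.

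Finally I would invoke Gillard. For $p\nmid 6$ split in $K$, the results of \cite{Gillard} give the vanishing of the $\mu$-invariant of the $p$-adic $L$-functions attached to $K$ and to its abelian extensions inside $K(p^\infty\frf)$. By the characterwise main conjecture (Theorem~\ref{lambda-mc}) the characteristic ideal of $H^2(\cO_K[1/p],\Lambda(\chi)(1))$ equals that of $H^1(\cO_K[1/p],\Lambda(\chi)(1))/\cJ_\Lambda(\zeta(\chi))$, which by the construction of $\zeta(\chi)$ out of the elliptic-unit Euler system of Section~\ref{euler-system} and by Gillard's theorem has $\mu$-invariant zero. Feeding this back through the identification of the previous paragraph — or, equivalently, applying the decomposition of the first paragraph componentwise — gives $\mu(X)=0$, hence the theorem. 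One may even avoid the full main conjecture here, using only the Euler-system divisibility $\mathrm{char}_{\Lambda_\cO}H^2\mid\mathrm{char}_{\Lambda_\cO}\!\big(H^1/\cJ_\Lambda(\zeta(\chi))\big)$ established in the proof of Theorem~\ref{lambda-mc}, since $\mu$ is monotone under divisibility.

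The step I expect to be most delicate is the first one when $p\mid\#\Delta$: then $\Omega/p\Omega$ is not reduced, so both the equivalence ``$N_\frq=0$ for all singular $\frq$'' $\Leftrightarrow$ ``$\mu(N)=0$'' and the passage to characterwise statements must be organized componentwise over the prime-to-$p$ characters of $\Delta$ with attention to nilpotents. A second, more bookkeeping-type issue is matching Gillard's normalization of the $p$-adic $L$-function with the zeta elements $\zeta(\chi)$, and keeping track of the difference between $\cO_p(\chi)$- and $\Lambda(\chi)$-coefficients and between the levels $p$ and $p\frf$.
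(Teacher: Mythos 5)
Your approach is genuinely different from the paper's, and it has a real gap that you yourself flag but do not close.

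The paper does not decompose by characters of $\Delta$ at all, nor does it invoke Theorem~\ref{lambda-mc} or Gillard's $\mu$-invariant result for $p$-adic $L$-functions. Instead it cites a different theorem of Gillard --- that $\Gal(M_\infty/F_\infty)$ has no $\bbZ_p$-torsion, where $F_\infty=LK_\infty^{\frp}$ --- deduces that $H^2(\cO_K[1/p\frf],\Lambda_\cO(\Gal(F_\infty/K))(1))$ is a finitely generated $\cO_p$-module, and then by (compact) Nakayama that $H^2(\cO_K[1/p\frf],\Omega_\cO(1))$ is finitely generated over the one-variable algebra $\Lambda_\cO(\cH)$. The punch line is Lemma~\ref{vanishingforheightoneprimes}: \emph{any} $\Omega_\cO$-module finitely generated over $\Lambda_\cO(\cH)$ localizes to zero at every height-one prime containing $p$. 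This is a pure structure-theoretic argument (two applications of Nakayama after passing to $\wt\Omega=\Omega_\cO/\frq$ and inverting a suitable $\wt u\notin\frq$) that makes no reference to characters and hence is completely insensitive to whether $p\mid\#\Delta$.

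The gap in your route is exactly the one you name at the end: when $p\mid\#\Delta$, the ring $\Omega_\cO$ is \emph{not} a product of the rings $\Lambda(\chi)$. The comparison map $\Omega_\cO\to\prod_\chi\Lambda(\chi)$ fails to be an isomorphism precisely on $p$-torsion, which is exactly where the $\mu$-invariant lives. So ``applying the decomposition of the first paragraph componentwise'' is not available --- your first paragraph only produces the decomposition after tensoring with $\bbQ_p$, which kills all $\mu$-information. Saying that one should proceed ``componentwise over the prime-to-$p$ characters of $\Delta$ with attention to nilpotents'' is a description of the difficulty, not a solution. You would need something like the paper's Lemma~\ref{vanishingforheightoneprimes} anyway, at which point the whole detour through the $\Lambda$-main-conjecture and $p$-adic $L$-function $\mu$-invariants becomes unnecessary. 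There is also a secondary issue you gesture at but do not resolve: transferring Gillard's $\mu=0$ for the $p$-adic $L$-function to $\mu=0$ for $H^1/\cJ_\Lambda(\zeta(\chi))$ requires tracking the elliptic units through the Coleman/reciprocity map, which is a nontrivial normalization check; the paper sidesteps this by using Gillard's structural statement about $\Gal(M_\infty/F_\infty)$ directly rather than his analytic $\mu=0$.

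In short: your proposal would prove the theorem for $p\nmid\#\Delta$, but the case $p\mid\#\Delta$ is precisely what makes the equivariant statement interesting, and your argument does not reach it. The paper's approach --- finitely generated over a smaller Iwasawa algebra plus a structural vanishing lemma --- handles all $\Delta$ uniformly and is also shorter, since it never touches the main conjecture or the Euler system.
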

Recall the Euler system of elliptic units presented in section \ref{elliptic-units}. Consider 
for $\fra$ prime to $6p\frf$
$$
_\fra\zeta(\frf):=\varprojlim_n({_\fra}\zeta_{\frf p^n})\in H^1(\cO_{K}[1/p\frf],\Omega(1)).
$$
Define as in (\ref{zetachidefn})
$$
	\zeta(\frf):=(\N\fra-\sigma_\fra)^{-1}{_\fra\zeta(\frf)}\in 
	H^1(\cO_{K}[1/p\frf],\Omega(1))\otimes_\Omega Q(\Omega),
$$
where $Q(\Omega)$ is the total quotient ring of $\Omega$. Note that $\zeta(\frf)$ 
is again independent of $\fra$. We have by definition
$$
	\cJ_\Omega(\zeta(\frf))\subset H^1(\cO_{K}[1/p\frf],\Omega(1))
$$
and $\cJ_\Omega(\zeta(\frf))$ is generated by the $_\fra\zeta(\frf)$. Consider the
inclusion of perfect complexes
$$
	\kappa_\frf:\cJ_\Omega(\zeta(\frf))\to R\Gamma(\cO_K[1/p\frf],\Omega(1))[1].
$$
\begin{theorem}[Equivariant Main Conjecture]\label{omega-mc} Fix an non-zero ideal $\frf\subset \cO_K$.
Then 
\begin{itemize}
\item[1)] $H^0(\cO_{K}[1/p\frf],\Omega(1))=0$
\item[2)] $H^1(\cO_{K}[1/p\frf],\Omega(1))$ has $\Omega$ rank $1$ and 
$$
	H^1(\cO_{K}[1/p\frf],\Omega(1))/\cJ_\Omega(\zeta(\frf))
$$
is torsion.
\item[3)] $H^2(\cO_{K}[1/p\frf],\Omega(1))$ is an
$\Omega$-torsion module.
\end{itemize}
Assume conjecture \ref{mu=0}, i.e., $H^2(\cO_K[1/p\frf],\Omega(1))_\frq=0$ for all height one prime ideals
with $p\in \frq$, then
the map $\kappa_\frf$ is good in the sense of \ref{determinants} and has divisor $\Div(\kappa_\frf)=0$.
In particular, $\kappa_\frf$ induces an isomorphism of $\Omega$-modules
$$
	\Det_\Omega\left( H^1(\cO_{K}[1/p\frf],\Omega(1))/\cJ_\Omega(\zeta(\frf))\right)\simeq 
	\Det_\Omega\left( H^2(\cO_{K}[1/p\frf],\Omega(1))\right).
$$
\end{theorem}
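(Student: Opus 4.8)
The plan is to deduce the $\Omega$-main-conjecture from the already established $\Lambda$-main-conjecture (Theorem \ref{lambda-mc}, in the form of Corollary \ref{localizationcor}) by a character-by-character argument; the only genuinely new ingredient is that Conjecture \ref{mu=0} kills the contribution of the ``singular'' primes, that is, the height-one primes of $\Omega$ containing $p$, where $\#\Delta$ is not invertible.

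I would first prove 1)--3). Part 1) is \eqref{h0vanishing}. For the rest, enlarge $E$ to contain the values of all finite order characters of $\cG_\frf$ and write $\Delta=\Delta'\times\Delta''$ with $\#\Delta'$ a power of $p$ and $\#\Delta''$ prime to $p$. Then $\Omega_\cO[1/p]\isom\prod_\eta\Lambda(\eta)[1/p]$, the product over the finite order characters $\eta$ of $\Delta$, and since $\Omega_\cO\to\Omega_\cO[1/p]$ is flat and $\Omega(1)[1/p]\isom\bigoplus_\eta\Lambda(\eta)(1)[1/p]$ as \'etale sheaves, $R\Gamma(\cO_K[1/p\frf],\Omega(1))\otimes_{\Omega_\cO}Q(\Omega_\cO)$ decomposes as $\bigoplus_\eta R\Gamma(\cO_K[1/p\frf],\Lambda(\eta)(1))\otimes_{\Lambda_\cO}Q(\Lambda_\cO)$. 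Under this decomposition $\cJ_\Omega(\zeta(\frf))$ corresponds, up to the continuous twists of Lemma \ref{twistinglemma}, to the direct sum of the $\cJ_\Lambda(\zeta(\eta,\frf))$; here one uses the compatibility of the elliptic unit Euler system with the projections $\Omega_\cO\to\Lambda(\eta)$ (Lemma \ref{zetaidentification}, Theorem \ref{zetaexplicit}) and that $\bbZ_p\isom\Omega/\cJ_\Omega$ is $\Omega$-torsion, so that $\cJ_\Omega\otimes_\Omega Q(\Omega_\cO)=Q(\Omega_\cO)$. Parts 2) and 3) then follow termwise from Theorem \ref{lambda-mc} and Corollary \ref{localizationcor}: each $H^2(\cO_K[1/p\frf],\Lambda(\eta)(1))$ being $\Lambda_\cO$-torsion gives that $H^2(\cO_K[1/p\frf],\Omega(1))$ is $\Omega_\cO$-torsion, and likewise for the rank-one statement for $H^1$ and the torsionness of $H^1/\cJ_\Omega(\zeta(\frf))$.

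Assuming Conjecture \ref{mu=0}, I would next show $\kappa_\frf$ is good with $\Div(\kappa_\frf)=0$. The ring $\Omega_\cO\isom\cO_p[\Delta][[T,S]]$ is finite free over the regular ring $\cO_p[[T,S]]$, hence Cohen--Macaulay, and it is reduced and $\bbZ_p$-flat; so its depth-zero points are its minimal primes, none of which contains $p$. At a minimal prime $\frp$, $\Omega_{\cO,\frp}$ lies in one factor $\Lambda(\eta_0)[1/p]$, and by the above $\kappa_\frf$ localizes there to $\wt\kappa_{\chi_0}$ of Corollary \ref{localizationcor} localized at a depth-zero point of $\Spec\Lambda_\cO$, which is an isomorphism since $\wt\kappa_{\chi_0}$ is good. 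Hence $\kappa_\frf$ is good, and $\Div(\kappa_\frf)$ is a well-defined effective Cartier divisor on $\Spec\Omega_\cO$; since every minimal prime over a nonzerodivisor has height one, it now suffices to see that $\Div(\kappa_\frf)$ localizes to $0$ at every height-one prime $\frq$. For $p\notin\frq$ this is the statement $\Div(\wt\kappa_{\chi_0})=0$ of the $\Lambda$-main-conjecture, transported via the decomposition of $\Omega_\cO[1/p]$ (Proposition \ref{divfunctoriality}). For a singular prime $\frq$, Conjecture \ref{mu=0} gives $H^2(\cO_K[1/p\frf],\Omega(1))_\frq=0$, so with 1) the complex $R\Gamma(\cO_K[1/p\frf],\Omega(1))_\frq$ is concentrated in degree one, and as $(\cJ_\Omega)_\frq=\Omega_{\cO,\frq}$ the map $\kappa_\frf$ localizes to the inclusion $\Omega_{\cO,\frq}\cdot\zeta(\frf)\hookrightarrow H^1(\cO_K[1/p\frf],\Omega(1))_\frq$, with cokernel $(H^1(\cO_K[1/p\frf],\Omega(1))/\cJ_\Omega(\zeta(\frf)))_\frq$. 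This cokernel vanishes: since the cohomology sits in degrees $\le 2$ (cf. Lemma \ref{perfect}), the base-change spectral sequence of Lemma \ref{coefficientequation} along $\Omega_\cO\to\Lambda(\eta)$ gives $H^2(\cO_K[1/p\frf],\Lambda(\eta)(1))=\Lambda(\eta)\otimes_{\Omega_\cO}H^2(\cO_K[1/p\frf],\Omega(1))$, which vanishes at the prime above $\frq$; by the $\Lambda$-main-conjecture the characteristic ideals of $H^2$ and of $H^1/\cJ_\Lambda(\zeta(\eta,\frf))$ coincide, so the latter vanishes there too, and this lifts back to $\frq$ through the pseudo-isomorphism of part 2). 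Thus $\Div(\kappa_\frf)=0$, and the asserted determinant isomorphism follows from \eqref{divequation}.

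The main obstacle is organisational rather than conceptual, and lies in two places. First, the precise matching of $\zeta(\frf)$ and $\cJ_\Omega(\zeta(\frf))$ with their $\Lambda$-counterparts $\zeta(\eta,\frf)$, $\cJ_\Lambda(\zeta(\eta,\frf))$ under the projections $\Omega_\cO\to\Lambda(\eta)$, including the continuous twists of Lemma \ref{twistinglemma} needed to reach every finite order character of $\cG_\frf$ from characters of the finite quotients $G(\frf_\chi)$ on which the $\Lambda$-main-conjecture is phrased. Second, and this is the crucial step, the passage at singular primes from $H^2(\Omega(1))_\frq=0$ to the vanishing of $(H^1(\Omega(1))/\cJ_\Omega(\zeta(\frf)))_\frq$, which genuinely uses the full strength of the $\Lambda$-main-conjecture and not merely its rank-and-torsion part. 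The non-normality of $\Omega_\cO$ when $\Delta'\neq1$, by contrast, causes no real trouble once everything is phrased through effective Cartier divisors and Krull's principal ideal theorem.
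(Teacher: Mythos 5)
Your overall strategy — localize at height-one primes via Krull/Flach, treat regular primes via the $\Lambda$-main-conjecture and the decomposition of $\Omega_\cO[1/p]$, and use Conjecture \ref{mu=0} to handle singular primes — is the same as the paper's, and parts 1)--3) and the regular-prime case are fine (the paper phrases the decomposition through the normalization $\wt\Omega_\cO=\prod_\chi\Lambda(\chi)$ rather than $\Omega_\cO[1/p]$, but this is cosmetic). The genuine divergence, and the place where your write-up has a gap, is the singular prime step.

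At a singular height-one prime $\frq$, the paper does \emph{not} try to show $(H^1/\cJ_\Omega(\zeta(\frf)))_\frq=0$; it shows only the determinant equality, by Witte's trick: writing $\Det\cJ_\Omega(\zeta(\frf))_\frq=u\cdot\Det H^1_\frq$ for some $u\in(\Omega_\cO)_\frq$ and checking that $u$ becomes a unit after the integral base change $(\Omega_\cO)_\frq\to(\wt\Omega_\cO)_\frq$, where everything splits character-by-character. You instead claim that the cokernel itself vanishes, and your justification — ``this lifts back to $\frq$ through the pseudo-isomorphism of part 2)'' — is not a valid argument: part 2) provides no mechanism for descending a vanishing statement along the non-flat map $\Omega_\cO\to\Lambda(\eta)$. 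What is actually needed is Nakayama's lemma. Concretely, because $H^2(\Omega(1))_\frq=0$, all Tor-corrections in the base-change spectral sequence along $\Omega_\cO\to\Lambda(\eta)$ vanish locally at $\frq$; hence $H^1(\Lambda(\eta)(1))_{\frq'}\isom\Lambda(\eta)_{\frq'}\otimes_{(\Omega_\cO)_\frq}H^1(\Omega(1))_\frq$ and the image of $\zeta(\frf)$ under $p_{\eta^{-1}}$ is $\zeta(\eta,\frf)$ (the computation of Lemma \ref{decompositionlemma}, which holds on the level of elements independently of whether $\frq$ is regular). Since $\Lambda(\eta)_{\frq'}$ is a DVR and by Corollary \ref{localizationcor} the determinant of $(H^1/\cJ_\Lambda(\zeta(\eta,\frf)))_{\frq'}$ equals that of $H^2(\Lambda(\eta)(1))_{\frq'}=0$, the quotient module vanishes, and then $M\otimes_{(\Omega_\cO)_\frq}\Lambda(\eta)_{\frq'}=0$ with $M=(H^1/\cJ_\Omega(\zeta(\frf)))_\frq$ forces $M\otimes\kappa(\frq)=0$, whence $M=0$ by Nakayama (one character $\eta$ suffices). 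With this correction your route is valid and actually yields a stronger statement at singular primes than the paper records; note that the two conclusions are equivalent here, since the cokernel is a perfect torsion module over the local ring $(\Omega_\cO)_\frq$ and so vanishes iff its determinant is trivial. You should also be aware that the citations you give for the two inputs (Lemma \ref{perfect} for ``cohomology in degrees $\le 2$'' and Lemma \ref{coefficientequation} for the base-change spectral sequence) concern the finite-level and $\Lambda\to\cO_p$ cases respectively; the versions you need are Lemma \ref{perfectness} and the analogue of \ref{coefficientequation} for $\Omega_\cO\to\Lambda(\eta)$, both via \cite{Fu-Ka} 1.6.5.
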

Note that conjecture \ref{mu=0} holds by corollary \ref{muverification} for all prime numbers $p\nmid 6$, which
split in $K$.

This theorem will be proved in section \ref{proof-of-omega-mc}.

\subsection{Relation to the classical Iwasawa main conjecture}
Recall that $\Omega\simeq \bbZ_p[\Delta][[S,T]]$ and let us assume
that the order of $\Delta$ is prime to $p$. This implies that
$\Omega_\cO$ decomposes
$$
	\Omega_\cO\simeq \prod_{\chi\in\wh\Delta}\Lambda(\chi).
$$
Moreover, this decomposition is given by the projectors $p_{\chi^{-1}}$, which are
in this case are already defined over $\cO_p$ 
(note that the image of $\delta_e\in \bbZ_p[\Delta]$ under the projector $p_{\chi^{-1}}$
is $t_p(\chi)$). In particular, $\Lambda(\chi)=\Lambda\otimes_{\bbZ_p}\cO_p(\chi)$ and $\Omega_\cO$ is a
product of regular local rings.
Note also that by \cite{Witte} 3.6. i) in this case the complex $R\Gamma(\cO_{K}[1/p],\Omega_\cO(1))$
(without $\frf$) is also perfect. 

Using the above decomposition we get
$$
	p_{\chi^{-1}}H^1(\cO_{K}[1/p],\Omega_\cO(1))\simeq H^1(\cO_{K}[1/p],\Lambda(\chi)(1)).
$$
We define as usual 
\begin{align*}
	\cA_\infty&:=\prolim_n \Pic(\cO_{K(p^n\frf)})\otimes_\bbZ\bbZ_p\\
	\cE_\infty&:=\prolim_n \cO_{K(p^n\frf)}^*\otimes_\bbZ\bbZ_p\\
	\cE_\infty'&:=\prolim_n \cO_{K(p^n\frf)}[1/p]^*\otimes_\bbZ\bbZ_p.
\end{align*}
Note that $\cJ_\Omega(\zeta(\frf))\subset \cE_\infty'$. If $\cI_\Omega\subset \Omega$
denotes the augmentation ideal we have in fact $\cI_\Omega\cJ_\Omega(\zeta(\frf))\subset \cE_\infty$.
The quotient $\cJ_\Omega(\zeta(\frf))/\cI_\Omega\cJ_\Omega(\zeta(\frf))$ is pseudo-null.
The elliptic units $\cC_\infty\subset \cE_\infty$ defined by Rubin in \cite{Rubin}
are (up to torsion) the $\Omega$-submodule generated by $\cI_\Omega\cJ_\Omega(\zeta(\frg))$  for
all integral ideals $\frg\mid\frf$.

The relation of the above Iwasawa modules to the ones used in this paper is given as follows:
\begin{lemma}[\cite{Bu-Gr} 5.1., \cite{Witte} 4.1 and 4.2]\label{classicalcomparison}
One has an isomorphism
$$
	\cE_\infty'\simeq H^1(\cO_{K}[1/p],\Omega(1))
$$
and an exact sequence
\begin{align*}
0&\to \cE_\infty\to \cE_\infty'\to \bigoplus_{v\mid p}\cO_p[[\cG_\frf/D_v]]\to \cA_\infty\to\\ &\to H^2(\cO_{K}[1/p],\Omega(1))\to \bigoplus_{v\mid p}\cO_p[[\cG_\frf/D_v]]\to \cO_p\to 0,
\end{align*}
where $D_v$ is the decomposition group at $v$.
\end{lemma}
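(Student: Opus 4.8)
The plan is to deduce both assertions by Iwasawa descent on the \'etale cohomology of $\Spec\cO_F[1/p]$ along the tower $F=K(p^n\frf)$, following \cite{Bu-Gr} 5.1 and \cite{Witte} 4.1--4.2.

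\emph{The identification $\cE_\infty'\simeq H^1(\cO_K[1/p],\Omega(1))$.} By Shapiro's lemma one has $H^i(\cO_K[1/p],\Omega(1))=\prolim_F H^i(\cO_F[1/p],\bbZ_p(1))$, the transition maps being corestrictions; the point of the convention \ref{jconvention} of working on $\cO_K[1/p]$ rather than on $\cO_K[1/p\frf]$ (that is, of taking $j_*$) is precisely that it keeps only the classes unramified at the primes dividing $\frf$, so that one lands in the $p$-units $\cO_F[1/p]^*$ and not in $\cO_F[1/p\frf]^*$. For fixed $F$ the Kummer sequence gives $0\to\cO_F[1/p]^*/p^m\to H^1(\cO_F[1/p],\mu_{p^m})\to\Pic(\cO_F[1/p])[p^m]\to 0$, and since $\Pic(\cO_F[1/p])$ is finite the right-hand term dies in the limit over $m$; hence $H^1(\cO_F[1/p],\bbZ_p(1))=\cO_F[1/p]^*\otimes_\bbZ\bbZ_p$, and $\prolim_F$ gives $\cE_\infty'$.

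\emph{The seven-term sequence at finite level.} Next I would compare $\Spec\cO_F$ with $\Spec\cO_F[1/p]$ through the localization triangle for the closed subset $Z=\{v\mid p\}$. By purity $i_v^!\bbZ_p(1)\simeq\bbZ_p[-2]$, so the cohomology with support along $Z$ is $\bigoplus_{v\mid p}H^{j-2}(\kappa(v),\bbZ_p)$ in degree $j$; as the residue fields $\kappa(v)$ are finite this equals $\bigoplus_{v\mid p}\bbZ_p$ for $j=2,3$ and vanishes otherwise. Into the resulting long exact sequence I would feed the standard computations for $F$ totally imaginary (so that no real place interferes, in particular when $p=2$): $H^1(\cO_F,\bbZ_p(1))=\cO_F^*\otimes\bbZ_p$ and $H^2(\cO_F,\bbZ_p(1))=\Pic(\cO_F)\otimes\bbZ_p$ by Kummer theory together with the finiteness of $\Pic(\cO_F)$ and the vanishing $\operatorname{Br}(\cO_F)=0$; $H^3(\cO_F,\bbZ_p(1))\simeq\bbZ_p$; and $H^3(\cO_F[1/p],\bbZ_p(1))=0$, since $\cO_F[1/p]$ has $p$-cohomological dimension $2$ ($F$ having no real place; cf. the proof of \ref{perfect} and \cite{Milne} I.4.10). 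Combined with $H^1(\cO_F[1/p],\bbZ_p(1))=\cO_F[1/p]^*\otimes\bbZ_p$ from the previous step, the localization sequence collapses to a seven-term exact sequence
\begin{multline*}
0\to\cO_F^*\otimes\bbZ_p\to\cO_F[1/p]^*\otimes\bbZ_p\to\bigoplus_{v\mid p}\bbZ_p[\Gal(F/K)/D_v]\to\Pic(\cO_F)\otimes\bbZ_p\\
\to H^2(\cO_F[1/p],\bbZ_p(1))\to\bigoplus_{v\mid p}\bbZ_p[\Gal(F/K)/D_v]\to\bbZ_p\to 0,
\end{multline*}
where the group $\bigoplus_{w\mid v}\bbZ_p$ attached to a place $v\mid p$ of $K$ is written $\bbZ_p[\Gal(F/K)/D_v]$, the places of $F$ above $v$ forming a single $\Gal(F/K)$-orbit with stabiliser the decomposition group $D_v$.

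Finally I would pass to $\prolim_F$ along $F=K(p^n\frf)$. Every term of the seven-term sequence is a finitely generated $\bbZ_p$-module, so the Mittag--Leffler condition holds and $\prolim_F$ is exact; the limits compute, in order, $\cE_\infty$, $\cE_\infty'$, $\bigoplus_{v\mid p}\cO_p[[\cG_\frf/D_v]]$, $\cA_\infty$, $H^2(\cO_K[1/p],\Omega(1))$, $\bigoplus_{v\mid p}\cO_p[[\cG_\frf/D_v]]$ and $\cO_p$, which is the asserted exact sequence. The step I expect to need the most care --- the main obstacle --- is the sheaf-theoretic bookkeeping in the first step: one must check that passing to $\cO_K[1/p]$ rather than $\cO_K[1/p\frf]$ really singles out $\cE_\infty'$, that this is compatible with the localization sequence used afterwards, and that the Shapiro identification $H^i(\cO_K[1/p],\Omega(1))=\prolim_F H^i(\cO_F[1/p],\bbZ_p(1))$ holds also in degree $2$; these are exactly the points covered by \cite{Witte} 3.6 and 4.1--4.2, which I would cite. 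The remaining local and class-field-theoretic inputs are standard for totally imaginary base fields.
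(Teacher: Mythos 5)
The paper itself does not prove this lemma: it is stated with a citation to \cite{Bu-Gr} 5.1 and \cite{Witte} 4.1--4.2, and no proof block follows. So there is no ``paper's own proof'' to compare against; what I can compare against is the standard argument those references give, and your proof is essentially that argument: Kummer theory on $\cO_F[1/p]$ at finite level $F=K(p^n\frf)$, the localization sequence relating $\cO_F$ and $\cO_F[1/p]$, the vanishing $H^3(\cO_F[1/p],\bbZ_p(1))=0$ from $\mathrm{cd}_p\le 2$ for totally imaginary $F$, and then $\prolim_F$, with exactness preserved because every term is a compact (finitely generated) $\bbZ_p$-module. Your identification of the places above $p$ with $\bbZ_p[\Gal(F/K)/D_v]$ and its limit $\bbZ_p[[\cG_\frf/D_v]]$ is right, and you correctly flag the $j_*$-bookkeeping and the degree-$2$ Shapiro/Iwasawa identification as the delicate point, which is precisely what \cite{Witte} 3.6 and 4.1--4.2 (and \cite{Fu-Ka} 1.6.5) supply.

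Two small remarks. First, the invocation of purity ``$i_v^!\bbZ_p(1)\simeq\bbZ_p[-2]$'' at places $v\mid p$ is slightly delicate, since $\bbZ_p(1)$ is not a lisse \'etale sheaf over $\cO_F$ at such $v$; the honest way to get the seven-term sequence is either to work in flat cohomology, or --- more simply --- to splice the elementary divisor sequence
$$
0\to\cO_F^*\otimes\bbZ_p\to\cO_F[1/p]^*\otimes\bbZ_p\to\bigoplus_{v\mid p}\bbZ_p\to\Pic(\cO_F)\otimes\bbZ_p\to\Pic(\cO_F[1/p])\otimes\bbZ_p\to 0
$$
with the Kummer-theoretic sequence $0\to\Pic(\cO_F[1/p])\otimes\bbZ_p\to H^2(\cO_F[1/p],\bbZ_p(1))\to T_p\Br(\cO_F[1/p])\to 0$, where $T_p\Br(\cO_F[1/p])=\ker\bigl(\bigoplus_{v\mid p}\bbZ_p\to\bbZ_p\bigr)$ by class field theory. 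This gives exactly your sequence without appealing to purity at $p$, and is closer to how \cite{Bu-Gr} proceed. Second, your finite-level computation produces $\bbZ_p$-modules in the third, sixth and seventh slots, whereas the lemma's statement writes $\cO_p$ there; this is a mismatch in the paper's statement (the modules $\cE_\infty$, $\cE_\infty'$, $\cA_\infty$ and the ring $\Omega$ are all over $\bbZ_p$), not an error of yours, but it is worth noting that you have transcribed the displayed $\cO_p$'s while your argument actually delivers $\bbZ_p$'s.
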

Note that $\bigoplus_{v\mid p}\cO_p[[\cG_\frf/D_v]]$ is a finitely generated $\cO_p$-module
(compare lemma \ref{lambdastructure}) 
as there are only finitely many primes above $p$ in $K(p^\infty\frf)$. In particular,
it is a pseudo-null $\Omega_\cO$-module. Consider now in each $H^1(\cO_{K}[1/p],\Lambda(\chi)(1))$
the sub-module
$$
	(\cI_\Lambda\cJ_\Lambda(\zeta(\chi)))\subset H^1(\cO_{K}[1/p],\Lambda(\chi)(1))\simeq p_{\chi^{-1}}{\cE_\infty'}
$$
(more traditionally one writes ${\cE_\infty'}^{\chi^{-1}}=p_{\chi^{-1}}{\cE_\infty'}$ etc.).
Here $\zeta(\chi)$ is as in theorem  \ref{lambda-mc}.
Then theorem \ref{lambda-mc} gives a canonical isomorphism
$$
	\Det_{\Lambda_\cO}\left(p_{\chi^{-1}}{\cE_\infty}/\cI_\Lambda\cJ_\Lambda(\zeta(\chi))\right)\simeq
	\Det_{\Lambda_\cO}\left(p_{\chi^{-1}}\cA_\infty\right),
$$
which holds with no restriction on $p$ besides the fact that $p$ is prime to the order of $\Delta$.

\subsection{Conjecture \ref{mu=0} and the vanishing of the $\mu$-invariant}
In this section we show that the results of Gillard \cite{Gillard}
imply the conjecture \ref{mu=0} for $p\nmid 6$, which are split
in $K$.

Assume that $p=\frp\frp'$ in $K$ and 
let $K\subset K_\infty^\frp\subset K_\infty$ (resp. $K_\infty^{\frp'}$)
be the $\bbZ_p$-extension of $K$, which is
unramified outside of $\frp$ (resp. $\frp'$). Recall from (\ref{Gfdefn}) that we fixed a splitting
$\cG_\frf\simeq \Delta\times \Gamma$ and define $L/K$ such that $\Gal(L/K)\simeq \Delta$.
Let $F_\infty:=LK_\infty^\frp$ be the compositum, then 
$\Gal(F_\infty/K)\isom \Delta\times \Gal(K_\infty^\frp/K)$
and 
$$
	\cG_\frf\isom \Gal(K_\infty^{\frp'}/K)\times \Gal(F_\infty/K).
$$
Define $\cH:=\Gal(K(\frf p^\infty)/F_\infty)\isom\Gal(K_\infty^{\frp'}/K)\isom \bbZ_p$, so that 
\begin{equation}\label{Hdefn}
	0\to \cH\to \cG_\frf\to \Gal(F_\infty/K)\to 0
\end{equation}
is exact.

Let $M_\infty$ be the maximal abelian
$\bbZ_p$-extension of $F_\infty$, which is unramified outside of $\frp$. 
Gillard proves:
\begin{theorem}[Gillard \cite{Gillard} 3.4.] Let $p\nmid 6$ be split in $K$. The group 
$\Gal(M_\infty/F_\infty)$ has no $\bbZ_p$-torsion. In particular, it is a finitely generated
$\bbZ_p$-module.
\end{theorem}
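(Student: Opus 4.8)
The plan is to reduce the statement to the vanishing of a $\mu$-invariant, and then to supply that vanishing by a Ferrero--Washington style analytic argument; this last step is the genuinely new content due to Gillard. Write $X:=\Gal(M_\infty/F_\infty)$, a finitely generated module over $\bbZ_p[[\Gal(F_\infty/K)]]\isom\bbZ_p[\Delta][[T]]$ with $T$ a topological generator of $\Gal(K_\infty^\frp/K)$. A standard rank computation --- using that $\frp$ is the only prime permitted to ramify in $M_\infty/F_\infty$, that it is finitely decomposed in $F_\infty/K$, and that $K$ has one complex place --- shows that $X$ is torsion over $\bbZ_p[[T]]$. Granting this, the structure theorem for finitely generated torsion $\bbZ_p[[T]]$-modules shows that ``$X$ has no $\bbZ_p$-torsion'' is equivalent to ``$X$ embeds into a finite direct sum $\bigoplus_j\bbZ_p[[T]]/(f_j)$ with the $f_j$ distinguished'', which says precisely that $\mu(X)=0$ and $X$ has no nonzero finite submodule; the finite generation over $\bbZ_p$ asserted in the ``in particular'' is then automatic. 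So the real task is to prove $\mu(X)=0$ and the absence of a finite submodule.

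To get at $X$ I would bring in the Euler system of elliptic units from \ref{elliptic-units} via class field theory. An analogue for the tower $F_\infty/K$ of the exact sequence of Lemma \ref{classicalcomparison} fits $X$ into an exact sequence assembled from the inverse limit $\cA_\infty$ of $p$-parts of ideal class groups along $F_\infty$, the module $\cU_\infty$ of local units above $\frp$, and the closure $\ol{\cC_\infty}$ of the elliptic units. The fact that $\cA_\infty$ has no nonzero finite submodule in the $K_\infty^\frp$-direction is a standard structural property of these Iwasawa modules, so the problem is transported onto the quotient $\cU_\infty/\ol{\cC_\infty}$. By Coleman's theory this quotient is presented by the Coleman power series of the norm-compatible system of elliptic units, i.e.\ by the restriction to the $K_\infty^\frp$-line of Robert's pseudo-measure, which is essentially the Katz two-variable $p$-adic $L$-function. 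Thus $\mu(X)=0$ is reduced to the single assertion that this power series is nonzero modulo $p$.

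The nonvanishing modulo $p$ is the step I expect to be the main obstacle; it is the genuinely analytic input, and in the paper it is simply imported from Gillard \cite{Gillard} 3.4 (together with related results of Schneps). Its moments are Eisenstein--Kronecker--Hurwitz numbers; since $p$ splits in $K$, the CM elliptic curve with multiplication by $\cO_K$ has ordinary reduction at $\frp$, so these numbers are controlled modulo $p$ by $q$-expansions of Eisenstein series whose coefficients can be analysed, and one runs a Ferrero--Washington style argument whose equidistribution (normal numbers) input forces the mod $p$ coefficients of the power series not to vanish simultaneously. The hypothesis $p\nmid 6$ enters to guarantee good reduction at $\frp$ of $\wp$, $\wp'$ and the Weierstrass $\sigma$-function and to rule out spurious $p$-divisibility, while the split hypothesis is exactly what provides the ordinary CM point. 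Granting this, $\mu(X)=0$; and since $X$ is built from $\cU_\infty/\ol{\cC_\infty}$ and $\cA_\infty$, which carry no nonzero finite submodule, neither does $X$, so $X$ is $\bbZ_p$-torsion-free and finitely generated over $\bbZ_p$.
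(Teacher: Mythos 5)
The paper does not prove this statement: it is quoted verbatim from Gillard \cite{Gillard}, Theorem 3.4, and is used as a black box to deduce Conjecture \ref{mu=0} for split $p\nmid 6$; there is therefore no internal proof to compare your sketch against. Your sketch is nonetheless a fair high-level outline of how Gillard's argument in \cite{Gillard} runs: reduce $\bbZ_p$-torsion-freeness of $X=\Gal(M_\infty/F_\infty)$ to $\mu(X)=0$ together with the absence of a nonzero finite submodule (both via the structure theory of finitely generated torsion modules over $\bbZ_p[\Delta][[T]]$, after checking that $X$ is torsion), transport the $\mu$-invariant through class field theory and Coleman's isomorphism onto the power series attached to the norm-compatible elliptic units (a one-variable restriction of the Katz two-variable $p$-adic $L$-function), and kill $\mu$ by a Ferrero--Washington ``normal numbers'' equidistribution argument that exploits the ordinary CM structure available precisely because $p$ is split. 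Your identification of what the hypotheses $p$ split and $p\nmid 6$ buy is also apt. The one caveat: you explicitly defer the decisive analytic nonvanishing mod $p$ back to Gillard \cite{Gillard} 3.4 itself, so read literally your sketch is circular as a proof of that very theorem; it should be presented as a summary of where the hard work in Gillard's paper lives, which is the appropriate posture for a result the present paper imports rather than proves.
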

We want to apply this theorem to prove conjecture \ref{mu=0}, i.e., we want to show that $H^2(\cO_{K}[1/p\frf],\Omega(1))_\frq=0$
for all height one prime ideals with $p\in \frq$. Note first that we can
make the flat base change to $\cO_p$.
We study $ H^2(\cO_{K}[1/p\frf],\Lambda_\cO(\Gal(F_\infty/K))(1))$, where 
$\Lambda_\cO(\Gal(F_\infty/K))$ is the Iwasawa algebra of $\Gal(F_\infty/K)$.

Let
$$
	\cA(F_\infty):=\prolim_n (\Pic(\cO_{F_n})\otimes_\bbZ\cO_p)
$$
be the inverse limits of the class groups of the fields $F_n:=K_nL$ so that
$F_\infty= \bigcup_n F_n$. Then $\cA(F_\infty)$ is a $\cO_p$-module, which is a quotient of
$\Gal(M_\infty/F_\infty)\otimes_{\bbZ_p}\cO_p$. 
The above theorem implies that $\cA(F_\infty)$
is a finitely generated $\cO_p$-module. 

\begin{corollary}
With the above notations
$$
	 H^2(\cO_{K}[1/p\frf],\Lambda_\cO(\Gal(F_\infty/K))(1))
$$
is a finitely generated $\cO_p$-module. In particular, $H^2(\cO_{K}[1/p\frf],\Omega_\cO(1))$ is
a finitely generated $\Lambda_\cO(\cH)$-module.
\end{corollary}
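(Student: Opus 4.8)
The plan is to identify $H^2(\cO_{K}[1/p\frf],\Lambda_\cO(\Gal(F_\infty/K))(1))$, up to finitely generated $\cO_p$-modules, with the inverse limit of class groups $\cA(F_\infty)$ — which the discussion preceding the corollary (via Gillard's theorem) already shows to be finitely generated over $\cO_p$ — and then to descend this finiteness from $\Lambda_\cO(\Gal(F_\infty/K))$ to $\Omega_\cO$ along the natural surjection of Iwasawa algebras.

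\emph{Step 1: the case of $\Lambda_\cO(\Gal(F_\infty/K))$.} The argument proving Lemma~\ref{classicalcomparison} (that is, \cite{Bu-Gr}~5.1 together with \cite{Witte}~4.1 and 4.2) applies mutatis mutandis with $\Gal(F_\infty/K)$ in place of $\cG_\frf$ and $\cO_K[1/p\frf]$ in place of $\cO_K[1/p]$: Kummer theory identifies $H^1$ with an inverse limit of $p$-completed unit groups, and Poitou--Tate duality then produces an exact sequence
\begin{equation*}
\cA(F_\infty)\to H^2(\cO_{K}[1/p\frf],\Lambda_\cO(\Gal(F_\infty/K))(1))\to
\bigoplus_{v\mid p}\cO_p[[\Gal(F_\infty/K)/D_v]]\to \cO_p\to 0 ,
\end{equation*}
where $D_v$ is the decomposition group at $v$. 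The term $\cA(F_\infty)$ is finitely generated over $\cO_p$ by the paragraph preceding the corollary. For the local terms one argues exactly as in the proof of Lemma~\ref{lambdastructure}: since $p=\frp\frp'$ splits, $\frp$ is totally ramified and $\frp'$ is finitely decomposed in $K_\infty^{\frp}$ (\cite{deShalit}~II.1.9), so $F_\infty=LK_\infty^{\frp}$ has only finitely many places above $p$ and each $D_v$ has finite index in $\Gal(F_\infty/K)$; hence each $\cO_p[[\Gal(F_\infty/K)/D_v]]$ is finitely generated over $\cO_p$. As $H^2(\cO_{K}[1/p\frf],\Lambda_\cO(\Gal(F_\infty/K))(1))$ is then an extension of a submodule of a finitely generated $\cO_p$-module by a quotient of $\cA(F_\infty)$, it is finitely generated over $\cO_p$.

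\emph{Step 2: descent to $\Omega_\cO$.} By construction $\cG_\frf\isom\cH\times\Gal(F_\infty/K)$ with $\cH\isom\bbZ_p$, so the projection $\cG_\frf\to\Gal(F_\infty/K)$ induces a surjection $\Omega_\cO\to\Lambda_\cO(\Gal(F_\infty/K))$ identifying $\Lambda_\cO(\Gal(F_\infty/K))$ with $\Omega_\cO\otimes_{\Lambda_\cO(\cH)}\cO_p$, where $\cO_p=\Lambda_\cO(\cH)/(\gamma-1)$ for a topological generator $\gamma$ of $\cH$ and $\gamma-1$ is a non-zero-divisor in each component. The coefficient theorem (\cite{Fu-Ka}~1.6.5~(3), compare Lemma~\ref{coefficientequation}) gives
\begin{equation*}
\cO_p\otimes_{\Lambda_\cO(\cH)}^{\bbL}R\Gamma(\cO_{K}[1/p\frf],\Omega_\cO(1))\isom R\Gamma(\cO_{K}[1/p\frf],\Lambda_\cO(\Gal(F_\infty/K))(1)),
\end{equation*}
and, resolving $\cO_p$ by $0\to\Lambda_\cO(\cH)\xrightarrow{\gamma-1}\Lambda_\cO(\cH)\to\cO_p\to 0$ and using $H^i(\cO_{K}[1/p\frf],\Omega_\cO(1))=0$ for $i\ge 3$ (the cohomological $p$-dimension of $\cO_K[1/p\frf]$ is $2$, as $K$ is imaginary quadratic), the associated long exact sequence yields
\begin{equation*}
H^2(\cO_{K}[1/p\frf],\Omega_\cO(1))/(\gamma-1)\isom H^2(\cO_{K}[1/p\frf],\Lambda_\cO(\Gal(F_\infty/K))(1)).
\end{equation*}
Since $R\Gamma(\cO_{K}[1/p\frf],\Omega_\cO(1))$ is a perfect complex of $\Omega_\cO$-modules (Lemma~\ref{perfectness}), $H^2(\cO_{K}[1/p\frf],\Omega_\cO(1))$ is finitely generated over $\Omega_\cO$; as $\gamma-1$ lies in the maximal ideal of every local component of $\Omega_\cO$ and the quotient above is finitely generated over $\cO_p$ by Step~1, Nakayama's lemma (applied componentwise) shows that $H^2(\cO_{K}[1/p\frf],\Omega_\cO(1))$ is finitely generated over $\Lambda_\cO(\cH)$.

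I expect the main obstacle to be Step~1: pinning down the analogue of Lemma~\ref{classicalcomparison} for the non-cyclotomic $\bbZ_p$-extension $F_\infty=LK_\infty^{\frp}$, and above all verifying that $F_\infty$ has only finitely many places over $p$ so that the local terms are genuinely finitely generated over $\cO_p$ — this is precisely the point at which the hypothesis that $p$ splits in $K$ is used. Once that exact sequence is in hand, Step~2 is a formal descent by base change along $\Lambda_\cO(\cH)\to\cO_p$ combined with the Nakayama lemma.
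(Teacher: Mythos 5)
Your proof follows essentially the same route as the paper's: establish the analogue of Lemma~\ref{classicalcomparison} for $\Gal(F_\infty/K)$, observe that $\cA(F_\infty)$ and the local $\cO_p[[\Gal(F_\infty/K)/D_v]]$ terms are finitely generated over $\cO_p$ because all primes not above $\frp$ are finitely decomposed in $F_\infty$, and then descend via \cite{Fu-Ka}~1.6.5(3) and topological Nakayama over $\Lambda_\cO(\cH)$. One small slip: since the base scheme is $\cO_K[1/p\frf]$ the direct sum of local terms should run over $v\mid p\frf$ (as in the paper), not just $v\mid p$ — though the same finite-decomposition argument covers the extra places dividing $\frf$, so this is readily repaired.
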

\begin{proof}
As in lemma \ref{classicalcomparison} one has an exact sequence
$$
	\cA(F_\infty)\to H^2(\cO_{K}[1/p\frf],\Lambda_\cO(\Gal(F_\infty/K))(1))\to 
	\bigoplus_{v\mid p\frf}
	\cO_p[[\Gal(F_\infty/K)/D_v]]
$$
where $D_v$ is the decomposition group. As $p$ splits in $K$, all primes not
above $\frp$ are finitely decomposed in $F_\infty$ (see \cite{deShalit} II 1.9) and $\frp$ is
completely ramified in $K^{\frp}_\infty$. It follows that the $H^2(K_v,\Lambda_\cO(\Gal(F_\infty/K))(1))= \cO_p[[\Gal(F_\infty/K)/D_v]]
$
are finitely generated $\cO_p$-modules.
We consider now $ H^2(\cO_{K}[1/p\frf],\Omega_\cO(1))$ as a compact $\Lambda_\cO(\cH)$-module. Using \cite{Fu-Ka} 1.6.5 (3) one sees that 
$$
	H^2(\cO_{K}[1/p\frf],\Omega_\cO(1))\otimes_{\Lambda_\cO(\cH)}\cO_p\simeq
	H^2(\cO_{K}[1/p\frf],\Lambda_\cO(\Gal(F_\infty/K))(1)).
$$
It follows from Nakayama's lemma (see \cite{NSW} 5.2.18)  that 
$H^2(\cO_{K}[1/p\frf],\Omega_\cO(1))$ is a finitely generated $\Lambda_\cO(\cH)$-module.
\end{proof}
We conclude with the following general structure result.
\begin{lemma}\label{vanishingforheightoneprimes}
Let $M$ be an $\Omega_\cO$-module which is finitely generated as $\Lambda_\cO(\cH)$-module.
Then for any height one prime
ideal $\frq\subset \Omega_\cO$ with $p\in \frq$, one has
$$
	M_\frq=0.
$$
\end{lemma}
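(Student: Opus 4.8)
The statement to prove is: if $M$ is an $\Omega_\cO$-module that is finitely generated as a $\Lambda_\cO(\cH)$-module, then $M_\frq = 0$ for every height-one prime $\frq \subset \Omega_\cO$ with $p \in \frq$. The plan is to use the fact that $\cH \isom \bbZ_p$ so that $\Lambda_\cO(\cH) \isom \cO_p[[U]]$ for a single variable $U$, and that $\Omega_\cO$ is (up to the finite factor coming from $\Delta$) a power series ring in several variables over $\cO_p$; writing $\Omega_\cO$ as a $\Lambda_\cO(\cH)$-algebra, the extra variables are precisely the ones cutting out $\Gal(F_\infty/K)$. The key dimension count is this: a height-one prime $\frq$ of $\Omega_\cO$ containing $p$ has, by the catenary property of the regular (up to $\Delta$) ring $\Omega_\cO$, the property that $\Omega_\cO/\frq$ has Krull dimension $\dim \Omega_\cO - 1$; since $p \in \frq$, the prime $\frq$ is "large" in the $\cH$-direction, i.e. $\frq$ cannot be contracted from a proper ideal that leaves $\Lambda_\cO(\cH)$ intact.

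First I would reduce to the local case: $\Omega_\cO$ is a finite product of local rings (one for each character of the prime-to-$p$ part of $\Delta$), so it suffices to treat each factor, which is a power series ring $R := \cO_p'[[T_1,\ldots,T_d]]$ over a complete DVR $\cO_p'$ (or a finite extension thereof coming from the $p$-part of $\Delta$, which does not affect the argument). Second, I would localize the hypothesis: $M$ finitely generated over $\Lambda_\cO(\cH)$ means $M_\frq$ is finitely generated over $\Lambda_\cO(\cH)_{\frq \cap \Lambda_\cO(\cH)}$. The heart of the matter is to show that $\frq \cap \Lambda_\cO(\cH)$ is the \emph{maximal} ideal of $\Lambda_\cO(\cH)$: indeed $p \in \frq$ forces $p \in \frq \cap \Lambda_\cO(\cH)$, and a height-one prime of the ($d$-dimensional, $d \geq 2$) ring $R$ containing $p$ must contract to a prime of height $\geq 1$ in the one-dimensional-relative-to-$p$ subring; a direct argument is that $\Omega_\cO/\frq$ is finite over $\Lambda_\cO(\cH)/(\frq \cap \Lambda_\cO(\cH))$ yet has dimension $d-1 \geq 1$, while if $\frq \cap \Lambda_\cO(\cH)$ were a height-one prime of $\Lambda_\cO(\cH) \isom \cO_p[[U]]$ not equal to the maximal ideal, it would be $(p)$ or $(f(U))$ for an irreducible distinguished polynomial, and in neither case can the finite extension $\Omega_\cO/\frq$ have dimension $\geq 1$ once one also knows $p \in \frq$ — so $\frq \cap \Lambda_\cO(\cH)$ is the maximal ideal $\frm_{\cH}$.

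Once that is established, the conclusion is immediate: $M_\frq$ is a finitely generated module over the local ring $\Lambda_\cO(\cH)_{\frm_\cH}$, but localizing at a height-one prime $\frq$ lying over $\frm_\cH$ means we are inverting elements of $\Omega_\cO \setminus \frq$, and in particular we may invert any element of $\frm_\cH \setminus \{0\}$ that lies outside $\frq$ — but every nonzero element of $\frm_\cH$ generates (after the Weierstrass preparation normalization) an ideal whose image in $\Omega_\cO$ is not contained in the height-one prime $\frq$ unless it is a unit multiple of a generator of $\frq$, which is impossible for dimension reasons since $\Lambda_\cO(\cH) \hookrightarrow \Omega_\cO$ raises dimension by $d - 1 \geq 1$. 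Concretely: pick $0 \neq x \in \frm_\cH$ with $x \notin \frq$ (possible precisely because $\frq$ has height one in $\Omega_\cO$ while $\frm_\cH \Omega_\cO$ has height $\geq 2$), so $x$ becomes invertible in $\Omega_{\cO,\frq}$; then $M_\frq = M \otimes_{\Lambda_\cO(\cH)} \Lambda_\cO(\cH)[1/x] \otimes (\cdots) = 0$ since $x$ kills $M$ after enough iterations — more cleanly, $M_\frq$ is finitely generated over $\Lambda_\cO(\cH)$ and the map $\Lambda_\cO(\cH) \to \Omega_{\cO,\frq}$ sends some nonzero element of $\Lambda_\cO(\cH)$ to a unit and another nonzero element (e.g.\ $p$ together with a second independent element of $\frm_\cH$) into $\frq$, forcing $\Omega_{\cO,\frq} \otimes_{\Lambda_\cO(\cH)} \Lambda_\cO(\cH) = 0$ unless $M_\frq = 0$.

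\textbf{Main obstacle.} The genuinely delicate point is the dimension/contraction lemma: verifying carefully that a height-one prime $\frq$ of $\Omega_\cO$ with $p \in \frq$ necessarily contracts to the maximal ideal of $\Lambda_\cO(\cH)$ — equivalently, that $\frm_\cH \Omega_\cO$ has height at least two. This is where one uses that $\cG_\frf / \cH \isom \Gal(F_\infty/K)$ still contains a $\bbZ_p$ (the ramified-at-$\frp$ line $\Gal(K_\infty^\frp/K)$) on top of $\cO_p$, so that $\Omega_\cO$ has Krull dimension at least $\dim \Lambda_\cO(\cH) + 1$, together with the fact that $p$ and the variable cutting out $\Gal(K_\infty^\frp/K)$ form part of a regular system of parameters and hence generate an ideal of height two inside any height-one prime's complement. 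Everything else — the reduction to a product of power-series rings, the behaviour of finite generation under localization, and Nakayama-type bookkeeping — is routine commutative algebra. I would expect this lemma to be the only place where the specific structure (the splitting $\cG_\frf \isom \cH \times \Gal(F_\infty/K)$ and the $\bbZ_p$-freeness lurking in $\Gal(F_\infty/K)$) is really used.
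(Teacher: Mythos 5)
Your proposal contains a genuine gap at the central step. You assert that $\frq\cap\Lambda_\cO(\cH)$ is the maximal ideal $\frm_\cH$ of $\Lambda_\cO(\cH)\simeq\cO_p[[s]]$, but this is false. Since $\frq$ has height one and the ideal $(p,s)$ has height $\ge 2$ in $\Omega_\cO$, necessarily $s\notin\frq$; hence $\frq\cap\Lambda_\cO(\cH)$ contains $p$ but not $s$, so it equals $(p)$, strictly smaller than $\frm_\cH$. You even observe the correct fact a few lines later, in the parenthetical ``(possible precisely because $\frq$ has height one in $\Omega_\cO$ while $\frm_\cH\Omega_\cO$ has height $\ge 2$)'' --- which shows $\frm_\cH\not\subset\frq\cap\Lambda_\cO(\cH)$, directly contradicting the claimed contraction. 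The concluding computation is then unsound: ``$x$ kills $M$ after enough iterations'' is false in general (take $M=\Lambda_\cO(\cH)$ and $x=s$), and the displayed relation ``$\Omega_{\cO,\frq}\otimes_{\Lambda_\cO(\cH)}\Lambda_\cO(\cH)=0$'' is not a meaningful assertion. More fundamentally, knowing only that $M$ is finitely generated over $\cO_p[[s]]$ and that $s$ becomes a unit in $\Omega_{\cO,\frq}$ does not force $M_\frq=0$.

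The paper's proof has a different shape, passing not through the subgroup $\cH$ but through the quotient $\Gal(F_\infty/K)$. One sets $\wt M:=M/\frq M$, $\wt\Omega:=\Omega_\cO/\frq\Omega_\cO$, $I:=\ker(\Omega_\cO\to\Lambda_\cO(\Gal(F_\infty/K)))$, and reduces by Nakayama to showing $\wt M\otimes_{\wt\Omega}\kappa(\frq)=0$. The hypothesis gives that $M/IM$ is finitely generated over $\cO_p$, and since $p\in\frq$ the quotient $\wt M/\wt I\wt M$ is actually a \emph{finite} group --- not just finitely generated. One then chooses $\wt u\in\Omega_\cO$ lifting a topological generator of $\Gal(K_\infty^\frp/K)\subset\Gal(F_\infty/K)$; the same height argument you invoke shows $\wt u\notin\frq$, so $\wt u$ is invertible in $\kappa(\frq)$. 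Because $\wt u$ lies in the radical of $\wt\Omega$ and $\wt M/\wt I\wt M$ is finite, some power $\wt u^k$ annihilates $\wt M/\wt I\wt M$, so inverting $\wt u$ kills $\wt M/\wt I\wt M$; a second Nakayama argument (with $\wt I$ in the radical of $\wt\Omega[\wt u^{-1}]$) then kills $\wt M[\wt u^{-1}]$, hence $\wt M\otimes_{\wt\Omega}\kappa(\frq)=0$ since $\wt\Omega[\wt u^{-1}]\subset\kappa(\frq)$. The two ingredients your proposal is missing are the finiteness (not merely $\cO_p$-finite generation) of $\wt M/\wt I\wt M$, and the use of the auxiliary variable $\wt u$ from the \emph{quotient} direction $\Gal(F_\infty/K)$, on which the $\cH$-reduction $\wt M/\wt I\wt M$ still carries a nontrivial action --- in contrast to your $x\in\frm_\cH$, which acts trivially on $M/IM$ and therefore cannot do the job.
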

\begin{proof}
Let $\wt M:=M/\frq M$,
$\wt\Omega:=\Omega_\cO/\frq\Omega_\cO$. We denote by $\kappa(\frq)$
the residue class field of $\frq$.
By Nakayama's lemma it suffices to show that
$$
	M_\frq/\frq M_\frq=\wt M\otimes_{\wt \Omega}\kappa(\frq)=0,
$$
By the exact sequence (\ref{Hdefn}), we have 
$\Omega_\cO\otimes_{\Lambda_\cO(\cH)}\cO_p\simeq \Lambda_\cO(\Gal(F_\infty/K))$
and we let 
$$
	I:=\ker(\Omega_\cO\to \Lambda_\cO(\Gal(F_\infty/K))).
$$
 By our assumption, $M/IM$ is a finitely
generated $\cO_p$-module.
Identify $\Lambda_\cO(\Gal(F_\infty/K))\simeq \cO_p[\Delta][[u]]$ and
choose $\wt u\in \Omega_\cO$ mapping to $u\in \cO_p[\Delta][[u]]$.
Note that $\wt u\notin \frq$ as otherwise $p,\wt u\in \frq$ and
$\frq$ could not have height $1$.
We show that 
$$
	\wt M\otimes_{\wt\Omega}\wt\Omega[\wt u^{-1}]=0,
$$
which gives the desired result, as $\wt\Omega[\wt u^{-1}]\subset \kappa(\frq)$.
Let $\wt I:=I/\frq I\subset \wt \Omega$.
As $p\in \frq$  the $\wt\Omega$-module $\wt M/\wt I\wt M$ is
finitely generated $\cO_p/p\cO_p$-module, hence a finite group. 
This implies that there is an integer $k$ such that 
$\wt u^k(\wt M/\wt I\wt M)= \wt u^{k+1}(\wt M/\wt I\wt M)$.
As $\wt u $ is in the radical of $\wt\Omega$, Nakayama's lemma shows that $\wt u^k(\wt M/\wt I\wt M)=0$.
This shows $(\wt M/\wt I\wt M)\otimes_{\wt\Omega/\wt I\wt\Omega}\wt\Omega/\wt I\wt\Omega[\wt u^{-1}]=0$.
As $\wt I$ is in the radical of $\wt\Omega[\wt u^{-1}]$ Nakayama's lemma implies that
$\wt M\otimes_{\wt\Omega}\wt\Omega[\wt u^{-1}]=0$.
\end{proof}

\begin{corollary}[Conjecture \ref{mu=0} for split primes]\label{muverification}
Let $p$ be a prime, which splits in $K$ and assume that 
$p\nmid 6$. Then,
for any height one prime
ideal $\frq\subset \Omega$ with $p\in \frq$, one has
$$
	H^2(\cO[1/p\frf],\Omega(1))_\frq=0.
$$
\end{corollary}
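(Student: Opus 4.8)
The plan is to combine the corollary immediately preceding the statement (which rests on Gillard's theorem) with Lemma~\ref{vanishingforheightoneprimes}. Essentially no new argument is needed beyond some base-change bookkeeping.

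First I would pass to $\cO_p$-coefficients. Since $\cO_p$ is a finite product of complete discrete valuation rings, each finite and flat over $\bbZ_p$, the map $\Omega\to\Omega_\cO=\Omega\wh\otimes_{\bbZ_p}\cO_p$ is finite and faithfully flat, and tensoring with $\cO_p$ is exact; hence it commutes with the formation of \'etale cohomology, giving
$$
H^2(\cO_K[1/p\frf],\Omega(1))\otimes_\Omega\Omega_\cO\isom H^2(\cO_K[1/p\frf],\Omega_\cO(1)).
$$
If $\frq\subset\Omega$ is a height one prime with $p\in\frq$, then any prime $\frq'$ of $\Omega_\cO$ lying over $\frq$ again has height one (Krull dimension is preserved along the finite flat extension and both rings are catenary of dimension $3$) and contains $p$. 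As $\Omega\to\Omega_\cO$ is faithfully flat, for any $\Omega$-module $M$ one has $M_\frq=0$ if and only if $(M\otimes_\Omega\Omega_\cO)_{\frq'}=0$ for every $\frq'$ above $\frq$. So it suffices to show $H^2(\cO_K[1/p\frf],\Omega_\cO(1))_{\frq'}=0$ for every height one prime $\frq'\subset\Omega_\cO$ with $p\in\frq'$.

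Next I would invoke the preceding corollary: using that $p\nmid 6$ splits in $K$, Gillard's theorem implies that $\cA(F_\infty)$ is a finitely generated $\cO_p$-module, and hence that $H^2(\cO_K[1/p\frf],\Omega_\cO(1))$ is a finitely generated $\Lambda_\cO(\cH)$-module, where $\cH=\Gal(K(\frf p^\infty)/F_\infty)$ sits in the exact sequence~\eqref{Hdefn}. Finally, applying Lemma~\ref{vanishingforheightoneprimes} with $M=H^2(\cO_K[1/p\frf],\Omega_\cO(1))$ gives $M_{\frq'}=0$ for every height one prime $\frq'$ of $\Omega_\cO$ containing $p$; combined with the descent step above, this yields $H^2(\cO_K[1/p\frf],\Omega(1))_\frq=0$.

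I do not expect a genuine obstacle here: the corollary is a formal consequence of the two cited results. The only mildly technical point is the faithfully flat base change $\Omega\to\Omega_\cO$ together with the verification that it matches up height one primes containing $p$ on the two sides, which is routine commutative algebra.
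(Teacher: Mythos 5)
Your proof is correct and follows essentially the same route as the paper. The paper gives no explicit proof for this corollary, but its preceding remark (``Note first that we can make the flat base change to $\cO_p$'') together with the immediately preceding corollary (finite generation over $\Lambda_\cO(\cH)$ via Gillard) and Lemma~\ref{vanishingforheightoneprimes} is exactly the chain you spell out; you merely make the faithfully flat descent from $\Omega_\cO$ to $\Omega$ and the preservation of the height-one condition explicit, which the paper leaves tacit.
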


\section{Proof of the $\Lambda$-main-conjecture}\label{proof-of-lambda-mc}

In this section we prove the $\Lambda$-main-conjecture  as formulated
in theorem \ref{lambda-mc}.

\subsection{Reduction to characters of big enough level}

Let $\chi:G(\frf_\chi)\to E^*$ be a character of conductor $\frf_\chi$ and let $\fra$ be an
integral ideal prime to $6p\frf_\chi$.
Consider the submodule $\Lambda_\cO({_\fra}\zeta(\chi))\subset H^1(\cO_K[1/p],\Lambda(\chi)(1))$.
\begin{lemma}\label{charactertwisting} Consider a continuous character $\varrho:\Gamma\to \cO_p^*$. Then
the twisting map of lemma \ref{twistinglemma}
maps $_\fra\zeta(\chi)$ to
$$
	_\fra\zeta(\chi\varrho)\in H^1(\cO_K[1/p],\Lambda(\chi\varrho)(1)).
$$
In particular, $\cJ\zeta(\chi)$ is mapped to 
$\cJ\zeta(\chi\varrho)$ and the $\Lambda$-main-conjecture is compatible with twists.
\end{lemma}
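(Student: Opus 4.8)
The statement to prove is Lemma~\ref{charactertwisting}: the twisting isomorphism of Lemma~\ref{twistinglemma}, induced by a continuous character $\varrho:\Gamma\to\cO_p^*$, carries ${_\fra}\zeta(\chi)$ to ${_\fra}\zeta(\chi\varrho)$, so that $\cJ_\Lambda({_\fra}\zeta(\chi))$ goes to $\cJ_\Lambda({_\fra}\zeta(\chi\varrho))$ and the $\Lambda$-main-conjecture for $\chi$ is equivalent to the one for $\chi\varrho$. The plan is to unwind the construction of ${_\fra}\zeta(\chi)$ at finite level, check compatibility of the twisting map with the trace maps defining it, and then pass to the inverse limit.

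First I would recall from Lemma~\ref{twistinglemma} that the twisting isomorphism $\Lambda_\cO\simeq\Lambda(\varrho)$, $\gamma\mapsto\gamma\otimes\varrho(\gamma)t_p(\varrho)$, is $\Gal(\bar K/K)$-equivariant, hence tensoring with $\cO_p(\chi)$ gives a $\Gal(\bar K/K)$-equivariant isomorphism $\Lambda(\chi)\simeq\Lambda(\chi\varrho)$ of \'etale sheaves on $\Spec(\cO_K[1/p])$, inducing $H^i(\cO_K[1/p],\Lambda(\chi)(1))\simeq H^i(\cO_K[1/p],\Lambda(\chi\varrho)(1))$. The heart of the matter is to track the Euler system class. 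Recall ${_\fra}\zeta(\chi)=\varprojlim_n{_\fra}\zeta_{K_n}(\chi)$ and, by Definition~\ref{twisted-euler-defn}, ${_\fra}\zeta_{K_n}(\chi)=\tr_{K_n(\frf_\chi)/K_n}(\zeta_{\frf_\chi K_n}\otimes t_p(\chi))$, where $\zeta_{\frf_\chi K_n}\in H^1(\cO_{K_n(\frf_\chi)}[1/p],\cO_p(1))$ comes from elliptic units and does \emph{not} depend on $\chi$ or $\varrho$. So I would show that under the finite-level twisting map $\cO_p(\chi)\to\cO_p(\chi\varrho)$ (at level $K_n$, using the value $\varrho(\gamma_n)$ on the image $\gamma_n$ of $\gamma\in\Gamma$) the element $\zeta_{\frf_\chi K_n}\otimes t_p(\chi)$ maps to $\zeta_{\frf_\chi K_n}\otimes t_p(\chi\varrho)$, and that this is compatible with the corestriction $\tr_{K_n(\frf_\chi)/K_n}$ because corestriction is $\Galois$-equivariant and the twisting is a morphism of Galois modules. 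The key compatibility to check, and the only mildly delicate point, is that the twisting isomorphisms at different levels $K_n$ are compatible with the transition maps in the inverse system defining ${_\fra}\zeta(\chi)=\varprojlim_n{_\fra}\zeta_{K_n}(\chi)$; this is where the formula $\gamma\mapsto\gamma\otimes\varrho(\gamma)t_p(\varrho)$ matters, since $\varrho$ being a character of $\Gamma$ (not of a finite quotient) is exactly what makes the cocycle conditions match up across levels. Granting this, passing to the limit gives that the twisting map sends ${_\fra}\zeta(\chi)$ to ${_\fra}\zeta(\chi\varrho)$.

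For the remaining assertions: the element $\sigma_\fra\in\Lambda_\cO$ and hence $\N\fra-\sigma_\fra$ is intrinsic to $\Lambda_\cO$, and the twisting isomorphism $\Lambda_\cO\simeq\Lambda(\varrho)$, being $\Gamma$-equivariant, commutes with multiplication by elements of $\Lambda_\cO$ (the Galois action being through $\Gamma\subset\Lambda_\cO^*$). Therefore the ideal $\cJ_\Lambda$, generated by the $\N\fra-\sigma_\fra$, is preserved, and $\cJ_\Lambda({_\fra}\zeta(\chi))$, which is generated by the ${_\fra}\zeta(\chi)$ as $\fra$ ranges over ideals prime to $6p\frf_\chi$ (this needs $\frf_{\chi\varrho}$ and $\frf_\chi$ to have the same prime divisors outside a harmless finite set, or simply enlarging $\fra$), maps onto $\cJ_\Lambda({_\fra}\zeta(\chi\varrho))$. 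Finally, since all the objects in Theorem~\ref{lambda-mc} --- the cohomology modules $H^i(\cO_K[1/p],\Lambda(\chi)(1))$, the submodule $\cJ_\Lambda(\zeta(\chi))$, the morphism $\kappa_\chi$ --- are carried isomorphically to their $\chi\varrho$-counterparts, the goodness of $\kappa_\chi$ and the vanishing $\Div(\kappa_\chi)=0$ hold for $\chi$ iff they hold for $\chi\varrho$; I would phrase this as: the whole diagram defining the $\Lambda$-main-conjecture for $\chi$ is isomorphic, via twisting, to the one for $\chi\varrho$.

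I expect the main obstacle to be purely bookkeeping: making sure the twisting isomorphisms at finite levels $K_n$ assemble into an isomorphism of the \emph{inverse systems}, so that one may genuinely commute $\varprojlim_n$ with twisting; once the formula from Lemma~\ref{twistinglemma} is taken seriously this is routine, but it is the step where an error would creep in. Everything else is formal manipulation of Galois-equivariant maps and the observation that $\sigma_\fra$, $\cJ_\Lambda$, and the complexes $R\Gamma$ are functorial in the coefficient sheaf.
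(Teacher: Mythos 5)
Your overall strategy and conclusion match the paper's: the twisting isomorphism of Lemma~\ref{twistinglemma} carries $t_p(\chi)$ to $t_p(\chi\varrho)$, hence carries the twisted Euler-system class for $\chi$ to the one for $\chi\varrho$, and all the auxiliary structures ($\cJ_\Lambda$, the complexes $R\Gamma$, the map $\kappa_\chi$) transport compatibly. The paper's own proof is a one-liner asserting exactly this, and your longer exposition is a reasonable unwinding of it.

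However, there is a genuine (if easily repairable) gap in the way you try to realize the twist at finite level. You posit a ``finite-level twisting map $\cO_p(\chi)\to\cO_p(\chi\varrho)$ at level $K_n$.'' No such map exists in general: a Galois-equivariant morphism of sheaves $\cO_p(\chi)\to\cO_p(\chi\varrho)$ over $\cO_{K_n}[1/p]$ would force $\varrho$ to be trivial on $\Gal(\bar K/K_n)$, i.e., to factor through $G_n$, which is false for a character $\varrho$ of infinite order. Equivalently: the ring automorphism $\varrho^{*}:\Lambda\to\Lambda$, $\gamma\mapsto\varrho(\gamma)\gamma$, that underlies the twist does \emph{not} preserve the ideals $I_n=\ker(\Lambda\to\cO_p[G_n])$, so the twisting isomorphism $\Lambda(\chi)\simeq\Lambda(\chi\varrho)$ does not descend to a system of maps $\cO_p[G_n]\otimes\cO_p(\chi)\to\cO_p[G_n]\otimes\cO_p(\chi\varrho)$ compatible with the transition maps. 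The ``cocycle conditions match up across levels'' heuristic therefore does not substitute for an argument; the very compatibility you flag as the ``mildly delicate point'' fails at the level of sheaves.

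The clean fix is to stay entirely at the $\Lambda$-level (or $\Omega$-level) and avoid finite-level twists altogether. Note that $_\fra\zeta(\chi)$ is, by Lemma~\ref{zetaidentification} and the distribution relations (see the proof of Lemma~\ref{decompositionlemma}), the image of the universal elliptic unit class $_\fra\zeta(\frf_\chi)\in H^1(\cO_K[1/p\frf_\chi],\Omega(1))$ under the Galois-equivariant surjection $\Omega\to\Lambda(\chi)$, $\sigma\mapsto\chi(\sigma)\,\bar\sigma\otimes t_p(\chi)$ (here $\bar\sigma$ is the image of $\sigma$ in $\Gamma$, and the Euler factors at $\frl\mid\frf_\chi$, $\frl\nmid p$ are trivial since $\chi(\frl)=0$). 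Post-composing with the twist $\Lambda(\chi)\to\Lambda(\chi\varrho)$, $\gamma\otimes t_p(\chi)\mapsto\varrho(\gamma)\gamma\otimes t_p(\chi\varrho)$, yields precisely $\sigma\mapsto\chi\varrho(\sigma)\,\bar\sigma\otimes t_p(\chi\varrho)$, i.e., the surjection $\Omega\to\Lambda(\chi\varrho)$. Hence the twist carries $_\fra\zeta(\chi)$ to $_\fra\zeta(\chi\varrho)$ directly, with no appeal to finite levels. The rest of your argument --- that $\sigma_\fra$ and $\cJ_\Lambda$ are intrinsic to $\Lambda_\cO$ and preserved (as the twist is $\Lambda_\cO$-linear only up to the ring automorphism $\varrho^{*}$, but $\varrho^{*}$ is an automorphism fixing the cyclotomic-character ideal $\cJ_\Lambda$), and that the whole diagram for the $\Lambda$-main-conjecture transports --- is fine.
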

\begin{proof} As the twisting morphism maps the generator $t_p(\chi)$ to $t_p(\chi\rho)$,
this is a direct consequence of the construction of $_\fra\zeta(\chi)$ in \ref{twisted-euler-defn}
(see also \cite{Ho-Ki} section 1.2). As $\Lambda(\chi)\isom \Lambda(\chi\varrho)$ as
$\Gal(\bar{K}/K)$-modules, it is clear that
$$
	R\Gamma(\cO_K[1/p],\Lambda(\chi)(1))\isom R\Gamma(\cO_K[1/p],\Lambda(\chi\varrho)(1)).
$$
\end{proof}

This lemma allows us to reduce the $\Lambda$-main-conjecture for $\chi$ to
the one for $\eta:=\chi\varrho$ using the isomorphisms in \ref{twistinglemma}.
Choose $\varrho$ such that the level of $\eta=\chi\varrho$ is big enough.
This gives:
\begin{corollary}\label{reductioncor}
To prove the $\Lambda$-main-conjecture, it suffices to consider characters $\eta$ of 
level big enough.
\end{corollary}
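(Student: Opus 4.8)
\subsection*{Proof strategy for Corollary \ref{reductioncor}}

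The plan is to deduce the corollary from Lemma \ref{charactertwisting} together with the twisting isomorphism of Lemma \ref{twistinglemma}, the only additional input being that every character $\chi$ admits a twist $\eta=\chi\varrho$ of arbitrarily large level. Given $\chi$ of conductor $\frf_\chi$, I would first show that twisting by a continuous finite-order character $\varrho:\Gamma\to\cO_p^*$ identifies the whole datum entering Theorem \ref{lambda-mc} for $\chi$ with the corresponding datum for $\chi\varrho$. Indeed, by Lemma \ref{twistinglemma} there is an isomorphism of $\Gal(\bar{K}/K)$-modules $\Lambda(\chi)(1)\isom\Lambda(\chi\varrho)(1)$, hence a quasi-isomorphism $R\Gamma(\cO_K[1/p],\Lambda(\chi)(1))\isom R\Gamma(\cO_K[1/p],\Lambda(\chi\varrho)(1))$ compatible with the $\Lambda_\cO$-module structures; and Lemma \ref{charactertwisting} says this isomorphism carries ${_\fra}\zeta(\chi)$ to ${_\fra}\zeta(\chi\varrho)$, and therefore $\cJ_\Lambda(\zeta(\chi))$ to $\cJ_\Lambda(\zeta(\chi\varrho))$ and the map $\kappa_\chi$ to $\kappa_{\chi\varrho}$. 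Consequently each assertion of Theorem \ref{lambda-mc} — vanishing of $H^0$, the rank-one and torsion-quotient statements for $H^1$, torsionness of $H^2$, goodness of $\kappa_\chi$, the equality $\Div(\kappa_\chi)=0$, and the induced determinant isomorphism — holds for $\chi$ if and only if it holds for $\chi\varrho$.

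It then remains to exhibit, for any prescribed bound $n_0$, a continuous finite-order character $\varrho:\Gamma\to\cO_p^*$ such that $\eta:=\chi\varrho$ has level at least $n_0$. Since $\varrho$ factors through $\Gamma$ it is unramified outside $p$, so the conductor of $\eta$ agrees with $\frf_\chi$ away from the primes above $p$; choosing $\varrho$ deeply ramified at every prime over $p$ forces $\frf_\eta$ to be divisible by an arbitrarily large power of $p$ as soon as the $p$-conductor of $\varrho$ exceeds that of $\chi$. Such $\varrho$ exist because $\Gamma\cong\bbZ_p^2$: in the inert or ramified case there is a single prime above $p$ which is deeply ramified in $K_\infty$, and in the split case one decomposes $\Gamma$ along the two $\bbZ_p$-extensions $K_\infty^\frp$ and $K_\infty^{\frp'}$ and takes a product of characters of large conductor at $\frp$ and at $\frp'$ respectively. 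Since $K_{n_0}/K$ is abelian and unramified outside $p$, it is contained in $K(p^{c})$ for a suitable exponent $c$; hence $p^{c}\mid\frf_\eta$ yields $K_{n_0}\subset K(\frf_\eta)$, i.e. the level of $\eta$ is at least $n_0$. Combining the two paragraphs, Theorem \ref{lambda-mc} for $\chi$ follows from Theorem \ref{lambda-mc} for a suitable twist $\eta=\chi\varrho$ of large level.

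The argument is essentially formal, the one point requiring a little care being the last: the twist $\chi\varrho$ must be arranged to be ramified at \emph{all} primes above $p$ with large conductor there, which is where the harmless case distinction between $p$ split and $p$ inert or ramified in $K$ enters. This is precisely the phenomenon the phrase ``level big enough'' is meant to capture, since by Theorem \ref{zetaexplicit} a sufficiently large level guarantees that $\eta$ is ramified at all primes over $p$, and hence that $z_p(\eta)=z(\eta)$.
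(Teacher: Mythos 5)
Your proof is correct and follows the paper's own route exactly: reduce via Lemma \ref{charactertwisting} combined with the twisting isomorphism of Lemma \ref{twistinglemma}, then observe that a finite-order $\varrho:\Gamma\to\cO_p^*$ can be chosen so that $\chi\varrho$ has arbitrarily large level. The paper's proof is essentially the single line ``Choose $\varrho$ such that the level of $\eta=\chi\varrho$ is big enough,'' so the extra detail you supply on the existence of such a twist (the case distinction between split and inert/ramified $p$, and the observation that the conductor of $\chi\varrho$ at $v\mid p$ is forced up once the $p$-conductor of $\varrho$ exceeds that of $\chi$) is a welcome elaboration of a point the paper treats as evident, not a different argument.
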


\subsection{Divisibility obtained from the Euler system}
In this section we use the Euler system defined by the elliptic units to
prove one divisibility in the statement of the $\Lambda$-main-conjecture.
We consider characters $\eta$ of level big enough.

Let us define a subgroup of $H^2(\cO_K[1/p],\Lambda(\eta)(1))$, which plays the role of
the Selmer group.
\begin{definition}
Let 
$$
H^2_0(\cO_K[1/p],\Lambda(\eta)(1)):=
\ker\left(H^2(\cO_K[1/p],\Lambda(\eta)(1))\to 
\bigoplus_{v\mid p}H^2(K_v,\Lambda(\eta)(1))\right).
$$
\end{definition}
The theory of Euler systems 
gives:
\begin{theorem}\label{divisibility} Let $\eta$ be a character of conductor $\frf_\eta$ and level $n$,
chosen so big that $\cO_p(\eta)$ is ramified at all places $v\mid p$, then:
	\begin{itemize}
	\item[1)] $H^2(\cO_K[1/p],\Lambda(\eta)(1))$ is $\Lambda_\cO$-torsion.
	\item[2)] $H^1(\cO_K[1/p],\Lambda(\eta)(1))$ has $\Lambda_\cO$-rank one.
	\item[3)] $H^1(\cO_K[1/p],\Lambda(\eta)(1))/\Lambda_\cO{_\fra}\zeta(\eta)$ is  $\Lambda_\cO$-torsion.
	\item[4)] Identify the $\Lambda_\cO$-determinants of the torsion modules 
	$$
	H^2_0(\cO_K[1/p],\Lambda(\eta)(1)) \mbox{ and }H^1(\cO_K[1/p],
	\Lambda(\eta)(1))/\Lambda_\cO{_\fra}\zeta(\eta)
	$$
	with invertible submodules of the total quotient ring $Q(\Lambda_\cO)$. Then:
	$$
		\Det_{\Lambda_\cO} \left(H^2_0(\cO_K[1/p],\Lambda(\eta)(1))\right)\subset\Det_{\Lambda_\cO} \left(H^1(\cO_K[1/p],\Lambda(\eta)(1))/\Lambda_\cO{_\fra}\zeta(\eta)\right).
	$$
	\end{itemize}
\end{theorem}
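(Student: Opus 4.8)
The plan is to obtain all four assertions from Rubin's general bound for Euler systems in \cite{Rubin2}, applied to the twisted Euler system $\{{}_{\fra}\zeta_\frm(\eta)\}$ of elliptic units, which by the cited proposition of \cite{Rubin2} is an Euler system for $(\cK,\cO_p(\eta)(1),p\frf_\eta\fra)$. Passing to the inverse limit over the $\bbZ_p^2$-extension $K_\infty/K$ produces the class ${}_{\fra}\zeta(\eta)\in H^1(\cO_K[1/p],\Lambda(\eta)(1))$, and the decisive non-degeneracy input is Theorem \ref{zetaexplicit}: the image of ${}_{\fra}\zeta(\eta)$ under corestriction to the bottom layer is ${}_{\fra}\zeta_K(\eta)$, which by Theorem \ref{zetaexplicit} (applied to $\eta^{-1}$) equals $(\N\fra-\eta^{-1}(\fra))z_p(\eta^{-1})$ and is not torsion in $H^1(\cO_K[1/p],\cO_p(\eta)(1))$ — here the running hypothesis that $\eta$ is ramified at all $v\mid p$ is used. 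Consequently ${}_{\fra}\zeta(\eta)$ is not $\Lambda_\cO$-torsion, so $H^1(\cO_K[1/p],\Lambda(\eta)(1))$ has $\Lambda_\cO$-rank at least $1$.

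Next I would check that Rubin's hypotheses hold for $T=\cO_p(\eta)(1)$ over $K_\infty/K$. Because $T$ is free of rank one over $\cO_p$, the hypotheses constraining the image of Galois are mild and are guaranteed precisely by the assumption that $\eta$ has level large enough to be ramified at every prime above $p$; this is also what makes the argument run uniformly for all $p$, including $p=2$, where cruder formulations fail. The other ingredient is a sufficient supply of Kolyvagin primes, which is furnished by the geometric fact recorded in \S\ref{es-definition} that no finite prime of $\cO_K$ splits completely in $K_\infty$, equivalently $\Gamma\cong\bbZ_p^2$. With these in hand, Rubin's bound, in its Iwasawa-theoretic form (the argument of \cite{Rubin2} together with the inverse-limit formalism, as in the treatment of the main conjecture there), yields that the strict Selmer group attached to $\cO_p(\eta)(1)$ — which by Poitou–Tate duality is the Pontryagin dual of the appropriate Selmer group for the Cartier dual $\cO_p(\eta^{-1})$, and which equals $H^2_0(\cO_K[1/p],\Lambda(\eta)(1))$ — is $\Lambda_\cO$-torsion, and that
\[
  \Det_{\Lambda_\cO}\!\bigl(H^2_0(\cO_K[1/p],\Lambda(\eta)(1))\bigr)\ \subseteq\ \Det_{\Lambda_\cO}\!\bigl(H^1(\cO_K[1/p],\Lambda(\eta)(1))/\Lambda_\cO\,{}_{\fra}\zeta(\eta)\bigr)
\]
inside $Q(\Lambda_\cO)$, i.e. the characteristic ideal of the former divides that of the latter.

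Granting this, I would assemble the theorem as follows. For (1): $H^2(\cO_K[1/p],\Lambda(\eta)(1))$ maps to $\bigoplus_{v\mid p}H^2(K_v,\Lambda(\eta)(1))$ with kernel $H^2_0$, and the target is $\Lambda_\cO$-pseudo-null by Lemma \ref{lambdastructure}; hence $H^2$ is $\Lambda_\cO$-torsion with the same characteristic ideal as $H^2_0$. For (2): by the global Euler characteristic formula for the $\cO_K[1/p]$-cohomology of the rank-one $\Lambda_\cO$-module $\Lambda(\eta)(1)$ — where, $K$ being imaginary quadratic with a single archimedean place, the only contribution is $\rk_{\Lambda_\cO}\Lambda(\eta)(1)=1$ — one has $\rk_{\Lambda_\cO}H^1-\rk_{\Lambda_\cO}H^0-\rk_{\Lambda_\cO}H^2=1$, so with $H^0=0$ from \eqref{h0vanishing} and (1) we get $\rk_{\Lambda_\cO}H^1=1$. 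Then (3) is immediate, since ${}_{\fra}\zeta(\eta)$ is non-torsion in the rank-one module $H^1$. Finally (4) is the displayed divisibility from Rubin's bound, now meaningful because (1) and (2) make both modules $\Lambda_\cO$-torsion, their determinants being identified with invertible submodules of $Q(\Lambda_\cO)$ via $\Det_{\Lambda_\cO}^{-1}(-)=\mathrm{char}_{\Lambda_\cO}(-)$ as in \S\ref{determinants}.

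I expect the main obstacle to be the bookkeeping in invoking Rubin's theorem: verifying, uniformly in $p$, that the twisted representation $\cO_p(\eta)(1)$ meets Rubin's hypotheses, and correctly matching the Selmer group that his bound controls — with its local conditions at the primes above $p$ — with $H^2_0(\cO_K[1/p],\Lambda(\eta)(1))$; the large-level hypothesis on $\eta$ is the crucial lever, above all at $p=2$. A secondary and more routine point is the passage from the finite-level Euler system bound of \cite{Rubin2} to the $\Lambda_\cO$-statement by inverse limits, which should be carried out componentwise since $\Lambda_\cO$ is a finite product of regular local domains rather than a single one.
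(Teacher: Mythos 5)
Your proposal follows essentially the same route as the paper: twist the elliptic-unit Euler system by $\eta$, feed it into Rubin's machinery from \cite{Rubin2}, use Theorem \ref{zetaexplicit} for the non-vanishing of ${}_\fra\zeta(\eta)$, deduce (1) from the pseudo-nullity of $H^2(K_v,\Lambda(\eta)(1))$ for $v\mid p$ (Lemma \ref{lambdastructure}), deduce (2) from the global Euler characteristic formula together with $H^0=0$, and obtain (3) and (4) from non-torsionness and Rubin's divisibility. Two points, however, are imprecise enough to amount to gaps.

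First, you write that the large-level hypothesis is what "guarantees" Rubin's Galois-image hypotheses, and also what makes $p=2$ work. In the paper the hypotheses $\mathrm{Hyp}(K_\infty/K)$ and $\mathrm{Hyp}(K_\infty,\cO_p(\eta)(1))$ of \cite{Rubin2}~2.3.3 are checked independently of the level: the first holds because $K$ is imaginary quadratic, and the second by taking $\tau=\mathrm{id}$. The level assumption (ramification at all $v\mid p$) enters solely through Theorem \ref{zetaexplicit}, to ensure that ${}_\fra\zeta_K(\eta)$ lies in $H^1_f$ and is non-torsion; it is not what verifies Rubin's hypotheses, nor by itself what fixes $p=2$.

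Second, and more substantively, in (4) you invoke "Rubin's bound" as if it directly produces the displayed determinant inclusion, "i.e.\ the characteristic ideal of the former divides that of the latter." What Theorem 2.3.3 of \cite{Rubin2} actually gives is a containment $\ind_{\Lambda_\cO}({}_\fra\zeta(\eta))\subset \Det^{-1}_{\Lambda_\cO}H^2_0(\cO_K[1/p],\Lambda(\eta)(1))$, where $\ind_{\Lambda_\cO}$ is Rubin's index ideal $\{\phi({}_\fra\zeta(\eta)): \phi\in\Hom_{\Lambda_\cO}(H^1,\Lambda_\cO)\}$. You still need to show that
$$
\Det^{-1}_{\Lambda_\cO}\bigl(H^1(\cO_K[1/p],\Lambda(\eta)(1))/\Lambda_\cO\,{}_\fra\zeta(\eta)\bigr)\subset \ind_{\Lambda_\cO}({}_\fra\zeta(\eta)),
$$
which is not formal. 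The paper does this by choosing a pseudo-isomorphism $H^1(\cO_K[1/p],\Lambda(\eta)(1))\to \Lambda_\cO\oplus (\text{torsion})$, letting $\phi$ be the projection onto the free part, and using the resulting exact sequence $0\to\ker\phi\to H^1/\Lambda_\cO\,{}_\fra\zeta(\eta)\to \Lambda_\cO/\Lambda_\cO\phi({}_\fra\zeta(\eta))\to 0$ together with $\Det^{-1}_{\Lambda_\cO}\ker\phi\subset\Lambda_\cO$. Also note that the paper works with $H^1(\cO_K[1/p],\Lambda(\eta)(1))=H^1_\infty(K,\cO_p(\eta)(1))$ via \cite{Rubin2}~Corollary B.3.5, so there is no further inverse-limit bookkeeping needed beyond that identification. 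You should supply the bridge from Rubin's index ideal to the characteristic ideal of $H^1/\Lambda_\cO\,{}_\fra\zeta(\eta)$ to make (4) complete.
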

\begin{proof}
	This is a consequence of the theory of Euler systems. We follow the
	exposition in Rubin, as this is closest to our setting. Let us begin by checking the
	hypothesis $\mbox{Hyp}(K_\infty/K)$ and $\mbox{Hyp}(K_\infty,\cO_p(\eta)(1))$
	in Rubin \cite{Rubin2} 2.3.3. This is clear for $\mbox{Hyp}(K_\infty/K)$ 
	as $K$ is imaginary quadratic. For $\mbox{Hyp}(K_\infty,\cO_p(\eta)(1))$ we take $\tau=\id$. 
	We also remark that it is
	clear from the definition that $H^2_0(\cO_K[1/p],\Lambda(\eta)(1))=X_\infty$ 
	in Rubin's notation. 
	As our element ${_\fra\zeta(\eta)}$ is non-torsion by \ref{zetaexplicit}, the
	Theorem 2.3.2 in \cite{Rubin2} implies that $H^2_0(\cO_K[1/p],\Lambda(\eta)(1))$ is 
	$\Lambda_\cO$-torsion. As $H^2(K_v,\Lambda(\eta)(1))$ for $v\mid p$ is $\Lambda_\cO$-torsion by 
	lemma \ref{lambdastructure}, it follows that $H^2(\cO_K[1/p],\Lambda(\eta)(1))$ is 
	$\Lambda_\cO$-torsion as well, which shows 1). To show 2) note that 
	$H^0(\cO_K[1/p],\Lambda(\eta)(1))=0$ by (\ref{h0vanishing}). Then, 2) follows from
	the formula
	\begin{multline*}
	\rk_{\Lambda_\cO} H^1(\cO_K[1/p],\Lambda(\eta)(1))-\rk_{\Lambda_\cO} H^2(\cO_K[1/p],\Lambda(\eta)(1))=\\
	\rk_{\cO_p}H^2(\cO_K[1/p], \cO_p(\eta)(1))-\rk_{\cO_p}H^1(\cO_K[1/p],\cO_p(\eta)(1)).
	\end{multline*}
	(see \cite{Ts} prop. 9.2. (3) for example) and
	$$
	\rk_{\cO_p}H^2(\cO_K[1/p],\cO_p(\eta)(1))-\rk_{\cO_p}
	H^1(\cO_K[1/p],\cO_p(\eta)(1))=\rk_{\cO_p}\cO_p(\eta)=1
	$$
	(see \cite{Ja} Lemma 2). As $\Lambda_\cO({_\fra\zeta(\eta)})\subset  H^1(\cO_K[1/p],\Lambda(\eta)(1))$
	is a non-torsion submodule by \ref{zetaexplicit}, we get 3). 
	For the statement in 4) we have to consider 
	$$
	\ind_{\Lambda_\cO}({_\fra\zeta(\eta)}):=
	\{\phi({_\fra\zeta(\eta)})|\phi\in\Hom_{\Lambda_\cO}(H^1(\cO_K[1/p],\Lambda(\eta)(1)),\Lambda_\cO)
	$$
	as
	defined by Rubin in \cite{Rubin2} p. 41. Our $H^1(\cO_K[1/p],\Lambda(\eta)(1))$
	is Rubin's $H^1_\infty(K,\cO_p(\eta)(1))$ by \cite{Rubin2} Corollary B.3.5.
	By the structure theory of $\Lambda_\cO$-modules, we can find a pseudo-isomorphism
	$$
		H^1(\cO_K[1/p],\Lambda(\eta)(1))\to \Lambda_\cO\oplus H^1(\cO_K[1/p],\Lambda(\eta)(1))_\tors.
	$$
	Let $\phi:H^1(\cO_K[1/p],\Lambda(\eta)(1))\to \Lambda_\cO$ be 
	the projection onto $\Lambda_\cO$, then the kernel of $\phi$ is
	torsion and one has an exact sequence of $\Lambda_\cO$-torsion modules
	$$
	0\to \ker\phi\to H^1(\cO_K[1/p],\Lambda(\eta)(1))/\Lambda_\cO{_\fra\zeta(\eta)}\to \Lambda_\cO/\Lambda_\cO\phi({_\fra\zeta(\eta)})\to 0.
	$$
	This gives inside $Q(\Lambda_\cO)$, using $\Det_{\Lambda_\cO}^{-1}\ker\phi\subset \Lambda_\cO$,
	$$
	\Det_{\Lambda_\cO}^{-1} \left(H^1(\cO_K[1/p],\Lambda(\eta)(1))/\Lambda_{\cO}{_\fra\zeta(\eta)}\right)\subset \phi({_\fra\zeta(\eta)})\Lambda_\cO\subset \ind_{\Lambda_\cO}({_\fra\zeta(\eta)}).
	$$
	Finally, theorem 2.3.3 in \cite{Rubin2} shows 
	$$
		\ind_\Lambda({_\fra\zeta(\eta)})\subset \Det_{\Lambda_\cO}^{-1} H^2_0(\cO_K[1/p],\Lambda(\eta)(1)),
	$$
	which gives statement 4).
\end{proof}
Next, we strengthen the divisibility of theorem \ref{divisibility}. For this, we need a lemma:

\begin{corollary}\label{divcorollary} Let $\eta$ be as in theorem \ref{divisibility}. Under the isomorphism
	$$
		Q(\Lambda_\cO){_\fra\zeta(\eta)}\simeq H^1(\cO_K[1/p],\Lambda(\eta)(1))\otimes_{\Lambda_\cO}Q(\Lambda_\cO)
	$$
	one has an inclusion of $\Lambda_\cO$-modules
	$$
		\Det_{\Lambda_\cO} \left(H^2(\cO_K[1/p],\Lambda(\eta)(1))\right)\subset\Det_{\Lambda_\cO} \left(H^1(\cO_K[1/p],\Lambda(\eta)(1))/\Lambda_\cO{_\fra\zeta(\eta)}\right).
	$$
\end{corollary}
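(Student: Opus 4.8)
The plan is to deduce the corollary directly from part 4) of Theorem~\ref{divisibility}, by showing that replacing the ``Selmer'' module $H^2_0(\cO_K[1/p],\Lambda(\eta)(1))$ by the full $H^2(\cO_K[1/p],\Lambda(\eta)(1))$ does not change the $\Lambda_\cO$-determinant. The point is that the difference between the two is controlled by the local terms at $p$, which are pseudo-null in the $\bbZ_p^2$-situation.

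First I would write down, straight from the definition of $H^2_0$, the exact sequence of $\Lambda_\cO$-modules
$$
0\to H^2_0(\cO_K[1/p],\Lambda(\eta)(1))\to H^2(\cO_K[1/p],\Lambda(\eta)(1))\xrightarrow{\ \mathrm{loc}\ }\bigoplus_{v\mid p}H^2(K_v,\Lambda(\eta)(1)),
$$
and set $C:=\operatorname{im}(\mathrm{loc})$, which is a \emph{sub}module of $\bigoplus_{v\mid p}H^2(K_v,\Lambda(\eta)(1))$, so that no surjectivity of $\mathrm{loc}$ is needed. By Lemma~\ref{lambdastructure} each $H^2(K_v,\Lambda(\eta)(1))$ with $v\mid p$ is a finitely generated $\cO_p$-module, hence $\Lambda_\cO$-pseudo-null; therefore its submodule $C$ is $\Lambda_\cO$-pseudo-null as well. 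Since $H^2(\cO_K[1/p],\Lambda(\eta)(1))$ is $\Lambda_\cO$-torsion by Theorem~\ref{divisibility} 1), this gives a short exact sequence of $\Lambda_\cO$-torsion modules $0\to H^2_0\to H^2\to C\to 0$.

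Next I would use that $\Lambda_\cO$ is a finite product of regular local rings, so that $\Det_{\Lambda_\cO}$ of a finitely generated torsion module is identified (via $\Det^{-1}_{\Lambda_\cO}=\operatorname{char}$) with an invertible submodule of the total quotient ring $Q(\Lambda_\cO)$, and this identification is multiplicative in short exact sequences. Applied to the sequence above, this yields $\Det_{\Lambda_\cO}H^2=\Det_{\Lambda_\cO}H^2_0\cdot\Det_{\Lambda_\cO}C$ inside $Q(\Lambda_\cO)$, and since $C$ is pseudo-null one has $\operatorname{char}(C)=\Lambda_\cO$, i.e.\ $\Det_{\Lambda_\cO}C=\Lambda_\cO$. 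Hence $\Det_{\Lambda_\cO}H^2(\cO_K[1/p],\Lambda(\eta)(1))=\Det_{\Lambda_\cO}H^2_0(\cO_K[1/p],\Lambda(\eta)(1))$ as submodules of $Q(\Lambda_\cO)$. Combining this with the inclusion of Theorem~\ref{divisibility} 4) gives the asserted inclusion; the right-hand determinant is viewed inside $Q(\Lambda_\cO)$ via the isomorphism $Q(\Lambda_\cO){_\fra}\zeta(\eta)\simeq H^1(\cO_K[1/p],\Lambda(\eta)(1))\otimes_{\Lambda_\cO}Q(\Lambda_\cO)$, which is legitimate because ${_\fra}\zeta(\eta)$ is non-torsion (Theorem~\ref{zetaexplicit}) and $H^1(\cO_K[1/p],\Lambda(\eta)(1))$ has $\Lambda_\cO$-rank one (Theorem~\ref{divisibility} 2)).

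I do not expect a real obstacle here: the only substantive input is the pseudo-nullity of the local terms $H^2(K_v,\Lambda(\eta)(1))$ for $v\mid p$, which is exactly Lemma~\ref{lambdastructure} and is special to the $\bbZ_p^2$-setting (over $\bbQ$ the corresponding local modules are not pseudo-null); everything else is formal bookkeeping with characteristic ideals. The one thing to phrase carefully is keeping the defect module $C$ on the ``sub'' side of $\bigoplus_{v\mid p}H^2(K_v,\Lambda(\eta)(1))$ and recording that $H^2_0$, $H^2$ and $C$ are all $\Lambda_\cO$-torsion, so that all the determinants in play genuinely live inside $Q(\Lambda_\cO)$.
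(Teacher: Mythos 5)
Your proposal is correct and takes essentially the same approach as the paper: write the exact sequence defining $H^2_0$, observe that the local terms $H^2(K_v,\Lambda(\eta)(1))$ for $v\mid p$ are $\Lambda_\cO$-pseudo-null by Lemma~\ref{lambdastructure} so that $\Det_{\Lambda_\cO}H^2_0=\Det_{\Lambda_\cO}H^2$ inside $Q(\Lambda_\cO)$, and then invoke Theorem~\ref{divisibility}~4). The only cosmetic difference is that the paper also cites the vanishing of $H^0$ from~(\ref{h0vanishing}), which you supply implicitly through Theorem~\ref{divisibility}~2) and Theorem~\ref{zetaexplicit} when justifying the identification of the determinants inside $Q(\Lambda_\cO)$.
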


\begin{proof}
	By definition of $H^2_0(\cO_K[1/p],\Lambda(\eta)(1))$ 
	we have an exact sequence
	\begin{align*}
	0\to H^2_0(\cO_K[1/p],\Lambda(\eta)(1))\to H^2(\cO_K[1/p],\Lambda(\eta)(1))\to\\
	\to \bigoplus_{v\mid p}H^2(K_v,\Lambda(\eta)(1))
	\end{align*}
	By lemma \ref{lambdastructure} the modules $H^2(K_v,\Lambda(\eta)(1))$ are finitely generated
	$\cO_p$-modules and hence pseudo-null.
	It follows that  inside $Q(\Lambda_\cO)$
	$$
	\Det_{\Lambda_\cO}^{-1} H^2_0(\cO_K[1/p],\Lambda(\eta)(1))= \Det_{\Lambda_\cO}^{-1} H^2(\cO_K[1/p],\Lambda(\eta)(1)).
	$$
	This, together with the vanishing of 
	$H^0(\cO_K[1/p],\Lambda(\eta)(1))$ by (\ref{h0vanishing}) and the divisibility in theorem \ref{divisibility} gives the result.
\end{proof}

\subsection{Reduction to the Tamagawa number conjecture }
In this section we reduce the $\Lambda$-main-conjecture \ref{lambda-mc}
to the Tamagawa number conjecture.
In this section $\eta$ is a character of conductor $\frf_\eta$ and level $n$, chosen so that
$\cO_p(\eta)$ is ramified at all  $v\mid p$.

Observe that $\Lambda_\cO$ is a product of regular local noetherian rings, so that
we can use the functor $\Div$ from \ref{determinants}. Consider the inclusion of perfect complexes
\begin{equation}\label{kappadefn}
	\kappa_\eta:\cJ_\Lambda(\zeta(\eta))\to R\Gamma (\cO_K[1/p],\Lambda(\eta)(1))[1].
\end{equation}
To compare this with the
divisibility results obtained from the theory of Euler systems, we also consider
the inclusion
$$
	\tau_\eta:\Lambda_\cO(_\fra\zeta(\eta))\subset \cJ_\Lambda(\zeta(\eta)).
$$
By theorem \ref{divisibility},
both $\kappa_\eta$ and $\tau_\eta$ are isomorphisms after tensoring with 
$Q(\Lambda_\cO)$, hence $\kappa_\eta$ and $\tau_\eta$ are good as defined
in \ref{determinants} and we can consider $\Div(\kappa_\eta)$ and $\Div(\tau_\eta)$ on $\Spec\Lambda_\cO$. 
Applying (\ref{divequation}) to $\kappa_\eta$ one gets
$$
	\Det_{\Lambda_\cO}(\cJ_\Lambda(\zeta(\eta)))(\Div(\kappa_\eta))=\Det_{\Lambda_\cO}\left( R\Gamma(\cO_K[1/p],\Lambda(\eta)(1))[1]\right).	
$$
In the same way
$$
	\Det_{\Lambda_\cO}(\Lambda_\cO(_\fra\zeta(\eta)))(\Div(\tau_\eta))=
	\Det_{\Lambda_\cO}(\cJ_\Lambda(\zeta(\eta)))
$$
and we have
$$
	\Div(\kappa_\eta\verk\tau_\eta)=\Div(\tau_\eta)+\Div(\kappa_\eta).
$$
By \ref{divisibility} the divisor $\Div(\kappa_\eta\verk \tau_\eta)$ is effective.
\begin{lemma}
The divisor $\Div(\tau_\eta)$ is effective and we have
$$
	(\N\fra-\sigma_\fra)^{-1}\Det_{\Lambda_\cO}(\Lambda_\cO(_\fra\zeta(\eta)))=
	\Det_{\Lambda_\cO}(\cJ_\Lambda(\zeta(\eta))).
$$
\end{lemma}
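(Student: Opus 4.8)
The plan is to deduce everything from the short exact sequence of $\Lambda_\cO$-modules (all finitely generated over the regular ring $\Lambda_\cO$, hence perfect as complexes)
$$
0\to \Lambda_\cO({_\fra}\zeta(\eta))\xrightarrow{\tau_\eta}\cJ_\Lambda(\zeta(\eta))\to C\to 0,
$$
$C$ denoting the cokernel, by identifying $C$ with $\cJ_\Lambda\Lambda_\cO/(\N\fra-\sigma_\fra)\Lambda_\cO$. First I would record that ${_\fra}\zeta(\eta)$ generates a \emph{free} rank-one $\Lambda_\cO$-submodule of $H^1(\cO_K[1/p],\Lambda(\eta)(1))$: by theorem \ref{divisibility} this module has $\Lambda_\cO$-rank one while $H^1(\cO_K[1/p],\Lambda(\eta)(1))/\Lambda_\cO{_\fra}\zeta(\eta)$ is $\Lambda_\cO$-torsion, so additivity of rank forces $\Lambda_\cO{_\fra}\zeta(\eta)\simeq\Lambda_\cO/\operatorname{Ann}_{\Lambda_\cO}({_\fra}\zeta(\eta))$ to have rank one on every factor of the semilocal ring $\Lambda_\cO$, whence $\operatorname{Ann}_{\Lambda_\cO}({_\fra}\zeta(\eta))=0$. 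Since $\N\fra-\sigma_\fra$ is a non-zero-divisor in $\Lambda_\cO$, the same annihilator vanishing holds for $\zeta(\eta)=(\N\fra-\sigma_\fra)^{-1}{_\fra}\zeta(\eta)$, so the $\Lambda_\cO$-linear map $\cJ_\Lambda\Lambda_\cO\to H^1(\cO_K[1/p],\Lambda(\eta)(1))\otimes_{\Lambda_\cO}Q(\Lambda_\cO)$, $j\mapsto j\zeta(\eta)$, is injective; its image is $\cJ_\Lambda(\zeta(\eta))$ by definition, and it carries $(\N\fra-\sigma_\fra)\Lambda_\cO\subseteq\cJ_\Lambda\Lambda_\cO$ onto $\Lambda_\cO({_\fra}\zeta(\eta))$ because ${_\fra}\zeta(\eta)=(\N\fra-\sigma_\fra)\zeta(\eta)$. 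This yields the desired identification $C\simeq\cJ_\Lambda\Lambda_\cO/(\N\fra-\sigma_\fra)\Lambda_\cO$, a $\Lambda_\cO$-torsion module since $\cJ_\Lambda\Lambda_\cO$ and $(\N\fra-\sigma_\fra)\Lambda_\cO$ both have rank one.

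Next I would compute the characteristic ideal of $C$. From the short exact sequence
$$
0\to C\to \Lambda_\cO/(\N\fra-\sigma_\fra)\Lambda_\cO\to \Lambda_\cO/\cJ_\Lambda\Lambda_\cO\to 0
$$
together with the fact, recalled among the preliminary notations, that $\Lambda_\cO/\cJ_\Lambda\Lambda_\cO\simeq\cO_p$ is pseudo-null over $\Lambda_\cO$ and hence has trivial characteristic ideal, multiplicativity of $\operatorname{char}$ gives $\operatorname{char}_{\Lambda_\cO}(C)=\operatorname{char}_{\Lambda_\cO}(\Lambda_\cO/(\N\fra-\sigma_\fra)\Lambda_\cO)=(\N\fra-\sigma_\fra)\Lambda_\cO$. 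Via the identification of $\Det_{\Lambda_\cO}^{-1}$ of a torsion module with its characteristic ideal (see \ref{determinants}) this reads $\Det_{\Lambda_\cO}C=(\N\fra-\sigma_\fra)^{-1}\Lambda_\cO$.

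It then remains to assemble these facts. Since $\tau_\eta$ was already observed to be good and it is injective with torsion cokernel $C$, the exact-sequence property of $\Div$ (see \ref{determinants}) gives $\Div(\tau_\eta)=\Div(C)$, and the divisor of an honest torsion module is the effective divisor cut out by its characteristic ideal; hence $\Div(\tau_\eta)$ is effective. Multiplicativity of $\Det$ along the same short exact sequence gives
$$
\Det_{\Lambda_\cO}\cJ_\Lambda(\zeta(\eta))=\Det_{\Lambda_\cO}\Lambda_\cO({_\fra}\zeta(\eta))\otimes_{\Lambda_\cO}\Det_{\Lambda_\cO}C=(\N\fra-\sigma_\fra)^{-1}\Det_{\Lambda_\cO}\Lambda_\cO({_\fra}\zeta(\eta)),
$$
which is the asserted equality. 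I expect the only step needing genuine care to be the vanishing of $\operatorname{Ann}_{\Lambda_\cO}({_\fra}\zeta(\eta))$ on each factor of the semilocal ring $\Lambda_\cO$ (this is what makes $j\mapsto j\zeta(\eta)$ injective and pins down $C$); the remainder is a formal manipulation of the $\Det$/$\Div$-formalism combined with the pseudo-nullity of $\Lambda_\cO/\cJ_\Lambda\Lambda_\cO$.
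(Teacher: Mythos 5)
Your proof is correct and follows essentially the same route as the paper. The paper's argument proceeds via the chain of inclusions $\Lambda_\cO({_\fra}\zeta(\eta))\subset \cJ_\Lambda(\zeta(\eta))\subset \Lambda_\cO(\zeta(\eta))$: the quotient of the second inclusion is $\Lambda_\cO/\cJ_\Lambda\simeq\cO_p$, which is pseudo-null, so its determinant is trivial, and the quotient of the composite inclusion is $\Lambda_\cO/(\N\fra-\sigma_\fra)\Lambda_\cO$. You instead work directly with the cokernel $C$ of $\tau_\eta$ and compute its characteristic ideal from the exact sequence $0\to C\to\Lambda_\cO/(\N\fra-\sigma_\fra)\Lambda_\cO\to\Lambda_\cO/\cJ_\Lambda\Lambda_\cO\to 0$; this is the same underlying computation, just packaged through the cokernel rather than the overmodule $\Lambda_\cO(\zeta(\eta))$. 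The only genuine addition in your writeup is the explicit verification that $\operatorname{Ann}_{\Lambda_\cO}({_\fra}\zeta(\eta))=0$ on every factor of the semilocal ring (so that $j\mapsto j\zeta(\eta)$ is injective and the identification of $C$ is legitimate) — the paper leaves this implicit, and spelling it out is a worthwhile clarification rather than a deviation.
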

\begin{proof}
Consider the inclusions
$$
	\Lambda_\cO(_\fra\zeta(\eta))\subset \cJ_\Lambda(\zeta(\eta))\subset \Lambda_\cO(\zeta(\eta)).
$$
As the quotient of the last inclusion is pseudo-null, we get 
$$
	\Det_{\Lambda_\cO}(\cJ_\Lambda(\zeta(\eta)))=\Det_{\Lambda_\cO}(\Lambda_\cO(\zeta(\eta))).
$$
We have an
exact sequence
$$
	0\to \Lambda_\cO (_\fra\zeta(\eta))  \to \Lambda_\cO(\zeta(\eta))\to 
	\Lambda_\cO/(\N\fra-\sigma_\fra)\Lambda_\cO\to 0.
$$
It follows
$$
	\Det_{\Lambda_\cO}(\Lambda_\cO (\zeta(\eta)))=(\N\fra-\sigma_\fra)^{-1}\Det_{\Lambda_\cO}(\Lambda_\cO(_\fra\zeta(\eta))).
$$
\end{proof}
With this result, we see that $\Div(\kappa_\eta)$ is effective as well
and one has
$$
	\Det_{\Lambda_\cO}(\cJ_\Lambda)\subset\Det_{\Lambda_\cO}(\cJ_\Lambda)(\Div(\kappa_\eta)).
$$
The statement of the $\Lambda$-main-conjecture is that $\Div(\kappa_\eta)=0$.
To show this we consider all characters $\eta$ of level $n$ big enough at the
same time. Recall that $K_n(\frf_\eta)$ is the
compositum $K_nK(\frf_\eta)$ and that we defined $G_n(\frf_\eta):=\Gal(K_n(\frf_\eta)/K)$.
As the divisors $\Div(\kappa_\eta)$ are effective, they vanish precisely when
$$
	\sum_{\eta}\Div(\kappa_\eta)=0,
$$
where the sum is over all $\eta\in \wh{G}_n(\frf_\eta) \smallsetminus \wh{G}_{n-1}(\frf_\eta)$.
From the above we have 
$$
	\bigotimes_\eta\Det_{\Lambda_\cO}(\cJ_\Lambda(\zeta(\eta)))\subset
	\bigotimes_\eta\Det_{\Lambda_\cO}\left( R\Gamma(\cO_K[1/p],\Lambda(\eta)(1))[1]\right),	
$$
where the tensor products are again taken over 
$\eta\in \wh{G}_n(\frf_\eta) \smallsetminus \wh{G}_{n-1}(\frf_\eta)$.
By the above lemma this can be formulated as follows:
\begin{equation}\label{detereq}
	\bigotimes_\eta(\N\fra-\sigma_\fra)^{-1}\Det_{\Lambda_\cO}(\Lambda_\cO({_\fra\zeta(\eta)}))\subset
	\bigotimes_\eta\Det_{\Lambda_\cO}\left( R\Gamma(\cO_K[1/p],\Lambda(\eta)(1))[1]\right).
\end{equation}	
To show that this is an equality of line bundles on $\Spec\Lambda_\cO$ we use Nakayama's lemma.
Consider the augmentation map
$$
	\iota:\Lambda_\cO\to \cO_p.
$$
We denote also by $\iota$ the induced map $\iota:\Spec\cO_p\to \Spec\Lambda_\cO$. We have to show that after applying $L\iota^*$ to both sides in (\ref{detereq})
we get equality.
\begin{lemma}\label{reductionlemma}
	Let $\iota$ be as above, then $L\iota^*(\kappa_\eta\verk\iota_\eta)$ is the 
	map induced by the inclusion $_\fra\zeta_{K}(\eta)\in H^1(\cO_{K}[1/p],\cO_p(\eta)(1))$
	$$
		L\iota^*(\kappa_\eta\verk\iota_\eta):\cO_p({_\fra\zeta_{K}(\eta)})\to
		R\Gamma(\cO_{K}[1/p],\cO_p(\eta)(1))[1].
	$$
\end{lemma}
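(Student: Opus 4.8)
The plan is to apply $L\iota^*$ termwise and then to match the outcome with Lemma~\ref{coefficientequation} together with the description of ${}_\fra\zeta(\eta)$ as an inverse limit. First I would observe that $\Lambda_\cO({}_\fra\zeta(\eta))$ is a free $\Lambda_\cO$-module of rank one: by parts~2) and~3) of Theorem~\ref{divisibility} the cyclic submodule $\Lambda_\cO({}_\fra\zeta(\eta))$ has rank one in every component of $\Lambda_\cO$, and a cyclic module over $\Lambda_\cO$, which is a finite product of regular local domains, is free as soon as it has rank one in each factor. Hence $\kappa_\eta\verk\tau_\eta$ is precisely the morphism in the derived category which, under the canonical identification
$$
	\Hom_{D(\Lambda_\cO)}(\Lambda_\cO[-1],R\Gamma(\cO_K[1/p],\Lambda(\eta)(1)))=H^1(\cO_K[1/p],\Lambda(\eta)(1)),
$$
corresponds to the class ${}_\fra\zeta(\eta)$.

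Next I would apply $L\iota^*$ to both sides. On the source, freeness gives $L\iota^*\Lambda_\cO({}_\fra\zeta(\eta))\isom\cO_p$, concentrated in degree zero. On the target, Lemma~\ref{coefficientequation} (after the harmless flat extension to $\cO_p$-coefficients) gives
$$
	L\iota^*R\Gamma(\cO_K[1/p],\Lambda(\eta)(1))\isom R\Gamma(\cO_K[1/p],\cO_p(\eta)(1)).
$$
Since $L\iota^*$ is a functor of triangulated categories, it carries the morphism determined by a class $c$ to the morphism determined by the image of $c$ under the resulting specialization map
$$
	H^1(\cO_K[1/p],\Lambda(\eta)(1))\longrightarrow H^1(\cO_K[1/p],\cO_p(\eta)(1)).
$$
So everything reduces to identifying this map and checking that it sends ${}_\fra\zeta(\eta)$ to ${}_\fra\zeta_K(\eta)$.

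For this I would use that the augmentation $\iota$ corresponds to the bottom level of the tower. The isomorphism of Lemma~\ref{coefficientequation} is, by its construction in \cite{Fu-Ka}~1.6.5~(3), compatible with the presentation $H^i(\cO_K[1/p],\Lambda(\eta)(1))=\prolim_n H^i(\cO_{K_n}[1/p],\cO_p(\eta)(1))$, and under this presentation the specialization map above becomes the projection onto the $n=0$ term $H^i(\cO_{K_0}[1/p],\cO_p(\eta)(1))=H^i(\cO_K[1/p],\cO_p(\eta)(1))$. (The ``inverse'' twist in the $\Lambda_\cO$-module structure is irrelevant here, since $\gamma\mapsto\gamma^{-1}$ fixes the augmentation ideal.) As ${}_\fra\zeta(\eta)=\prolim_n{}_\fra\zeta_{K_n}(\eta)$ by definition, its image is ${}_\fra\zeta_{K_0}(\eta)={}_\fra\zeta_K(\eta)$; and since ${}_\fra\zeta_K(\eta)$ is non-torsion by Theorem~\ref{zetaexplicit}, the source $\cO_p$ is genuinely identified with $\cO_p({}_\fra\zeta_K(\eta))\subset H^1(\cO_K[1/p],\cO_p(\eta)(1))$. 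This yields the assertion.

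The main obstacle, modest as it is, lies in the last step: one must unwind the constructions of \cite{Fu-Ka}~1.6.5 carefully enough to be sure that the map on $H^1$ produced by $L\iota^*$ and Lemma~\ref{coefficientequation} is exactly the transition map of the inverse system (and not merely a unit multiple of it), so that the limit class ${}_\fra\zeta(\eta)$ is carried to ${}_\fra\zeta_K(\eta)$ on the nose. Once that is in place the remaining verifications are routine.
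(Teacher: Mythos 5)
Your proof is correct and follows essentially the same strategy as the paper's (which is very terse: it applies $L\iota^*$, invokes $L\iota^*R\Gamma(\cO_K[1/p],\Lambda(\eta)(1))\isom R\Gamma(\cO_K[1/p],\cO_p(\eta)(1))$, and then appeals to ``the definition of $_\fra\zeta(\eta)$''). You supply the details the paper leaves implicit — that $\Lambda_\cO({_\fra}\zeta(\eta))$ is free of rank one so that $L\iota^*$ of it is $\cO_p$ in degree $0$, that the specialization map on $H^1$ is the projection to the bottom of the tower, and that non-torsionness of ${_\fra}\zeta_K(\eta)$ (Theorem \ref{zetaexplicit}) identifies the source with $\cO_p({_\fra}\zeta_K(\eta))$ — all of which is correct and properly fills out the argument.
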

\begin{proof}
The map $\iota:\Lambda_\cO\to \cO_p$ induces a map of $\Gal(\bar{K}/K)$-modules
$$
	\Lambda(\eta)\to \cO_p(\eta)
$$
and hence an isomorphism 
$$
	L\iota^*R\Gamma (\cO_K[1/p],\Lambda(\eta)(1))\isom R\Gamma (\cO_K[1/p],\cO_p(\eta)(1)).
$$
Using the definition of $_\fra\zeta(\eta)$, we see that $L\iota^*(\kappa_\eta\verk\iota_\eta)$ is the 
map induced by the inclusion $_\fra\zeta_{K}(\eta)\in H^1(\cO_{K}[1/p],\cO_p(\eta)(1))$
$$
	L\iota^*(\kappa_\eta\verk\iota_\eta):\cO_p{_\fra\zeta_{K}(\eta)}\to
	R\Gamma(\cO_{K}[1/p],\cO_p(\eta)(1))[1].
$$
\end{proof}
 As the map $\Lambda(\eta)\to \cO_p(\eta)$ maps $\gamma\otimes t_p(\eta)\mapsto \eta(\gamma)t_p(\eta)$
we see that
$$
	\left((\N\fra-\sigma_\fra)^{-1}\Det_{\Lambda_\cO}(\Lambda_\cO({_\fra\zeta(\eta)}))\right)\otimes_{\Lambda_\cO}\cO_p=(\N\fra-\eta^{-1}(\fra))^{-1}\Det_{\cO_p}\cO_p({_\fra\zeta_K(\eta)})
$$
we get after applying $L\iota^*$ to both sides in (\ref{detereq}):
$$
\bigotimes_\eta(\N\fra-\eta^{-1}(\fra))^{-1}\Det_{\cO_p}\cO_p({_\fra\zeta_K(\eta)})\subset 
\bigotimes_\eta R\Gamma (\cO_K[1/p],\cO_p(\eta)(1)),
$$
where $\eta\in \wh{G}_n(\frf_\eta) \smallsetminus \wh{G}_{n-1}(\frf_\eta)$.
Application of corollary \ref{generationbyEuler} to $L=K_n(\frf_\eta)$ and $F=K_{n-1}(\frf_\eta)$
gives that this is in fact an equality. This ends the proof of the $\Lambda$-main-conjecture.

\section{Proof of the $\Omega$-main-conjecture}\label{proof-of-omega-mc}
The proof of the $\Omega$-main-conjecture essentially reduces, using an observation of Burns and Greither,
to the $\Lambda$-main-conjecture plus conjecture \ref{mu=0} which, as
we stress again, is a theorem in the case where $p$ is split in $K$ and $p$ does not divide
$6$.

\subsection{Preliminary reductions}

Recall that $\Omega=\Lambda(\cG_\frf)\isom \bbZ_p[\Delta][[S,T]]$.
\begin{lemma}
Let $\cO_p$ contain the values of all characters of $\Delta$, then it suffices
to prove \ref{omega-mc} for $\Omega_\cO$.
\end{lemma}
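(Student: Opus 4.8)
The plan is to deduce this by faithfully flat base change along the map $\Omega\to\Omega_\cO$. First I would record that $\cO=\cO_E$ is a finite free $\bbZ$-module and that $\bbZ\hookrightarrow\cO$ splits as $\bbZ$-modules (its cokernel is finitely generated and torsion free, since $\bbQ\cap\cO=\bbZ$), whence $\cO_p=\cO\otimes_\bbZ\bbZ_p$ is finite free over $\bbZ_p$ with $\bbZ_p$ a direct summand, and therefore $\Omega_\cO=\Omega\wh\otimes_{\bbZ_p}\cO_p$ is finite and free over $\Omega$, in particular faithfully flat, and $\Omega$ is an $\Omega$-module direct summand of $\Omega_\cO$. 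Write $f\colon\Spec\Omega_\cO\to\Spec\Omega$ for the associated finite flat surjection. The next step is to check that every ingredient of Theorem \ref{omega-mc} is compatible with $f^*$: by flat base change for étale cohomology one has $R\Gamma(\cO_K[1/p\frf],\Omega_\cO(1))\simeq Lf^*R\Gamma(\cO_K[1/p\frf],\Omega(1))$ and likewise for the local complexes $R\Gamma(K_v,\Omega(1))$ and for $R\Gamma_c$ (all of which thus remain perfect); the ideal $\cJ_{\Omega_\cO}=\ker(\Omega_\cO\to\cO_p)$ equals $\cJ_\Omega\otimes_\Omega\Omega_\cO$, since applying the exact functor $-\otimes_{\bbZ_p}\cO_p$ to $\cJ_\Omega=\ker(\Omega\to\bbZ_p)$ gives $\ker(\Omega_\cO\to\cO_p)$; and because the Euler system classes ${_\fra\zeta(\frf)}\in H^1(\cO_K[1/p\frf],\Omega(1))$ already live at the $\bbZ_p$-level, $\cJ_{\Omega_\cO}(\zeta(\frf))$ is exactly the image of the (injective, by flatness) base change of $\cJ_\Omega(\zeta(\frf))$. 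Consequently the analogue of $\kappa_\frf$ over $\Omega_\cO$ is identified with $Lf^*\kappa_\frf$. Finally, Conjecture \ref{mu=0} over $\Omega_\cO$ is a formal consequence of its statement over $\Omega$: a height one prime $\frq'\subset\Omega_\cO$ with $p\in\frq'$ contracts to a prime $\frq=\frq'\cap\Omega$ containing $p$ and of height one (height one because $f$ is flat, hence satisfies going-down, and $p$ is a non-zero-divisor in $\Omega$), so $H^2(\cO_K[1/p\frf],\Omega_\cO(1))_{\frq'}$ is a localization of $H^2(\cO_K[1/p\frf],\Omega(1))_\frq=0$.

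Granting Theorem \ref{omega-mc} for $\Omega_\cO$, I would then descend each assertion. Statement $1)$ is already \eqref{h0vanishing}. For $2)$ and $3)$ the key point is that a finitely generated $\Omega$-module $N$ is $\Omega$-torsion if and only if $N\otimes_\Omega\Omega_\cO$ is $\Omega_\cO$-torsion: a non-zero-divisor of $\Omega$ remains a non-zero-divisor in the free module $\Omega_\cO$ (giving ``only if''), while conversely $N$ embeds into $N\otimes_\Omega\Omega_\cO$ because $\Omega$ is an $\Omega$-summand of $\Omega_\cO$, and a finitely generated $\Omega_\cO$-torsion module is $\Omega$-torsion since $f$ maps the minimal primes of $\Omega_\cO$ onto those of $\Omega$. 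Applying this to $N=H^2(\cO_K[1/p\frf],\Omega(1))$ yields $3)$, and to $N=H^1(\cO_K[1/p\frf],\Omega(1))/\cJ_\Omega(\zeta(\frf))$ yields the torsion half of $2)$; the $\Omega$-rank of $H^1(\cO_K[1/p\frf],\Omega(1))$ is read off at the minimal primes of $\Omega$, each dominated by a minimal prime of $\Omega_\cO$, so it equals $1$ by the $\Omega_\cO$-statement.

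For the remaining assertions (goodness of $\kappa_\frf$ and $\Div(\kappa_\frf)=0$ under Conjecture \ref{mu=0}), I would argue as follows. Since $\Omega_\cO$ is finite flat over the Cohen--Macaulay ring $\Omega$ it is itself Cohen--Macaulay, so the depth zero points of $\Spec\Omega_\cO$ are its minimal primes, and they map onto the depth zero points (again the minimal primes) of $\Spec\Omega$. Given that $Lf^*\kappa_\frf$ is good on $\Spec\Omega_\cO$ and $U(Lf^*\kappa_\frf)=f^{-1}(U(\kappa_\frf))$, every minimal prime of $\Omega$ lies in $U(\kappa_\frf)$, so $\kappa_\frf$ is good. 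Proposition \ref{divfunctoriality} then gives $f^*\Div(\kappa_\frf)=\Div(Lf^*\kappa_\frf)=0$, and since $f$ is faithfully flat the pullback of Cartier divisors is injective (if $\cO_X(D)\otimes_{\cO_X}\cO_Y\simeq\cO_Y$ compatibly with the canonical sections then faithfully flat descent forces $\cO_X(D)\simeq\cO_X$ compatibly), so $\Div(\kappa_\frf)=0$; the displayed determinant isomorphism follows from \eqref{divequation}. The only step requiring genuine care is this last one --- transporting ``good'' and ``$\Div=0$'' back across $f$, which mixes the depth condition defining goodness with faithfully flat descent of Cartier divisors --- but I anticipate no serious obstacle, everything else being routine base change.
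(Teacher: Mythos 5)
Your argument is correct and rests on the same idea as the paper's proof: faithfully flat base change along $\Omega\to\Omega_\cO$. The paper's version is much terser --- it simply notes that the two invertible $\Omega$-submodules $\Det_\Omega\cJ_\Omega(\zeta(\frf))$ and $\Det_\Omega R\Gamma(\cO_K[1/p\frf],\Omega(1))[1]$ of $\Det_\Omega H^1(\cO_K[1/p\frf],\Omega(1))\otimes Q(\Omega)$ agree if and only if they agree after the faithfully flat extension $\Omega\to\Omega_\cO$, thereby bypassing the descent of goodness, the Cohen--Macaulay argument, and the transport of $\Div(\kappa_\frf)$ via Proposition \ref{divfunctoriality} that you supply (all of which is sound but more than is strictly needed).
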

\begin{proof}
Inside $\Det_\Omega H^1(\cO_K[1/p\frf],\Omega(1))\otimes Q(\Omega)$ we have
two $\Omega$-modules $\Det_\Omega \cJ_\Omega(\zeta(\frf))$ and $\Det_\Omega R\Gamma(\cO_K[1/p\frf],\Omega_\cO(1))[1]$.
To check that they are equal, we can make the faithfully flat base extension
$\Omega\to \Omega_\cO$.
\end{proof}

We assume now that $\cO_p$ contains the values of the characters of $\Delta$.
We need the following result about the ring 
$\Omega_\cO\isom \cO_p[\Delta][[S,T]]$. 
\begin{lemma}\label{normalizationlemma}
The normalization $\wt\Omega_\cO$ of $\Omega_\cO$ inside of $Q(\Omega_\cO)$ is given by
$$
	\wt\Omega_\cO\isom \prod_{\chi\in\wh\Delta}\Lambda(\chi).
$$
In particular, $\Omega_\cO\otimes_{\cO_p}E_p\isom \wt\Omega_\cO\otimes_{\cO_p}E_p$.
\end{lemma}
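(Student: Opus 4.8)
The plan is to exhibit the normalization explicitly, so that no general finiteness theorem for normalizations is needed. Write $\Lambda_\cO\isom\cO_p[[S,T]]$ and $\Omega_\cO\isom\Lambda_\cO[\Delta]$, the latter the group ring of the finite abelian group $\Delta$ over $\Lambda_\cO$; it is finite and free of rank $|\Delta|$ as a $\Lambda_\cO$-module. Since $\cO_p$ is a finite product of complete discrete valuation rings whose fraction fields have characteristic $0$, the ring $\Lambda_\cO$ is a finite product of complete regular local domains, hence a normal noetherian ring, and $Q(\Lambda_\cO)$ is a finite product of characteristic $0$ fields. Being $\Lambda_\cO$-free, $\Omega_\cO$ is $\Lambda_\cO$-torsion free, hence embeds into $\Omega_\cO\otimes_{\Lambda_\cO}Q(\Lambda_\cO)=Q(\Lambda_\cO)[\Delta]$; in particular $\Omega_\cO$ is reduced. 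A short argument with non-zero-divisors — any non-zero-divisor of the $\Lambda_\cO$-free ring $\Omega_\cO$ becomes a unit in $\Omega_\cO\otimes_{\Lambda_\cO}Q(\Lambda_\cO)$, which is a finite product of fields, hence is already invertible there — shows that in fact $Q(\Omega_\cO)=Q(\Lambda_\cO)[\Delta]$.

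Next I would decompose $Q(\Lambda_\cO)[\Delta]$. By the standing hypothesis $\cO_p$, hence each of its discrete valuation factors, hence $Q(\Lambda_\cO)$, contains all the values $\chi(\delta)$ for $\chi\in\wh\Delta$ and $\delta\in\Delta$; these are roots of unity, and $|\Delta|$ is invertible in the characteristic $0$ ring $Q(\Lambda_\cO)$. The orthogonal idempotents $e_\chi=|\Delta|^{-1}\sum_{\delta\in\Delta}\chi(\delta)^{-1}\delta$ are therefore defined over $Q(\Lambda_\cO)$, and since the source and target below have the same $Q(\Lambda_\cO)$-rank $|\Delta|$ while independence of characters makes the map injective, one obtains a ring isomorphism
$$
	Q(\Lambda_\cO)[\Delta]\;\xrightarrow{\;\sim\;}\;\prod_{\chi\in\wh\Delta}Q(\Lambda_\cO),\qquad \delta\longmapsto(\chi(\delta))_{\chi}.
$$
Under this isomorphism the subring $\Omega_\cO=\Lambda_\cO[\Delta]$ is carried into $\prod_{\chi\in\wh\Delta}\Lambda_\cO$, since every $\chi(\delta)$ lies in $\cO_p\subseteq\Lambda_\cO$.

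It then remains to recognize $\prod_{\chi}\Lambda_\cO$ as the normalization $\wt\Omega_\cO$. On the one hand $\prod_{\chi}\Lambda_\cO$ is module-finite over $\Lambda_\cO$, hence over $\Omega_\cO$, hence integral over $\Omega_\cO$; on the other hand, being a finite product of normal domains it is a normal ring, so it is integrally closed in its own total quotient ring $\prod_{\chi}Q(\Lambda_\cO)=Q(\Omega_\cO)$. As the integral closure of $\Omega_\cO$ in $Q(\Omega_\cO)$ is the unique intermediate ring that is simultaneously integral over $\Omega_\cO$ and integrally closed in $Q(\Omega_\cO)$, this forces $\wt\Omega_\cO=\prod_{\chi\in\wh\Delta}\Lambda_\cO$; and identifying $\Lambda_\cO$ with $\Lambda(\chi)$ through the generator $t_p(\chi)$ of $\cO_p(\chi)$ turns the $\chi$-th projection $\Omega_\cO\to\Lambda_\cO$ into the action of $\Omega_\cO$ on $\Lambda(\chi)$, which gives $\wt\Omega_\cO\isom\prod_{\chi\in\wh\Delta}\Lambda(\chi)$. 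For the final assertion, the idempotents $e_\chi$ give $|\Delta|\cdot\wt\Omega_\cO\subseteq\Omega_\cO$, so the cokernel of $\Omega_\cO\hookrightarrow\wt\Omega_\cO$ is annihilated by $|\Delta|$ and dies after $-\otimes_{\cO_p}E_p$, whence $\Omega_\cO\otimes_{\cO_p}E_p\isom\wt\Omega_\cO\otimes_{\cO_p}E_p$. I expect the only genuinely delicate point to be the identity $Q(\Omega_\cO)=Q(\Lambda_\cO)[\Delta]$ — that adjoining the fraction ring of $\Lambda_\cO$ already inverts every non-zero-divisor of $\Omega_\cO$; the rest is formal bookkeeping with group algebras and integral closures.
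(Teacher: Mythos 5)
Your proof is correct and takes essentially the same route as the paper: the paper simply observes that $\Omega_\cO\subset\prod_{\chi\in\wh\Delta}\Lambda(\chi)$ and that the latter is normal (with the integrality of the extension and the coincidence of total quotient rings left implicit), and your argument spells out exactly these points in detail via the idempotent decomposition. The one minor sketchy step is the assertion that a non-zero-divisor of $\Omega_\cO$ remains one in $\Omega_\cO\otimes_{\Lambda_\cO}Q(\Lambda_\cO)$; this does hold (for instance because $Q(\Lambda_\cO)$ is flat over $\Lambda_\cO$, so injectivity of multiplication by $x$ is preserved), but you should state the reason rather than leave it as ``a short argument.''
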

\begin{proof}
This follows from the fact that $\Omega_\cO\subset \prod_{\chi\in\wh\Delta}\Lambda(\chi)$
and that the latter ring is normal. 
\end{proof}
We can now prove the first part of the equivariant main conjecture \ref{omega-mc}
\begin{corollary}
The module $H^2(\cO_K[1/p\frf],\Omega_\cO(1))$ is an $\Omega_\cO$-torsion module and
$H^1(\cO_K[1/p\frf],\Omega_\cO(1))$ has $\Omega_\cO$-rank one.
\end{corollary}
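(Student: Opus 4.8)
The plan is to descend the statement to the $\Lambda$-main-conjecture \ref{lambda-mc}, which has already been proved, by passing to the normalization $\wt\Omega_\cO\isom\prod_{\chi\in\wh\Delta}\Lambda(\chi)$ of lemma \ref{normalizationlemma}. The subtle point is that $\Omega_\cO$ itself does not split as the product of the $\Lambda(\chi)$ (this fails exactly when $p\mid\#\Delta$), so the characterwise decomposition is only available after inverting $p$; this will suffice, because torsionness and $\Omega_\cO$-rank of a finitely generated $\Omega_\cO$-module can be detected after $-\otimes_{\cO_p}E_p$. Indeed $p$ is a non-zerodivisor of $\Omega_\cO\isom\cO_p[\Delta][[S,T]]$, hence lies in no minimal prime, so $\Spec\Omega_\cO\otimes_{\cO_p}E_p\to\Spec\Omega_\cO$ is a bijection on minimal primes inducing isomorphisms of the associated local rings, and by lemma \ref{normalizationlemma} one has $\Omega_\cO\otimes_{\cO_p}E_p\isom\wt\Omega_\cO\otimes_{\cO_p}E_p=\prod_\chi\Lambda(\chi)\otimes_{\cO_p}E_p$.

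First I would set up the characterwise base change. By lemma \ref{perfectness} the complex $R\Gamma(\cO_K[1/p\frf],\Omega_\cO(1))$ is a perfect complex of $\Omega_\cO$-modules, and the ring homomorphism $\Omega_\cO\to\Lambda(\chi)$ underlying the normalization (it sends $\delta\in\Delta$ to $\chi(\delta)$) induces, by \cite{Fu-Ka} 1.6.5 (3) as in lemma \ref{coefficientequation}, an isomorphism
$$
\Lambda(\chi)\otimes^{\bbL}_{\Omega_\cO}R\Gamma(\cO_K[1/p\frf],\Omega_\cO(1))\isom R\Gamma(\cO_K[1/p\frf],\Lambda(\chi)(1)).
$$
Since $\Lambda(\chi)\otimes_{\cO_p}E_p$ is a ring-direct factor of $\Omega_\cO\otimes_{\cO_p}E_p$ and hence $\Omega_\cO$-flat, applying $-\otimes_{\cO_p}E_p$ turns both sides into ordinary tensor products, and taking cohomology yields
$$
H^i(\cO_K[1/p\frf],\Omega_\cO(1))\otimes_{\cO_p}E_p\isom\bigoplus_{\chi\in\wh\Delta}H^i(\cO_K[1/p\frf],\Lambda(\chi)(1))\otimes_{\cO_p}E_p
$$
for every $i$.

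Then I would invoke theorem \ref{lambda-mc} together with the localization sequence used in corollary \ref{localizationcor} (and lemma \ref{perfectness}), which transports the rank-one and torsion statements from $\cO_K[1/p]$ to $\cO_K[1/p\frf]$: for each $\chi$, the module $H^1(\cO_K[1/p\frf],\Lambda(\chi)(1))$ has $\Lambda_\cO$-rank one and $H^2(\cO_K[1/p\frf],\Lambda(\chi)(1))$ is $\Lambda_\cO$-torsion. Feeding this into the displayed decomposition, $H^1(\cO_K[1/p\frf],\Omega_\cO(1))\otimes_{\cO_p}E_p$ has rank one at every minimal prime of $\Omega_\cO\otimes_{\cO_p}E_p=\prod_\chi\Lambda(\chi)\otimes_{\cO_p}E_p$, while $H^2(\cO_K[1/p\frf],\Omega_\cO(1))\otimes_{\cO_p}E_p$ is torsion; by the reduction of the first paragraph these conclusions already hold over $\Omega_\cO$, which is the assertion.

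The main obstacle is exactly the bookkeeping forced by the failure of $\Omega_\cO$ to decompose as $\prod_\chi\Lambda(\chi)$ when $p\mid\#\Delta$, and the consequent need to combine the base-change result \cite{Fu-Ka} 1.6.5 (3) with the passage to $\otimes_{\cO_p}E_p$; once one is content to verify rank and torsionness after inverting $p$, where lemma \ref{normalizationlemma} restores the product decomposition, the argument is purely formal and uses no new input beyond the already-established $\Lambda$-main-conjecture.
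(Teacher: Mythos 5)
Your argument is correct and takes essentially the same route as the paper: reduce the statement over $\Omega_\cO$ to the characterwise statement over $\wt\Omega_\cO\isom\prod_\chi\Lambda(\chi)$ via the Fukaya--Kato base-change isomorphism, then settle each factor by theorem \ref{lambda-mc} together with the localization sequence relating $\cO_K[1/p]$ and $\cO_K[1/p\frf]$. The only difference is that the paper keeps track of the integral $\Tor$-spectral sequence for the base change $\Omega_\cO\to\wt\Omega_\cO$ (using $H^0=0$ to get $H^2\otimes_{\Omega_\cO}\wt\Omega_\cO\isom H^2(\wt\Omega_\cO)$ on the nose, and a $\Tor_2$-term for $H^1$), whereas you invert $p$ up front; since $p$ is a non-zerodivisor this preserves all rank and torsion information, so your shortcut is legitimate and slightly cleaner for this particular assertion.
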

\begin{proof}
It follows from \cite{Fu-Ka} 1.6.5 (3) that 
\begin{equation}\label{coeff}
	R\Gamma(\cO_K[1/p\frf],\Omega_\cO(1))\otimes_{\Omega_\cO}^\bbL\wt\Omega_\cO\isom
	R\Gamma(\cO_K[1/p\frf],\wt\Omega_\cO(1))
\end{equation}
and as $H^0(\cO_K[1/p\frf],\Omega_\cO(1))=0$ (by (\ref{h0vanishing})) this implies that 
$$
	H^2(\cO_K[1/p\frf],\Omega_\cO(1))\otimes_{\Omega_\cO}\wt\Omega_\cO\isom  H^2(\cO_K[1/p\frf],\wt\Omega_\cO(1))
	\isom \prod_{\chi\in\wh\Delta}H^2(\cO_K[1/p\frf],\Lambda(\chi)(1)).
$$
We have an exact sequence
$$
	H^2(\cO_K[1/p],\Lambda(\chi)(1))\to H^2(\cO_K[1/p\frf],\Lambda(\chi)(1))\to 
	\bigoplus_{\frl\mid\frf,\frl\nmid p}H^1(\kappa(\frl),\Lambda(\chi)^{I_\frl})
$$
and by \ref{lambda-mc} the first module is a torsion $\Lambda_\cO$-module. The last one
is  the cokernel of $\Lambda_\cO\xrightarrow{1-\chi(\frl)\Frob^{-1}_\frl}\Lambda_\cO$, which
is also torsion. This shows that $H^2(\cO_K[1/p\frf],\wt\Omega_\cO(1))$ is torsion.
As $Q(\Omega_\cO)=Q(\wt\Omega_\cO)$, it follows that
$$
	H^2(\cO_K[1/p\frf],\Omega_\cO(1))\otimes_{\Omega_\cO}Q(\Omega_\cO)=0,
$$
which proves
that $H^2(\cO_K[1/p\frf],\Omega_\cO(1))$ is $\Omega_\cO$-torsion. Moreover, one gets from
(\ref{coeff}) an exact sequence
\begin{multline*}
	\Tor_2^{\Omega_\cO}(H^2(\cO_K[1/p\frf],\Omega_\cO(1)),\wt\Omega_\cO)\to 
	H^1(\cO_K[1/p\frf],\Omega_\cO(1))\otimes_{\Omega_\cO}\wt\Omega_\cO\to \\ \to H^1(\cO_K[1/p\frf],\wt\Omega_\cO(1)).
\end{multline*}
We have 
$$
	H^1(\cO_K[1/p],\wt\Omega_\cO(1))\to H^1(\cO_K[1/p\frf],\wt\Omega_\cO(1))\to
	\bigoplus_{\frl\mid\frf,\frl\nmid p}H^0(\kappa(\frl),\wt\Omega_\cO^{I_\frl})
$$
and the last group is zero as the Frobenius acts non-trivially on $\wt\Omega_\cO$.
It follows from \ref{normalizationlemma} and \ref{lambda-mc} and the fact that 
$H^2(\cO_K[1/p\frf],\Omega_\cO(1))$ is a torsion module
that $H^1(\cO_K[1/p\frf],\Omega_\cO(1))$ has 
$\Omega_\cO$-rank one.
\end{proof}
To prove the rest of the equivariant main conjecture \ref{omega-mc} we want to
use the following lemma taken from Flach \cite{Flach} (recall that $\Lambda_\cO$ and $\Omega_\cO$ are
products of local rings):
\begin{lemma}[\cite{Flach} 5.3]\label{localizationlemma} Let $R=\Lambda_\cO$ or $R=\Omega_\cO$ and $Q(R)$ be the
total quotient ring. Let
$M$ and $N$ be two invertible $R$-submodules of some invertible $Q(R)$-module $D$, 
then $M=N$ if and only if for all height $1$ prime ideals $\frq$ of $R$
  one has $M_\frq=N_\frq$ inside $D_\frq$.
\end{lemma}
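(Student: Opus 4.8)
The plan is to dispose of the trivial implication at once and then reduce the converse to the classical fact that an invertible module over a reduced ring satisfying Serre's condition $S_2$ is recovered from its localisations at the height $1$ primes. One direction is immediate, since localisation is exact: $M=N$ inside $D$ forces $M_\frq=N_\frq$ inside $D_\frq$ for every prime $\frq$. For the converse, assume $M_\frq=N_\frq$ inside $D_\frq$ for all height $1$ primes $\frq$. It then suffices to prove
\[
	M=\bigcap_{\mathrm{ht}\,\frq=1}M_\frq\quad\text{(intersection taken inside }D\text{)},
\]
together with the analogous formula for $N$: indeed, once this is known, $M=\bigcap_\frq M_\frq=\bigcap_\frq N_\frq=N$.

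To establish this intersection formula I would use that $R$ --- whether $\Lambda_\cO\cong\cO_p[[S,T]]$ or $\Omega_\cO\cong\cO_p[\Delta][[S,T]]$ --- is a reduced noetherian ring satisfying $S_2$: $\Lambda_\cO$ is regular, while $\Omega_\cO$ is Cohen--Macaulay, being a power series ring over the finite free, hence Cohen--Macaulay, $\cO_p$-algebra $\cO_p[\Delta]$, and it is reduced because it embeds into $\wt\Omega_\cO\cong\prod_{\chi\in\wh\Delta}\Lambda(\chi)$ (cf.\ Lemma \ref{normalizationlemma}). Now let $M\subset D$ be invertible. Then $M$ is locally free of rank $1$, so $M\otimes_RQ(R)\xrightarrow{\ \sim\ }D$ and $M$ is torsion-free; in particular $M\subset\bigcap_\frq M_\frq$. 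Conversely, let $x$ lie in $M_\frq$ for every height $1$ prime $\frq$, and set $N_x:=\{r\in R:rx\in M\}$. Since $x$ also lies in $M_\frp$ for every minimal prime $\frp$ (automatic, as $M\otimes Q(R)\xrightarrow{\ \sim\ }D$), the ideal $N_x$ is contained in no prime of height $\le 1$, hence $\mathrm{ht}(N_x)\ge 2$ (we may assume $N_x\neq R$). By $S_2$ the ideal $N_x$ contains an $R$-regular sequence $a,b$; since $M$ is flat, $a,b$ is also $M$-regular. From $b(ax)=a(bx)$ with $ax,bx\in M$ one gets $ax\in aM$ (because $b$ is regular on $M/aM$), hence $x\in M$ (because $a$ is a non-zerodivisor). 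Thus $M=\bigcap_\frq M_\frq$, and the same applies to $N$, completing the argument. This is exactly \cite{Flach} 5.3, which one may also simply quote.

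The one genuine subtlety is that $\Omega_\cO$ need not be normal when $p\mid\#\Delta$, so one cannot reduce to the classical statement over a Krull domain by passing to the normalisation $\wt\Omega_\cO=\prod_\chi\Lambda(\chi)$ without extra bookkeeping at the finitely many non-regular height $1$ primes. The remedy is precisely the point exploited above: Cohen--Macaulayness already supplies the condition $S_2$, and $S_2$ --- combined with the fact that invertible modules are locally free, hence themselves torsion-free and $S_2$ --- is all the intersection argument needs, so no appeal to normality is required.
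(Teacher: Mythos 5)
The paper itself does not prove this lemma; it simply cites Flach~\cite{Flach}~5.3. Your proposal supplies a self-contained proof, and the argument is correct. The forward direction is trivial by exactness of localisation. For the converse, you reduce to the reflexivity-type statement $M=\bigcap_{\mathrm{ht}\,\frq=1}M_\frq$, and your verification is sound: writing $N_x=\{r\in R:rx\in M\}$ for $x\in\bigcap_\frq M_\frq$, the hypothesis together with the automatic containment at minimal primes (reducedness of $R$ makes each $R_\frp$ a field for $\frp$ minimal, so $M_\frp=D_\frp$) forces $\mathrm{ht}(N_x)\ge 2$; the $S_2$ condition then yields $\mathrm{grade}(N_x)\ge 2$, i.e.\ an $R$-regular sequence $a,b\in N_x$, which by flatness of $M$ is also $M$-regular; and the computation $b(ax)=a(bx)\in aM$ plus regularity of $b$ on $M/aM$ gives $ax\in aM$, whence $x\in M$ since $a$ acts invertibly on $D$.

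The one point worth praising explicitly is your remark that normality is the wrong hypothesis here: $\Omega_\cO=\cO_p[\Delta][[S,T]]$ fails to be normal when $p\mid\#\Delta$, but it is Cohen--Macaulay (as $\cO_p[\Delta]$ is a complete intersection over $\cO_p$, hence CM, and power series preserve CM), and reduced (it embeds in $\prod_\chi\Lambda(\chi)$), so $S_2$ holds --- and $S_2$ is all the argument needs. This is exactly the technical content that makes the lemma apply to $\Omega_\cO$ and not just to the regular ring $\Lambda_\cO$. Since the published paper offers no proof to compare against, there is nothing to reconcile; your proof is a valid and suitably careful expansion of the citation.
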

Inside $ Q(\Omega_\cO)$ we have
two rank one $\Omega_\cO$-modules:
$$
	\Det_{\Omega_\cO}\left(H^1(\cO_K[1/p\frf],\Omega_\cO(1))/\cJ_\Omega(\zeta(\frf))\right)\mbox{ and } 
	\Det_{\Omega_\cO}(H^2(\cO_K[1/p\frf],\Omega_\cO(1))).
$$
To show that these are equal we can by \ref{localizationlemma} localize at all
height one primes of $\Omega_\cO$. We distinguish two cases following Burns and Greither:
\begin{definition}
A prime ideal  $\frq\subset \Omega_\cO$ of height one is called \emph{regular}
if $p\notin \frq$. If $p\in \frq$, the prime ideal is called \emph{singular}.
\end{definition}
The proof of the $\Omega$-main-conjecture in these two cases is given in the
next two sections.

\subsection{Proof for regular prime ideals}

First note the following consequence of lemma \ref{normalizationlemma}:
\begin{lemma}\label{regularlocalization}
Let $\frq\subset \Omega_\cO$ be a regular prime ideal of height one, then
$$
	(\Omega_\cO)_\frq\isom (\wt\Omega_\cO)_\frq\isom \prod_{\chi\in\wh\Delta}\Lambda(\chi)_\frq.
$$
\end{lemma}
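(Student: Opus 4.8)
The plan is to deduce both isomorphisms formally from Lemma~\ref{normalizationlemma}, the point being that passing to a regular height-one localization kills the difference between $\Omega_\cO$ and its normalization. Write $C:=\wt\Omega_\cO/\Omega_\cO$ for the cokernel of the inclusion $\Omega_\cO\hookrightarrow\wt\Omega_\cO$. The first step I would carry out is to observe that this inclusion is module-finite, i.e.\ that $C$ is a finitely generated $\Omega_\cO$-module: by Lemma~\ref{normalizationlemma} we have $\wt\Omega_\cO\isom\prod_{\chi\in\wh\Delta}\Lambda(\chi)$ with $\wh\Delta$ finite, and each $\Lambda(\chi)$ is a cyclic $\Omega_\cO$-module (the $\chi$-component map $\Omega_\cO=\cO_p[\Delta][[S,T]]\to\Lambda(\chi)$ is surjective, since it factors through $\cO_p[\Delta]\to\cO_p$, $\delta_g\mapsto\chi(g)$), so a finite product of cyclic modules is finitely generated. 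The second step is to note that, since $E_p=\cO_p[1/p]$ is flat over $\cO_p$, flat base change along $\cO_p\to E_p$ is exact, so the last assertion of Lemma~\ref{normalizationlemma}, namely $\Omega_\cO\otimes_{\cO_p}E_p\isom\wt\Omega_\cO\otimes_{\cO_p}E_p$, says exactly that $C[1/p]=C\otimes_{\cO_p}E_p=0$; combined with finite generation this forces $p^N C=0$ for some $N\ge 0$.

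Now let $\frq\subset\Omega_\cO$ be a regular height-one prime, so $p\notin\frq$ and hence $p^N$ is a unit in $(\Omega_\cO)_\frq$. Localizing $p^N C=0$ at the multiplicative set $\Omega_\cO\smallsetminus\frq$ gives $C_\frq=0$, i.e.\ the localized inclusion is an isomorphism $(\Omega_\cO)_\frq\isom(\wt\Omega_\cO)_\frq$. Finally, localization commutes with the finite product appearing in Lemma~\ref{normalizationlemma}, which yields $(\wt\Omega_\cO)_\frq\isom\prod_{\chi\in\wh\Delta}\Lambda(\chi)_\frq$, where $\Lambda(\chi)_\frq$ denotes the localization, at $\Omega_\cO\smallsetminus\frq$, of $\Lambda(\chi)$ regarded as an $\Omega_\cO$-algebra via $\Omega_\cO\to\wt\Omega_\cO\to\Lambda(\chi)$.

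There is no real obstacle in this argument; the only step meriting care is the module-finiteness of $\wt\Omega_\cO$ over $\Omega_\cO$, since it is precisely what upgrades the generic statement $C[1/p]=0$ to the assertion that $C$ is globally $p$-power torsion, which in turn is what allows the hypothesis $p\notin\frq$ to annihilate $C$ after localization.
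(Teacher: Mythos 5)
Your proof is correct, and it rests on the same core input (Lemma~\ref{normalizationlemma}) as the paper's. The paper's argument is shorter: since $p\notin\frq$, the multiplicative set $\Omega_\cO\smallsetminus\frq$ contains $p$, so localizing $\Omega_\cO$ (resp.\ $\wt\Omega_\cO$) at $\frq$ factors through $\Omega_\cO\otimes_{\cO_p}E_p$ (resp.\ $\wt\Omega_\cO\otimes_{\cO_p}E_p$); as those two rings coincide by Lemma~\ref{normalizationlemma}, the localizations agree. You instead pass through the cokernel $C$ and show it is $p$-power torsion, which also gives $C_\frq=0$. One remark on your version: the module-finiteness of $\wt\Omega_\cO$ over $\Omega_\cO$ — while true, and worth noting for other purposes — is not actually needed here. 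The statement $C[1/p]=0$ already means that \emph{every} element of $C$ is annihilated by some power of $p$, and since $p\notin\frq$ this alone forces $C_\frq=0$ by the definition of localization; you do not need the uniform exponent $N$ that finite generation buys you. So that step is a harmless detour, and the underlying mechanism (inverting $p$ is permitted, and after inverting $p$ the two rings coincide) is identical to the paper's.
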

\begin{proof}
As $p$ is invertible in $(\Omega_\cO)_\frq$ both rings are localizations of 
$\Omega_\cO\otimes_{\cO_p}E_p$ resp. $\wt\Omega_\cO\otimes_{\cO_p}E_p $, which
agree by lemma \ref{normalizationlemma}.
\end{proof}
It follows that for regular $\frq $ (using again \cite{Fu-Ka} 1.6.5. (3)) 
$$
R\Gamma(\cO_K[1/p\frf],\Omega_\cO(1))_\frq\isom R\Gamma(\cO_K[1/p\frf],\wt\Omega_\cO(1))_\frq\isom
	\prod_{\chi\in\wh\Delta} R\Gamma(\cO_K[1/p\frf],\Lambda(\chi)(1))_\frq.
$$
\begin{lemma}\label{decompositionlemma}
The image of $\cJ_\Omega(\zeta(\frf))_\frq\subset H^1(\cO_K[1/p\frf],\Omega_\cO(1))_\frq$ under
the isomorphism
$$
	H^1(\cO_K[1/p\frf],\Omega_\cO(1))_\frq\simeq \prod_{\chi\in\wh\Delta}H^1(\cO_K[1/p\frf],\Lambda(\chi)(1))_\frq
$$
is given by
$$
	\prod_{\chi\in\wh\Delta}\cJ_\Lambda(\zeta(\chi,\frf))_\frq
$$
with $\zeta(\chi,\frf)$ defined in corollary \ref{localizationcor}.
\end{lemma}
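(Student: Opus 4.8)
The plan is to prove the lemma by using the idempotent decomposition available at a \emph{regular} prime to reduce to a character-by-character statement, and then to identify the $\chi$-component of the elliptic-unit system $_\fra\zeta(\frf)$ with the twisted system $_\fra\zeta(\chi)$, corrected by the appropriate Euler factors.

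First I would set up the decomposition. Since $\frq$ is regular, $p$ --- and hence $\#\Delta$ --- is a unit in $(\Omega_\cO)_\frq$, so the orthogonal idempotents $p_{\chi^{-1}}=\tfrac{1}{\#\Delta}\sum_{\sigma\in\Delta}\chi(\sigma)\sigma$ lie in $(\Omega_\cO)_\frq$ (recall $\cO_p$ contains the values of all characters of $\Delta$). By lemma \ref{regularlocalization} these realize $(\Omega_\cO)_\frq\isom\prod_{\chi\in\wh\Delta}\Lambda(\chi)_\frq$ with $\Lambda(\chi)_\frq=p_{\chi^{-1}}(\Omega_\cO)_\frq$, and hence (by the base change \cite{Fu-Ka} 1.6.5 (3), exactly as in the displayed isomorphism preceding the lemma) $H^1(\cO_K[1/p\frf],\Omega_\cO(1))_\frq\isom\prod_\chi H^1(\cO_K[1/p\frf],\Lambda(\chi)(1))_\frq$. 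Under this isomorphism an $(\Omega_\cO)_\frq$-submodule is the product of its images under the $p_{\chi^{-1}}$, so it is enough to compute $p_{\chi^{-1}}(\cJ_\Omega(\zeta(\frf))_\frq)$ for each fixed $\chi$.

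Next I would cut the problem down to a single identity. The module $\cJ_\Omega(\zeta(\frf))$ is generated over $\Omega_\cO$ by the classes ${_\fra}\zeta(\frf)=\varprojlim_n {_\fra}\zeta_{\frf p^n}$ for $\fra$ prime to $6p\frf$ (because $\cJ_\Omega$ is generated by the $\N\fra-\sigma_\fra$ and $(\N\fra-\sigma_\fra)\zeta(\frf)={_\fra}\zeta(\frf)$); dually $\cJ_\Lambda(\zeta(\chi,\frf))$ is generated over $\Lambda_\cO$ by ${_\fra}\zeta(\chi,\frf):=\prod_{\frl\mid\frf,\,\frl\nmid p}(1-\chi(\frl)\Frob_\frl^{-1}){_\fra}\zeta(\chi)$, with $\zeta(\chi,\frf)$ as in corollary \ref{localizationcor} (and $(1-\chi(\frl)\Frob_\frl^{-1})=1$ whenever $\frl\mid\frf_\chi$). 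So the lemma becomes the identity $p_{\chi^{-1}}({_\fra}\zeta(\frf))={_\fra}\zeta(\chi,\frf)$ in $H^1(\cO_K[1/p\frf],\Lambda(\chi)(1))_\frq$, for every $\fra$ prime to $6p\frf$. I would verify this at finite level and pass to the limit: applying the projection compatibility of lemma \ref{zetaidentification} at each layer $n$ identifies $p_{\chi^{-1}}({_\fra}\zeta_{\frf p^n})$ with the trace down of the $\chi$-twist of ${_\fra}\zeta_{\frf p^n}$, i.e. with the level-$n$ member of the twisted Euler system of $\chi$; the discrepancy between the full modulus $\frf$ and the conductor $\frf_\chi$ of $\chi$ is then absorbed, via the Euler-system norm relation of proposition \ref{unitproperties} (2), into precisely the product $\prod_{\frl\mid\frf,\,\frl\nmid\frf_\chi p}(1-\chi(\frl)\Frob_\frl^{-1})$, and taking $\varprojlim_n$ yields the identity.

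The two reduction steps are essentially formal. The main obstacle is the last identity: it is where one has to reconcile the ad hoc normalisations --- ${_\fra}\zeta(\frf)$ is built with the full modulus $\frf$, whereas ${_\fra}\zeta(\chi)$ and $\zeta(\chi,\frf)$ are built with $\frf_\chi$ and then corrected --- keeping careful track of moduli, conductors, corestrictions, and the Euler factors they produce at the primes dividing $\frf$ but not $\frf_\chi$.
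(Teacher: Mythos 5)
Your proposal is correct and follows essentially the same route as the paper: reduce via the idempotents (equivalently, the paper's lemma \ref{regularlocalization}) to the computation of $p_{\chi^{-1}}({_\fra}\zeta(\frf))$, and identify this at each finite layer with $\prod_{\frl\mid\frf,\,\frl\nmid p}(1-\chi(\frl)\Frob_\frl^{-1}){_\fra}\zeta_{K_n}(\chi)$ by combining lemma \ref{zetaidentification} with the distribution relation of proposition \ref{unitproperties}(2), then pass to the limit. The only cosmetic difference is that you obtain the ring decomposition directly from the invertibility of $\#\Delta$ at a regular prime, whereas the paper cites lemma \ref{regularlocalization} (comparison with $\wt\Omega_\cO$); these are the same observation.
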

\begin{proof}
First note that $(\cJ_\Omega)_\frq=(\Omega_\cO)_\frq=\prod_{\chi\in\wh\Delta}\Lambda(\chi)_\frq$
and that the map $(\Omega_\cO)_\frq\to \Lambda(\chi)_\frq$ is given by the projector 
$p_{\chi^{-1}}$. Recall that ${_\fra\zeta(\chi)}=\prolim_n {_\fra\zeta_{K_n}(\chi)}$
and that $_\fra\zeta(\frf)=\prolim_n {_\fra\zeta_{\frf p^n}}$. Let $\frf_\chi$ be the 
conductor of $\chi$. By definition
$_\fra\zeta_{K_n}(\chi)=\tr_{K(\frf_\chi p^m)/K_n} {_\fra}\zeta_{\frf_\chi p^m}$,
where $m$ is the smallest integer such that $K_n\subset K(\frf_\chi p^m)$. 
Let $t$ be the smallest integer such that $K(\frf_\chi p^m)\subset K(\frf p^t)$.
The distribution relation gives
$$
	\tr_{K(\frf p^t)/K_n}(_\fra\zeta_{\frf p^t}\otimes t_p(\chi))=\prod_{\frl\mid\frf p^t,\frl\nmid p^n}(1-\chi(\frl)\Frob_\frl^{-1}){_\fra\zeta_{K_n}(\chi)}.
$$
Consider $\chi$ as a character on $G:=\ker(G(\frf p^t)\to G_n)$.
By lemma \ref{zetaidentification} with
$G=G$ and $H=G_n$ we get that 
$\tr_{K(\frf p^t)/K_n}(_\fra\zeta_{\frf p^t}\otimes t_p(\chi))=p_{\chi^{-1}}(_\fra\zeta_{\frf p^t})$.
This implies that the map
$$
	H^1(\cO_K[1/p\frf],\Omega_\cO(1))_\frq\to H^1(\cO_K[1/p\frf],\Lambda(\chi)(1))_\frq
$$
maps $_\fra\zeta(\frf)$ to $\prod_{\frl\mid\frf,\frl\nmid p}(1-\chi(\frl)\Frob_\frl^{-1}){_\fra\zeta(\chi)}$.
The last element is 
$\zeta(\chi,\frf)$ defined in corollary \ref{localizationcor}.
\end{proof}
Consider the map
$$
	(\kappa_\frf)_\frq:\cJ_\Omega(\zeta(\frf))_\frq\to R\Gamma(\cO_K[1/p\frf],\Omega_\cO(1))_\frq[1].
$$
Using the above lemma this coincides with
$$
	\prod_{\chi\in\wh\Delta}(\wt\kappa_\chi)_\frq:\prod_{\chi\in\wh\Delta}\cJ_\Lambda(\zeta(\chi,\frf))_\frq\to 
	\prod_{\chi\in\wh\Delta} R\Gamma(\cO_K[1/p\frf],\Lambda(\chi)(1))_\frq.
$$
By corollary \ref{localizationcor} each $\wt\kappa_\chi$ induces an isomorphism
$$
\Det_{(\Lambda_\cO)_\frq} \left( H^1(\cO_{K}[1/p\frf],\Lambda(\chi)(1))_\frq/\cJ_\Lambda(\zeta(\chi,\frf))_\frq\right)\simeq \Det_{(\Lambda_\cO)_\frq} H^2(\cO_{K}[1/p\frf],\Lambda(\chi)(1))_\frq.
$$
This proves the $\Omega$-main-conjecture for regular prime ideals.

\subsection{Proof for singular prime ideals}\label{proofforsingular}

Let $\frq\subset \Omega_\cO$ be a singular prime ideal (i.e., $p\in \frq$).
Then by our assumption $H^2(\cO_K[1/p\frf],\Omega_\cO(1))_\frq=0$ and
we get
$$
	\Det_{(\Omega_\cO)_\frq}R\Gamma(\cO_K[1/p\frf],\Omega_\cO(1))_\frq[1] =
	\Det_{(\Omega_\cO)_\frq}H^1(\cO_K[1/p\frf],\Omega_\cO(1))_\frq .
$$
Consider 
$$
	(\kappa_\frf)_\frq:\cJ_\Omega(\zeta(\frf))_\frq\to R\Gamma(\cO_K[1/p\frf],\Omega_\cO(1))_\frq .
$$
This gives, as by the above the divisor $\Div(\kappa_\frf)_\frq$ is effective, an inclusion
\begin{equation}\label{detinclusion}
	\Det_{(\Omega_\cO)_\frq}\cJ_\Omega(\zeta(\frf))_\frq\subset \Det_{(\Omega_\cO)_\frq}H^1(\cO_K[1/p\frf],\Omega_\cO(1))_\frq .
\end{equation}
We now use an idea of Witte.
Choosing generators for both modules we see that there is an element
$u\in (\Omega_\cO)_\frq$ such that
$$
	\Det_{(\Omega_\cO)_\frq}\cJ_\Omega(\zeta(\frf))_\frq=u
	\Det_{(\Omega_\cO)_\frq}H^1(\cO_K[1/p\frf],\Omega_\cO(1))_\frq.
$$
We want to show that $u$ is a unit in $(\Omega_\cO)_\frq$.
Consider the normal ring homomorphism $(\Omega_\cO)_\frq\to (\wt\Omega_\cO)_\frq$.
An element $u\in (\Omega_\cO)_\frq$ is a unit if and only if it is a unit
in $(\wt\Omega_\cO)_\frq$. Thus it suffices to show that after extending
scalars in (\ref{detinclusion}) we get an equality. By lemma \ref{decompositionlemma}
the map $(\kappa_\frf)_\frq$  decomposes after this base extension into 
$$
\prod_{\chi\in\wh\Delta}(\wt\kappa_\chi)_\frq:\prod_{\chi\in\wh\Delta}\cJ_\Lambda(\zeta(\chi,\frf))_\frq\to 
	\prod_{\chi\in\wh\Delta} R\Gamma(\cO_K[1/p\frf],\Lambda(\chi)(1))_\frq.
$$
By corollary \ref{localizationcor} each $\wt\kappa_\chi$ induces an isomorphism
$$
\Det_{(\Lambda_\cO)_\frq} \left( H^1(\cO_{K}[1/p\frf],\Lambda(\chi)(1))_\frq/\cJ_\Lambda(\zeta(\chi,\frf))_\frq\right)\simeq \Det_{(\Lambda_\cO)_\frq} H^2(\cO_{K}[1/p\frf],\Lambda(\chi)(1))_\frq.
$$
This shows that the element $u\in (\Omega_\cO)_\frq$ becomes a unit in
$(\wt\Omega_\cO)_\frq$. Thus $u$ is already a unit in $ (\Omega_\cO)_\frq$ and we get
equality in (\ref{detinclusion}), which proves the $\Omega$-main-conjecture for singular prime ideals.

\end{document}